\documentclass{amsart}

\newtheorem{theorem}{Theorem}[section]
\newtheorem{lemma}[theorem]{Lemma}

\theoremstyle{definition}
\newtheorem{definition}[theorem]{Definition}
\newtheorem{example}[theorem]{Example}

\theoremstyle{remark}
\newtheorem{remark}[theorem]{Remark}
\theoremstyle{plain}
\newtheorem{theo+}{Theorem}
\numberwithin{theo+}{section}
\newtheorem{prop+}[theo+]{Proposition}
\newtheorem{coro+}[theo+]{Corollary}
\newtheorem{lemm+}[theo+]{Lemma}
\newtheorem{conjecture}[theo+]{Conjecture}

\theoremstyle{definition}
\newtheorem{defi+}[theo+]{Definition}

\usepackage{amssymb}
\usepackage{graphicx}
\usepackage{float}
\usepackage{subfigure}
\usepackage[]{caption2} 
\renewcommand{\thesubfigure}{(\roman{subfigure})}
\makeatletter \renewcommand{\@thesubfigure}{\thesubfigure \space}
\renewcommand{\p@subfigure}{} \makeatother

\numberwithin{equation}{section}

\newcommand{\abs}[1]{\lvert#1\rvert}

\begin{document}

\title{Principle of generating critical points of rational functions and Gauss-Fourier conjecture for second-kind zeros}


\author{Lande Ma}
\address{}
\curraddr{School of Mathematical Sciences, Tongji University, Shanghai, 200092, China}
\email{dzy200408@sina.cn}
\thanks{}

\author{Zhaokun Ma}
\address{}
\curraddr{Yanzhou College, Shandong Radio and TV University, YanZhou, 272100, China}
\email{dzy200408@126.com}
\thanks{}

\subjclass[2020]{Primary 30C10 Secondary 30C15,26C10,93C05}

\date{October 17th, 2025.}

\dedicatory{}

\keywords{zeros; critical points; rational functions; root locus; Gauss-Fourier conjecture;}

\begin{abstract}
The precise origin of critical points in rational functions remains a longstanding open problem. This paper elucidates the mechanism underlying critical point generation in rational functions. For over two centuries, a conjecture attributed to Gauss and Fourier has remained unresolved: every real zero of $(p^{'}(s)/p(s))^{'}$ is associated with a pair of non-real zeros of the real polynomial $p(s)$.  The real zeros of $(p^{'}(s)/p(s))^{'}$ that are multiple real zeros of $p^{'}(s)$ are termed second-kind zeros of $(p^{'}(s)/p(s))^{'}$. By developing the root locus method, we establish a necessary and sufficient condition for the emergence of second-kind zeros: a second-kind zero $M$ arises if and only if at least two root loci of $2q\pi$ or $2q\pi+\pi$ degree, emitted from at least one pair of non-real conjugate zeros of $p(s)$ (in the extended complex plane, excluding the real axis), directly intersect another root locus at $M$ on the real axis. Our results resolve one conjecture of the Gauss-Fourier conjectures and reveal the fundamental mechanism by which critical points of rational functions are generated.
\end{abstract}
\maketitle

\section*{Introduction}
In 2004, Borcea and Shapiro proposed a conjecture \cite{Shapiro1}, motivated by their efforts to prove the Hawaii Conjecture put forward by Craven, Csordas, and Smith \cite{Csordas1}. The Hawaii Conjecture asserts that for a real polynomial $p(s)$ of degree $n \ge 2$, the number of real zeros of $(p'(s)/p(s))'$ does not exceed the number of non-real zeros of $p(s)$. Tracing its roots to foundational work by Gauss and Fourier—with historical details documented by Edwards and Hinkkanen \cite{Stephanie1}—this conjecture has long been a focal point in the mathematical community.

In the correspondence \textit{G\"{o}ttingische gelehrte Anzeigen} (G.G.A.) dated February 25, 1833 \cite{Gauss7,Gauss8}, Gauss described observations and a conjecture by Fourier. Gauss explained that Fourier believed for each real zero of $(p^{'}(s)/p(s))^{'}$ (termed "critical points" by Fourier), there exists an associated pair of non-real zeros of the polynomial $p(s)$.

The Hawaii Conjecture represents an effort by Craven, Csordas, and Smith to quantify the ideas explored by Gauss and Fourier, though it does not explicitly reference the above-mentioned association between zeros. This conjecture was resolved in 2011 by M.~Tyaglov \cite{Tyaglov1}. Subsequently, B.~Shapiro proposed three new conjectures related to the Hawaii Conjecture (Conjectures 11, 12, and 13 in \cite{Shapiro0}). In our prior work \cite{Ma} (January 2024), we established the necessary and sufficient conditions for Shapiro's Conjecture 12 to hold.

Mathematical historiography shows that the Gauss-Fourier conjecture has shaped a distinct research trajectory over the past two centuries. While derivative conjectures (e.g., the Hawaii Conjecture) have been resolved, the original conjectures by Gauss and Fourier remain unsolved. Beyond the initial examples provided by these pioneers, no further general positive results have been proven.

The expression $(\frac{p^{'}(s)}{p(s)})^{'}=\frac{p^{''}(s)p(s)-(p^{'}(s))^2}{p^2(s)}$ implies that multiple zeros of $p^{'}(s)$ are zeros of $p^{''}(s)p(s)-(p^{'}(s))^2$ and thus critical points of $\frac{p^{'}(s)}{p(s)}$. These critical points of $\frac{p^{'}(s)}{p(s)}$ are inherited from the original multiple zeros of $p^{'}(s)$, whereas multiple zeros of $p(s)$ are poles (not zeros) of $\frac{p^{'}(s)}{p(s)}$. Note that zeros of polynomials are categorized into simple zeros and multiple zeros, with multiple zeros constituting a distinct class rather than a special case of zeros.

Since the Gauss-Fourier conjecture concerns the real zeros of $(p'(s)/p(s))'$, we classify these zeros as follows:

\begin{definition}
A real zero of $(p^{'}(s)/p(s))^{'}$ is called a \emph{second kind of zero of $(p^{'}(s)/p(s))^{'}$} (or \emph{second kind zeros}) if it is a real multiple zero of $p^{'}(s)$.
\end{definition}

\begin{definition}
In addition to the second kind of zero, the remaining real zeros of $(p^{'}(s)/p(s))^{'}$ are referred to as the \emph{first kind of zeros of $(p^{'}(s)/p(s))^{'}$} (or \emph{first kind zeros}).
\end{definition}

The zeros of the first kind of $(p^{'}(s)/p(s))^{'}$ represent a class of real zeros of $(p^{'}(s)/p(s))^{'}$. The zeros of the second kind of $(p^{'}(s)/p(s))^{'}$ are a class of real zeros of $(p^{'}(s)/p(s))^{'}$. Both first and second kind zeros represent classes of real zeros of $(p^{'}(s)/p(s))^{'}$.  Both kinds of zeros are equally important in the Gauss-Fourier conjecture; the conjecture related to second kind zeros is not a specialized issue.

Gauss described Fourier's conjecture that for each real zero of $(p^{'}(s)/p(s))^{'}$ (or, as Fourier calls them, "critical points"), there existed an associated pair of non-real zeros of  $p(s)$. Edwards and Hinkkanen note in \cite{Stephanie1}: "Gauss and Fourier found it difficult to make their ideas and conjectures articulate precisely since they attempted to find a specific association. However, it is evident from the correspondence that they were trying to understand exactly how these non-real zeros and real zero of $(p^{'}(s)/p(s))^{'}$ are connected. Both Gauss and Fourier seemed to express frustrations about not being able to precisely state and prove these conjectures."

Gauss's description may represent one facet of a broader framework encompassed by the Gauss-Fourier conjecture—one that includes multiple interrelated problems. An optimal resolution of this framework would clarify how real Fourier critical points are generated, with the key insight that these points cannot emerge without considering the non-real zeros of $p(s)$. Proving this mechanism would address a major challenge within the conjecture.

The Gauss-Fourier conjecture concerns real critical points of the rational function $p^{'}(s)/p(s)$, centering on understanding their generation. In this paper, we prove Theorem 0.5, providing necessary and sufficient conditions for generating critical points of general rational functions. This theorem is essential for formulating and solving the main problem of the Gauss-Fourier conjecture.

Understanding critical point generation is profoundly important and arguably surpasses the conjecture itself. Gauss and Fourier sought principles governing critical point generation for general functions, leading to this conjecture.

Here, we investigate the emergence of second kind zeros of $(p^{'}(s)/p(s))^{'}$. Following Gauss and Fourier's approach, we propose Conjectures 0.1 and 0.2 and prove Theorems 0.3 and 0.4, addressing core problems in the Gauss-Fourier conjecture.

We also establish results in root locus theory. In the extended complex plane $\mathbb{C} \cup \{\infty\}$, the set of points $s$ where the argument of a rational function $RF(s)$ is equal to $\alpha$ degrees forms a curve known as the root loci of $\alpha$ degree. The argument of $RF(s)$ at each point on a specific curve has the same phase angle. The argument of $RF(s)$ at a point on a certain root locus is $\alpha$ degree. The degrees of distinct curves are typically distinct. (The phase angle is the argument). $\alpha$ is the degree of the root locus. Notably, adjacent root loci generally have distinct degrees.

When root loci have distinct unit complex values, they cannot intersect. When unit complex values coincide, intersection is possible. Such intersection points are `"breakaway points" in root locus theory, representing critical points of complex rational functions. Thus, critical points emerge from intersections of root loci sharing identical unit complex values.

Notation 0: Let $\mathbb{C}\cup\{\infty\}$ denote the extended complex plane, $\mathbb{C}^{*}=\{s=x+iy: y\neq 0\}$ denote the extended complex plane excluding the real axis, $RF$ denote any complex rational function, $M$ denote a second kind of zero of $(p^{'}(s)/p(s))^{'}$.

The real polynomial $p(s)$ forms the root locus equation:
$K\frac{1}{p(s)}=a+ib$.

\begin{conjecture}
A second-kind zero of $(p^{'}(s)/p(s))^{'}$ can only be generated if root loci emitted from non-real zeros of real polynomial $p$ intersect on the real axis.
\end{conjecture}

\begin{conjecture}
Each second kind of zero $M$ of $(p^{'}(s)/p(s))^{'}$ is generated, if and only if at least two root loci of $2q\pi$ or $2q\pi+\pi$ degree, emitted from a pair of non-real conjugate zeros of the real polynomial $p(s)$, directly intersect at least one other root locus at $M$ on the real axis in $\mathbb{C}^{*}$.
\end{conjecture}

In the subsequent two centuries, no results have linked the zeros of the second kind of $(p^{'}(s)/p(s))^{'}$ to non-real zeros of real polynomials. Relevant studies appear in \cite{Totik,Ge,Mir,Angst}.

In this paper, we have proved Theorem 0.3 and Theorem 0.4.

\begin{theorem}
A second kind of zero of $(p^{'}(s)/p(s))^{'}$ can only be generated if root loci emitted from non-real zeros of the real polynomial $p(s)$ intersect on the real axis.
\end{theorem}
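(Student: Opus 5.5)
Let $M$ be a second-kind zero, so $M$ is real with $p'(M)=0$ and $p''(M)=0$; write $m\ge 2$ for the multiplicity of $M$ as a zero of $p'$. I will work with the root locus equation $K\frac{1}{p(s)}=a+ib$. Equivalently, the root loci of $\alpha$ degree are the level curves $\arg p(s)=-\alpha$, and the breakaway points are the zeros of $p'$. The plan is to describe the local geometry of the loci at $M$, then trace the branches through $M$ back to their sources, and finally rule out that all of them start at real zeros.

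\textbf{Local structure at $M$.} Near $M$ we have $p(s)=p(M)+c(s-M)^{m+1}+\cdots$ with $c\neq 0$ real, and $p(M)\neq 0$ real. Hence exactly $m+1\ge 3$ root loci of the degree $\arg(1/p(M))\in\{2q\pi,2q\pi+\pi\}$ pass through $M$. They form $2(m+1)$ arcs at equal angles $\pi/(m+1)$, and two of these arcs are the real axis itself. So at least $2m\ge 4$ arcs leave $M$ into $\mathbb{C}^{*}$, arranged symmetrically under conjugation.

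\textbf{Tracing arcs back to sources.} Take an arc leaving $M$ into the upper half plane. Along a root locus, $K=|p(s)|$ is monotone between consecutive critical points, and every branch of a level curve of $\arg p$ ends at a zero of $p$ (where $K=0$) or at $\infty$. Follow the arc in the direction of decreasing $|p|$.

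\textbf{Ruling out real sources.} Assume, for contradiction, that all zeros of $p$ are real. Then $p'$ has only real zeros, all simple between distinct zeros of $p$, by Rolle's theorem and a degree count. Hence $p'$ has no multiple zero off the zeros of $p$, so no second-kind zero exists. Therefore a second-kind zero forces $p$ to have non-real zeros. The remaining task is to show that an arc from $M$ actually reaches one of them.

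For this I will use the harmonic function $\arg p$. In the upper half plane it equals $\sum_k \arg(s-z_k)$. The contribution from each real zero is monotone in $x$ along horizontal lines, and the loci emanating from real zeros are controlled by these monotone terms. An upper arc from $M$ cannot come back to the real axis at a real zero of $p$ except along the real axis, since real zeros have only the real axis plus perpendicular loci near the axis. Such an arc is confined to $\mathbb{C}^{*}$. By the maximum principle applied to $|p|$ on the region it bounds, a decreasing-$|p|$ path in $\mathbb{C}^{*}$ must terminate at a zero of $p$ in $\mathbb{C}^{*}$ or escape to $\infty$. If it escapes to $\infty$, I run the arc in the other direction instead; decreasing $|p|$ then leads to a zero in the open half plane. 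By conjugate symmetry, a non-real conjugate pair emits loci meeting at $M$ on the real axis, which is the statement.

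\textbf{Main obstacle.} The hard step is the last one. A level arc of $\arg p$ through $M$ might a priori pass through other breakaway points in $\mathbb{C}^{*}$, or could the region bounded by two arcs be free of zeros of $p$? I would first try to settle this with the argument principle. Two adjacent upper arcs from $M$, closed at infinity or at another crossing, bound a region $D$ on whose boundary $\arg p$ is piecewise constant, with a jump across the corner at $M$. Since $\log p$ is analytic in a zero-free $D$, the total variation of $\arg p$ around $\partial D$ would have to vanish, contradicting the angle-$\pi/(m+1)$ corner. Hence $D$ contains a non-real zero, and the arc of decreasing $|p|$ inside $D$ ends at it.
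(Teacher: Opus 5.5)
\textbf{The gap.} The unproved step is the one you label ``Such an arc is confined to $\mathbb{C}^{*}$''. You need that an arc leaving $M$ into the upper half-plane, followed in the direction of decreasing $|p|$, never returns to the real axis. That is the whole content of the statement; the paper spends Theorem 3.22 and Lemmas 3.3--3.21 on it. Your justification does not establish it:
\begin{itemize}
\item A real zero of multiplicity $k$ carries $2k$ arcs of $\{\mathrm{Im}\,p=0\}$ at angles $\pi/k$. For $k\ge 3$ it emits oblique loci into $\mathbb{C}^{*}$. For $k=2$ the perpendicular loci also enter $\mathbb{C}^{*}$, and your remark does not exclude them.
\item Your arc could also land on the axis at another real critical point of $p$ with $p\neq 0$, then run along the axis to a real zero.
\item The Rolle/degree-count step only shows that $p$ has some non-real zero. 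It does not show that any locus from such a zero reaches the axis, let alone reaches $M$.
\item The maximum principle for $|p|$ says nothing about returning to the axis. A path along which $|p|$ decreases from $|p(M)|$ cannot escape to $\infty$ at all, so your ``other direction'' alternative is vacuous.
\item The argument-principle repair fails. Since $p(M)\neq 0$, $\arg p$ is continuous at $M$, and near $M$ both adjacent arcs carry the same value $\arg p(M)$. The geometric corner of angle $\pi/(m+1)$ contributes nothing to the variation of $\arg p$ along $\partial D$, so there is no jump to contradict.
\item Even if $D$ contained a zero of $p$, nothing would force your particular decreasing arc to end there.
\end{itemize}

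\textbf{The missing idea.} Work with the harmonic function $\mathrm{Im}\,p$ rather than with $|p|$ or $\arg p$. Since $p$ is real, $\mathrm{Im}\,p=0$ on the real axis and on every $2q\pi$ and $2q\pi+\pi$ locus. So no bounded domain can have its boundary inside $\{\mathrm{Im}\,p=0\}$, because the maximum principle would then force $\mathrm{Im}\,p\equiv 0$ there.

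Pick an upper arc at $M$ on which $p/p(M)$ decreases from $1$. One exists because the $m\ge 2$ upper arcs alternate between increasing and decreasing. Continue this arc, choosing a decreasing branch at each breakaway point. Along the resulting path, $p/p(M)$ lies in $(0,1)$ and is strictly decreasing. Hence the path is injective and bounded, and it terminates at a zero of $p$.

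Suppose the path met the real axis at a first point $x_1$ (necessarily $x_1\neq M$). Then the path together with the segment $[M,x_1]$ would bound a domain in the upper half-plane with $\mathrm{Im}\,p=0$ on its boundary, which is impossible. So the terminal zero is non-real. Its conjugate emits the mirror-image locus, which reaches $M$ from below. This closes your argument and makes the Rolle step unnecessary.
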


By Theorem 3.22 in this paper, root loci emitted from real zeros of $p(s)$ enter $\mathbb{C}^{*}$ and cannot re-intersect the real axis. If root loci emitted from non-real zeros do not intersect other root loci on the real axis, second kind zeros cannot be generated. Thus, intersections from non-real zeros are necessary.

\begin{theorem}
Each second kind of zero $M$ of $(p^{'}(s)/p(s))^{'}$ is generated if and only if at least two root loci of $2q\pi$ or $2q\pi+\pi$ degree, emitted from a pair of conjugate non-real zeros of $p(s)$, directly intersect at least one other root locus at $M$ on the real axis in $\mathbb{C}^{*}$,
\end{theorem}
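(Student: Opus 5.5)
We would work with the root locus equation $K\frac{1}{p(s)}=a+ib$, i.e.\ the level sets of $\arg p(s)$, and use the standard local picture of these curves. Away from zeros of $p$, the map $s\mapsto p(s)$ is conformal wherever $p'(s)\neq 0$. So through each such point passes exactly one root locus of a fixed degree $\alpha$, together with the orthogonal curve $\abs{p}=$const. At a zero $s_0$ of $p'$ of multiplicity $m$ with $p(s_0)\neq 0$, we have locally $p(s)-p(s_0)\sim c(s-s_0)^{m+1}$. Hence exactly $m+1$ arcs of the root locus through $s_0$ meet there at equal angles $\pi/(m+1)$, giving $2(m+1)$ half-branches. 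A second-kind zero $M$ is by definition a real zero of $p'$ of multiplicity $m\ge 2$. Since $p$ is real, $p(M)\in\mathbb{R}\setminus\{0\}$, so the degree of the loci through $M$ is $2q\pi$ or $2q\pi+\pi$. Moreover, $M$ is a point where at least three root loci (counting the real axis itself, which is a locus of degree $0$ or $\pi$ on each interval between real zeros) meet. This already yields the translation: $M$ is a second-kind zero exactly when at least two non-real half-branches of degree $2q\pi$ or $2q\pi+\pi$ meet at $M$ in addition to the real axis. This gives the local form of the ``directly intersect at least one other root locus at $M$'' clause.

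Next we would identify where these non-real half-branches originate. By conjugation symmetry $\overline{p(\bar s)}=p(s)$, the branches leaving $M$ into the upper half plane are mirrored in the lower half plane. Following a branch away from $M$, $\abs{p}$ is monotone along it, except that it may pass through further critical points where it can be continued. We would continue in the direction of decreasing $\abs{p}$, which must terminate at a zero of $p$. By Theorem 3.22, root loci emitted from real zeros of $p$ enter $\mathbb{C}^{*}$ and never return to the real axis. Hence the branch from $M$ cannot end at a real zero, other than along the real axis itself, and must end at a non-real zero $z_0$. Its mirror image ends at $\bar z_0$. This shows necessity: the two non-real loci through $M$ are emitted from the conjugate pair $z_0,\bar z_0$, exactly as Theorem 0.3 asserts in weaker form.

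For sufficiency, we would suppose two root loci of degree $2q\pi$ or $2q\pi+\pi$ emitted from $z_0,\bar z_0$ reach $M$ on the real axis and meet another locus there. Since the real axis near $M$ (if $p(M)\neq0$) is already a locus of the same argument, at least three arcs with identical unit value $\pm1$ meet at $M$. Conformality fails there, so $p'(M)=0$. The multiplicity is at least two because the local branch count $2(m+1)\ge 6$ forces $m\ge 2$. Then $p''(M)p(M)-p'(M)^2=0$, and $M$ is a second-kind zero.

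The main obstacle is the global continuation step in the necessity direction. One must show that a half-branch from $M$, continued through possible intermediate critical points in $\mathbb{C}^{*}$, really terminates at a non-real zero of $p$ rather than escaping to $\infty$. Branches of decreasing $\abs{p}$ cannot go to $\infty$, but the continuation choice at intermediate saddles needs care. We would handle this by always choosing the steepest-descent continuation of $\abs{p}$ and invoking Theorem 3.22 to exclude termination at real zeros.
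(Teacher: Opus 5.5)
Your sufficiency argument has a genuine gap, and it comes from the local ``translation'' in your first paragraph. At a simple real zero $M$ of $p'$ with $p(M)\neq 0$, the level set is the real axis plus a vertical arc. That arc already supplies two non-real half-branches, so ``at least two non-real half-branches besides the real axis'' does not separate $m=1$ from $m\ge 2$. The correct local criterion for $m\ge 2$ is at least four non-real half-branches.

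In the sufficiency step, the loci from $z_0$ and $\bar z_0$ arrive at $M$ along directions $e^{\pm i\theta}$. When $\theta=\pi/2$ they are the two halves of a single smooth arc. Add the real axis, which passes through every real point that is not a zero of $p$ anyway, and you have only four half-branches. That gives $p'(M)=0$ but not $p''(M)=0$.

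Concretely, take $p(s)=s^2+1$. The degree-$0$ loci from $\pm i$ run along the imaginary axis straight to $M=0$ and meet the degree-$0$ real axis there, yet $p'(s)=2s$ has a simple zero at $0$. The same happens for $p(s)=(s-2)(s^2+1)$ at $M=1/3$, where $p'(s)=(3s-1)(s-1)$. In both cases the real axis near $M$ is just the outgoing continuation of the two conjugate loci.

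To close the gap, read ``at least one other root locus'' as a third branch distinct from the two, and count branches rather than arcs. Of the $2(m+1)$ half-branches at $M$, exactly $m+1$ have $|p|$ increasing toward $M$. These are the root loci meeting at $M$ in the sense of Theorem 1.40, with $\gamma_0=m+1$. Two branches from the conjugate pair plus a distinct third force $m+1\ge 3$. This is the paper's sufficiency argument: it exhibits three distinct root loci meeting at $M$ and applies Theorem 1.40.

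Your necessity direction is essentially the paper's route: at least three loci meet at $M$, Theorem 3.22 excludes a real origin, and Theorem 1.27 gives the conjugate. You make the backward tracing explicit, and your local count handles uniformly the configurations the paper splits into two cases. Two details are missing.

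\begin{itemize}
\item You must start on a non-real half-branch along which $|p|$ decreases away from $M$. One exists because the $m+1\ge 3$ such half-branches outnumber the two real half-branches, and its conjugate gives the second.
\item Theorem 3.22 alone does not stop the traced path from touching the real axis at another point before it reaches $z_0$, which would contradict ``directly''. The paper uses Theorem 3.24 for this.
\end{itemize}

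Both points, and your worry about saddles, are settled by the maximum principle. The function $\mathrm{Im}\,p$ is harmonic and vanishes on the real axis and on every locus of degree $2q\pi$ or $2q\pi+\pi$. So such a locus cannot leave the real axis into the open upper half-plane and return, since the enclosed region would force $\mathrm{Im}\,p\equiv 0$. Hence the traced path stays in $\mathbb{C}^{*}$ inside the compact set $\{|p|\le |p(M)|\}$. Since $|p|$ strictly decreases and there are finitely many critical points, any continuation through intermediate saddles ends at a non-real zero. No steepest-descent choice is needed.
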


Here, root loci of $2q\pi$ or $2q\pi+\pi$ degree emitted from at least one pair of conjugate non-real zeros of the real polynomial $p(s)$ intersect at $M$ exactly once. At least one other root locus (but no more than two, excluding those from the pair) must intersect at $M$.

Monotonicity properties of real functions (e.g., Rolle's Theorem) identify intervals containing critical points for real rational functions but do not fully explain their generation. We show that critical points arise when root loci intersect, clarifying their generation mechanism.

Since calculus began, understanding how and why real critical points emerge has been a central mathematical concern. This question is fundamental, not superfluous. After centuries, we now uncover the principles governing critical point generation for rational functions.

\begin{theorem}
All critical points of rational function $RF$ (excluding multiple zeros/poles) are breakaway points of its root loci. Conversely, all breakaway points of the root loci of $RF$ are its critical points.
\end{theorem}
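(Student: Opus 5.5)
The plan is to turn the root-locus picture into a local statement about the holomorphic map $w=RF(s)$ and then use the local normal form of a holomorphic map near a point. The root loci of $RF$ are the level sets of $\arg RF(s)$, namely the sets where $RF(s)=K e^{i\alpha}$ with $K>0$ and $\alpha$ fixed. So I would work with the equation $K\,RF(s)=e^{i\alpha}$, i.e.\ with $G(s)=1/RF(s)$ real positive times a unit. A point $s_0$, not a zero or pole of $RF$, lies on exactly the locus whose degree is $\alpha_0=\arg RF(s_0)$. A breakaway point is a point where two or more branches of root loci with the same unit complex value $e^{i\alpha_0}$ meet.

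First I fix $s_0$ with $RF(s_0)=w_0\neq 0,\infty$. On a small disc I write $RF(s)=w_0\exp(h(s))$ with $h$ holomorphic and $h(s_0)=0$. The root locus through $s_0$ is then locally the set $\{\operatorname{Im} h(s)=0\}$, and $h'(s_0)=RF'(s_0)/RF(s_0)$. If $s_0$ is a critical point of order $m-1\ge 1$, so that $RF'$ vanishes to order $m-1$ at $s_0$, then $h$ has a zero of order $m\ge2$. Hence $h=\phi^m$ for a local biholomorphism $\phi$ with $\phi(s_0)=0$. The set $\{\operatorname{Im}\phi^m=0\}$ is the preimage under $\phi$ of $2m$ rays from $0$ at angles $k\pi/m$. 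So $m$ analytic arcs, all carrying the same value $\arg RF=\alpha_0$, cross at $s_0$ with equal angles $\pi/m$. That is a breakaway point, and the first assertion follows.

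Conversely, if $h'(s_0)\neq0$, the implicit function theorem (or $h$ being a local biholomorphism) shows that $\{\operatorname{Im}h=0\}$ is a single smooth arc near $s_0$. Loci of other degrees $\alpha\ne\alpha_0 \pmod{2\pi}$ are disjoint from it, since the argument is single-valued. So no two loci can meet at $s_0$, and $s_0$ is not a breakaway point. Contrapositively, every breakaway point (which by definition is not a zero or pole) satisfies $RF'(s_0)=0$, giving the converse.

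The step needing the most care is making the definition of ``breakaway point'' precise enough to use. I would define it as a point where the locus of degree $\alpha_0$ fails to be a single embedded arc, or where branches of equal unit value intersect. I would also check that multiple zeros and poles are legitimately excluded: there $\log RF$ is not defined, and the loci emanate as rays from the zero or pole itself rather than breaking away. If the point at $\infty$ is to be included, I would apply the same argument in the coordinate $t=1/s$, again excluding the case where $\infty$ is a zero or pole of $RF$.
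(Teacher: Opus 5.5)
Your proof is correct, but it argues differently from the paper. The paper never examines the shape of the level set of $\arg RF$ near $s_0$. It freezes the gain at $K_0=K(s_0)$ and writes the characteristic polynomial $K_0\prod_l(s-z_l)^{\gamma_l}-(a+ib)\prod_j(s-p_j)^{\beta_j}$ as $(s-s_0)^{\gamma_0}g(s)$ with $g(s_0)\neq 0$. It then identifies a breakaway point with the case $\gamma_0>1$. Differentiating this identity shows that $\gamma_0>1$ iff $K'(s_0)=0$ (Lemmas 1.20--1.24, Theorem 1.25). Finally it transfers the result to $RF$ because $K=(a+ib)/RF$ (Theorem 1.26).

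That route is short and algebraic. It fits the control-theory picture in which branches are root trajectories of a polynomial family in $K$, and it gives the multiplicity count of Theorem 1.40 at once. However, it only asserts that a multiple root at frozen gain is a genuine crossing of distinct loci, and that a simple root is not. Your normal form $RF=w_0e^{h}$, $h=\phi^m$, actually proves that geometric content, and more: the $m$ arcs cross at equal angles $\pi/m$. It also needs none of the continuity machinery of Section 2, and it treats $\infty$ uniformly through $t=1/s$.

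To match the paper's definition (an intersection of two or more loci, each a curve of strictly increasing gain emitted from a pole), add one line. Near $s_0$ the frozen-gain characteristic polynomial equals $K_0w_0\prod_j(s-p_j)^{\beta_j}\,(e^{h(s)}-1)$, so $s_0$ is a root of multiplicity exactly $m$. Geometrically, $h$ is real of sign $(-1)^k$ on $\phi^{-1}(\{re^{ik\pi/m}:r>0\})$, so $K=1/|RF|=e^{-h}/|w_0|$ is strictly monotone along each of the $2m$ rays. Thus $m$ branches arrive with gain below $K_0$ and $m$ leave with gain above it. At a regular point, $K$ is strictly monotone along the single arc, so only one branch passes through $s_0$.
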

Using the root locus method, we establish its origination desired by Fourier and Gauss. Consequently, we have achieved a complete resolution of one conjecture of the Gauss-Fourier conjectures for second-kind zeros of $(p^{'}(s)/p(s))^{'}$.

In the context of associated relationships, the first type is the generating relationship. Theorem 0.4 proves that the emergence of each second-kind zero of $(p^{'}(s)/p(s))^{'}$ necessarily involves a pair of non-real zeros. This result also demonstrates that each second-kind zero of $(p^{'}(s)/p(s))^{'}$ is associated with a pair of non-real zeros of the real polynomial $p(s)$.

When $p(s)=s^4+1$, it has four conjugate complex zeros $s_{1,2}=\frac{\sqrt{2}}{2}\pm i\frac{\sqrt{2}}{2}$, $s_{3,4}=-\frac{\sqrt{2}}{2}\pm i\frac{\sqrt{2}}{2}$.
$\frac{1}{p(s)}$ can form the root locus equation: $K\frac{1}{p(s)}=\pm 1$. $p^{'}(s)=4s^3$. Therefore, there must exist four root loci emitted from four conjugate complex zeros of $p(s)$, four root loci intersect at the breakaway point $b_{1}=0$. The breakaway point $b_{1}=0$ is a second kind of zero of $(p^{'}(s)/p(s))^{'}$.

In this example, we obtain one second kind of zero of $(p^{'}(s)/p(s))^{'}$. The second kind of zero of $(p^{'}(s)/p(s))^{'}$ is generated by the intersection of the root loci of non-real zeros of $p(s)$. However, The generation of second kind of zero of $(p^{'}(s)/p(s))^{'}$ has no any relationship with real zeros of $p(s)$. This example also proves that each second-kind zero of $(p^{'}(s)/p(s))^{'}$ is associated with a pair of non-real zeros of  $p(s)$, but, each second-kind zero of $(p^{'}(s)/p(s))^{'}$ may be not associated with real zeros of  $p(s)$.

Gauss has provided an example, that he thought under the example, Gauss-Fourier conjecture is not true. However, this example belongs to the case of first-kind zero of $(p^{'}(s)/p(s))^{'}$, not the second-kind zero of $(p^{'}(s)/p(s))^{'}$. Therefore, his example and ideas do not affect the results in this paper.

\section{Root Locus Theory for Rational Functions}
All results in this section concern root locus theory. Lemmas 1.2, 1.3, 1.4, 1.13, 1.16, and 1.24, along with Theorems  1.14,  1.27, 1.31 and 1.32 are adapted from textbooks on automatic control theory \cite{Dorf1,Y1}. The remaining 25 results are original discoveries and proofs by the authors. Furthermore, even for the 10 results from automatic control theory, some proofs lack mathematical rigor. In this paper, we provide rigorous proofs and extend root locus theory.

Notation 1:
A non-constant rational function with constant coefficients denoted by $RF(s)$, is given by $RF(s)=\frac{\prod^{m}_{l=1} (s-z_l)^{\gamma_l}}{ \prod^{n}_{j=1}  (s-p_j)^{\beta_j}}$, where $z_l, p_j\in \mathbb{C}$, $l=1,2,\cdots,m$, $j=1,2,\cdots,n$. $\gamma_l, \beta_j\in \mathbb{N}$. $m, n$ are non-negative integers. $m$ and $n$ may be zero. $\gamma_l$ and $\beta_j$ are positive integers. Specifically, $RF$ may consist solely of a numerator that contains variables, and $RF$ may have only a denominator that contains variables. $RF$ cannot be constant.

Define the gain as $K=|\frac{1}{RF}|$, equivalently, $K=|\frac{\prod^{n}_{j=1}  (s-p_j)^{\beta_j}}{\prod^{m}_{l=1} (s-z_l)^{\gamma_l}}|$. Let $a+ib$ be a unit complex number ($a^2+b^2=1$). The gain function is: $K(s)=\frac{\prod^{n}_{j=1}  (s-p_j)^{\beta_j}}{\prod^{m}_{l=1} (s-z_l)^{\gamma_l}}$.

Let $\Delta\subset\mathbb{C}\cup\{\infty\}$ denote the set of points $s=\sigma+it\in\mathbb{C}\cup\{\infty\}$ that are neither zeros nor poles of (1.1). $s=\sigma+it\in\Delta$ is an arbitrary point in $\Delta$. The argument (or phase angle) of $RF$ is denoted $\varphi(\sigma,t)$.

The gain $K$ is the modulus of the reciprocal of $RF(s)$. Multiplying $K$ by $RF(s)$ results in a unit complex number.

\begin{definition}
In the extended complex plane $\mathbb{C}\cup\{\infty\}$, the equation (1.1) is called the \emph{root locus equation}.
\begin{equation}
K\frac{\prod^{m}_{l=1} (s-z_l)^{\gamma_l}}{\prod^{n}_{j=1} (s-p_j)^{\beta_j}}=a+ib.
\end{equation}
$a+ib$ is a fixed value.
\end{definition}

\begin{lemma}
After removing coincident poles and zeros in (1.1):

1. $n$ finite poles $p_j$ of $RF$ are also $n$ finite poles of (1.1) with $K=0$.

2. $m$ finite zeros $z_l$ of $RF$ are also $m$ finite zeros of (1.1) with $K=+\infty$.
\end{lemma}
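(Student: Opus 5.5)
The lemma only unpacks the root locus equation (1.1) at the points where $RF$ vanishes or blows up. I would first put $RF$ in lowest terms: whenever some $z_l$ equals some $p_j$, cancel the common factor $(s-z_l)^{\min(\gamma_l,\beta_j)}$ and keep only the leftover factor in the numerator or the denominator. After this the finite zeros $z_l$ and finite poles $p_j$ are pairwise distinct, and each is a genuine zero or pole of $RF$. The point of the hypothesis is that, without cancellation, such an $s$ would be neither a zero nor a pole of the reduced function, and $K$ would take a finite nonzero value there.

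Next I would rewrite (1.1) as $K=|1/RF(s)|$. This follows by taking moduli of both sides, since $|a+ib|=1$. It matches the paper's definition $K=\left|\frac{\prod_{j}(s-p_j)^{\beta_j}}{\prod_{l}(s-z_l)^{\gamma_l}}\right|$, viewed as a continuous function on $\Delta$ with values in $[0,+\infty]$.

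For part 1, fix a pole $p_{j_0}$. As $s\to p_{j_0}$, the factor $|s-p_{j_0}|^{\beta_{j_0}}$ tends to $0$. The remaining factors $\prod_{j\neq j_0}|s-p_j|^{\beta_j}$ and $\prod_l|s-z_l|^{-\gamma_l}$ stay bounded and nonzero near $p_{j_0}$, by the distinctness obtained in the first step. Hence $K(s)\to 0$, and setting $K=0$ at $s=p_{j_0}$ extends $K$ continuously. So $p_{j_0}$ satisfies (1.1) with $K=0$ in the limiting sense, and it is a pole of the equation in the sense that $RF$ has a pole there. The count is exactly $n$, because the $p_j$ are distinct after cancellation. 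Part 2 is symmetric: near $z_{l_0}$ the factor $|s-z_{l_0}|^{-\gamma_{l_0}}\to+\infty$ while the other factors stay bounded away from $0$ and $\infty$, so $K\to+\infty$.

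The main obstacle is interpretive rather than computational. The phrase "poles of (1.1) with $K=0$" has to be read as a limit, since (1.1) is not literally defined at $p_j$, where $RF$ is infinite. I would therefore state explicitly that $K$ is extended to the closure of $\Delta$ by continuity, using the local factorization $RF(s)=(s-p_{j_0})^{-\beta_{j_0}}h(s)$ with $h$ holomorphic and $h(p_{j_0})\neq 0$ (and analogously $RF(s)=(s-z_{l_0})^{\gamma_{l_0}}g(s)$ with $g(z_{l_0})\neq 0$ at a zero). This factorization is precisely where the cancellation hypothesis is used.
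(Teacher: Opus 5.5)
Your proof is correct, but it runs in the opposite direction from the paper's. You fix the point and compute the gain: you read $K$ as the function $K(s)=|1/RF(s)|$ on $\Delta$ and show $K(s)\to 0$ as $s\to p_{j_0}$ and $K(s)\to+\infty$ as $s\to z_{l_0}$. The cancellation hypothesis is what keeps the remaining factors bounded away from $0$ and $\infty$.

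The paper instead fixes the gain and solves for the point. It clears denominators to get $K\prod_l(s-z_l)^{\gamma_l}=(a+ib)\prod_j(s-p_j)^{\beta_j}$ and observes that at $K=0$ the roots are exactly the $p_j$, because $a+ib\neq 0$. For the zeros it rewrites the equation as $\prod_l(s-z_l)^{\gamma_l}=\frac{a+ib}{K}\prod_j(s-p_j)^{\beta_j}$, so that at $K=+\infty$ the roots are exactly the $z_l$.

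The two routes buy different things:
\begin{itemize}
\item The paper's polynomial form makes the equation literally meaningful at a pole with $K=0$. That sidesteps the $0\cdot\infty$ problem you handle by continuous extension.
\item The paper's route also gives the converse at once: the only solutions with $K=0$ are the poles, which is what Lemma 1.13 relies on to make poles the origination points of the loci.
\item Its $K=+\infty$ step, however, is only a formal substitution. It needs exactly the limiting reading you make explicit.
\item Your route uses the paper's own definition of $K$ as a function of $s$, the one later used for continuity and monotonicity of the gain.
\item It also shows where the hypothesis of removing coincident zeros and poles actually enters, which the paper's proof never mentions.
\end{itemize}

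If you want the converse as well, add one line: on $\Delta$ the value $|1/RF(s)|$ is finite and nonzero, so $K=0$ or $K=+\infty$ can occur only at poles or zeros.
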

\begin{proof}
1. Rewriting (1.1) as:
$$K\prod^{m}_{l=1} (s-z_l)^{\gamma_l}=(a+ib)\prod^{n}_{j=1}(s-p_j)^{\beta_j}.$$ $n$ finite poles of $RF$ correspond to zeros of $\prod^{n}_{j=1}  (s-p_j)^{\beta_j}$. If $K=0$, $n$ finite poles of $RF$ are $n$ roots of this equation. Conversely, $n$ finite roots of this equation when $K=0$ are $n$ finite poles of $RF$. Thus, at $n$ finite poles of (1.1), which are the $n$ finite poles of $RF$, $K=0$

2. Similarly, equation (1.1) can be expressed as
$$\prod^{m}_{l=1} (s-z_l)^{\gamma_l}=\frac{(a+ib)}{K}\prod^{n}_{j=1}  (s-p_j)^{\beta_j}.$$
If $K=+\infty$, $m$ finite zeros of $RF$ are $m$ roots of the equation. Conversely, $m$ roots of the equation when $K=+\infty$ are $m$ finite zeros of $RF$. Therefore, at $m$ finite zeros of (1.1), which are the $m$ finite zeros of $RF$, $K=+\infty$.
\end{proof}

$K$ is the modulus of the reciprocal of $RF$. Lemma 1.3 holds trivially.

\begin{lemma}
For any infinite point $s_{\infty}$ of $\mathbb{C}\cup\{\infty\}$,
(1). If $s_{\infty}$ satisfies

$K=\lim\limits_{s\to \infty}\frac{\prod^{n}_{j=1}|s-p_{j}|^{\beta_{j}}}{{\prod^{m}_{l=1}|s-z_{l}|^{\gamma_{l}}}}=0$, then $s_{\infty}$ is an infinite pole of (1.1).

(2). If $s_{\infty}$ satisfies

$K=\lim\limits_{s\to \infty}\frac{\prod^{n}_{j=1}|s-p_{j}|^{\beta_{j}}}{{\prod^{m}_{l=1}|s-z_{l}|^{\gamma_{l}}}}=A$, in which $A\neq 0$ is a finite non-zero value, then $s_{\infty}$ is a general point with a finite gain.

(3). If $s_{\infty}$ satisfies

$K=\lim\limits_{s\to \infty}\frac{\prod^{n}_{j=1}|s-p_{j}|^{\beta_{j}}}{{\prod^{m}_{l=1}|s-z_{l}|^{\gamma_{l}}}}=+\infty$, then $s_{\infty}$ is an infinite zero of (1.1).
\end{lemma}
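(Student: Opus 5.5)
The lemma concerns the behaviour of the gain $K(s)=\bigl|\prod_{j}(s-p_j)^{\beta_j}/\prod_l (s-z_l)^{\gamma_l}\bigr|$ as $s\to\infty$. I would reduce it to a comparison of total degrees. Set $N=\sum_{j=1}^{n}\beta_j$ and $D=\sum_{l=1}^{m}\gamma_l$, after cancelling coincident zeros and poles as in Lemma 1.2. For $|s|$ large, each factor satisfies $|s-p_j|/|s|\to 1$ and $|s-z_l|/|s|\to 1$. Hence
$$K(s)=|s|^{N-D}\prod_{j}\Bigl|1-\tfrac{p_j}{s}\Bigr|^{\beta_j}\prod_{l}\Bigl|1-\tfrac{z_l}{s}\Bigr|^{-\gamma_l},$$
and the two products tend to $1$. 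So $\lim_{s\to\infty}K$ equals $0$, $1$, or $+\infty$ according as $N<D$, $N=D$, or $N>D$. This gives the trichotomy in the statement, with the constant $A$ in case (2) equal to $1$ when $RF$ is monic in the normalisation of Notation 1. If a leading coefficient is allowed, $A$ is the modulus of the ratio of leading coefficients, which is still finite and nonzero.

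Next, each case has to be matched with the terminology of Lemma 1.2. Rewrite (1.1) in the local coordinate $w=1/s$ at $s_\infty$, so that $RF(1/w)=w^{D-N}h(w)$ with $h$ holomorphic and $h(0)\neq 0$. The root locus equation then becomes
$$K\,w^{D-N}h(w)=a+ib.$$
\begin{itemize}
\item If $D>N$, then $w=0$ is a zero of $w^{D-N}h$. By the reasoning of Lemma 1.2(1), the equation is satisfied at $w=0$ in the limit $K=0$, exactly as the finite poles $p_j$ are. So $s_\infty$ is an infinite pole of (1.1).
\item If $D<N$, then $w=0$ is a pole of $w^{D-N}h$. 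Multiplying through by $w^{N-D}$ and arguing as in Lemma 1.2(2) gives $K=+\infty$ there, so $s_\infty$ is an infinite zero.
\item If $D=N$, then $RF(s_\infty)=h(0)$ is finite and nonzero. So $s_\infty$ is neither a zero nor a pole of $RF$, i.e.\ it lies in $\Delta$, and it carries the finite gain $A=1/|h(0)|$.
\end{itemize}

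The analytic content is routine. The main point needing care is definitional: the paper does not formally define an infinite pole or zero of (1.1), so I would state explicitly that these are read off in the chart $w=1/s$, consistently with Lemma 1.2. With that convention, the three limit values are exhaustive and mutually exclusive, and each direction of the correspondence follows immediately.
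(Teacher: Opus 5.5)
Your degree count in the first paragraph is correct: $K(s)\to 0,\ 1,\ +\infty$ according as $N<D$, $N=D$, $N>D$. It is more than the lemma needs, though, since the limit value is the hypothesis of each case. The paper gives no argument beyond the remark that $K=|1/RF|$, so the lemma "holds trivially": $K\to 0$, $A$, $+\infty$ means $RF\to\infty$, a finite nonzero value, $0$ at $s_\infty$, matching the labels of Lemma 1.2.

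\textbf{The gap is in your chart step.} That step is where the matching with Lemma 1.2 is actually done, and it has a sign error. Substituting $s=1/w$ gives $RF(1/w)=w^{N-D}h(w)$, with $h(w)=\prod_l(1-z_lw)^{\gamma_l}/\prod_j(1-p_jw)^{\beta_j}$ and $h(0)=1$. It does not give $w^{D-N}h(w)$, because $RF$ behaves like $s^{D-N}=w^{N-D}$. For example, for $RF=1/p$ in (3.1), your formula gives $w^{-\deg p}h(w)$, a pole at $w=0$, although $1/p(s)\to 0$ and the paper treats $\infty$ as a zero of (3.1).

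With your sign, both bullets contradict your own first paragraph and Lemma 1.2:
\begin{itemize}
\item For $D>N$ you call $w=0$ a zero of $RF$. That would force $K=|1/RF|\to+\infty$, yet you conclude $K=0$.
\item For $D<N$, multiplying $Kw^{D-N}h(w)=a+ib$ by $w^{N-D}$ gives $Kh(w)=(a+ib)w^{N-D}$. At $w=0$ this forces $K\,h(0)=0$, i.e.\ $K=0$, not $K=+\infty$.
\end{itemize}
So the labels you state are the right ones, but they do not follow from the argument as written; they were read off the statement.

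The fix is short. With $RF(1/w)=w^{N-D}h(w)$:
\begin{itemize}
\item If $D>N$, then $w=0$ is a pole of $RF$ of order $D-N$. The equation becomes $Kh(w)=(a+ib)w^{D-N}$, and $w=0$ is a root exactly at $K=0$, as in Lemma 1.2(1). So $s_\infty$ is an infinite pole.
\item If $D<N$, then $w=0$ is a zero of order $N-D$. The equation becomes $w^{N-D}h(w)=(a+ib)/K$, and $w=0$ is a root exactly at $K=+\infty$, as in Lemma 1.2(2). So $s_\infty$ is an infinite zero.
\item The case $D=N$ is as you wrote, with $A=1/|h(0)|$, which equals $1$ in the monic normalisation.
\end{itemize}
With this correction your route is a sound, more explicit version of the paper's one-line remark. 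Your degree computation also adds something the paper never states: the three cases are exhaustive and mutually exclusive.
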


The continuity of $K$ with respect to $s\in\mathbb{C}\cup\{\infty\}$ follows directly from the expression of $K$ in (1.1). Applying Lemma 1.2, we directly obtain Lemma 1.4.
\begin{lemma}
For any $s\in\mathbb{C}\cup\{\infty\}$, the value $K$ takes all non-negative real numbers, ranging from 0 at poles of (1.1) to $+\infty$ at zeros of (1.1).
\end{lemma}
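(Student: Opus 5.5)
The statement to prove is Lemma 1.5: on $\mathbb{C}\cup\{\infty\}$ the gain $K(s)=\prod_{j}|s-p_j|^{\beta_j}/\prod_{l}|s-z_l|^{\gamma_l}$ takes every value in $[0,+\infty]$. The plan is a connectedness (intermediate value) argument. I would use the continuity of $K$ noted just before the statement, the identification of poles with $K=0$ and zeros with $K=+\infty$ from Lemma 1.2, and the description of the point at infinity in Lemma 1.4.

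\emph{Step 1: $K$ is a continuous map to $[0,+\infty]$.} Regard $K$ as a map from $\mathbb{C}\cup\{\infty\}$ into $[0,+\infty]$, where $[0,+\infty]$ carries the topology of a compact interval. On $\Delta$ it is a quotient of continuous nonvanishing products, hence continuous. At a finite pole $p_j$ one has $K\to 0$. At a finite zero $z_l$ one has $K\to+\infty$; if a point is both, it was cancelled beforehand, as in Lemma 1.2. At $\infty$ one has $K\sim |s|^{N-M}$ with $N=\sum\beta_j$ and $M=\sum\gamma_l$, so the limit is $0$, a positive constant $A$, or $+\infty$, which are the three cases of Lemma 1.4. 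Thus $K$ extends continuously to the Riemann sphere.

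\emph{Step 2: $K$ attains both $0$ and $+\infty$.} Since $RF$ is non-constant, at least one of the following holds: $RF$ has a finite zero, $RF$ has a finite pole, or $N\neq M$. I would check that $K$ attains $0$ somewhere. If $n\ge1$, it does so at a finite pole. If $n=0$, then $m\ge1$, so $N=0<M$ and $K(\infty)=0$. By the symmetric argument, $K$ attains $+\infty$ either at a finite zero or at $\infty$.

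\emph{Step 3: intermediate values.} The sphere $\mathbb{C}\cup\{\infty\}$ is connected and $K$ is continuous, so $K(\mathbb{C}\cup\{\infty\})$ is a connected subset of $[0,+\infty]$ containing $0$ and $+\infty$. It is therefore all of $[0,+\infty]$. Concretely, for any $c\in(0,\infty)$, join a point where $K=0$ to a point where $K=+\infty$ by a path on the sphere, and apply the intermediate value theorem to $K$ along the path. The phrase ``ranging from $0$ at poles to $+\infty$ at zeros'' then follows from Lemma 1.2 together with Lemma 1.4 for the point at infinity.

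\emph{Main obstacle.} The only delicate point is Step 1 near $\infty$ and near poles and zeros: continuity must be formulated into the extended range $[0,+\infty]$, and the cancellation of common factors is needed to avoid indeterminate $0/0$ points. The cleanest way to handle this is to use the chordal metric, or the compactified interval $[0,+\infty]\cong[0,1]$ via $K\mapsto K/(1+K)$. In that form continuity is immediate from the explicit formula, and the rest is elementary topology.
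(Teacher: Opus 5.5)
Your proposal is correct and follows the same route as the paper: the paper's entire proof is the remark that $K$ is continuous on $\mathbb{C}\cup\{\infty\}$, combined with Lemma 1.2 ($K=0$ at poles, $K=+\infty$ at zeros), with the intermediate-value step left implicit. You make that step explicit through connectedness of the sphere, and you also handle the case where $0$ or $+\infty$ is attained only at $\infty$ (e.g.\ $RF=1/p$, which has no finite zeros), which the paper's one-line appeal to Lemma 1.2 does not spell out; note also that in the paper's numbering the statement is Lemma 1.4 and the point at infinity is treated in Lemma 1.3, not Lemma 1.4.
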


\begin{lemma}
For an arbitrary $s\in\Delta$, the phase angle of $RF$ at the left-hand side of (1.1) can be expressed as the function $\varphi(\sigma,t)$ given by: $\varphi(\sigma,t)=\arg{RF(\sigma+it)}$, or,
$\varphi(\sigma,t)=\sum^{m}_{l=1} \gamma_l\arg(s-z_l)- \sum^{n}_{j=1} \beta_j\arg(s-p_j)$.
\end{lemma}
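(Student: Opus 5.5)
We need to prove Lemma 1.5: for $s\in\Delta$, $\varphi(\sigma,t)=\arg RF(\sigma+it)=\sum_{l=1}^{m}\gamma_l\arg(s-z_l)-\sum_{j=1}^{n}\beta_j\arg(s-p_j)$. Since $K>0$ is real, multiplying by $K$ does not change the argument, so it suffices to compute $\arg RF(s)$ from its factorization in Notation 1. The plan is to pass through the polar form of each factor and then use the multiplicativity of the exponential.

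\textbf{Step 1.} Fix $s=\sigma+it\in\Delta$. Because $s$ is neither a zero nor a pole, every factor $s-z_l$ and $s-p_j$ is a nonzero complex number. Write each in polar form:
\[
s-z_l=\abs{s-z_l}e^{i\arg(s-z_l)},\qquad s-p_j=\abs{s-p_j}e^{i\arg(s-p_j)}.
\]

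\textbf{Step 2.} Raise these to the integer powers $\gamma_l$ and $\beta_j$. Using $(e^{i\theta})^k=e^{ik\theta}$ for integers $k$ and $e^{i\theta_1}e^{i\theta_2}=e^{i(\theta_1+\theta_2)}$, we obtain
\[
RF(s)=\frac{\prod_{l}\abs{s-z_l}^{\gamma_l}}{\prod_j\abs{s-p_j}^{\beta_j}}\exp\Bigl(i\Bigl(\sum_l\gamma_l\arg(s-z_l)-\sum_j\beta_j\arg(s-p_j)\Bigr)\Bigr).
\]
The prefactor is a positive real number. Hence the exponent is an argument of $RF(s)$, which is the claimed formula for $\varphi(\sigma,t)$.

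\textbf{Step 3 (the only delicate point).} The formula holds as an identity of arguments modulo $2\pi$. This is also the sense needed for the root loci of $2q\pi$ and $2q\pi+\pi$ degree. If one insists on the principal branch, equality holds only up to an additive multiple of $2\pi$, and I would state this explicitly. Alternatively, one can fix continuous branches of each $\arg(s-z_l)$ and $\arg(s-p_j)$ on a simply connected subdomain of $\Delta$. The sum is then a continuous branch of $\arg RF$, which matters later for continuity of $\varphi$ along loci.

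\textbf{Step 4 (point at infinity).} If $\infty\in\Delta$, I would interpret $\varphi$ there by continuity or limit, or simply restrict the lemma to finite $s$, as the formula itself requires.
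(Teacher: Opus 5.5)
Your argument is correct. It is the standard justification the paper relies on: the paper states Lemma 1.5 with no proof and treats additivity of arguments over products and quotients as evident. Your caveats are also right and more careful than the paper: the identity holds modulo $2\pi$ (or for a consistent choice of continuous branches), and the case $\infty\in\Delta$ must be handled by a limit.
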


The phase angle condition equation of (1.1) on $s=\sigma+it\in\Delta$ is as follows:
\begin{equation}
\varphi(\sigma,t)=2q\pi+\arg{(a+ib)}.
\end{equation}

In which, $q\in \mathbb{N}$.

Curves formed by the roots of (1.1) are initially called the locus of roots; once defined, they are referred to as the root locus of (1.1).

\begin{lemma}
If $s_1=(\sigma_1,t_1)\in\Delta$ satisfies (1.2) of $2q\pi+\arg{(a_1+ib_1)}$ degree, then $s_{1}$ must be on the locus of roots of $2q\pi+\arg{(a_1+ib_1)}$ degree of (1.1).
\end{lemma}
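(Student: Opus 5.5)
We must show: if $s_1\in\Delta$ satisfies the phase condition (1.2) with angle $2q\pi+\arg(a_1+ib_1)$, then $s_1$ is a root of (1.1) for some gain value $K$, and so lies on the root locus of that degree. The plan is to produce this gain explicitly, using the polar form of $RF(s_1)$ together with Lemma 1.5.

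First, since $s_1\in\Delta$, it is neither a zero nor a pole of $RF$. Hence $RF(s_1)$ is a finite nonzero complex number and can be written in polar form as $RF(s_1)=|RF(s_1)|e^{i\varphi(\sigma_1,t_1)}$. By Lemma 1.5, $\varphi(\sigma_1,t_1)=\sum_l\gamma_l\arg(s_1-z_l)-\sum_j\beta_j\arg(s_1-p_j)$, which is well defined modulo $2\pi$.

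Next, we use the hypothesis (1.2): $\varphi(\sigma_1,t_1)=2q\pi+\arg(a_1+ib_1)$. It gives
$$e^{i\varphi(\sigma_1,t_1)}=e^{i\arg(a_1+ib_1)}=a_1+ib_1,$$
since $a_1+ib_1$ is a unit complex number. Now define $K_1=|1/RF(s_1)|$. This is exactly the gain of Notation 1 evaluated at $s_1$, and it is a positive finite real number. Then
$$K_1\,RF(s_1)=\frac{1}{|RF(s_1)|}\,|RF(s_1)|e^{i\varphi(\sigma_1,t_1)}=a_1+ib_1,$$
so $s_1$ is a root of (1.1) with $a+ib=a_1+ib_1$ at gain $K=K_1$.

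Finally, by the definition of the locus of roots, the set of all roots of (1.1) for the fixed value $a_1+ib_1$ as $K$ ranges over $[0,+\infty]$ (Lemma 1.4) is the root locus of degree $2q\pi+\arg(a_1+ib_1)$. Since $s_1$ is one of these roots, it lies on that locus.

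The only delicate point is bookkeeping of the branch of the argument. The label "degree $2q\pi+\arg(a_1+ib_1)$" records a particular integer $q$, while the root equation only sees the angle modulo $2\pi$. So I would note explicitly that the locus of that degree is defined through (1.2) with that same $q$. The point $s_1$ satisfies (1.2) for that $q$ by hypothesis, and the computation above shows it solves (1.1). No continuity argument is needed, because the statement is pointwise.
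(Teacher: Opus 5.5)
Your proposal is correct and follows essentially the same route as the paper. The paper likewise uses the phase condition to write $RF(s_1)$ as a positive multiple of $a_1+ib_1$, takes $K_1=|1/RF(s_1)|$, and concludes from $K_1RF(s_1)=a_1+ib_1$ that $s_1$ is a root of (1.1). Your explicit polar-form computation and your remark on the branch of the argument are harmless clarifications that the paper leaves implicit.
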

\begin{proof}
Suppose that $s_1=(\sigma_1,t_1)\in\Delta$ satisfies (1.2), which implies that $\varphi(\sigma_1,t_1)=2q\pi+\arg{(a_1+ib_1)}$, where $\varphi(\sigma,t)$ is the phase angle of $RF$. Substituting $s_{1}$ into $RF$ yields a complex value. By the phase angle expression $\varphi(\sigma,t)$ of the point $s_{1}$, and by the condition that the point $s_{1}$ satisfies (1.2), this complex value can be written as $K_1^{*}(a_{1}+ib_{1})$. $K_1^{*}$ is a positive real number and $2q\pi+\arg{(a_1+ib_1)}$ is the phase angle of $RF$ at the point $s_{1}$, which is the value of the right-hand side of (1.2).

By substituting $s_1$ into the gain expression $K=|\frac{\prod^{n}_{j=1}  (s-p_j)^{\beta_j}}{\prod^{m}_{l=1} (s-z_l)^{\gamma_l}}|$, via the definition of the gain, we have an unique gain $K_1=|1/RF(s_1)|$, where $K_1$ is the reciprocal $K_1^{*}$ of the modulus of $RF$ of the point $s_1$.
That is, $K_1^{*}=1/K_1$. Multiplying $RF(s_1)$ by $K_1$ yields: $K_1\frac{\prod^{m}_{l=1} (s_1-z_l)^{\gamma_l}}{\prod^{n}_{j=1}  (s_1-p_j)^{\beta_j}}=a_1+ib_1$. This equation represents a concrete situation in which the point $s_1$ satisfies (1.1). Therefore, $s_1$ represents a root of (1.1). Since $s_1$ satisfies (1.1), it is proven that the point $s_1$ is on the locus of roots of (1.1) with a gain of $K_1$ and a degree of $2q\pi+\arg{(a_1+ib_1)}$. Hence, if $s_1$ satisfies (1.2), then $s_1$ must be on the locus of roots of (1.1) with a gain of $K_1$ and a degree of $2q\pi+\arg{(a_1+ib_1)}$.
\end{proof}

\begin{lemma}
For any point $s_2=(\sigma_2,t_2)\in\Delta$, if $s_2$ is on the locus of roots of $2q\pi+\arg{(a_2+ib_2)}$ degree of (1.1), then $s_2$ must satisfy (1.2) of $2q\pi+\arg{(a_2+ib_2)}$ degree.
\end{lemma}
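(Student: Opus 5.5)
The plan is to prove Lemma 1.7 as the converse of Lemma 1.6, running that argument backwards. By definition, $s_2$ lies on the locus of roots of $2q\pi+\arg(a_2+ib_2)$ degree of (1.1). This means there is a gain value $K_2\geq 0$ such that
\[
K_2\frac{\prod^{m}_{l=1} (s_2-z_l)^{\gamma_l}}{\prod^{n}_{j=1} (s_2-p_j)^{\beta_j}}=a_2+ib_2 .
\]
The steps, in order, are as follows.

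First, I will rule out degenerate gains. Since $s_2\in\Delta$, it is neither a zero nor a pole of $RF$, so $RF(s_2)$ is a finite nonzero complex number. If $K_2=0$, the left-hand side would vanish, contradicting $|a_2+ib_2|=1$. If $K_2=+\infty$, then by Lemma 1.2 the point $s_2$ would have to be a zero of $RF$, which is excluded. Hence $0<K_2<+\infty$. Taking moduli of the displayed equation gives $K_2=|1/RF(s_2)|$, which is consistent with the definition of the gain.

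Second, I will take arguments of both sides. Since $K_2$ is a positive real number, $\arg(K_2 RF(s_2))=\arg RF(s_2)$ modulo $2\pi$. By Lemma 1.5,
\[
\arg RF(s_2)=\varphi(\sigma_2,t_2)=\sum^{m}_{l=1}\gamma_l\arg(s_2-z_l)-\sum^{n}_{j=1}\beta_j\arg(s_2-p_j).
\]
Equating this with $\arg(a_2+ib_2)$ up to a multiple of $2\pi$ yields $\varphi(\sigma_2,t_2)=2q\pi+\arg(a_2+ib_2)$, which is exactly (1.2) of the stated degree.

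The main obstacle is bookkeeping of the integer $q$. The arguments $\arg(s_2-z_l)$ and $\arg(s_2-p_j)$ are multivalued, so the identity holds only modulo $2\pi$. I must show that the particular $q$ is the one labelling the locus on which $s_2$ lies. I would handle this by fixing continuous branches of each $\arg(s-z_l)$ and $\arg(s-p_j)$ along the locus, so that $\varphi$ is continuous on $\Delta$ along that curve. The locus of $2q\pi+\arg(a_2+ib_2)$ degree is then, by its definition, the component on which $\varphi$ takes that specific value. Since $\varphi$ is continuous and its values at $s_2$ are confined to the discrete set $\arg(a_2+ib_2)+2\pi\mathbb{Z}$, $\varphi$ is constant along the curve. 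Therefore $s_2$ inherits the value $2q\pi+\arg(a_2+ib_2)$.
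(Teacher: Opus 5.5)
Your argument is correct and is essentially the paper's proof. Both write $s_2$ as a root of (1.1) with gain $K_2$, note that the positive real $K_2$ contributes only a multiple of $2\pi$ to the argument, and read off $\varphi(\sigma_2,t_2)=2q\pi+\arg(a_2+ib_2)$ from Lemma 1.5; your direct exclusion of $K_2=0$ and $K_2=+\infty$ justifies positivity more cleanly than the paper's appeal to Lemma 1.4. Your third step on fixing $q$ goes beyond the paper, which simply absorbs the extra $2q_2\pi$ into $2q\pi$, and because you define a branch's degree label as the value of the continuous lift of $\varphi$, it is only a consistency check on that labelling (continuity plus discreteness makes the lift constant along a connected arc), so the congruence from your second step already suffices.
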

\begin{proof}
Assume that the point $s_2=\sigma_2+it_2\in\Delta$ is an arbitrary point on the locus of roots of $2q\pi+\arg(a_2+ib_2)$ degree of (1.1).
The locus of roots of the $2q\pi+\arg{(a_2+ib_2)}$ degree of (1.1) are obtained by (1.1). Therefore, the point $s_2$ surely satisfies (1.1).
$K_2\frac{\prod^{m}_{l=1} (s_2-z_l)^{\gamma_l}}{\prod^{n}_{j=1}  (s_2-p_j)^{\beta_j}}=a_2+ib_2$.
By Lemma 1.4, the gain $K_2$ is a positive real number, the phase angle of the positive real number is $2q\pi$. Therefore, the phase angle of $K_2$ is $2q_2\pi$. The phase angle of $a_2+ib_2$ is $2q\pi+\arg{(a_2+ib_2)}$.

The factor on the left-hand side of the equation in the previous paragraph has two factors: $K_2$ and $RF(s_2)$. Thus, the phase angle of this factor at the left-hand side of the equation in the previous paragraph on point $s_2$ is equal to the sum of the phase angles of the two factors $K_2$ and $RF(s_2)$. The phase angle $2q\pi+\arg{(a_2+ib_2)}$ subtracts the phase angle $2q_2\pi$ is equal to
$2q\pi+\arg{(a_2+ib_2)}-2q_2\pi=2q\pi+\arg{(a_2+ib_2)}$. The difference in the phase angles of factors $a_2+ib_2$ and factor $K_2$ is: $2q\pi+\arg{(a_2+ib_2)}$. Therefore, the phase angle of
$RF(s_2)$ is $2q\pi+\arg{(a_2+ib_2)}$. The phase angle of $RF$ is $\varphi(\sigma,t)=\sum^{m}_{l=1} \gamma_l\arg(s-z_l)-\sum^{n}_{j=1} \beta_j\arg(s-p_j)$. Therefore, via the previous proof, we obtain $ \varphi(\sigma_2,t_2)=2q\pi+\arg{(a_2+ib_2)}$, and point $s_2$ satisfies (1.2). Therefore, if the point $s_2$ lies on the locus of roots of (1.1) with a gain of $K_2$ and a degree of $2q\pi+\arg{(a_2+ib_2)}$, or satisfies (1.1), then the point $s_2$ satisfies (1.2).
\end{proof}

Lemma 1.6 provides the sufficient condition for Theorem 1.8. Lemma
1.7 provides the necessary condition for Theorem 1.8.  By combining Lemma
1.6 and Lemma 1.7, we obtain Theorem 1.8.

\begin{theorem}
Let $s=\sigma+it\in\Delta$ be an arbitrary point. The point $s$ lies on the locus of roots of $\alpha$ degree of (1.1) if and only if $s$ satisfies (1.2).
\end{theorem}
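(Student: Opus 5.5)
The statement of Theorem 1.8 is the conjunction of Lemma 1.6 and Lemma 1.7, so the plan is to write $\alpha = 2q\pi+\arg(a+ib)$ for a fixed unit complex number $a+ib$ and a fixed integer $q$, and prove the two implications separately by invoking those lemmas. The only preliminary is to check that the hypotheses match. Both lemmas are stated for an arbitrary point of $\Delta$, so $RF(s)$ is a finite nonzero complex number there. Consequently both its argument $\varphi(\sigma,t)$ (Lemma 1.5) and the gain $K=|1/RF(s)|$ are well defined, with $0<K<+\infty$.

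\emph{Sufficiency.} Suppose $s=\sigma+it\in\Delta$ satisfies (1.2) with right-hand side $\alpha$. Then Lemma 1.6, applied with $a_1+ib_1=a+ib$, gives that $s$ lies on the locus of roots of $\alpha$ degree of (1.1). Concretely, the witness is the gain $K_1=|1/RF(s)|$, which is positive and finite because $s\in\Delta$. Then $K_1RF(s)$ has modulus $1$ and argument $\alpha$, hence equals $a+ib$.

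\emph{Necessity.} Suppose $s\in\Delta$ lies on the locus of roots of $\alpha$ degree, that is, $KRF(s)=a+ib$ for some gain $K$. Lemma 1.7 then gives (1.2). The point to verify here is that $K$ is a genuine positive real number, so that its argument is a multiple of $2\pi$. By Lemma 1.4, $K=0$ occurs only at poles and $K=+\infty$ only at zeros of (1.1), and both are excluded from $\Delta$. Taking arguments of the product then yields $\varphi(\sigma,t)\equiv\alpha \pmod{2\pi}$, which is exactly (1.2) with the integer ambiguity absorbed into $q$.

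The main obstacle is not computational but a matter of bookkeeping. The phase angle $\varphi$ is only defined modulo $2\pi$, through the sum of the $\gamma_l\arg(s-z_l)$ and $\beta_j\arg(s-p_j)$. So "the locus of $\alpha$ degree" has to be read as the set where $\varphi\equiv\alpha \pmod{2\pi}$, with $q$ ranging over the integers. I would state this convention explicitly at the start, so that the equivalence in Theorem 1.8 is an identity of point sets in $\Delta$ and the argument above is complete.
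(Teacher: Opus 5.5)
Your proposal is correct and follows the paper's route: the paper proves Theorem 1.8 simply by combining Lemma 1.6 (sufficiency) with Lemma 1.7 (necessity). Your extra checks, that $0<K<+\infty$ on $\Delta$ and that degrees are read modulo $2\pi$ with the ambiguity absorbed into $q$, make explicit bookkeeping that the paper leaves implicit.
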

\begin{definition}
Let $\Xi$ be the point set of all points on the locus of roots of equation (1.1) traced out in $\mathbb{C}\cup\{\infty\}$ when $\alpha=2q\pi+\arg(a+ib)$ is a constant. The path of the roots of equation (1.1) in $\mathbb{C}\cup\{\infty\}$ is called \emph{the root locus of (1.1)}. In other words, the set $\Xi$ is \emph{the root locus of the $\alpha$ degree}.
\end{definition}

When $2q\pi+\arg{(a+ib)}$ is a constant, the points in $\mathbb{C}\cup\{\infty\}$ that satisfy equation (1.2) form curves. These curves are the root loci of (1.1) and (1.2). Theorem 1.8 proves that the root locus of (1.1) is the same as the root locus of (1.2).

Determining the root locus requires two elements. One is the unit complex value $a+ib$. Generally, distinct unit complex values $a+ib$ lead to distinct root loci. The other is the points on the root locus. Points on the root locus can be determined via (1.1), and they are both sufficient and necessary to be determined via (1.2).

Lemma 1.5 offers a method to calculate the degree of the root locus of (1.1) via the function $\varphi(\sigma,t)$. The factor $a+ib$ on the right-hand side of (1.1) serves as the computational value. In $\mathbb{C}\cup\{\infty\}$, the degree of the root locus of (1.1) is equal to the degree of the phase angle of $RF$. Therefore, we can define the degree of the root locus of (1.1) as follows:

\begin{definition}
When the phase angle of $RF$ is $\alpha$ degree, we call the root locus of (1.1) the \emph{$\alpha$ degree root locus}.
\end{definition}

\begin{lemma}
Let $z_{1}$ be a finite zero of (1.1) with exponent $\gamma_1$. Then, $z_{1}$ lies simultaneously on the root loci of all degrees of (1.1) from $2q\pi$ degree to $2q\pi+2\gamma_1\pi$ degree.
\end{lemma}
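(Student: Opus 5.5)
The plan is to work locally at $z_1$ and read off the phase angle of $RF$ on a small punctured disc around it. By Lemma 1.5, for $s\in\Delta$ near $z_1$ we write
$$\varphi(\sigma,t)=\gamma_1\arg(s-z_1)+\psi(s),\qquad \psi(s)=\sum_{l\ge 2}\gamma_l\arg(s-z_l)-\sum_{j}\beta_j\arg(s-p_j).$$
We may assume the coincident zeros and poles have been removed, as in Lemma 1.2. Then $z_1$ is at positive distance from all other $z_l$ and $p_j$. Hence $\psi$ is continuous and single-valued on a small disc $D$ around $z_1$, and it tends to the constant $\psi_0=\psi(z_1)$ as $s\to z_1$.

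The key step is to let $s=z_1+\rho e^{i\theta}$ with $\rho$ small and let $\theta$ increase continuously through one full turn, from $\theta_0$ to $\theta_0+2\pi$. Along this circle the term $\gamma_1\arg(s-z_1)$ increases continuously by exactly $2\gamma_1\pi$. The perturbation $\psi$ varies by at most $\varepsilon(\rho)\to 0$. Hence, using a continuous branch, $\varphi$ sweeps an interval that contains $[2q\pi+\delta,\,2q\pi+2\gamma_1\pi-\delta]$ for a suitable normalization of $q$ and any small $\delta$.

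We then fix any target degree $\alpha\in[2q\pi,2q\pi+2\gamma_1\pi]$ and apply the intermediate value theorem on each small circle. This gives a point $s_\rho$ with $\varphi(s_\rho)=\alpha$ (mod the chosen branch). By Theorem 1.8, $s_\rho$ lies on the $\alpha$ degree root locus. Letting $\rho\to 0$ gives $s_\rho\to z_1$. Since $z_1\notin\Delta$, we interpret ``$z_1$ lies on the locus'' as $z_1$ belonging to the closure of $\Xi$. This matches Lemma 1.2, which identifies $z_1$ as a point of (1.1) with $K=+\infty$ on every locus. In particular, $\gamma_1$ distinct branches of each degree emanate from $z_1$, with asymptotic departure angles $(\alpha-\psi_0+2k\pi)/\gamma_1$ for $k=0,\dots,\gamma_1-1$.

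The main obstacle is the endpoints and the branch bookkeeping. We must make the statement ``all degrees from $2q\pi$ to $2q\pi+2\gamma_1\pi$'' precise despite the additive constant $\psi_0$ and the multivaluedness of $\arg$. I would first handle this by choosing the branch of $\arg(s-z_1)$ so that $\gamma_1\arg(s-z_1)+\psi_0$ starts at $2q\pi$ on the ray $\theta_0=(2q\pi-\psi_0)/\gamma_1$. The endpoints are then reached by continuity, since $\varepsilon(\rho)\to0$ and the ray of exact degree is approached as $\rho\to 0$. If that proves awkward, I would instead solve $K(s)^{-1}\cdot\!\,$$RF(s)/|RF(s)|=a+ib$ directly. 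Writing $RF(s)=(s-z_1)^{\gamma_1}g(s)$ with $g(z_1)\ne0$, this reduces to choosing a $\gamma_1$-th root of $(a+ib)\overline{g}/|g|$ locally, which exists for every unit value $a+ib$.
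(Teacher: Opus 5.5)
Your proof is correct, but it takes a genuinely different route from the paper's. The paper's argument is purely formal. It notes that $RF(z_1)=0=0\cdot e^{i\theta}$ for every $\theta$, so the argument at $z_1$ is indeterminate, and it declares that $z_1$ carries every degree. This is in the spirit of the $K=+\infty$ convention of Lemma 1.2. That approach is short and reads ``lies on'' literally, but it never uses $\gamma_1$. The same reasoning would place $z_1$ on loci of every degree, and the range $2q\pi$ to $2q\pi+2\gamma_1\pi$ is simply asserted. You instead split $\varphi=\gamma_1\arg(s-z_1)+\psi$ on a small disc and follow a continuous lift of the phase along small circles around $z_1$. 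Combining the intermediate value theorem with Theorem 1.8, you show that genuine locus points of each degree converge to $z_1$, i.e.\ $z_1\in\overline{\Xi}$. This gives the lemma real content. It also explains where the range comes from: one turn around $z_1$ advances the lifted phase by exactly $2\gamma_1\pi$, which anticipates Theorem 1.36.

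Two minor points. First, the endpoint bookkeeping is easier than you fear. Since $\psi$ is single-valued on the disc, the lift $\Phi(\theta)=\gamma_1\theta+\psi(z_1+\rho e^{i\theta})$ satisfies $\Phi(\theta+2\pi)=\Phi(\theta)+2\gamma_1\pi$ exactly. Hence $\Phi$ maps $\mathbb{R}$ onto $\mathbb{R}$, and every $\alpha$, endpoints included, is attained on every small circle.

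Second, your closing claim about $\gamma_1$ branches with departure angles $(\alpha-\psi_0+2k\pi)/\gamma_1$ does not follow from the intermediate value theorem alone. You need $\partial_\theta\psi=O(\rho)$, so that $\Phi$ is strictly increasing and the implicit function theorem produces arcs. A cleaner option is the local biholomorphism $h(s)=(s-z_1)g(s)^{1/\gamma_1}$, which satisfies $RF=h^{\gamma_1}$; near $z_1$ each locus is then the $h$-preimage of $\gamma_1$ rays. This is better than your fallback, where $K(s)^{-1}$ should read $K(s)$, and where the target $(a+ib)\overline{g}/|g|$ depends on $s$, so it is not just a choice of root. In any case, that branch claim is not needed for the lemma as stated.
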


\begin{proof}
Assume that the point $z_{1}$ is an arbitrary finite zero of (1.1). When we substitute the point $z_{1}$ into $RF$, we obtain $\frac{\prod^{m}_{l=1}(z_1-z_{l})^{\gamma_l}}
{\prod^{n}_{j=1}(z_1-p_{j})^{\beta_j}}=0$.
This equation can also be written in polar form as $\frac{\prod^{m}_{l=1}(z_1-z_{l})^{\gamma_l}}
{\prod^{n}_{j=1}(z_1-p_{j})^{\beta_j}}=K_{z_1}e^{i\theta_{z_1}}=0$.
$K_{z_{1}}$ is the modulus of  $RF$ and $\theta_{z_{1}}$ is the argument of $RF$.
Since the product is zero, we have $K_{z_{1}}=0$. Then $K_{z_1}e^{i(\theta_{z_1})}=0\cdot(\cos(\theta_{z_1})+i\sin(\theta_{z_1}))=0$ holds for any value of $\theta_{z_{1}}$. Here, $\theta_{z_{1}}$ can take any value from the range $2q\pi$ degree to $2q\pi+2\gamma_1\pi$ degree.

For the point $z_{1}$ which makes the left-hand side of (1.1) equal to 0, regardless of the value of the argument $\theta_{z_{1}}$. $\cos(\theta_{z_{1}})+i\sin(\theta_{z_{1}})$ is the non-zero and finite, and the modulus of the factor $cos(\theta_{z_{1}})+isin(\theta_{z_{1}})$ is 1. $\theta_{z_{1}}$ represents an arbitrary degree, which shows the following: no matter what value $\theta_{z_{1}}$ is, because the modulus $K_{z_{1}}$ is 0,
there is $K_{z_{1}}e^{i(\theta_{z_{1}})}=0\cdot(\cos(\theta_{z_{1}})+i\sin(\theta_{z_{1}}))=0$.

When a finite zero $z_{1}$ is substituted into $RF$, for any argument $\theta_{z_{1}}$ of $RF$ with degree in the given range, the function values of $z_{1}$ are all equal to 0. By Definition 1.10, the argument of $RF$ is the degree of the root locus of (1.1). Since $\theta_{z_{1}}$ can take all values from $2q\pi$ degree to $2q\pi+2\gamma_1\pi$ degree, this implies that finite zeros of (1.1) lie simultaneously on the root loci of all degrees of (1.1) from $2q\pi$ degree to $2q\pi+2\gamma_1\pi$ degree.
\end{proof}

\begin{lemma}
Let $p_{1}$ be a finite pole of (1.1) with exponent $\beta_1$. Then, $p_{1}$ lies simultaneously on the root loci of all degrees of (1.1) from $2q\pi$ degree to $2q\pi+2\beta_1\pi$ degree.
\end{lemma}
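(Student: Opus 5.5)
The plan is to mirror the argument of Lemma 1.11, with the roles of zeros and poles exchanged. At a pole the gain $K$ of Definition 1.1 vanishes (Lemma 1.2), whereas at a zero the modulus of $RF$ vanishes. I will therefore work with the equivalent form of (1.1) in which the pole factor appears on the side that becomes zero:
\[
K\prod^{m}_{l=1}(s-z_l)^{\gamma_l}=(a+ib)\prod^{n}_{j=1}(s-p_j)^{\beta_j}.
\]

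\textbf{Step 1 (the pole is a root for every direction).} Substitute $s=p_1$. By Lemma 1.2(1), $p_1$ is a root of this equation with $K=0$. Moreover, the right-hand side vanishes for \emph{every} unit complex number $a+ib$, because the factor $(s-p_1)^{\beta_1}$ is zero. Hence the pair $(p_1,K=0)$ satisfies (1.1) regardless of the value of $\arg(a+ib)$.

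\textbf{Step 2 (polar form and the range of degrees).} Write $1/RF(s)=K(s)e^{-i\theta(s)}$ in polar form, exactly as $RF$ was written in the proof of Lemma 1.11. At $p_1$ the modulus $K_{p_1}=0$, so $0\cdot e^{i\theta}=0$ holds for every $\theta$, and the argument of $RF$ at $p_1$ is not constrained. By Definition 1.10, the degree of a root locus is the argument of $RF$. It follows that $p_1$ lies on root loci of every degree.

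To justify the specific range from $2q\pi$ to $2q\pi+2\beta_1\pi$, I will use Lemma 1.5. Near $p_1$,
\[
\varphi(\sigma,t)=-\beta_1\arg(s-p_1)+\text{(a continuous term)}.
\]
As $s$ runs once around a small circle centred at $p_1$, $\arg(s-p_1)$ sweeps through $2\pi$, so $\varphi$ sweeps through an interval of total length $2\beta_1\pi$. The direction of the sweep is reversed relative to the zero case, but the length is the same. Every degree in $[2q\pi,\,2q\pi+2\beta_1\pi]$ is therefore attained arbitrarily close to $p_1$. Since $p_1$ is a root of (1.1) for each of these degrees (Step 1), it lies on the closure of each of these loci, and so on each of these loci.

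\textbf{Main obstacle.} The hardest step is making rigorous the claim that $p_1$ lies on a locus of a given degree. The point $p_1$ is not in $\Delta$, so Theorem 1.8 cannot be applied there directly. I will handle this by treating the loci as closures in $\mathbb{C}\cup\{\infty\}$ of the point sets $\Xi$ of Definition 1.9. I will then use the winding argument above to show that each degree in the stated range has points of $\Xi$ converging to $p_1$, with $K\to 0$ along them, consistent with Lemma 1.2.
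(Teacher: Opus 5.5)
Your argument is correct. Its first half, Step 1 and the polar-form remark in Step 2, is exactly the paper's proof. The paper substitutes $p_1$ into $RF$, writes $RF(p_1)=\infty=(+\infty)e^{i\theta}$, observes that this holds for every $\theta$, and concludes from Definition 1.10 that $p_1$ lies on the loci of all degrees in the stated range. Your $1/RF(p_1)=0\cdot e^{-i\theta}$ is the same observation.

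What you add is genuinely different. The paper's ``argument is unconstrained'' reasoning proves too much. It gives every real $\theta$, so the multiplicity $\beta_1$ plays no role and the length $2\beta_1\pi$ of the range is simply asserted. It also does not confront the fact that $p_1\notin\Delta$, so Theorem 1.8 cannot certify membership there. Your winding step uses $\varphi=-\beta_1\arg(s-p_1)+h(s)$ with $h$ continuous on a small disc (Lemma 1.5), plus the intermediate value theorem on small circles lying in $\Delta$. This shows that each degree is attained at points converging to $p_1$ with $K\to 0$, and it explains the range through the multiplicity. In effect it anticipates what the paper only extracts later from the angle-of-origination formula (Theorems 1.31 and 1.35), including the $\beta_1$ branches per unit complex value.

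Two small tidy-ups. First, membership rests entirely on the winding step together with your closure convention. Step 1 holds for every pole and every $a+ib$, so it singles out no particular locus. Second, the lifted sweep is $[\varphi_0-2\beta_1\pi,\varphi_0]$ for whatever starting value $\varphi_0$ you pick. To obtain literally $[2q\pi,2q\pi+2\beta_1\pi]$, start the loop at a point where $RF>0$; such a point exists because the sweep has length at least $2\pi$.
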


\begin{proof}
Assume that the point $p_{1}$ is an arbitrary finite pole of (1.1). When we substitute the point $p_{1}$ into $RF$, we obtain $\frac{\prod^{m}_{l=1}(p_1-z_{l})^{\gamma_l}}
{\prod^{n}_{j=1}(p_1-p_{j})^{\beta_j}}=\infty$.

In complex analysis, 0 can be written as: $0\cdot e^{i\theta}=0\cdot(\cos\theta+i\sin\theta)$.
The non-zero finite values can also be written as: $k\cdot e^{i\theta}=k\cdot(\cos\theta+i\sin\theta)$.
Infinity can also be written as: $(+\infty)\cdot e^{i\theta}=(+\infty)\cdot(\cos\theta+i\sin\theta)$. In which, $(\cos\theta+i\sin\theta)$ is a non-zero and infinite, and its modulus is 1 of the unit complex number value.
For $(+\infty)\cdot e^{i\theta}=(+\infty)\cdot(\cos\theta+i\sin\theta)$, no matter what value $\theta$ is obtained on the unit circle in $\mathbb{C}\cup\{\infty\}$, the values of this expression are all infinity.

The equation above can also be expressed as: $\frac{\prod^{m}_{l=1}(p_1-z_{l})^{\gamma_l}}
{\prod^{n}_{j=1}(p_1-p_{j})^{\beta_j}}=K_{p_1}e^{i\theta_{p_1}}=\infty$, where $K_{p_1}$ is the modulus of  $RF$ and $\theta_{p_1}$ is the argument of  $RF$. Since the result is infinite, we have $K_{p_1}=+\infty$. Then $K_{p_1}\cdot e^{i\theta}=(+\infty)\cdot(\cos\theta+i\sin\theta)=\infty$ holds for any degree value of $\theta_{p_1}$ in the range of $2q\pi$ degree to $2q\pi+2\beta_1\pi$ degree.

When a finite pole $p_{1}$ of (1.1) is substituted into $RF$, for any argument $\theta_{p_{1}}$ of $RF$ in the given range, the function values of $p_{1}$ are all equal to infinity. By Definition 1.10, the argument of $RF$ is the degree of the root locus of (1.1). Since $\theta_{p_{1}}$ can take all values from $2q\pi$ degree to $2q\pi+2\beta_1\pi$ degree, this implies that the finite poles of (1.1) lie simultaneously on the root loci of all degrees of (1.1) from $2q\pi$ degree to $2q\pi+2\beta_1\pi$ degree.
\end{proof}

\begin{lemma}
$n$ finite poles of (1.1) are $n$ origination points of the root locus of any $\alpha$ degree of (1.1). $m$ finite zeros of (1.1) are $m$ ending points of the root locus of any $\alpha$ degree of (1.1).
\end{lemma}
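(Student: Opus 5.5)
The plan is to parametrize each root locus by the gain $K$ and to read off the endpoints $K=0$ and $K=+\infty$ from Lemma 1.2 and Lemma 1.4. Fix a degree $\alpha=2q\pi+\arg(a+ib)$ and rewrite (1.1) in polynomial form
$$K\prod^{m}_{l=1}(s-z_l)^{\gamma_l}-(a+ib)\prod^{n}_{j=1}(s-p_j)^{\beta_j}=0 .$$
For each $K\ge 0$ this is a polynomial equation in $s$. By Theorem 1.8, its roots in $\Delta$ are exactly the points of the $\alpha$ degree root locus, and the gain there is $K=|1/RF(s)|$. As $K$ increases from $0$ to $+\infty$, the roots move continuously, since the roots of a polynomial depend continuously on its coefficients (on the Riemann sphere, to handle a change of degree). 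So each branch of the locus is a continuous curve indexed by $K$, and its endpoints are the limiting root positions at $K=0$ and $K=+\infty$.

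\textbf{The case $K=0$.} The equation reduces to $(a+ib)\prod_j(s-p_j)^{\beta_j}=0$. Its roots are exactly the finite poles $p_j$, counted $\beta_j$ times, by part 1 of Lemma 1.2. By continuity, as $K\to 0^{+}$ exactly $\beta_j$ roots converge to $p_j$. Lemma 1.11 shows that $p_j$ lies on loci of every degree, so these branches indeed belong to the $\alpha$ degree locus.

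The orientation is fixed by $K$. Near $p_j$ we have $K=|1/RF(s)|\sim C|s-p_j|^{\beta_j}\to 0$, and away from the poles $K>0$. So $p_j$ is the point of minimal gain on each branch through it, and the branch leaves it as $K$ increases. Hence $p_j$ is an origination point, in the sense of increasing $K$, of the $\alpha$ degree locus for every $\alpha$.

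\textbf{The case $K\to+\infty$.} Divide by $K$ to get $\prod_l(s-z_l)^{\gamma_l}=\frac{a+ib}{K}\prod_j(s-p_j)^{\beta_j}$. The limit is $\prod_l(s-z_l)^{\gamma_l}=0$, which is part 2 of Lemma 1.2, so $\gamma_l$ branches converge to each $z_l$. Near $z_l$ we have $K\sim C|s-z_l|^{-\gamma_l}\to+\infty$, so $z_l$ is the point of maximal gain and the branches terminate there. Lemma 1.10 guarantees that $z_l$ lies on the $\alpha$ degree locus.

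\textbf{Branches at infinity.} If $\sum\beta_j>\sum\gamma_l$ (respectively $<$), the surplus branches end (respectively begin) at $s_\infty$, consistent with Lemma 1.3. This does not affect the claim, which concerns only the finite poles and zeros.

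\textbf{Main obstacle.} The hardest step is making rigorous that a branch cannot turn back, that is, that $K$ is monotone along each branch. This follows from the local structure: at a point $s_0\in\Delta$ on the locus, write $w=1/RF$, which is holomorphic near $s_0$. The locus is the preimage of the ray $\arg w=-\alpha$, and $K=|w|$ is the parameter along that ray. Where $w'(s_0)\neq0$, $w$ is a local biholomorphism, so the locus is a smooth arc and $K$ is strictly monotone along it. Where $w'(s_0)=0$ (breakaway points), the local form $w=w(s_0)+c(s-s_0)^k$ shows that $k$ arcs arrive and $k$ arcs leave, still with $K$ monotone on each. So every maximal branch runs from a point with $K=0$ to a point with $K=+\infty$, which forces the stated origination and ending points.
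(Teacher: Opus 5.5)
Your argument is correct, but it takes a genuinely different route from the paper.

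The paper's proof is a classification of points. A general point of $\Delta$ has one definite phase and a finite nonzero gain, so it is declared unsuitable as an endpoint. Poles and zeros lie on loci of every degree (Lemmas 1.11, 1.12) and carry $K=0$ resp.\ $K=+\infty$ (Lemma 1.2), and this is taken to make them origination resp.\ ending points. The paper never shows that arcs of a given degree actually emanate from a pole, nor that the gain is monotone along a branch. It therefore tacitly assumes the locus is already a union of oriented curves.

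You instead parametrize the locus by $K$ through the characteristic polynomial. Continuous dependence of roots then gives exactly $\beta_j$ branches leaving $p_j$ and $\gamma_l$ branches reaching $z_l$. The local normal form of $w=1/RF$ (the locus is the preimage of the ray $\{Ke^{-i\alpha}:K>0\}$) gives strict monotonicity of $K$ on every arc. It also shows that arcs both arrive and depart at every point of $\Delta$, breakaway points included.

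Your route buys real rigor and extra content:
\begin{itemize}
\item the existence and number of branches, anticipating Theorems 1.14, 1.35 and 1.36;
\item monotonicity of the gain, which the paper only obtains later (Lemma 1.37, Theorem 1.38) by a counting argument;
\item an actual proof of the paper's unargued claim that general points cannot be endpoints.
\end{itemize}
The paper's version is shorter and avoids root-continuity and local-structure facts, but at this step it is essentially heuristic.

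Two bookkeeping points. First, the arcs near $p_j$ belong to the locus with unit value $a+ib$ because they solve the characteristic equation (Lemma 1.19), not because of the lemmas placing $p_j$ on all loci. Your numbering of those lemmas is also off by one: 1.10 is a definition. Second, for the global claim that every maximal branch runs from $K=0$ to $K=+\infty$, state the finiteness argument explicitly. There are finitely many critical points and $K$ is strictly increasing, so no vertex is revisited. Hence the chain of arcs is finite and can only stop where $K\in\{0,+\infty\}$. When the point at infinity has finite gain, treat it by the same local analysis in the coordinate $1/s$.
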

\begin{proof}
The root locus of any $\alpha$ degree of (1.1) consists of curves in $\mathbb{C}\cup\{\infty\}$ that must have their own origination and ending points. On the basis of the relationship between the points in $\mathbb{C}\cup\{\infty\}$ and the root locus of (1.1), we can classify the points in $\mathbb{C}\cup\{\infty\}$ into four types: $n$ finite poles of (1.1), $m$ finite zeros of (1.1), general finite points on the root locus of $\alpha$ degree of (1.1), and the infinity point.

Except for $n$ finite poles and $m$ finite zeros of (1.1), a general finite point in $\mathbb{C}\cup\{\infty\}$ lies on the root locus of a specific degree of (1.1). These general points only have a phase angle and a non-zero finite gain value, so they are not suitable as the origination or ending points of the root locus of (1.1).

$n$ finite poles and $m$ finite zeros of (1.1) lie on the root loci of all degrees of (1.1). Since an infinite number of root locus curves of distinct degrees intersect at the same point, they meet the requirements for being the origination or ending points of the root locus. Moreover, for $n$ finite poles of (1.1), the gain $K=0$, which means that they act as origination points. For $m$ finite zeros of (1.1), the gain $K=+\infty$, which means that they act as ending points of the root locus.

The result of the infinity point is in Lemma 1.3. When (1) in Lemma 1.3 holds, the point  at infinity $s_{\infty}$ is an origination point of the root locus of (1.1). When (3) in Lemma 1.3 holds, the point at infinity $s_{\infty}$ is a ending point of the root locus of (1.1).
\end{proof}

By Lemma 1.2, Lemma 1.3 and Lemma 1.13 we have Theorem 1.14.

\begin{theorem}
The $2q\pi+\pi$  and $2q\pi$ degree root loci of (1.1) begin at $n$ poles of (1.1) or the infinity and end at $m$ zeros of (1.1) or the infinity.

1. When $\sum^n_{j=1} \beta_j > \sum^m_{l=1} \gamma_l$, $\sum^n_{j=1} \beta_j- \sum^m_{l=1} \gamma_l$ branches of the $2q\pi+\pi$  and $2q\pi$ degree root loci ending at the infinity in $\mathbb{C}\cup\{\infty\}$.

2. When $\sum^n_{j=1} \beta_j < \sum^m_{l=1} \gamma_l$, $ \sum^m_{l=1} \gamma_l - \sum^n_{j=1} \beta_j$ branches of the $2q\pi+\pi$  and $2q\pi$ degree root loci beginning at the infinity in $\mathbb{C}\cup\{\infty\}$.
\end{theorem}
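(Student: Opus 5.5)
The plan is to read each degree-$\alpha$ root locus (with $\alpha=2q\pi$ or $2q\pi+\pi$) as a branch of the real algebraic curve $\operatorname{Im}\bigl((a+ib)^{-1}RF(s)\bigr)=0$ on which $K$ runs through $[0,+\infty]$. Equivalently, for each fixed $K\ge 0$ it is the set of roots of the polynomial equation
\begin{equation*}
K\prod^{m}_{l=1}(s-z_l)^{\gamma_l}-(a+ib)\prod^{n}_{j=1}(s-p_j)^{\beta_j}=0 .
\end{equation*}
This equation has degree $N=\max\bigl(\sum_j\beta_j,\sum_l\gamma_l\bigr)$ in $s$. I would then follow its $N$ roots continuously as $K$ increases from $0$ to $+\infty$. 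By continuity of the roots of a polynomial in its coefficients, each root traces a continuous arc. Together these arcs make up the $2q\pi$ and $2q\pi+\pi$ degree root loci, and Theorem 1.8 guarantees that the arcs are exactly the points satisfying (1.2).

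First I identify the endpoints. At $K=0$ the equation reduces to $\prod_j(s-p_j)^{\beta_j}=0$. By Lemma 1.2(1), its roots are the $n$ finite poles, counted with multiplicities $\beta_j$, so every arc begins at a pole, as Lemma 1.13 says. As $K\to+\infty$, dividing by $K$ shows the roots tend to the zeros $z_l$, counted with multiplicities $\gamma_l$ (Lemma 1.2(2)). Lemma 1.4 gives that $K$ takes every value in $[0,+\infty]$ along the way, so no arc can stop at an ordinary point of $\Delta$.

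Next I do the counting at infinity using Lemma 1.3. Suppose $\sum_j\beta_j>\sum_l\gamma_l$. Then for large $K$ the polynomial has $\sum_j\beta_j$ roots, but only $\sum_l\gamma_l$ of them converge to finite zeros. I would show that the remaining $\sum_j\beta_j-\sum_l\gamma_l$ roots leave every compact set. The tool is Rouché's theorem on a large disc: for $K$ large, the term $K\prod_l(s-z_l)^{\gamma_l}$ dominates on small circles around the $z_l$, which pins exactly $\gamma_l$ roots near each $z_l$. The leftover roots satisfy $|s|^{\sum\beta_j-\sum\gamma_l}\sim K$, so they tend to $\infty$. This matches case (1) of Lemma 1.3, where $K\to+\infty$ at $s_\infty$, so $s_\infty$ acts as an ending point. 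In case 2 the symmetric argument applies. As $K\to 0^{+}$ the polynomial has degree $\sum_l\gamma_l$ in $s$ with leading coefficient proportional to $K$, so $\sum_l\gamma_l-\sum_j\beta_j$ roots come in from infinity, matching Lemma 1.3(1).

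The main obstacle is the bookkeeping between the two degrees. The statement groups the $2q\pi$ and $2q\pi+\pi$ loci together, so I would fix the single real line through $a+ib$ and $-(a+ib)$, that is, allow $K$ to be a signed real number. Each branch counted in the theorem is then a piece of either the $2q\pi$ family or the $2q\pi+\pi$ family, with endpoints as described. I would also need to handle possible collisions of roots at breakaway points, where the individual arcs are not canonically continued. I would resolve this by choosing any continuous labeling, which exists by the standard continuity theorem for unordered root tuples. That labeling does not affect the endpoint counts.
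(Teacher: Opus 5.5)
Your proposal is correct, but it takes a different route from the paper. The paper obtains Theorem 1.14 directly ``by Lemma 1.2, Lemma 1.3 and Lemma 1.13.'' Finite poles have gain $K=0$ and are taken as origination points. Finite zeros have $K=+\infty$ and are taken as ending points. The role of $s_\infty$ is read off from $\lim_{s\to\infty}K$: an ending point when $\sum\beta_j>\sum\gamma_l$, an origination point when $\sum\beta_j<\sum\gamma_l$. The branch counts are not derived from these lemmas; they are carried over from the control-theory literature.

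You instead realize each locus as the root set of $K\prod(s-z_l)^{\gamma_l}-(a+ib)\prod(s-p_j)^{\beta_j}$ as $K$ runs through $(0,+\infty)$. A ``branch'' then becomes a continuous root path. Its endpoints come from the degree of this polynomial at $K=0$ and as $K\to+\infty$. The number of branches at infinity comes from Rouch\'e: for large $K$, a fixed disc containing all $z_l$ holds exactly $\sum\gamma_l$ roots. This supplies precisely the counting step the paper leaves implicit. It also gives monotonicity of $K$ along your arcs for free, which the paper proves only later, in Theorem 1.38.

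The cost is twofold. Your arcs are defined only up to relabeling at breakaway points, which you correctly note is harmless for endpoint counts. You should also add the one-line argument that each continuously selected root path actually converges as $K\to 0^{+}$ and as $K\to+\infty$. Its cluster set on the Riemann sphere is connected and lies in the finite set of poles, zeros and $\infty$, so it is a single point.

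A few minor points:
\begin{itemize}
\item In case 1 the point at infinity falls under Lemma 1.3(3), not 1.3(1).
\item What keeps an arc from stopping at an ordinary point is your parametrization over all $K$, not Lemma 1.4.
\item The signed-$K$ device is unnecessary. Running the argument once with $a+ib=1$ and once with $a+ib=-1$ gives each family's counts directly.
\item In the equal-degree case, not covered by items 1 and 2, the leading coefficient $K-(a+ib)$ vanishes at $K=1$ when $a+ib=1$. There one branch passes through $\infty$ at finite gain, so the continuity argument must be run on the Riemann sphere. This is the exception the paper flags after Theorem 1.38.
\end{itemize}
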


For a point $s_1=\sigma_1+it_1$ on a root locus, substituting $s_1$ into the rational function $RF(s_1)$ yields a complex value $u(\sigma_1,t_1)+iv(\sigma_1,t_1)$ of $RF(s_1)$. The argument $\arg(u(\sigma_1,t_1)+iv(\sigma_1,t_1))$ of $RF(s_1)$ is referred to as the phase angle, which represents the degree of the root locus of (1.1). Each degree of the root locus of (1.1) is the degree value of an angle. Each degree of the root locus corresponds to a unique unit complex value. Consequently, each root locus is characterized by its degree value and its unit complex value. Root loci with distinct degrees may share the same unit complex value, and the difference in their degrees is $2q\pi$. The unit complex value of the root locus is the unit complex value of the degree of the root locus of (1.1).

\begin{theorem}
Two root loci of (1.1) with distinct unit complex values $a_1+ib_1$ and $a_2+ib_2$ cannot intersect in $\mathbb{C}\cup\{\infty\}$.
\end{theorem}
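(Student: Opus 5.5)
The plan is to argue by contradiction, relying only on Theorem 1.8 and the fact that $RF$ is single-valued. Suppose the root locus of unit value $a_1+ib_1$ and the root locus of unit value $a_2+ib_2$, with $a_1+ib_1\neq a_2+ib_2$, share a point $s_0\in\mathbb{C}\cup\{\infty\}$. I will split into two cases: $s_0\in\Delta$, and $s_0$ a zero, pole, or the point at infinity.

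\emph{Case $s_0\in\Delta$.} Here $RF(s_0)$ is a well-defined finite nonzero complex number. Theorem 1.8 (more precisely Lemma 1.7) applied to each locus gives
\[
\varphi(\sigma_0,t_0)\equiv \arg(a_1+ib_1) \pmod{2\pi}
\quad\text{and}\quad
\varphi(\sigma_0,t_0)\equiv \arg(a_2+ib_2) \pmod{2\pi}.
\]
Equivalently, $RF(s_0)/|RF(s_0)|=a_1+ib_1$ and $RF(s_0)/|RF(s_0)|=a_2+ib_2$. This contradicts $a_1+ib_1\neq a_2+ib_2$. The same step can be phrased through the root locus equation (1.1): the gain is uniquely $K_0=|1/RF(s_0)|$, so $K_0RF(s_0)$ is one specific unit complex number and cannot equal two distinct ones. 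Distinct degrees that differ by a multiple of $2\pi$ give the same unit value and are not covered by the hypothesis, which is why the statement is phrased in terms of unit values rather than degrees.

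\emph{Case $s_0$ a zero, pole, or infinity.} This is the main obstacle. By Lemmas 1.11 and 1.12, a finite zero or pole lies on the root loci of all degrees, and by Theorem 1.14 the loci may begin or end at infinity. So these points are genuinely common to loci with distinct unit values, and a literal reading of the statement fails there. I would handle this by interpreting the statement as in the paper's own framing: an ``intersection'' means a common point in $\Delta$, whereas zeros, poles and infinity are the origination and ending points of Lemma 1.13 rather than intersection points. I would state this convention explicitly.

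With that convention, the theorem reduces to the first case, and the proof consists of the uniqueness argument above. I will also remark that the reasoning shows $\Delta$ is partitioned into the level sets of the map $s\mapsto RF(s)/|RF(s)|$. This is the sense in which loci of distinct unit values are disjoint, and it is what lets the later discussion say that intersections (breakaway points) can occur only between loci sharing the same unit value.
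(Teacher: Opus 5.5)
Your argument is essentially the paper's. The paper also takes the common point $s_0$ to be neither a zero nor a pole, uses that the gain $K_0=|1/RF(s_0)|$ is unique, and gets $a_1+ib_1=K_0RF(s_0)=a_2+ib_2$; your explicit convention excluding zeros and poles just states openly what the paper's proof assumes tacitly. One small correction: infinity is exceptional only when it is a zero or pole of (1.1). When $RF$ has a finite nonzero limit there (Lemma 1.3(2)), infinity lies in $\Delta$, so your first case already covers it and it should not be listed among the exceptional points.
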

\begin{proof}
Assume that two root loci of (1.1) with distinct unit complex values $a_1+ib_1$ and $a_2+ib_2$ intersect at a point $s_0$, which is neither a zero nor a pole of (1.1).

Since $s_0$ is the same point, its gain $K_0$ should be the same. $s_0$ allows the following two equations to hold simultaneously. $K_0\frac{\prod^{m}_{l=1} (s_0-z_l)^{\gamma_l}}
{\prod^{n}_{j=1}  (s_0-p_j)^{\beta_j}}=a_{1}+ib_{1}$, and $K_0\frac{\prod^{m}_{l=1} (s_0-z_l)^{\gamma_l}}{\prod^{n}_{j=1}  (s_0-p_j)^{\beta_j}}=a_{2}+ib_{2}$.
Subtracting these equations yields $(a_{1}-a_{2})+i(b_{1}-b_{2})=0$. By the conditions of this theorem, $a_{1}\neq a_{2}$ or $b_{1}\neq b_{2}$, so this equation cannot hold. Since the contradiction arises from the assumption that the two root loci intersect at $s_0$, this assumption is false.
\end{proof}

\begin{theorem}
Two root loci of (1.1) with identical unit complex values $a+ib$ may intersect in $\mathbb{C}\cup\{\infty\}$.
\end{theorem}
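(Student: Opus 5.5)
The statement is existential: it asserts only that two root loci of (1.1) sharing the same unit complex value $a+ib$ \emph{can} meet. So I plan two steps: a short argument that no contradiction of the kind in Theorem 1.15 arises, and then an explicit example in which such an intersection actually occurs at a point of $\Delta$.

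\emph{Step 1 (no obstruction).} Following the proof of Theorem 1.15, suppose a point $s_0\in\Delta$ lies on two root loci with the same unit complex value $a+ib$. By Theorem 1.8 both loci satisfy (1.2), with degrees $2q_1\pi+\arg(a+ib)$ and $2q_2\pi+\arg(a+ib)$. At $s_0$ both loci have the single gain $K_0=|1/RF(s_0)|$. The two instances of (1.1) at $s_0$ are then literally the same equation $K_0RF(s_0)=a+ib$. Subtracting them gives $0=0$ rather than a contradiction, so the argument of Theorem 1.15 does not exclude the intersection.

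\emph{Step 2 (existence of an intersection).} Absence of a contradiction does not show that an intersection exists, so I will exhibit one. I take the example already used in the introduction, $RF(s)=1/(s^4+1)$, with $a+ib=1$ (the $2q\pi$ degree loci). On the real axis $RF(x)=1/(x^4+1)>0$, so every real point lies on a $2q\pi$ degree locus.

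Near $s=0$ we have $RF(s)=1-s^4+O(s^8)$. Hence $\arg RF(s)\equiv 0 \pmod{2\pi}$ near $0$ exactly when $\operatorname{Im}(s^4)=0$ to leading order. That condition holds on the lines $\arg s\in\{0,\pi/4,\pi/2,3\pi/4,\dots\}$. More precisely, $\operatorname{Im}RF(s)=0$ is a real-analytic curve set whose local structure at $0$ is that of $\operatorname{Im}(-s^4)=0$. Those lines with $\operatorname{Re}(s^4)>0$, namely $\arg s\in\{0,\pi/2,\pi,3\pi/2\}$, give $RF>0$, so they lie on the $2q\pi$ degree locus. Thus the real axis and the imaginary axis, which belong to distinct branches of the $2q\pi$ degree root locus, cross at $s_0=0$. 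The imaginary axis can also be checked directly: $RF(iy)=1/(y^4+1)>0$.

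Finally, $0$ is neither a zero nor a pole of $RF$, so $0\in\Delta$. Two root loci with the identical unit complex value $1$ therefore intersect at $0$, which proves the statement.

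The main obstacle is making precise that the real axis and the imaginary axis are two \emph{distinct} root loci rather than one locus. I would handle this by tracing them through Lemma 1.13 back to different origination points. The imaginary axis through $0$ does not contain the poles $e^{\pm i\pi/4}$ and $e^{\pm 3i\pi/4}$, so I instead check that, in a punctured neighbourhood of $0$, the set $\{\arg RF\equiv 0\}$ consists of four disjoint arcs. This follows from the local normal form $\operatorname{Im}(s^4)=0$ via the implicit function theorem applied to $\operatorname{Im}\log RF$. I then identify each pair of opposite arcs with the axes.
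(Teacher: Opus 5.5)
Your Step 2 breaks down at the point you yourself call the main obstacle. You tacitly exclude the diagonals from the $2q\pi$-degree locus, and that is false. Since $\mathrm{Im}\,RF(s)=-\mathrm{Im}(s^4)/|1+s^4|^2$, the set $\{\mathrm{Im}\,RF=0\}$ is exactly the four lines through $0$ at angles $k\pi/4$ (minus the poles). On a diagonal, $s=re^{i\pi/4}$ and similar, one has $s^4=-r^4$, so $RF(s)=1/(1-r^4)>0$ for $0<r<1$.

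Near $0$ we have $RF\approx 1$ on every ray. So the sign of $\mathrm{Re}(s^4)$ does not separate degree $2q\pi$ from degree $2q\pi+\pi$. It only tells whether the gain $K=|1+s^4|$ lies above or below $K(0)=1$, that is, whether the ray leaves $0$ or arrives at $0$. Hence, in a punctured neighbourhood of $0$, the set $\{\arg RF\equiv 0\}$ consists of eight rays, not four disjoint arcs. The local statement you planned to prove is therefore false, and with it your pairing of arcs into the two axes.

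The identification itself also conflicts with the paper's notion of a root locus. In the paper, a root locus is a branch emitted from a pole along which the gain increases strictly (Lemma 1.13, Theorems 1.35 and 1.38). Along the real axis, $K=x^4+1$ first decreases and then increases, so the real axis is not one root locus; the same holds for the imaginary axis. Your checks $RF(x)>0$ and $RF(iy)>0$ are correct, but they only show that the $2q\pi$ point set is not a simple curve at $0$.

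The missing idea is to obtain distinctness from distinct origination points. In your example, each simple pole $e^{\pm i\pi/4}, e^{\pm 3i\pi/4}$ emits its $2q\pi$-degree branch inward along its diagonal. (The departure-angle formula of Theorem 1.31 gives direction $5\pi/4$ at $e^{i\pi/4}$, and along the diagonal the gain $1-r^4$ rises from $0$ to $1$.) The branches then continue out along the axis rays. Thus four distinct root loci with unit value $1$ meet at $0\in\Delta$, which is the picture in the paper's introduction.

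The paper's own proof uses exactly this device in the simplest setting. For $K/((s-1)(s-3))$, the $2q\pi+\pi$ branches from the poles $1$ and $3$ run toward each other along $(1,3)$, where $RF<0$. They meet at $s=2$, which Rolle's theorem identifies as the critical point. Your Step 1 is correct but not needed.
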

\begin{proof}
Consider the root locus equation $\frac{K}{(s-1)(s-3)}=a+ib$. The interval $[1,3]$ contains root loci of $2q\pi+\pi$ degree from both poles at $s=1$ and $s=3$, each with a unit complex value of $-1$. These root loci intersect at point $s=2$.

By applying Rolle's theorem, there exists a critical point of the function $(s-1)(s-3)$ in the interval $[1,3]$. This critical point of $(s-1)(s-3)$ is $s=2$, which is the intersection point of the root loci.
\end{proof}

\begin{lemma}
All points in $\mathbb{C}\cup\{\infty\}$ are roots of the root locus equation (1.1). Except for zeros and poles of (1.1), each finite point in $\mathbb{C}\cup\{\infty\}$ lies on a root locus of (1.1). The root loci of (1.1) fill the entire extended complex plane.
\end{lemma}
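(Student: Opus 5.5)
The plan is to show directly that every point of $\mathbb{C}\cup\{\infty\}$ satisfies the root locus equation (1.1) for a suitable choice of gain $K$ and unit complex value $a+ib$. We then invoke Theorem 1.8 and the pole/zero lemmas to place each point on a root locus of some degree. The argument splits into three cases: general finite points, the finite zeros and poles, and the point at infinity.

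\textbf{Case 1: general points.} Take an arbitrary $s_0=\sigma_0+it_0\in\Delta$. Since $s_0$ is neither a zero nor a pole, $RF(s_0)$ is a finite non-zero complex number, so it has a polar form $RF(s_0)=r_0e^{i\theta_0}$ with $r_0>0$. Set $K_0=1/r_0=|1/RF(s_0)|$, which is the gain at $s_0$ by Notation 1, and set $a_0+ib_0=e^{i\theta_0}$. Then
$$K_0\,RF(s_0)=a_0+ib_0,$$
so $s_0$ is a root of (1.1) with this unit complex value. Equivalently, $\varphi(\sigma_0,t_0)=\theta_0$ satisfies (1.2) with $\alpha=\theta_0$. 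By Lemma 1.6 (or Theorem 1.8), $s_0$ lies on the $\theta_0$ degree root locus. This settles the middle sentence of the lemma. The degree is determined up to $2q\pi$, and by Theorem 1.15 the point cannot lie on loci with a different unit complex value.

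\textbf{Case 2: finite zeros and poles.} For a finite zero $z_l$, Lemma 1.11 gives that $z_l$ lies on the root loci of all degrees from $2q\pi$ to $2q\pi+2\gamma_l\pi$. It is a root of (1.1) with $K=+\infty$, as in Lemma 1.2(2). Likewise, for a pole $p_j$, Lemma 1.12 and Lemma 1.2(1) show that it is a root with $K=0$ and lies on the loci of every degree. Hence zeros and poles are also roots of (1.1). They are the endpoints and origination points of every locus (Lemma 1.13).

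\textbf{Case 3: the point at infinity.} Lemma 1.3 classifies $s_\infty$ according to the limit of $K$. If $\sum\beta_j>\sum\gamma_l$, then $K\to+\infty$ and $s_\infty$ is a zero of (1.1). If $\sum\beta_j<\sum\gamma_l$, then $K\to0$ and $s_\infty$ is a pole. In either of these cases Case 2 applies. If the two sums are equal, $K$ tends to a finite non-zero value $A$, and $RF(s)$ tends to the non-zero finite ratio of leading coefficients, whose argument gives the degree. The argument of Case 1 then applies with $K=A$.

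Combining the cases, every point of $\mathbb{C}\cup\{\infty\}$ is a root of (1.1). Since the union over all unit complex values of the loci covers every point, the root loci fill the extended plane.

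The main obstacle is the point at infinity in the equal-degree case. There one must make sense of the argument of $RF$ at $s_\infty$ via the limit of $RF(s)$ as $s\to\infty$, rather than by direct substitution. I would handle this by writing $RF(s)=c\,(1+O(1/s))$ with $c$ the ratio of leading coefficients. Everything else is a direct application of the polar form together with Lemmas 1.6, 1.11 and 1.12.
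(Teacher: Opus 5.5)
Your proposal is correct and follows essentially the same route as the paper. The paper also treats zeros and poles via Lemma 1.2 (with $K=+\infty$ and $K=0$), general finite points via the positive finite gain $K(s_0)=|1/RF(s_0)|$ together with the finite phase angle $\varphi(\sigma_0,t_0)$, and the point at infinity via the trichotomy of Lemma 1.3. Your explicit treatment of the equal-degree case at infinity is a harmless refinement of the paper's ``repeating the previous proof''; with the monic normalization of Notation 1, $RF(s)\to 1$ there, so $A=1$.
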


\begin{proof}
By Lemma 1.2, at the zeros of (1.1), $K=+\infty$. The zeros of (1.1) satisfy (1.1) with $K=+\infty$. At the poles of (1.1), $K=0$. The poles of (1.1) satisfy (1.1) with $K=0$.

For any finite point $f_{a}=\sigma_a+it_a$ that is neither a zero nor a pole of (1.1), substituting $f_{a}$ into the gain expression $K=|\frac{\prod^{n}_{j=1}  (s-p_j)^{\beta_j}}{\prod^{m}_{l=1} (s-z_l)^{\gamma_l}}|$ yields a  positive, finite real number. Similarly, substituting $f_{a}$ into the phase angle expression $\varphi(\sigma,t)$ yields a finite certain phase angle $\varphi(\sigma_a,t_a)$. Therefore, the finite point $f_{a}$ must lie on a root locus of  $\varphi(\sigma_a,t_a)$ degree of (1.1) with a gain $K(f_{a})$ that is a positive, finite real number. Thus, $f_{a}$ satisfies (1.1) with the gain $K(f_{a})$.

By Lemma 1.3, points at infinity in $\mathbb{C}\cup\{\infty\}$ can be zeros, poles, or general points with finite gain values $A$. Repeating the previous proof, we conclude that all points at infinity in $\mathbb{C}\cup\{\infty\}$ lie on a root locus of (1.1) and satisfy (1.1) with a specific gain.
\end{proof}

\begin{definition}
If two or more root loci intersect in $\mathbb{C}\cup\{\infty\}$, these intersected root loci are called \emph{"common root loci"}. The points of intersection are called \emph{the breakaway points} of the root loci of (1.1).
\end{definition}
Multiplying both sides of (1.1) by $\prod (s-p_j)^{\beta_j}$ gives:
$  K(s) \prod_{l=1}^m (s-z_l)^{\gamma_l} =(a+ib)\prod_{j=1}^n (s-p_j)^{\beta_j},$
  which is a polynomial whose roots coincide with solutions of (1.1). All non-zero factors are moved to one side.  This transformation yields the characteristic equation of (1.1), which can be written as: $K(s)\prod^{m}_{l=1}(s-z_{l})^{\gamma_{l}}-
(a+ib)\prod^{n}_{j=1}(s-p_{j})^{\beta_{j}}=0$. The characteristic equation of (1.1) is equivalent to (1.1). Therefore, the roots of the characteristic equation are the roots of (1.1), and vice versa. We have Lemma 1.19.

\begin{lemma}
Equation (1.1) can be transformed into its characteristic equation. The roots of (1.1) and its characteristic equation are identical.
\end{lemma}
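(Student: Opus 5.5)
The statement to prove is Lemma 1.19: equation (1.1) is equivalent to its characteristic equation, and the two have the same roots. I would show this by direct algebraic manipulation, splitting into three classes of points: general points of $\Delta$, finite poles and zeros of (1.1), and the point at infinity. Throughout I assume, as in Lemma 1.2, that coincident zeros and poles of $RF$ have been cancelled. Then $N(s)=\prod^{m}_{l=1}(s-z_l)^{\gamma_l}$ and $D(s)=\prod^{n}_{j=1}(s-p_j)^{\beta_j}$ have no common roots. The characteristic equation reads $K N(s)-(a+ib)D(s)=0$.

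\emph{Step 1 (points of $\Delta$).} Let $s\in\Delta$ be finite. Then $D(s)\neq 0$, so multiplying (1.1) by $D(s)$ is reversible. Hence $K N(s)/D(s)=a+ib$ holds if and only if $K N(s)=(a+ib)D(s)$, with the same positive finite $K$ in both. In particular, if $s$ satisfies (1.1) with gain $K=|1/RF(s)|$, it satisfies the characteristic equation with that gain. Conversely, if $s$ satisfies the characteristic equation for some $K>0$, dividing by $D(s)$ recovers (1.1). Taking moduli then forces $K=|D(s)/N(s)|$, which agrees with the gain definition.

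\emph{Step 2 (finite poles and zeros).} At a pole $p_j$ we have $D(p_j)=0$ and, since there is no cancellation, $N(p_j)\neq0$. The characteristic equation therefore holds exactly when $K=0$, matching Lemma 1.2(1), where poles are the roots of (1.1) with $K=0$. At a zero $z_l$ we have $N(z_l)=0$ and $D(z_l)\neq 0$. Here I would rewrite the characteristic equation as $N(s)-\frac{a+ib}{K}D(s)=0$, the form used in the proof of Lemma 1.2(2). It is satisfied in the limit $K=+\infty$, which is how (1.1) is satisfied at $z_l$ by Lemma 1.2(2).

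\emph{Step 3 (infinity) and the main obstacle.} The point at infinity is the only delicate part. I would handle it by comparing degrees, $\sum\beta_j$ versus $\sum\gamma_l$, and invoking Lemma 1.3. Its three cases (gain limit $0$, finite nonzero $A$, or $+\infty$) are reproduced identically by the characteristic equation, because the ratio of its two sides is $K\cdot RF(s)$ as $s\to\infty$.

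The main obstacle I expect is to make the convention for $K=+\infty$ rigorous at the zeros and at infinity. I would state explicitly that both equations are interpreted in the normalized form divided by $K$, with $1/K=0$, so that the equivalence holds in the same limiting sense on both sides. Combining the three steps then gives that the root sets coincide, gain for gain.
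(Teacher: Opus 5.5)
Your proposal is correct and follows the paper's route. The paper's entire argument is your Step~1: it multiplies (1.1) by $\prod_{j=1}^{n}(s-p_j)^{\beta_j}$, moves everything to one side to obtain $K\prod_{l=1}^{m}(s-z_l)^{\gamma_l}-(a+ib)\prod_{j=1}^{n}(s-p_j)^{\beta_j}=0$, and declares this equivalent to (1.1). Your Steps~2 and~3 add care the paper omits: the multiplication is reversible only off the poles, and the $K=0$ and $K=+\infty$ (i.e.\ $1/K=0$) conventions of Lemmas~1.2 and~1.3 have to be matched separately at the poles, the zeros and the point at infinity.
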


\begin{lemma}
For any finite point $s_0$ in the complex plane, the left-hand side of the characteristic equation with gain $K(s_0)$ can be rewritten as another expression: $(s-s_0)^{\gamma_0}g(s)$, where $g(s_0)\neq0$, $g(s_0)\neq\infty$ and $g^{'}(s_0)\neq\infty$.
\end{lemma}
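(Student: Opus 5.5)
Write the left-hand side of the characteristic equation at the fixed gain $K_0=K(s_0)$ as
\[
P(s)=K_0\prod_{l=1}^m (s-z_l)^{\gamma_l}-(a+ib)\prod_{j=1}^n (s-p_j)^{\beta_j}.
\]
By Lemma 1.19 the roots of this equation coincide with those of (1.1). The essential observation is that, once $K_0$ is fixed, $P$ is a polynomial in $s$ with constant complex coefficients. The statement will then follow from factoring this polynomial at $s_0$.

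The plan has three steps.

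\begin{enumerate}
\item \textbf{Nontriviality.} First check that $P\not\equiv 0$. If $P$ vanished identically, then $RF$ would equal the constant $(a+ib)/K_0$, which is excluded in Notation 1. (If $K_0=0$ or $K_0=+\infty$, the constant factor in front of the surviving product is nonzero, so $P$ is again nontrivial.)

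\item \textbf{Factorization.} Let $\gamma_0\ge 0$ be the order of vanishing of $P$ at $s_0$. We have $\gamma_0\ge 1$ exactly when $s_0$ is a root, which is the case for $s_0$ on the root locus with gain $K(s_0)$. Since $\mathbb{C}[s]$ is a unique factorization domain, or equivalently by Taylor expansion of $P$ about $s_0$,
\[
P(s)=\sum_{k\ge \gamma_0} c_k (s-s_0)^k=(s-s_0)^{\gamma_0}g(s),\qquad g(s)=\sum_{k\ge\gamma_0}c_k(s-s_0)^{k-\gamma_0}.
\]

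\item \textbf{Properties of $g$.} The polynomial $g$ satisfies $g(s_0)=c_{\gamma_0}\neq 0$ by the definition of the order $\gamma_0$. Because $g$ is a polynomial, both $g$ and $g'$ are entire. Hence $g(s_0)$ and $g'(s_0)$ are finite, which gives $g(s_0)\neq\infty$ and $g'(s_0)\neq\infty$.
\end{enumerate}

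The only real subtlety is the dependence on $s$ hidden in the notation $K(s)$. The statement requires the gain to be frozen at the constant value $K(s_0)$. If $K$ were allowed to vary with $s$, the expression would no longer be a polynomial, and in general it would not even be holomorphic, since $K=|1/RF|$. I will therefore state explicitly that $K(s_0)$ is treated as a constant throughout, so that $P$ really is a polynomial.

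A second point concerns the choice of $a+ib$. It should be taken as the unit complex value of $RF(s_0)$ when $s_0\in\Delta$, so that $s_0$ is a genuine root. For zeros and poles of $RF$, the factorization still holds, with $\gamma_0$ equal to the corresponding multiplicity. In every case the factorization itself is unconditional.
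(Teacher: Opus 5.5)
Your proposal is correct and follows essentially the same route as the paper: freeze the gain at $K(s_0)$, note that the left-hand side of the characteristic equation is then a polynomial vanishing at $s_0$, factor out $(s-s_0)^{\gamma_0}$ to full multiplicity, and use that the quotient $g$ is a polynomial, so $g(s_0)$ and $g'(s_0)$ are finite. Your additions make explicit what the paper's proof only asserts: you check $P\not\equiv 0$, so that $\gamma_0$ is well defined, and you derive $g(s_0)=c_{\gamma_0}\neq 0$ from the Taylor expansion.
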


\begin{proof}
By Lemma 1.17, the finite point $s_0$ lies on the root loci of (1.1). Substituting gain $K(s_0)$ of the finite point $s_0$ into the characteristic equation yields: $(a_0+ib_0)\frac{\prod^{n}_{j=1}(s_0-p_{j})^{\beta_{j}}}
{\prod^{m}_{l=1}(s_0-z_{l})^{\gamma_{l}}}\prod^{m}_{l=1}(s-z_{l})^{\gamma_{l}}-
(a+ib)\prod^{n}_{j=1}(s-p_{j})^{\beta_{j}}=0$.
Since $s_0$ is a root of the characteristic equation, the left-hand side of the characteristic equation can be expressed as: $(s-s_0)^{\gamma_0}g(s)$. $g(s_0)\neq0$. $(a_0+ib_0)\frac{\prod^{n}_{j=1}(s_0-p_{j})^{\beta_{j}}}
{\prod^{m}_{l=1}(s_0-z_{l})^{\gamma_{l}}}\prod^{m}_{l=1}(s-z_{l})^{\gamma_{l}}-
(a+ib)\prod^{n}_{j=1}(s-p_{j})^{\beta_{j}}=(s-s_0)^{\gamma_0}g(s)=0$.

Because the function $(a_0+ib_0)\frac{\prod^{n}_{j=1}(s_0-p_{j})^{\beta_{j}}}
{\prod^{m}_{l=1}(s_0-z_{l})^{\gamma_{l}}}\prod^{m}_{l=1}(s-z_{l})^{\gamma_{l}}-
(a+ib)\prod^{n}_{j=1}(s-p_{j})^{\beta_{j}}$ on the left side of the characteristic equation is a polynomial. After factoring out $(s-s_0)^{\gamma_0}$ in the polynomial $(a_0+ib_0)\frac{\prod^{n}_{j=1}(s_0-p_{j})^{\beta_{j}}}
{\prod^{m}_{l=1}(s_0-z_{l})^{\gamma_{l}}}\prod^{m}_{l=1}(s-z_{l})^{\gamma_{l}}-
(a+ib)\prod^{n}_{j=1}(s-p_{j})^{\beta_{j}}$, the remaining function $g(s)$ is still a polynomial. Polynomials do not have finite poles, and their derivatives are also polynomials. Therefore, $g(s)$ and $g^{'}(s)$ cannot have finite poles, $g(s_0)\neq\infty$ and $g^{'}(s_0)\neq\infty$.
\end{proof}
\begin{lemma}
Except for the zeros of (1.1), for all other points in the extended complex plane, (1.3) holds.
\begin{equation}
TT1(s)=TT2(s).
\end{equation}

In which, $TT1(s)=(a_0+ib_0)\prod^{n}_{j=1}(s_0-p_{j})^{\beta_{j}}(\prod^{m}_{l=1}(s-z_{l})^{\gamma_{l}})^{'}-
(a+ib)\prod^{n}_{j=1}(s-p_{j})^{\beta_{j}})^{'}
\prod^{m}_{l=1}(s_0-z_{l})^{\gamma_{l}}$.

$TT2(s)=((s-s_0)^{\gamma_0}g(s))^{'}
\prod^{m}_{l=1}(s_0-z_{l})^{\gamma_{l}}$.
\end{lemma}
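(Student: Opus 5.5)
The plan is to differentiate the identity established in Lemma 1.20 and then clear one denominator. Lemma 1.20 gives, for the finite point $s_0$ with gain $K(s_0)$, the polynomial identity
\[
(a_0+ib_0)\frac{\prod^{n}_{j=1}(s_0-p_{j})^{\beta_{j}}}{\prod^{m}_{l=1}(s_0-z_{l})^{\gamma_{l}}}\prod^{m}_{l=1}(s-z_{l})^{\gamma_{l}}-(a+ib)\prod^{n}_{j=1}(s-p_{j})^{\beta_{j}}=(s-s_0)^{\gamma_0}g(s).
\]
Here $g$ is a polynomial. Both sides are therefore polynomials in $s$ and agree identically, so their derivatives agree identically on all of $\mathbb{C}$.

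The steps, in order, are as follows.

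\begin{enumerate}
\item Differentiate both sides termwise. The coefficient $(a_0+ib_0)\prod_j(s_0-p_j)^{\beta_j}/\prod_l(s_0-z_l)^{\gamma_l}$ depends only on the fixed point $s_0$, not on $s$. Differentiation therefore acts only on $\prod_l(s-z_l)^{\gamma_l}$ and $\prod_j(s-p_j)^{\beta_j}$ on the left, and on $(s-s_0)^{\gamma_0}g(s)$ on the right.
\item Multiply both sides by the constant $\prod^{m}_{l=1}(s_0-z_{l})^{\gamma_{l}}$. On the left this cancels the denominator of the coefficient, and the result is exactly $TT1(s)$. On the right the result is exactly $TT2(s)$.
\item Conclude that (1.3) holds as an identity of polynomials, hence at every finite point. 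At the point at infinity both sides are polynomials of the same degree with equal coefficients, so the identity persists in the extended sense, and Lemma 1.3 covers the behaviour there.
\end{enumerate}

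The only delicate point, and the one to handle first, is the role of the zeros $z_l$. The constant $\prod_l(s_0-z_l)^{\gamma_l}$ must be finite and nonzero for Lemma 1.20 to apply at $s_0$ and for step 2 to be legitimate. This is guaranteed exactly when $s_0$ is not a zero of (1.1), since at a zero the gain $K(s_0)=+\infty$ and the characteristic equation with gain $K(s_0)$ is not defined.

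I would therefore read the exclusion in the statement as restricting the base point $s_0$ to be a non-zero of (1.1). I would also note that if $s_0$ is a pole, then $K(s_0)=0$ and the identity still holds trivially. Once this is settled, the remaining argument is the routine differentiation above. One should additionally keep in mind the misplaced parenthesis in the displayed $TT1$: it is to be read as $(a+ib)\bigl(\prod_j(s-p_j)^{\beta_j}\bigr)'\prod_l(s_0-z_l)^{\gamma_l}$.
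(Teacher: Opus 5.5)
Your proposal is correct and matches the paper's proof. Like the paper, you differentiate the polynomial identity from Lemma 1.20, whose coefficient depends only on $s_0$, and then multiply both sides by $\prod_{l=1}^{m}(s_0-z_l)^{\gamma_l}$, which is nonzero precisely because $s_0$ is not a zero of (1.1); your readings of the exclusion as a condition on the base point $s_0$ and of the misplaced parenthesis in $TT1$ agree with how the paper uses the lemma.
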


\begin{proof}
Differentiate the equation: $(a_0+ib_0)\frac{\prod^{n}_{j=1}(s_0-p_{j})^{\beta_{j}}}
{\prod^{m}_{l=1}(s_0-z_{l})^{\gamma_{l}}}\prod^{m}_{l=1}(s-z_{l})^{\gamma_{l}}-
(a+ib)\prod^{n}_{j=1}(s-p_{j})^{\beta_{j}}=(s-s_0)^{\gamma_0}g(s)$.

$(a_0+ib_0)\frac{\prod^{n}_{j=1}(s_0-p_{j})^{\beta_{j}}}
{\prod^{m}_{l=1}(s_0-z_{l})^{\gamma_{l}}}
(\prod^{m}_{l=1}((s-z_{l})^{\gamma_{l}}))^{'}-(a+ib)
(\prod^{n}_{j=1}((s-p_{j})^{\beta_{j}}))^{'}
=((s-s_0)^{\gamma_0}g(s))^{'}$.

Since we exclude the zeros of (1.1), at the finite point $s_0$, the factor ${\prod^{m}_{l=1}(s_0-z_{l})^{\gamma_{l}}}$ of (1.1) is non-zero. Multiplying both sides by this factor ${\prod^{m}_{l=1}(s_0-z_{l})^{\gamma_{l}}}$ yields: $(a_0+ib_0)\prod^{n}_{j=1}(s_0-p_{j})^{\beta_{j}}(\prod^{m}_{l=1}(s-z_{l})^{\gamma_{l}})^{'}-
(a+ib)(\prod^{n}_{j=1}(s-p_{j})^{\beta_{j}})^{'}
\prod^{m}_{l=1}(s_0-z_{l})^{\gamma_{l}}=(\gamma_0(s-s_0)^{\gamma_0-1}g(s)+(s-s_0)^{\gamma_0}g^{'}(s))
\prod^{m}_{l=1}(s_0-z_{l})^{\gamma_{l}}$.

Let $TT1(s)=(a_0+ib_0)\prod^{n}_{j=1}(s_0-p_{j})^{\beta_{j}}(\prod^{m}_{l=1}(s-z_{l})^{\gamma_{l}})^{'}-
(a+ib)(\prod^{n}_{j=1}(s-p_{j})^{\beta_{j}})^{'}
\prod^{m}_{l=1}(s_0-z_{l})^{\gamma_{l}}$.

$TT2(s)=((s-s_0)^{\gamma_0}g(s))^{'}
\prod^{m}_{l=1}(s_0-z_{l})^{\gamma_{l}}$. Using these two new expressions, we obtain (1.3).
\end{proof}
The breakaway points must lie on the root loci sharing the same unit complex value. Therefore, in $TT1(s)$, the sign $a_0+ib_0$ before $\prod^{n}_{j=1}(s_0-p_{j})^{\beta_{j}}(\prod^{m}_{l=1}(s-z_{l})^{\gamma_{l}})^{'}$ is identical to the sign $a+ib$ before $(\prod^{n}_{j=1}(s-p_{j})^{\beta_{j}})^{'}\prod^{m}_{l=1}(s_0-z_{l})^{\gamma_{l}}$.

 The gain function is given by: $K(s)=(a+ib)\frac{\prod^{n}_{j=1}  (s-p_j)^{\beta_j}}{\prod^{m}_{l=1} (s-z_l)^{\gamma_l}}$.
\begin{lemma}
Let $s_0=(\sigma_0+it_0)\in\Delta $. If the finite point $s_0$ satisfies equation (1.4),
\begin{equation}
\frac{dK(s)}{ds}=0.
\end{equation}
Then, $TT1(s_0)=0$ and $((s-s_0)^{\gamma_0}g(s))^{'}\mid{_{s=s_0}}=0$.
\end{lemma}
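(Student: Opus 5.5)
Write $N(s)=\prod_{l=1}^{m}(s-z_l)^{\gamma_l}$ and $D(s)=\prod_{j=1}^{n}(s-p_j)^{\beta_j}$. The gain function is then $K(s)=(a+ib)D(s)/N(s)$. The plan is to compute $dK/ds$ with the quotient rule, evaluate at $s_0$, and compare the result with $TT1(s_0)$ from Lemma 1.21. The second assertion then follows from the identity (1.3).

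First, since $s_0\in\Delta$, we have $N(s_0)\neq 0$ and $D(s_0)\neq 0$. Hence
$$K'(s_0)=(a+ib)\,\frac{D'(s_0)N(s_0)-D(s_0)N'(s_0)}{N(s_0)^2}.$$
Because $N(s_0)^2$ is finite and nonzero and $a+ib\neq 0$, condition (1.4) is equivalent to
$$D(s_0)N'(s_0)-D'(s_0)N(s_0)=0.$$

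Next, evaluate $TT1$ at $s=s_0$. By the remark after Lemma 1.21, a breakaway point lies on root loci sharing one unit complex value, so we take $a_0+ib_0=a+ib$. This gives
$$TT1(s_0)=(a+ib)\bigl(D(s_0)N'(s_0)-D'(s_0)N(s_0)\bigr).$$
Here $(a_0+ib_0)D(s_0)$ multiplies $N'(s)$, and $(a+ib)N(s_0)$ multiplies $D'(s)$. Comparing with the previous step, (1.4) forces $TT1(s_0)=0$.

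Finally, Lemma 1.21 applies because $s_0$ is not a zero of (1.1). Identity (1.3) gives
$$TT2(s_0)=\bigl((s-s_0)^{\gamma_0}g(s)\bigr)'\big|_{s=s_0}\,N(s_0)=TT1(s_0)=0.$$
Since $N(s_0)\neq 0$, dividing by it yields $\bigl((s-s_0)^{\gamma_0}g(s)\bigr)'\big|_{s=s_0}=0$.

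As a consistency check, expand the derivative as $\gamma_0(s-s_0)^{\gamma_0-1}g(s)+(s-s_0)^{\gamma_0}g'(s)$, using Lemma 1.20 ($g(s_0)\neq0$, and $g$, $g'$ finite at $s_0$). This expression vanishes at $s_0$ exactly when $\gamma_0\ge 2$, which matches the breakaway interpretation.

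The main obstacle is the bookkeeping of the constants in $TT1$. The unit complex value $a_0+ib_0$ attached to $s_0$ must coincide with the $a+ib$ in the characteristic equation. Otherwise $TT1(s_0)$ is only a combination of $D(s_0)N'(s_0)$ and $D'(s_0)N(s_0)$ with different unimodular coefficients, and it would not be proportional to $K'(s_0)$. I would justify the equality from the definition of the characteristic equation with gain $K(s_0)$. The equation $K(s_0)N(s_0)=(a+ib)D(s_0)$ is exactly what makes $s_0$ a root, so the coefficient $(a_0+ib_0)D(s_0)/N(s_0)$ equals $(a+ib)D(s_0)/N(s_0)$ in modulus and in phase. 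That identification is what I would use.
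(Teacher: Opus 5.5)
Your proposal is correct and follows the paper's proof essentially step for step. You compute $K'(s_0)$ by the quotient rule, identify $a_0+ib_0$ with $a+ib$ so that $TT1(s_0)$ becomes a nonzero multiple of the numerator of $K'(s_0)$, and then use (1.3) together with $N(s_0)\neq 0$ to obtain the second claim. Your closing justification of $a_0+ib_0=a+ib$ is cleaner than the paper's implicit appeal to the breakaway remark: substituting $s=s_0$ into the characteristic equation gives $(a_0+ib_0)D(s_0)=(a+ib)D(s_0)$ with $D(s_0)\neq 0$, whereas at this stage $s_0$ is not yet known to be a breakaway point.
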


\begin{proof}
Substituting the finite point $s_0$ into (1.4), according to the requirement of this lemma, we obtain:

$\frac{dK(s_0)}{ds}
=(a_0+ib_0)\frac{(\prod^{n}_{j=1}(s-p_{j})^{\beta_{j}})^{'}\mid{_{s=s_0}}
\prod^{m}_{l=1}(s_0-z_{l})^{\gamma_{l}}-
\prod^{n}_{j=1}(s_0-p_{j})^{\beta_{j}}
}{(\prod^{m}_{l=1}(s_0-z_{l})^{\gamma_{l}})^{2}}$

$\frac{(\prod^{m}_{l=1}((s-z_{l})^{\gamma_{l}}))^{'}\mid{_{s=s_0}}}{}=0$.

From the result of $\frac{dK(s_0)}{ds}$, substituting $s_0$ into the left side of (1.3) yields: $TT1(s_0)=(a_0+ib_0)(\prod^{n}_{j=1}(s_0-p_{j})^{\beta_{j}}(\prod^{m}_{l=1}(s-z_{l})^{\gamma_{l}})^{'}\mid{_{s=s_0}}-
(\prod^{n}_{j=1}(s-p_{j})^{\beta_{j}})^{'}\mid{_{s=s_0}}
\prod^{m}_{l=1}(s_0-z_{l})^{\gamma_{l}})=0.$
Thus, the left side of (1.3) equals 0.

For the right side of (1.3), consider the expression: $TT2(s)=((s-s_0)^{\gamma_0}g(s))^{'}
\prod^{m}_{l=1}(s_0-z_{l})^{\gamma_{l}}$. Since the factor $\prod^{m}_{l=1}(s-z_{l})^{\gamma_{l}}$  has no finite pole, and $s_0$ is neither a zero nor a pole of the function $\prod^{m}_{l=1}(s-z_{l})^{\gamma_{l}}$, if $TT2(s_0)=((s-s_0)^{\gamma_0}g(s))^{'}
\prod^{m}_{l=1}(s_0-z_{l})^{\gamma_{l}}=0$, we can remove the factor $\prod^{m}_{l=1}(s_0-z_{l})^{\gamma_{l}}$. This yields: $((s-s_0)^{\gamma_0}g(s))^{'}\mid{_{s=s_0}}=0$.
\end{proof}

\begin{lemma}
Let $s_0=(\sigma_0+it_0)\in\Delta $. If the finite point $s_0$ satisfies the equation (1.4), then $s_0$ is the finite breakaway point of the root loci of (1.1).
\end{lemma}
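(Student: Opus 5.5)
The plan is to start from Lemma 1.22. Since $s_0\in\Delta$ satisfies (1.4), that lemma gives $((s-s_0)^{\gamma_0}g(s))'\mid_{s=s_0}=0$. Here $(s-s_0)^{\gamma_0}g(s)$ is the left-hand side of the characteristic equation at gain $K(s_0)$ and unit value $a_0+ib_0=a+ib$. The unit value is the same because, as remarked after Lemma 1.21, both terms carry the same unit complex value.

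The first step is to expand the derivative as $\gamma_0(s-s_0)^{\gamma_0-1}g(s)+(s-s_0)^{\gamma_0}g'(s)$. If $\gamma_0=1$, the value at $s_0$ is $g(s_0)$, which is nonzero by Lemma 1.20; the second term vanishes because $g'(s_0)\neq\infty$. This contradicts Lemma 1.22, so $\gamma_0\ge 2$. Hence $s_0$ is a root of multiplicity at least two of the characteristic polynomial at gain $K(s_0)$. By Lemma 1.19 it is a repeated root of (1.1) with that gain.

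The second step is to show that a repeated root is a point where at least two root loci with the same unit complex value $a+ib$ meet. I would do this with a local analysis. Near $s_0$ write $K(s)=K(s_0)+c(s-s_0)^{k}+O((s-s_0)^{k+1})$ with $c\neq0$ and $k\ge2$. This is possible because $K'(s_0)=0$ and $K$ is analytic and nonconstant near $s_0\in\Delta$.

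The root locus of unit value $a+ib$ through $s_0$ is the level set $\{s:\ K(s)\in\mathbb{R}_{>0}\}$, where $K$ is the gain function normalized by $a+ib$ (Theorem 1.8). Locally this is the preimage of the real line under $K$. Since $K$ has order $k$ at $s_0$, it is locally conjugate to $w\mapsto K(s_0)+w^k$. So the preimage consists of $2k\ge4$ arcs emanating from $s_0$ at equal angles $\pi/k$, forming $k\ge 2$ smooth curves crossing at $s_0$. Each of these arcs lies on a root locus of the fixed unit value $a+ib$, and they are distinct branches meeting at $s_0$. By Definition 1.18, $s_0$ is therefore a breakaway point, and finiteness is given.

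The main obstacle is rigor in the second step. The paper defines root loci somewhat informally, so I must ensure that the distinct local arcs count as distinct (or intersecting) root loci in the sense of Definition 1.18, rather than a single curve.

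To handle this, I would use the local normal form: locally, $K$ is $K(s_0)+\phi(s)^k$ with $\phi$ a conformal coordinate, obtained by taking a $k$-th root of $(K-K(s_0))/c$. Then the curve $\mathrm{Im}\,\phi^k=0$ visibly consists of $k$ lines through the origin in the $\phi$-coordinate, which is a genuine crossing. If that normal form seems too heavy for the paper's style, I would instead argue via Lemma 1.20 and the multiplicity $\gamma_0\ge2$. By continuity of roots of polynomials in the gain parameter, the $\gamma_0$ roots coinciding at $s_0$ for gain $K(s_0)$ separate for nearby gains. This yields $\gamma_0\ge2$ root-locus branches passing through $s_0$.
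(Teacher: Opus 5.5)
Your proposal is correct. Your first step is exactly the paper's proof. Lemma 1.22 together with $g(s_0)\neq 0$ rules out $\gamma_0=1$, so $s_0$ is a multiple root of the characteristic equation and, by Lemma 1.19, of (1.1). The paper also treats a vacuous case $0<\gamma_0<1$, which you rightly skip because $\gamma_0$ is a positive integer.

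The difference lies in the last implication. The paper simply asserts that a multiple root of (1.1) is ``the intersection point of at least two root loci.'' Your local normal form $K=K(s_0)+c\,\phi^{k}$, with $\phi$ conformal at $s_0$, actually proves this and gives the precise local picture:
\begin{itemize}
\item $k$ smooth arcs cross at $s_0$ at angles $\pi/k$;
\item the gain increases away from $s_0$ on $k$ of the $2k$ half-arcs and decreases on the other $k$;
\item hence $k$ distinct gain-parametrized branches genuinely pass through $s_0$.
\end{itemize}
This also settles your worry about Definition 1.18.

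The characteristic polynomial at gain $K(s_0)$ equals $\prod_{l}(s-z_l)^{\gamma_l}\,(K(s_0)-K(s))$, so your order $k$ is exactly the paper's $\gamma_0$. Your second step therefore proves the lemma on its own, directly from $K'(s_0)=0$, and also yields the multiplicity count behind Theorem 1.40. This makes Lemmas 1.20--1.22 unnecessary. The price is that you invoke the local mapping theorem for analytic functions instead of the paper's purely algebraic factorization.

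If you use your fallback argument instead, continuity of roots alone is not enough to make the coincident roots separate for nearby gains. You also need $s_0$ to be an isolated zero of $K'$, which holds because $K$ is analytic and nonconstant near $s_0$.
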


\begin{proof}
We analyse three cases on the basis of $\gamma_0>0$:

1.  Case $\gamma_0=1$:

If $\gamma_0=1$,  $g^{'}(s_0)\neq\infty$, $(s_0-s_0)^{\gamma_0}g^{'}(s_0)
=0$. $g(s_0)\neq 0$. $(g(s_0)+(s_0-s_0)^{\gamma_0}g^{'}(s_0))=g(s_0)\neq 0$.

$((s-s_0)^{\gamma_0}g(s))^{'}\mid{_{s=s_0}}=(g(s_0)+(s_0-s_0)^{\gamma_0}g^{'}(s_0))
=g(s_0)\neq 0$. By Lemma 1.22, $((s-s_0)^{\gamma_0}g(s))^{'}\mid{_{s=s_0}}=0$. Therefore, This leads to a contradiction. Hence, $\gamma_0=1$ is invalid.

2. Case $1>\gamma_0>0$:

If $1>\gamma_0>0$,  $g^{'}(s_0)\neq\infty$, $(s_0-s_0)^{\gamma_0}g^{'}(s_0)
=0$. $\gamma_0(s_0-s_0)^{\gamma_0-1}=\infty$, $g(s_0)\neq 0$. $((s-s_0)^{\gamma_0}g(s))^{'}\mid{_{s=s_0}}=(\gamma_0(s_0-s_0)^{\gamma_0-1}g(s_0)+(s_0-s_0)^{\gamma_0}g^{'}(s_0))=\infty$.

$((s-s_0)^{\gamma_0}g(s))^{'}\mid{_{s=s_0}}=\infty$.

By Lemma 1.22, $((s-s_0)^{\gamma_0}g(s))^{'}\mid{_{s=s_0}}=0$. Therefore, This leads to a contradiction. Hence, $1>\gamma_0>0$ is invalid.

3. Case $\gamma_0>1$:

Because $\gamma_0>1$, $((s-s_0)^{\gamma_0}g(s))^{'}\mid{_{s=s_0}}=\gamma_0(s_0-s_0)^{\gamma_0-1}g(s_0)+(s_0-s_0)^{\gamma_0}g^{'}(s_0)=0$ holds. The left-hand side of (1.3) is equal to the right-hand side of (1.3). (1.3) holds. Hence, $\gamma_0>1$ is valid.

By Lemma 1.19, the roots of (1.1) and its characteristic equation are identical.
The left-hand side of the characteristic equation with gain $K(s_0)$ is rewritten as another expression: $(s-s_0)^{\gamma_0}g(s)$. $s_0$ is a root of (1.1).  $\gamma_0$ is a positive integer. Therefore, the condition $\gamma_0>1$ implies that the point $s_0$ is a multiple root of (1.1), and the finite point $s_0$ is the intersection point of at least two root loci. The finite point $s_0$ is the finite breakaway point of (1.1).

Only $\gamma_0>1$ is valid, confirming that $s_0$ is a finite breakaway point.
\end{proof}
\begin{lemma}
Let $s_0=(\sigma_0+it_0)\in\Delta $. If $s_0$ is a finite breakaway point of the root loci of (1.1), then $s_0$ must satisfy (1.4).
\end{lemma}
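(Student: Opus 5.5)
The plan is to run the argument of Lemma 1.23 backwards, with the characteristic equation of Lemma 1.19 and the factorization of Lemma 1.20 as the main tools. Let $s_0\in\Delta$ be a finite breakaway point. By Definition 1.18, $s_0$ lies on at least two root loci. By Theorem 1.15 these loci share the same unit complex value $a_0+ib_0$, so $s_0$ satisfies (1.1) with this value and with gain $K_0=K(s_0)$, which is positive and finite because $s_0$ is neither a zero nor a pole.

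\textbf{Step 1 (intersection gives a multiple root).} Form the characteristic polynomial
$$P(s)=K_0\prod^{m}_{l=1}(s-z_l)^{\gamma_l}-(a_0+ib_0)\prod^{n}_{j=1}(s-p_j)^{\beta_j}.$$
By Lemma 1.20 we write $P(s)=(s-s_0)^{\gamma_0}g(s)$ with $g(s_0)\neq0$. The key claim is $\gamma_0\ge 2$.

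To justify it, work in a neighbourhood of $s_0$. For each gain $K$ near $K_0$, the roots of the characteristic equation near $s_0$ are, by Lemma 1.19, exactly the points near $s_0$ that lie on the root locus with value $a_0+ib_0$ and gain $K$. If $\gamma_0=1$, then $P'(s_0)\neq0$. The implicit function theorem then gives a unique root $s(K)$, depending analytically on $K$, through $s_0$. So only one root-locus arc of value $a_0+ib_0$ passes through $s_0$, which contradicts the presence of two intersecting loci. Hence $\gamma_0\ge2$.

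\textbf{Step 2 (multiple root gives $dK/ds=0$).} Since $\gamma_0\ge2$, we have $P(s_0)=0$ and $P'(s_0)=0$. Equivalently, $TT2(s_0)=0$, and by Lemma 1.21 also $TT1(s_0)=0$. Write $N(s)=\prod(s-z_l)^{\gamma_l}$ and $D(s)=\prod(s-p_j)^{\beta_j}$, with $N(s_0)\neq0$. From $P(s_0)=0$ we get $K_0=(a_0+ib_0)D(s_0)/N(s_0)$. Substituting this into $P'(s_0)=0$ and multiplying by $N(s_0)$ gives
$$(a_0+ib_0)\bigl(D(s_0)N'(s_0)-D'(s_0)N(s_0)\bigr)=0.$$
The numerator of the derivative of the gain function $K(s)=(a_0+ib_0)D/N$ is $D'N-DN'$, which is therefore zero at $s_0$. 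Dividing by $N(s_0)^2\neq0$ yields $dK(s_0)/ds=0$, which is (1.4).

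\textbf{Main obstacle.} The delicate point is Step 1: making precise that ``two root loci intersect at $s_0$'' forces a double root. One has to handle the fact that loci of different degrees $2q\pi+\alpha$ but the same unit value all belong to the same characteristic equation. Phrased via the implicit function theorem, the argument is that a simple root yields locally a single smooth arc $K\mapsto s(K)$, on which the phase is constant. The ``two loci'' must then be two genuinely distinct arcs, and two such arcs cannot both pass through $s_0$ when $P'(s_0)\neq0$. I would make this rigorous by noting that near a simple root the map $s\mapsto 1/RF(s)$ is locally biholomorphic. The preimage of the ray $\arg=-\alpha$ is therefore a single arc, so any two loci through $s_0$ coincide locally.
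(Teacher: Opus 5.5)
Your proposal is correct and follows the paper's route: breakaway point $\Rightarrow$ $s_0$ is a multiple root of the characteristic equation ($\gamma_0>1$) $\Rightarrow$ $TT2(s_0)=0$ $\Rightarrow$ $TT1(s_0)=0$ by Lemma 1.21 $\Rightarrow$ $dK(s_0)/ds=0$. The one difference is that the paper simply asserts the first implication, whereas you justify it with the implicit function theorem (equivalently, local biholomorphy of $1/RF$ when $(1/RF)'(s_0)\neq0$); that step is the only nontrivial content of the lemma.
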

\begin{proof}
From (1.3), we obtain:

$(a_0+ib_0)\frac{\prod^{n}_{j=1}(s_0-p_{j})^{\beta_{j}}}
{\prod^{m}_{l=1}(s_0-z_{l})^{\gamma_{l}}}
(\prod^{m}_{l=1}((s-z_{l})^{\gamma_{l}}))^{'}-
(a+ib)(\prod^{n}_{j=1}((s-p_{j})^{\beta_{j}}))^{'}
=\gamma_0(s-s_0)^{\gamma_0-1}g(s)+(s-s_0)^{\gamma_0}g^{'}(s)$. $g(s_0)\neq\infty$ and  $g^{'}(s_0)\neq\infty$.

Substituting $s_0$ into the right-hand side of the above equation, since $s_0$ is the finite breakaway point of (1.1), $s_0$ is a multiple root of the characteristic equation. Therefore, $\gamma_0>1$. $\gamma_0(s_0-s_0)^{\gamma_0-1}g(s_0)+(s_0-s_0)^{\gamma_0}g^{'}(s_0)=0$. $TT2(s_0)=0$.
By Lemma 1.21, $TT1(s_0)=0$, we obtain: $(a_0+ib_0)(\prod^{n}_{j=1}(s_0-p_{j})^{\beta_{j}}
(\prod^{m}_{l=1}(s-z_{l})^{\gamma_{l}})^{'}\mid{_{s=s_0}}-
\prod^{m}_{l=1}(s_0-z_{l})^{\gamma_{l}}
(\prod^{n}_{j=1}(s-p_{j})^{\beta_{j}})^{'}\mid{_{s=s_0}})
=0$.
Differentiating the gain function and substituting $s_0$ into the differential expression, we obtain:

$\frac{dK(s_0)}{ds}
=(a_0+ib_0)\frac{(\prod^{n}_{j=1}(s-p_{j})^{\beta_{j}})^{'}\mid{_{s=s_0}}
\prod^{m}_{l=1}(s_0-z_{l})^{\gamma_{l}}-
\prod^{n}_{j=1}(s_0-p_{j})^{\beta_{j}}
}{(\prod^{m}_{l=1}(s_0-z_{l})^{\gamma_{l}})^{2}}$

$\frac{(\prod^{m}_{l=1}((s-z_{l})^{\gamma_{l}}))^{'}\mid{_{s=s_0}}}{}
=0$.
\end{proof}

Lemma 1.23 proves the sufficient condition for breakaway points, whereas Lemma 1.24 proves the necessary condition. Combining these two lemmas, we obtain the following sufficient and necessary conditions for breakaway points.

\begin{theorem}
Let $s_0=(\sigma_0+it_0)\in\Delta $. The finite point $s_0$ is a finite breakaway point of the root loci of (1.1) if and only if $s_0$ satisfies (1.4).
\end{theorem}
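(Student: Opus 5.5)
The statement is the conjunction of Lemma 1.23 and Lemma 1.24, so the plan is to assemble those two lemmas. Before doing so I will make the local structure at $s_0$ explicit, so that both directions rest on a single algebraic identity.

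Fix $s_0\in\Delta$ and write $N(s)=\prod_{l=1}^{m}(s-z_l)^{\gamma_l}$ and $D(s)=\prod_{j=1}^{n}(s-p_j)^{\beta_j}$. Let $c=D(s_0)/N(s_0)$. Since $s_0\in\Delta$, this is a finite nonzero constant. By Lemma 1.17, $s_0$ lies on a root locus whose unit complex value is $a+ib=cN(s_0)/D(s_0)$ up to normalisation. The characteristic polynomial at this gain is
$$P(s)=cN(s)-D(s).$$
By Lemma 1.20 it factors as $P(s)=(s-s_0)^{\gamma_0}g(s)$ with $g(s_0)\neq0$, where $\gamma_0\ge1$ is an integer. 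Following Definition 1.18 and the way it is used in Lemmas 1.23 and 1.24, $s_0$ is a breakaway point exactly when $\gamma_0\ge2$. That is, at least two branches of the equal-gain locus of the same unit complex value pass through $s_0$.

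The key computation is to differentiate $P$ and evaluate at $s_0$:
$$P'(s_0)=cN'(s_0)-D'(s_0)=\frac{D(s_0)N'(s_0)-D'(s_0)N(s_0)}{N(s_0)}.$$
Up to the nonzero factor $-N(s_0)$ and the unit constant, this is the derivative $K'(s_0)=(D'N-DN')(s_0)/N(s_0)^2$ of the gain function $K(s)=(a+ib)D/N$. Hence $K'(s_0)=0$ if and only if $P'(s_0)=0$. This is precisely the content of Lemma 1.21 ($TT1=TT2$) together with Lemma 1.22.

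Both directions then follow from the same fact: $P(s_0)=0$ always holds, and $P'(s_0)=0$ holds if and only if $\gamma_0\ge2$.

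\textbf{Sufficiency (Lemma 1.23).} If (1.4) holds, then $P'(s_0)=0$. This excludes $\gamma_0=1$, because in that case $P'(s_0)=g(s_0)\neq0$. So $\gamma_0\ge2$, and $s_0$ is a breakaway point.

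\textbf{Necessity (Lemma 1.24).} If $s_0$ is a breakaway point, then $\gamma_0\ge2$. Hence $P'(s_0)=0$, and therefore $K'(s_0)=0$, which is (1.4).

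Combining the two lemmas gives the theorem.

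The main obstacle is not the algebra but justifying the identification of "intersection of at least two root loci at $s_0$" with "$s_0$ is a multiple root of $P$." The direction from a multiple root to several branches needs a local argument. Near $s_0$ the equation $cN(s)/D(s)=\text{const}$ has the form $(s-s_0)^{\gamma_0}h(s)=\epsilon$ with $h(s_0)\neq0$. The real-gain level set $\arg(N/D)=\text{const}$ then consists of $2\gamma_0$ arcs emanating from $s_0$, which can be seen by taking a local $\gamma_0$-th root coordinate. For the converse direction I would use Theorem 1.15: any loci meeting at $s_0$ share the unit value. Then, if $\gamma_0=1$, $N/D$ is conformal near $s_0$, so the preimage of a ray is a single smooth arc and no genuine intersection occurs. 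I would present this local-coordinate argument explicitly, since the earlier lemmas treat this step only informally.
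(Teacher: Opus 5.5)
Your proposal is correct and follows the paper's route. Like the paper, you combine Lemma 1.23 and Lemma 1.24 via the factorization $(s-s_0)^{\gamma_0}g(s)$ of the characteristic polynomial, use the fact that its derivative at $s_0$ is a nonzero multiple of $K'(s_0)$ (the paper's $TT1=TT2$), and characterize breakaway points by $\gamma_0\ge 2$. Your local $\gamma_0$-th root coordinate argument, which identifies a multiple root with a genuine crossing of at least two loci, supplies a step the paper only asserts, and treating $\gamma_0$ as an integer makes the paper's case $0<\gamma_0<1$ unnecessary.
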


When the unit complex values of the root loci are distinct, these root loci cannot intersect. When the unit complex values of the root loci are identical, (under these conditions)the root loci may intersect. The intersection points at finite locations are finite breakaway points. These intersected root loci separate from these finite breakaway points again. Multiple zeros and poles of the root locus equation cannot exhibit this property. Thus, the finite breakaway points of the root loci are distinct from the multiple zeros and poles of (1.1).

\begin{theorem}
Let $s=(\sigma+it)\in\Delta$. (Except at the multiple zeros and multiple poles of (1.1).)
Then,

1. $K^{'}(s)$ and $RF^{'}(s)$ share the same zeros.

2. Each finite breakaway point of (1.1) is a finite zero of $K^{'}(s)$ and $RF^{'}(s)$.

3. Each finite zero of $K^{'}(s)$ and $RF^{'}(s)$ is a finite breakaway point of (1.1).
\end{theorem}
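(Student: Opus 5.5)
The plan is to reduce all three parts to the algebraic identity linking $K$ and $RF$, and then invoke Theorem 1.25 for parts 2 and 3. Write $RF(s)=N(s)/D(s)$ with $N(s)=\prod_{l=1}^{m}(s-z_l)^{\gamma_l}$ and $D(s)=\prod_{j=1}^{n}(s-p_j)^{\beta_j}$. The gain function is $K(s)=(a+ib)D(s)/N(s)=(a+ib)/RF(s)$. On $\Delta$ both $N$ and $D$ are nonzero and finite. Hence $RF(s)\neq 0,\infty$ there, and
\begin{equation*}
K'(s)=-(a+ib)\frac{RF'(s)}{RF(s)^{2}},\qquad K'(s)=(a+ib)\frac{D'(s)N(s)-D(s)N'(s)}{N(s)^{2}},\qquad RF'(s)=\frac{N'(s)D(s)-N(s)D'(s)}{D(s)^{2}}.
\end{equation*}

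\textbf{Part 1.} Since $a+ib\neq 0$ and $RF(s)^{2}$ is finite and nonzero on $\Delta$, the first formula gives $K'(s_0)=0$ if and only if $RF'(s_0)=0$ for every $s_0\in\Delta$. Equivalently, both derivatives vanish exactly where the polynomial $N'D-ND'$ vanishes, because the denominators $N^{2}$ and $D^{2}$ are nonzero on $\Delta$.

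The parenthetical exclusion matters here. At a multiple zero or multiple pole of (1.1), $N'D-ND'$ vanishes, but $RF$ is $0$ or $\infty$ there, so these points are not in $\Delta$. They are excluded by hypothesis, and I will remark that this is exactly why they must be removed.

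\textbf{Parts 2 and 3.} Let $s_0\in\Delta$ be a finite point. By Theorem 1.25, $s_0$ is a finite breakaway point of (1.1) if and only if $dK/ds(s_0)=0$, which is (1.4). Part 1 then transfers this equivalence from $K'$ to $RF'$:
\begin{itemize}
\item if $s_0$ is a breakaway point, Lemma 1.24 gives $K'(s_0)=0$, and Part 1 gives $RF'(s_0)=0$;
\item conversely, if $RF'(s_0)=0$ or $K'(s_0)=0$, Part 1 gives $K'(s_0)=0$, and Lemma 1.23 makes $s_0$ a breakaway point.
\end{itemize}

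\textbf{Main obstacle.} The only delicate point is ensuring the unit complex value $a+ib$ in $K(s)$ matches the one on the loci through $s_0$, as discussed before Lemma 1.22. Since $a+ib$ is a nonzero constant factor, it does not affect where $K'$ vanishes, so the zero set of $K'$ is independent of the chosen $a+ib$.

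I also have to confirm that $s_0$ being a simple zero of $N$ or $D$ causes no trouble. Such points lie outside $\Delta$, so they are excluded by hypothesis. Beyond this, the argument is a direct composition of Part 1 with Theorem 1.25.
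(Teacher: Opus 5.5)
Your proposal is correct and follows essentially the paper's route. Part 1 rests on $K'$ and $RF'$ differing only by the nonvanishing factor $-(a+ib)$ and denominators that do not vanish on $\Delta$: you use $K'=-(a+ib)RF'/RF^{2}$, while the paper compares the common numerator $N'D-ND'$. Parts 2 and 3 then combine Part 1 with Lemma 1.24 and Lemma 1.23 (Theorem 1.25), exactly as the paper does, and your observation that the exclusion of multiple zeros and poles is already implied by $s\in\Delta$ is accurate.
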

\begin{proof}
In the proof of Lemma 1.22 and Lemma 1.24, we present the expression of $\frac{dK(s)}{ds}$. Therefore, we omit it here. For $RF(s)=\frac{\prod^{m}_{l=1} (s-z_l)^{\gamma_l}}{ \prod^{n}_{j=1}  (s-p_j)^{\beta_j}}$. Its derivative is as follows:

$(RF(s))^{'}
=\frac{(\prod^{m}_{l=1} (s-z_l)^{\gamma_l})^{'}\prod^{n}_{j=1}  (s-p_j)^{\beta_j}-\prod^{m}_{l=1} (s-z_l)^{\gamma_l}(\prod^{n}_{j=1}  (s-p_j)^{\beta_j})^{'}}{(\prod^{n}_{j=1}  (s-p_j)^{\beta_j})^2}$.

The multiple zeros and multiple poles of $RF$ are zeros of the numerator polynomials of $\frac{dK(s)}{ds}$ and $(RF(s))^{'}$. Specifically:

The multiple zeros of $RF$ are zeros of $(RF(s))^{'}$, but not zeros of $\frac{dK(s)}{ds}$.

The multiple poles of $RF$ are zeros of $\frac{dK(s)}{ds}$, but not zeros of $(RF(s))^{'}$.

1. Excluding the multiple zeros and multiple poles of the $RF$, since $-(a+ib)$ lies on the unit circle and cannot be zero, the critical points of $K(s)$ and $RF(s)$ are identical.

2. The zeros of (1.4) are the critical points of $K(s)$. By Lemma 1.24, each finite breakaway point of (1.1) is finite critical point of $K(s)$, so, each finite breakaway point of (1.1) is finite critical point of $RF(s)$.

3. Excluding the multiple zeros and multiple poles of the $RF$, by Theorem 1.25, each zero of $K^{'}(s)$  is finite breakaway point of (1.1). Each finite zero of  $RF^{'}(s)$ is finite breakaway point of (1.1).
\end{proof}

The multiple zeros can be critical points of $RF$. Similarly, the multiple poles can be critical points of $K(s)$. However, they cannot be breakaway points of (1.1), so we exclude them in Theorem 1.25 and Theorem 1.26.

Theorem 1.25 shows the breakaway points of (1.1) are identical to the critical points of $K(s)$. Theorem 1.26 shows the critical points of $K(s)$ are identical to the critical points of $RF(s)$. Therefore, Theorem 1.26 also shows the breakaway points of (1.1) are identical to the critical points of $RF(s)$. By  Theorem 1.25, the intersection points of the root loci of (1.1) are critical points of $RF(s)$.
That is, the breakaway points of (1.1) are critical points of $RF(s)$. So, we have Theorem 0.5.

\begin{lemma}
Assume $RF$ is a real constant coefficient rational function. Then:

(1). The root loci of $2q\pi$ and $2q\pi+\pi$ degree of (1.1) are symmetric about the real axis.

(2). The root loci on the real axis are the root loci of $2q\pi$ and $2q\pi+\pi$ degree of (1.1).

(3). An interval on the real axis is a $2q\pi+\pi$ degree root locus if and only if the number of real poles and zeros of (1.1) to its right is odd.

(4). An interval on the real axis is a $2q\pi$ degree root locus if and only if the number of real poles and zeros of (1.1) to its right is even.

(5). The interval between two adjacent zero and pole (zero-pole, zero-zero, or pole-pole) is the root loci of the same degree.
\end{lemma}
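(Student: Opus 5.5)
The plan rests on the conjugation symmetry of a real rational function. Since all coefficients are real, $RF(\bar s)=\overline{RF(s)}$ for every $s\in\Delta$. By Lemma 1.5 this gives $\varphi(\sigma,-t)\equiv-\varphi(\sigma,t)$ modulo $2\pi$. Part (1) follows at once. If $s$ satisfies (1.2) with $\arg(a+ib)\in\{0,\pi\}$, then $\bar s$ satisfies (1.2) with argument $-\arg(a+ib)\equiv\arg(a+ib)\pmod{2\pi}$. By Theorem 1.8, $\bar s$ therefore lies on a root locus with the same unit complex value $\pm1$, so the $2q\pi$ and $2q\pi+\pi$ root loci are invariant under reflection in the real axis.

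For (2), take a real point $s=\sigma\in\Delta$. Then $RF(\sigma)$ is a nonzero real number, so its argument is $2q\pi$ when $RF(\sigma)>0$ and $2q\pi+\pi$ when $RF(\sigma)<0$. By Theorem 1.8, every real point of $\Delta$ lies on a root locus of one of these two degrees.

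For (3) and (4), I will compute the argument term by term using Lemma 1.5:
\[
\varphi=\sum_l\gamma_l\arg(\sigma-z_l)-\sum_j\beta_j\arg(\sigma-p_j).
\]
The non-real zeros and poles of $RF$ come in conjugate pairs with equal exponents. For each such pair, $\arg(\sigma-w)+\arg(\sigma-\bar w)\equiv0$, and the difference term likewise vanishes modulo $2\pi$, so complex factors contribute nothing. A real zero or pole lying to the left of $\sigma$ gives $\sigma-z>0$ and contributes $0$. One lying to the right gives $\sigma-z<0$ and contributes $\pi$ times its exponent; the sign in front of $\pi$ does not matter modulo $2\pi$. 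Hence
\[
\varphi\equiv\pi N(\sigma)\pmod{2\pi},
\]
where $N(\sigma)$ is the number of real zeros and poles to the right of $\sigma$, counted with multiplicity. The interval is a $2q\pi+\pi$ locus exactly when $N(\sigma)$ is odd and a $2q\pi$ locus exactly when $N(\sigma)$ is even. This gives both directions of (3) and (4), once "number" is read as counting multiplicity, as the exponents $\gamma_l,\beta_j$ require; I will state that convention explicitly.

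Part (5) follows because $N(\sigma)$ is constant on any open interval between adjacent real zeros or poles, so the degree is constant there. Equivalently, $RF$ is continuous and nonvanishing on that interval, so its sign cannot change.

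The only delicate step I anticipate is the modular bookkeeping. The paper's degrees $2q\pi+\alpha$ are defined only up to the choice of $q$, so every equality has to be read modulo $2\pi$, with the branch of $\arg$ fixed consistently when adding up the conjugate-pair contributions. Beyond that, the argument is elementary.
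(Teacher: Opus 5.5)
Your proposal is correct and follows essentially the same route as the paper. The paper also uses conjugation symmetry $RF(\bar s)=\overline{RF(s)}$ for (1) and the reality of $RF$ on the real axis for (2). For (3)--(5) it uses the same term-by-term phase count from Lemma 1.5: conjugate pairs contribute $0$ modulo $2\pi$, real factors to the left contribute $0$, and real factors to the right contribute $\pi$ times their exponent. Your convention that the count in (3)--(4) is taken with multiplicity is exactly what the paper's expression $(\sum\gamma_l+\sum\beta_j)\pi$ uses implicitly, and the statement needs it to be true.
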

\begin{proof}
(1). The rational function can be written as $RF(s)=u(\sigma,t)+iv(\sigma,t)$. Since $RF$ is a real constant coefficient rational function, it is a conjugate function. For the conjugate function $RF$, $RF(\overline{s})=u(\sigma,t)-iv(\sigma,t)$. For the root loci of $2q\pi$ and $2q\pi+\pi$ degree of (1.1), $v(\sigma,t)=0$. Therefore, for all points on these root loci of $2q\pi$ and $2q\pi+\pi$ degree of (1.1), $RF(s)=RF(\overline{s})=u(\sigma,t)$. Thus, the root loci of $2q\pi$ and $2q\pi+\pi$ degree of (1.1) are symmetric about the real axis.

(2). Since $RF$ is a real constant coefficient rational function, it takes real values on the real axis. Hence, the root loci on the real axis are the root loci of $2q\pi$ and $2q\pi+\pi$ degree of (1.1).

For any point $s_{0}$ on the selected section of the real axis, the zeros and poles of (1.1) are conjugate symmetric with respect to the real axis. The sum of the phase angles of the vectors between conjugate complex numbers and an arbitrary point on the real axis is $2\pi$. Therefore, determining the phase angle of an arbitrary point on the real axis does not require considering the effects of conjugate zeros and poles. When determining the degree of the root locus of a real constant coefficient rational function on the real axis, we do not need to consider the conjugate zeros and poles.

The phase angle of vector between a real zero  on the left side of $s_{0}$ and point $s_{0}$ is $2\pi$. The phase angle of vector between a real pole on the left side of $s_{0}$ and point $s_{0}$ is $2\pi$. Thus, determining the phase angle of an arbitrary point on the real axis does not need to consider the real zeros and real poles on the left side of $s_{0}$. When determining the degree of the root locus of a real constant coefficient rational function on the real axis, we do not need to consider the real zeros and real poles on the left side of $s_{0}$.

Let $z_{l}$ be a real zero to the right of $s_{0}$, and let $\gamma_l$ be the exponent of $z_{l}$. The phase angle of the vector between $z_{l}$ and $s_{0}$ is $\gamma_l\pi$. The sum of the phase angles of the vectors between the real zeros on the right side of $s_{0}$ and $s_{0}$ is $\sum_{l=1}^{m_1} \gamma_l\pi$. Similarly, let $p_{j}$ be a real pole to the right of $s_{0}$, and let $\beta_j$ be the exponent of $p_{j}$. The phase angle of the vector between $p_{j}$ and $s_{0}$ is $\beta_j\pi$. The sum of the phase angles of the vectors between the real poles on the right side of $s_{0}$ and $s_{0}$ is $\sum_{j=1}^{n_1} \beta_j\pi$. The degree of the root locus of a real constant coefficient rational function in this interval is $\sum_{l=1}^{m_1} \gamma_l\pi-\sum_{j=1}^{n_1} \beta_j\pi=(\sum_{l=1}^{m_1} \gamma_l-\sum_{j=1}^{n_1} \beta_j)\pi$.

In $(\sum_{l=1}^{m_1} \gamma_l-\sum_{j=1}^{n_1} \beta_j)\pi$, $-\pi$ and $\pi$ represents the same degrees, subtracting an integer multiple of $\pi$ is equivalent to adding an integer multiple of $\pi$. Therefore, $(\sum_{l=1}^{m_1} \gamma_l-\sum_{j=1}^{n_1} \beta_j)\pi=(\sum_ {l=1}^{m_1} \gamma_l+\sum_{j=1}^{n_1} \beta_j)\pi$. By $(\sum_ {l=1}^{m_1} \gamma_l+\sum_{j=1}^{n_1} \beta_j)\pi$, (3) and (4) hold.

(5). By $(\sum_ {l=1}^{m_1} \gamma_l+\sum_{j=1}^{n_1} \beta_j)\pi$, the phase angles of all the points in this interval are identical. The interval between two adjacent zero and pole (zero and pole, zero and zero, pole and pole) is the same degree root loci.
\end{proof}

\begin{theorem}
1. The number of  breakaway points of (1.1) is finite. The breakaway points are isolated to distributed in $\mathbb{C}\cup\{\infty\}$

2. Any segment of all the root loci of an arbitrary degree cannot overlap in the finite positions of $\mathbb{C}\cup\{\infty\}$.
\end{theorem}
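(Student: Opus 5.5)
Part 1 follows from Theorem 1.25 together with Theorem 1.26. A finite breakaway point $s_0\in\Delta$ is exactly a zero of $K'(s)$ in $\Delta$. Writing $N(s)=\prod_{l=1}^{m}(s-z_l)^{\gamma_l}$ and $D(s)=\prod_{j=1}^{n}(s-p_j)^{\beta_j}$, we have
\[
K'(s)=(a+ib)\,\frac{D'(s)N(s)-D(s)N'(s)}{N(s)^2}.
\]
So every finite breakaway point is a root of the polynomial $W(s)=D'(s)N(s)-D(s)N'(s)$. After cancelling common zeros and poles as in Lemma 1.2, $W$ is not the zero polynomial. Indeed, $W\equiv 0$ would force $(D/N)'\equiv 0$, so $D/N$, and hence $RF$, would be constant, which Notation 1 excludes. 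A nonzero polynomial has at most $\deg W\le \sum\beta_j+\sum\gamma_l-1$ roots. Hence the finite breakaway points form a finite set, and each is isolated. The point at infinity adds at most one more point, so the total count stays finite. I would check isolatedness at $\infty$ separately by the substitution $s=1/w$: that turns $RF$ into another non-constant rational function in $w$, to which the same argument applies at $w=0$.

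For Part 2, I would argue by contradiction. Suppose two distinct root loci, or two branches of the same degree, share a segment (an arc $\Gamma$ of positive length) at finite positions. By Theorem 1.15 both carry the same unit complex value $a+ib$, so along $\Gamma$ the equation $K(s)RF(s)=a+ib$ holds with $K$ real and positive. The key step is to show that every point of $\Gamma$ is then a common point of two loci, i.e.\ a breakaway point in the sense of Definition 1.18. Removing the finitely many zeros and poles, $\Gamma\cap\Delta$ would contain infinitely many breakaway points, contradicting Part 1.

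The main obstacle is making ``overlap implies breakaway at each point'' rigorous. I would do this via the characteristic equation of Lemma 1.19. At a point $s_1\in\Gamma\cap\Delta$, two branches passing through $s_1$ mean that $s_1$ is a root of multiplicity $\gamma_0>1$ of
\[
K(s_1)N(s)-(a+ib)D(s),
\]
in the notation of Lemma 1.20. Then Lemma 1.24 gives $K'(s_1)=0$ at infinitely many points with an accumulation point. This makes $K'$ vanish identically by the identity theorem, again forcing $RF$ to be constant.

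As a fallback, I would argue directly that a root locus through a non-critical point is locally a single analytic arc. Near any $s_1$ with $RF'(s_1)\ne 0$, $RF$ is conformal. The set $\{\arg RF=\alpha\}$ is then the preimage of a ray, hence a single smooth curve, so two distinct branches cannot coincide there. Overlap could therefore occur only at critical points, which are finite in number by Part 1.
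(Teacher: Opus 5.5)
Your proposal is correct and follows the paper's own route. Part 1 uses the fact that finite breakaway points are zeros of $dK/ds$, of which there are finitely many, and Part 2 observes that an overlapping segment would consist of infinitely many intersection (hence breakaway) points, contradicting Part 1; your additions (that $D'N-DN'\not\equiv 0$ because $RF$ is non-constant, and the multiplicity or local-conformality justification that every point of an overlap is genuinely a breakaway point) fill in steps the paper simply asserts.
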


\begin{proof}
1. To compute the finite breakaway points and prove their existence, we use the formula for computing finite breakaway points. Equation (1.4) provides a necessary and sufficient condition for computing the finite breakaway point:

$\frac{dK(s)}{ds}
=(a+ib)\frac{(\prod^{n}_{j=1}(s-p_{j})^{\beta_{j}})^{'}
\prod^{m}_{l=1}(s-z_{l})^{\gamma_{l}}-
\prod^{n}_{j=1}(s-p_{j})^{\beta_{j}}(\prod^{m}_{l=1}(s-z_{l})^{\gamma_{l}})^{'}
}{(\prod^{m}_{l=1}(s-z_{l})^{\gamma_{l}})^{2}}=0.$

$\frac{dK(s)}{ds}$ has only finite number of zeros. The breakaway points are zeros of $\frac{dK(s)}{ds}$. These finite zeros must be isolated in $\mathbb{C}\cup\{\infty\}$; they cannot be continuously distributed. Therefore, these finite zeros cannot form a continuous curve.

2. The finite breakaway point is the intersection point of the root locus of the root locus equation (1.1). An overlapping segment consists of continuous intersection points of the root locus of (1.1). If such overlapping segments exist, they would be special intersection points, specifically an infinite number of continuous intersection points.  On the basis of the previous proof, any segment of all the root loci of an arbitrary degree cannot overlap.
\end{proof}
\begin{definition}
In $\mathbb{C}\cup\{\infty\}$, if an angle $\phi$ is between the positive direction of the real axis and the tangent line of the root loci of arbitrary $2q\pi+\alpha$ degree that originates from a finite pole, then the angle $\phi$ is called the \emph{angle of origination} of the root loci of $2q\pi+\alpha$ degree of (1.1).
\end{definition}

\begin{definition}
In $\mathbb{C}\cup\{\infty\}$, if an angle $\vartheta$ is between the positive direction of the real axis and the tangent line of the root loci of arbitrary $2q\pi+\alpha$ degree that terminates at a finite zero, then the angle $\vartheta$ is called the \emph{angle of receiving} of the root loci of $2q\pi+\alpha$ degree of (1.1).
\end{definition}

The \emph{angle of origination} and \emph{angle of receiving} are both positive in the counterclockwise orientation.

\begin{theorem}
In $\mathbb{C}\cup\{\infty\}$, let $P_k$ be a finite pole of (1.1) with multiplicity $\beta_k$. The angle of origination of the root locus of an arbitrary $2q\pi+\alpha$ degree of (1.1), which originates from the finite pole $P_k$, is given by:

$\phi_{k}=\frac{1}{\beta_k}(2q\pi-\alpha+\sum\limits_{l=1}^{m}
(\gamma_{l}arg{(P_k-z_{l})})-\sum\limits_{j=1,j \neq k}^{n}
(\beta_{j}arg{(P_k-p_{j})})).$
\end{theorem}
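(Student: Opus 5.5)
The plan is to compute the angle of origination directly from the phase angle condition (1.2), evaluated at points of the locus that approach $P_k$. Take a point $s$ on the $2q\pi+\alpha$ degree root locus that starts at $P_k$, and write $s=P_k+\varepsilon e^{i\phi}$ with $\varepsilon>0$ small. Here $\phi$ is the direction of the chord from $P_k$ to $s$. Because $s\in\Delta$, Lemma 1.5 and Theorem 1.8 give
\[
\sum_{l=1}^{m}\gamma_l\arg(s-z_l)-\sum_{j=1,j\neq k}^{n}\beta_j\arg(s-p_j)-\beta_k\arg(s-P_k)=2q'\pi+\alpha
\]
for some integer $q'$. The key step is to isolate the single term coming from the pole $P_k$. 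We have $\arg(s-P_k)=\phi$ exactly. For every other zero $z_l$ and pole $p_j$, the point $P_k$ is at positive distance from it, so $\arg(s-z_l)\to\arg(P_k-z_l)$ and $\arg(s-p_j)\to\arg(P_k-p_j)$ continuously as $\varepsilon\to0$, modulo $2\pi$. Then we solve for $\beta_k\phi$ and let $\varepsilon\to0$. The chord direction tends to the tangent direction, which by the definition above is the angle of origination $\phi_k$. This gives
\[
\beta_k\phi_k=2q''\pi-\alpha+\sum_{l}\gamma_l\arg(P_k-z_l)-\sum_{j\neq k}\beta_j\arg(P_k-p_j),
\]
where the sign ambiguity in the integer is absorbed by replacing $q''$ with $-q''$. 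Dividing by $\beta_k$ yields the stated formula. The freedom in $q$ then produces the $\beta_k$ distinct departure directions, spaced $2\pi/\beta_k$ apart. This agrees with the fact, from Lemma 1.12, that $P_k$ lies on root loci of all degrees in a range of length $2\beta_k\pi$.

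The main obstacle is justifying that a root locus branch actually leaves $P_k$ with a well-defined tangent, so that the chord direction converges. I would handle this through the local form of the root locus equation. Near $P_k$ we have $RF(s)=(s-P_k)^{-\beta_k}h(s)$ with $h$ holomorphic and $h(P_k)\neq0$. The locus of degree $2q\pi+\alpha$ is then the set where $\arg h(s)-\beta_k\arg(s-P_k)\equiv\alpha$. Write $h(s)=h(P_k)\bigl(1+O(s-P_k)\bigr)$, choose a holomorphic $\beta_k$-th root, and substitute $w=(s-P_k)\,\bigl(h(s)/h(P_k)\bigr)^{-1/\beta_k}$. This is a conformal change of coordinate tangent to the identity at $P_k$. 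In the coordinate $w$, the condition reads $\arg w^{\beta_k}\equiv\arg h(P_k)-\alpha$, which is exactly $\beta_k$ straight rays. Their images under the conformal map are smooth arcs from $P_k$, and their tangent directions agree with the ray directions. This confirms both the existence of the tangents and the limit computation above.

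Finally, I would note that $\arg h(P_k)$ is precisely $\sum\gamma_l\arg(P_k-z_l)-\sum_{j\neq k}\beta_j\arg(P_k-p_j)$, which connects the two viewpoints. I would also remark that if $P_k$ were a coincident zero–pole, it would have been removed by the convention in Lemma 1.2. So all the other factors are nonvanishing at $P_k$, which is what the continuity step requires.
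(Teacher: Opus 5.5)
Your proposal is correct and follows the paper's own argument. You evaluate the phase condition at a locus point approaching $P_k$, replace the angles to all other zeros and poles by their values at $P_k$, identify the limiting chord direction with $\phi_k$, and absorb the sign of the $2q\pi$ term; your local coordinate $w=(s-P_k)\bigl(h(s)/h(P_k)\bigr)^{-1/\beta_k}$, which turns the locus near $P_k$ into $\beta_k$ straight rays, additionally justifies the existence of the tangent direction that the paper simply takes for granted.
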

\begin{proof}
Consider a finite point $s_1$ on the root locus of $2q\pi+\alpha$ degree. Let $s_1$ approach infinitely close to the finite pole $P_k$ with multiplicity $\beta_k$, which needs to compute its angle of origination. Since $s_1$ is infinitely close to $P_k$, the phase angles $\beta_k arg{(s_1-p_j)}$ of vectors between the finite point $s_{1}$ and the finite poles $p_j$, $j\neq k$. (Namely, except for the finite pole $P_k$.) can all be substituted by angles $\beta_{k}arg{(P_k-p_{j})}$, $j\neq k$. Similarly, the phase angle $\gamma_{k}arg{(s_1-z_{l})}$ of vectors between the finite point $s_{1}$ and all of the finite zeros $z_l$ of (1.1) all can be substituted by angles $\gamma_{k}arg{(P_k-z_{l})}$.

Therefore, the vector between finite point $s_{1}$ and finite pole $P_k$ is infinitely approaching to the tangent line of the root locus of arbitrary $2q\pi+\alpha$ degree that originates from $P_k$. By Definition 1.29, the phase angle of vectors between finite point $s_{1}$ and finite pole $P_k$ be infinitely approach the angle of origination $\phi_{k}$.

The finite point $s_{1}$ must satisfy the phase angle condition equation,
$\sum\limits_{l=1}^{m}
(\gamma_{l}arg{(s_1-z_{l})})-\sum\limits_{j=1}^{n}
(\beta_{j}arg{(s_1-p_{j})})=2q\pi+\alpha$.
Substituting $s_{1}$ with $P_k$, we obtain: $\sum\limits_{l=1}^{m}
(\gamma_{l}arg{(P_k-z_{l})})-\sum\limits_{j=1,j \neq k}^{n}
(\beta_{j}arg{(P_k-p_{j})})-\beta_k\phi_{k}=2q\pi+\alpha$.
$\beta_k\phi_{k}=\sum\limits_{l=1}^{m} (\gamma_{l}arg{(P_k-z_{l})})-\sum\limits_{j=1,j \neq k}^{n}
(\beta_{j}arg{(P_k-p_{j})})-2q\pi-\alpha$.

Since $2q\pi$ and $-2q\pi$ are equivalent, we substitute $2q\pi$ with $-2q\pi$. After transposing, we obtain the formula in Theorem 1.31.
\end{proof}

By repeating the proof of Theorem 1.31, we obtain Theorem 1.32.

\begin{theorem}
In $\mathbb{C}\cup\{\infty\}$, let $Z_k$ be a finite zero of (1.1) with multiplicity $\gamma_k$. The angle of receiving on the root locus of an arbitrary $2q\pi+\alpha$ degree of (1.1), which ends at the finite zero $Z_k$, is given by:

$\vartheta_{k}=\frac{1}{\gamma_k}(2q\pi+\alpha-\sum\limits_{l=1, l \neq k}^{m}
(\gamma_{l}arg{(Z_k-z_{l})})+\sum\limits_{j=1}^{n}
(\beta_{j}arg{(Z_k-p_{j})})).$
\end{theorem}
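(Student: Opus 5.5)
The plan is to mirror the proof of Theorem 1.31, with the finite zero $Z_k$ in place of the pole $P_k$. Take a point $s_1 \in \Delta$ on the root locus of $2q\pi+\alpha$ degree that terminates at $Z_k$, and let $s_1 \to Z_k$ along that branch. By Theorem 1.8, every such $s_1$ satisfies the phase angle condition (1.2):
$$\sum_{l=1}^{m}\gamma_l\arg(s_1-z_l)-\sum_{j=1}^{n}\beta_j\arg(s_1-p_j)=2q\pi+\alpha.$$

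The first step is to isolate the singular term. On the left-hand side, separate $\gamma_k\arg(s_1-Z_k)$ from the others. For $l\neq k$ and for every $j$, the vectors $s_1-z_l$ and $s_1-p_j$ tend to $Z_k-z_l$ and $Z_k-p_j$. These limits are nonzero because the zeros and poles are distinct after cancellation (Lemma 1.2). Continuity of $\arg$ away from $0$ then lets us replace $\arg(s_1-z_l)$ by $\arg(Z_k-z_l)$ and $\arg(s_1-p_j)$ by $\arg(Z_k-p_j)$ in the limit.

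The second step is to identify the remaining term with the tangent direction. The quantity $\arg(s_1-Z_k)$ is the direction of the secant from $Z_k$ to $s_1$. As $s_1\to Z_k$ along the branch, this secant tends to the tangent line at the endpoint. By Definition 1.30, its limit is therefore the angle of receiving $\vartheta_k$. One should note the orientation: the vector $s_1-Z_k$ points from $Z_k$ back along the locus, so the angle is measured from $Z_k$, exactly as $\phi_k$ was measured from $P_k$. This is consistent with the counterclockwise convention stated after Definition 1.30.

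The third step is to pass to the limit. This gives
$$\gamma_k\vartheta_k+\sum_{l\neq k}\gamma_l\arg(Z_k-z_l)-\sum_{j}\beta_j\arg(Z_k-p_j)=2q\pi+\alpha.$$
Solving for $\vartheta_k$ and dividing by $\gamma_k$ yields the stated formula. No sign flip of $2q\pi$ is needed here, unlike at the end of the proof of Theorem 1.31, because the zero term enters with a plus sign.

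The main obstacle is justifying the second step: that the secant direction actually converges, so that the branch has a tangent at $Z_k$ at all. I would handle this locally. Near $Z_k$ one can write $RF(s)=(s-Z_k)^{\gamma_k}h(s)$ with $h$ holomorphic and $h(Z_k)\neq0$. The locus condition then reads $\gamma_k\arg(s-Z_k)+\arg h(s)\equiv 2q\pi+\alpha$. Since $\arg h(s)\to\arg h(Z_k)$, the argument $\arg(s-Z_k)$ converges to one of the $\gamma_k$ values $\frac{1}{\gamma_k}\bigl(2q\pi+\alpha-\arg h(Z_k)\bigr)$, with different $q$ modulo $\gamma_k$ giving the $\gamma_k$ incoming branches. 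This is exactly the content of the limit formula. The values themselves are only determined modulo $2\pi/\gamma_k$, which is why the free integer $q$ appears.
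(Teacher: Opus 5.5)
Your proposal is correct and follows the paper's route: the paper obtains Theorem 1.32 simply by repeating the proof of Theorem 1.31, with the zero term $\gamma_k\arg(s_1-Z_k)$ in place of the pole term, exactly as in your first three steps. Your local factorization $RF(s)=(s-Z_k)^{\gamma_k}h(s)$ goes beyond the paper: it justifies that $\arg(s_1-Z_k)$ actually converges along the branch, which the paper takes for granted.
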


\begin{theorem}
For the root locus originating from an arbitrary finite pole $P_k$ of (1.1) in $\mathbb{C}\cup\{\infty\}$, if the angle $\phi_{k}$ of origination is treated as an independent variable, then the degree values of these root loci strictly and monotonically increase in the clockwise direction around $P_k$.
\end{theorem}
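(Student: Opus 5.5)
The plan is to read the degree off the angle-of-origination formula in Theorem 1.31 and solve it for the degree. Fix the finite pole $P_k$ of multiplicity $\beta_k$ and put
$$C_k=\sum_{l=1}^{m}\gamma_l\arg(P_k-z_l)-\sum_{j=1,j\neq k}^{n}\beta_j\arg(P_k-p_j).$$
This is a constant that depends only on the configuration of zeros and poles. Theorem 1.31 gives $\beta_k\phi_k=2q\pi-\alpha+C_k$. Hence the root locus leaving $P_k$ with origination angle $\phi$ has degree
$$\alpha(\phi)=C_k+2q\pi-\beta_k\phi .$$
This is an affine function of $\phi$ with slope $-\beta_k<0$, and $\beta_k\ge 1$ is a positive integer. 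Since $\phi$ is measured counterclockwise (the convention fixed after Definitions 1.29--1.30), $\alpha$ strictly decreases as $\phi$ increases counterclockwise. Equivalently, $\alpha$ strictly and monotonically increases as $\phi$ moves clockwise around $P_k$, which is the claim.

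To make this rigorous rather than purely formal, I would not rely only on the limiting substitution used in the proof of Theorem 1.31. Instead I would give a local argument on a small circle $|s-P_k|=r$.

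\begin{enumerate}
\item Write $s=P_k+re^{i\theta}$. By Lemma 1.5, $\varphi(s)=\sum_l\gamma_l\arg(s-z_l)-\sum_{j\ne k}\beta_j\arg(s-p_j)-\beta_k\theta$.
\item Choose $r$ so small that the disc contains no other zero or pole. Then the first two sums are continuous in $s$, and they differ from $C_k$ by at most $\varepsilon(r)\to 0$ uniformly in $\theta$.
\item Conclude that $\varphi(P_k+re^{i\theta})=C_k-\beta_k\theta+o(1)$ as $r\to 0$.
\item A root locus of degree $\alpha$ leaving $P_k$ meets this circle where $\varphi=\alpha$. Its tangent direction $\phi$ is the limit of the corresponding $\theta$ as $r\to0$. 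So $\alpha=C_k-\beta_k\phi$ modulo $2\pi$, with the branch chosen continuously in $\theta$.
\item Because $\theta\mapsto C_k-\beta_k\theta$ is strictly decreasing, moving clockwise ($\theta$ decreasing) strictly increases the degree.
\end{enumerate}

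I expect the main obstacle to be the bookkeeping of arguments (branches of $\arg$) and the $2q\pi$ ambiguity in the degree. The statement ``degree values strictly increase'' only makes sense once the degree is taken as a continuous lift of $\varphi$ along the circle, not reduced modulo $2\pi$. I would handle this by defining the degree along the small circle as the continuous determination $C_k-\beta_k\theta$ plus the small continuous correction. A full clockwise turn then raises the degree by exactly $2\beta_k\pi$, which is consistent with Lemma 1.12: $P_k$ lies on the loci of all degrees from $2q\pi$ to $2q\pi+2\beta_k\pi$. I would also check that the $o(1)$ error does not spoil strict monotonicity. For this I would argue on $\theta$ directly rather than on the approximation: in a punctured neighbourhood the function $\log RF(s)-(-\beta_k\log(s-P_k))$ is holomorphic, so $\partial\varphi/\partial\theta=-\beta_k+O(r)$, and this is strictly negative for small $r$. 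That yields genuine strict monotonicity in the clockwise direction.
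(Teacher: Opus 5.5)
Your proposal is correct, and its core is the paper's argument. The paper subtracts two instances of the Theorem 1.31 formula to get $\alpha_2-\alpha_1=\beta_k(\phi_{k,1}-\phi_{k,2})$, which is exactly your observation that the degree is affine in $\phi$ with slope $-\beta_k<0$. Your small-circle argument ($\partial\varphi/\partial\theta=-\beta_k+O(r)$, with the degree taken as a continuous lift) adds rigor the paper omits, justifying both its ``infinitely close'' substitution and the $2q\pi$ bookkeeping, but it does not change the route.
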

\begin{proof}
The angle of origination of the $\alpha_1$ degree root locus of (1.1) at the finite pole $P_k$ is given by:
$\phi_{k,1}=\frac{1}{\beta_k}(2q\pi-\alpha_1+\sum\limits_{l=1}^{m}
(\gamma_{l}arg{(P_k-z_{l})})-\sum\limits_{j=1,j \neq k}^{n}
(\beta_{j}arg{(P_k-p_{j})}))$.

Similarly, the angle of origination of the $\alpha_2$ degree root locus of (1.1) at the finite pole is as follows:
$\phi_{k,2}=\frac{1}{\beta_k}(2q\pi-\alpha_2+\sum\limits_{l=1}^{m}
(\gamma_{l}arg{(P_k-z_{l})})-\sum\limits_{j=1,j \neq k}^{n}
(\beta_{j}arg{(P_k-p_{j})}))$. Subtracting these two equations yields: $\phi_{k,1}-\phi_{k,2}=\frac{1}{\beta_k}(\alpha_2-\alpha_1)$, thus, $\alpha_2-\alpha_1=\beta_k(\phi_{k,1}-\phi_{k,2})$.

Here, $\beta_k$ is the exponent of the finite pole $P_k$, a positive constant integer. The angles $\phi_{k,1}$ and $\phi_{k,2}$ represent the origination angles of the $\alpha_1$-degree and $\alpha_2$-degree root loci, respectively. The angle of origination is defined as the angle between the positive real axis and the tangent line of the root locus originating from a finite pole $P_k$, measured counterclockwise from the positive real axis.

1. If $\phi_{k,1}>\phi_{k,2}$, $\phi_{k,1}-\phi_{k,2}>0$. The angle of origination of the root loci of $\alpha_1$ degree is $\phi_{k,1}$. The angle of origination of the root loci of $\alpha_2$ degree is $\phi_{k,2}$. $\alpha_2-\alpha_1>0$, $\alpha_2>\alpha_1$ holds. Therefore, for two root loci that originate at two distinct angles of origination from a same pole, the root locus of the higher degree is in the clockwise direction of the root locus of the lower degree.

2. If $\phi_{k,1}<\phi_{k,2}$, $\phi_{k,1}-\phi_{k,2}<0$, then $\alpha_2-\alpha_1<0$, $\alpha_2<\alpha_1$ holds. Therefore, for two root loci that originate at two distinct angles of origination from the same pole, the root locus of the higher degree is in the clockwise direction of the root locus of the lower degree.

In both cases, the degree values of the root loci originating from $P_k$ strictly and monotonically increase in the clockwise direction around $P_k$.
\end{proof}

By $\alpha_2-\alpha_1=\gamma_k(\varphi_{k,2}-\varphi_{k,1})$ and repeating the proof of Theorem 1.33, we can obtain Theorem 1.34.

\begin{theorem}
For the root locus received by an arbitrary finite zero $Z_k$ of (1.1) in $\mathbb{C}\cup\{\infty\}$, the angle of receiving $\vartheta_{k}$ is an independent variable. Rotating the finite zero $Z_k$, the degree values of these root loci strictly and monotonously decrease clockwise around $Z_k$.
\end{theorem}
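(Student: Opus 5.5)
The plan is to mirror the proof of Theorem 1.33, using the angle-of-receiving formula of Theorem 1.32 in place of the angle-of-origination formula of Theorem 1.31.

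Fix a finite zero $Z_k$ of (1.1) with multiplicity $\gamma_k$, and take two root loci of degrees $\alpha_1$ and $\alpha_2$ that are received at $Z_k$. Write $\vartheta_{k,1}$ and $\vartheta_{k,2}$ for their angles of receiving. By Theorem 1.32, with the same integer $q$ and the same sums over the other zeros and all poles,
$$\vartheta_{k,i}=\frac{1}{\gamma_k}\Bigl(2q\pi+\alpha_i-\sum_{l\neq k}\gamma_l\arg(Z_k-z_l)+\sum_{j}\beta_j\arg(Z_k-p_j)\Bigr),\qquad i=1,2.$$
All terms except $\alpha_i$ are independent of the locus. Subtracting the two expressions should give $\vartheta_{k,2}-\vartheta_{k,1}=\frac{1}{\gamma_k}(\alpha_2-\alpha_1)$, that is, $\alpha_2-\alpha_1=\gamma_k(\vartheta_{k,2}-\vartheta_{k,1})$. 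Since $\gamma_k$ is a positive integer, the degree is a strictly increasing affine function of the receiving angle, with slope $\gamma_k$.

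Next I would split into the two cases used in Theorem 1.33. If $\vartheta_{k,2}>\vartheta_{k,1}$, then $\alpha_2>\alpha_1$, so the locus with the larger receiving angle has the larger degree. The case $\vartheta_{k,2}<\vartheta_{k,1}$ is symmetric. Receiving angles are measured counterclockwise from the positive real axis. So moving counterclockwise around $Z_k$ increases the degree, and moving clockwise strictly decreases it. This is exactly the reverse of the pole case, where the sign of $\alpha$ in Theorem 1.31 is negative instead of positive.

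The main obstacle is the multivaluedness of the angles. Both $\vartheta$ and $\alpha$ are determined only modulo $2\pi$, so "monotone" must be read locally. The natural setting is a continuous sweep of directions on one turn $[\vartheta_0,\vartheta_0+2\pi)$ around $Z_k$, with $\alpha$ taken as the continuous branch $\gamma_k\vartheta+\text{const}$ on that turn. Under this convention, one full clockwise turn lowers the degree by $2\gamma_k\pi$, which matches Lemma 1.11. I would state this convention explicitly, as is implicitly done in the proof of Theorem 1.33.
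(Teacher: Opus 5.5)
Your proposal is correct and follows essentially the same route as the paper: it takes $\alpha_2-\alpha_1=\gamma_k(\vartheta_{k,2}-\vartheta_{k,1})$ from the formula of Theorem 1.32 and then repeats the two-case argument of Theorem 1.33. Your added remark about fixing a continuous branch of the angles on one turn around $Z_k$ is a sensible clarification that the paper leaves implicit.
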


\begin{theorem}
Assume that a circle is rotated from $0$ to $2\pi$ degrees around a pole $P_k$. The pole $P_k$ with the exponent $\beta_k$ of (1.1) emits root loci from $2q\pi$ degrees to $2q\pi+2\beta_k\pi$ degrees. Specifically, the pole $P_k$ with the exponent $\beta_k$ of (1.1) emits $\beta_k$ branches of $2q\pi$ degree root loci and $\beta_k$ branches of $2q\pi+\pi$ degree root loci.
\end{theorem}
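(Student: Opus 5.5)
The plan is to read the claim off the origination-angle formula of Theorem 1.31, and to use the strict monotonicity of degree with direction from Theorem 1.33 to see that every degree in the range is attained.

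First, fix the pole $P_k$ of exponent $\beta_k$ and set
$$C_k=\sum_{l=1}^{m}\gamma_l\arg(P_k-z_l)-\sum_{j\neq k}\beta_j\arg(P_k-p_j).$$
This is a constant depending only on the other zeros and poles. Theorem 1.31 then says a root locus of degree $\alpha$ leaves $P_k$ with origination angle
$$\phi_k=\frac{1}{\beta_k}\bigl(C_k-\alpha\bigr) \pmod{2\pi/\beta_k\cdot\mathbb{Z}}.$$
Equivalently, the degree of the locus leaving in direction $\phi$ is $\alpha(\phi)=C_k-\beta_k\phi$. To see this directly, I would expand $\varphi$ near $P_k$ as $\varphi(P_k+re^{i\phi})=C_k-\beta_k\phi+o(1)$ as $r\to0$. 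This expansion is just Lemma 1.5 with the factors other than $(s-P_k)^{\beta_k}$ frozen at $P_k$, exactly as in the proof of Theorem 1.31.

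Second, I would rotate $\phi$ once around the circle, from $0$ to $2\pi$. The degree $\alpha(\phi)$ then sweeps continuously and strictly monotonically, by Theorem 1.33, through an interval of length exactly $2\beta_k\pi$. Up to the additive normalization by $2q\pi$, this interval is the range from $2q\pi$ to $2q\pi+2\beta_k\pi$. This is consistent with Lemma 1.12, which says the pole lies on all such loci. By Theorem 1.8, each direction $\phi$ is the tangent direction of the root locus of degree $\alpha(\phi)$ through nearby points, so $P_k$ emits loci of every degree in that range.

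Third, I would count. The equation $C_k-\beta_k\phi\equiv 0 \pmod{2\pi}$ has exactly $\beta_k$ solutions $\phi\in[0,2\pi)$, spaced by $2\pi/\beta_k$; these give the $\beta_k$ branches of $2q\pi$ degree. The equation $C_k-\beta_k\phi\equiv\pi \pmod{2\pi}$ likewise has exactly $\beta_k$ solutions, interlaced with the first set, giving the $\beta_k$ branches of $2q\pi+\pi$ degree.

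The main obstacle is justifying that each such solution direction really carries exactly one branch emanating from $P_k$, rather than merely being a limiting direction. I would handle this locally. Write $RF(s)=(s-P_k)^{-\beta_k}h(s)$ with $h$ holomorphic and nonzero near $P_k$. Then choose a local coordinate $w=(s-P_k)h(s)^{-1/\beta_k}$, which is biholomorphic near $P_k$. In this coordinate $RF=w^{-\beta_k}$, and the set $\{\arg RF\equiv\theta\}$ is exactly $\beta_k$ rays in the $w$-plane. Pulling these rays back gives $\beta_k$ disjoint analytic arcs starting at $P_k$ for each $\theta$, with no further branches near $P_k$. Taking $\theta=0$ and $\theta=\pi$ gives the stated counts.
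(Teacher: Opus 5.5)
Your proposal is correct, and its core is the paper's own argument: the origination-angle formula of Theorem 1.31 gives degree $C_k-\beta_k\phi$ (modulo $2\pi$) in direction $\phi$, so one full turn about $P_k$ changes the degree by $2\beta_k\pi$. The paper stops there and only asserts the branch count, so your explicit count of the $\beta_k$ solutions of $C_k-\beta_k\phi\equiv 0$ and $\equiv\pi$ modulo $2\pi$, together with the local coordinate $w=(s-P_k)h(s)^{-1/\beta_k}$ in which $RF=w^{-\beta_k}$, supplies rigor the paper omits. In fact the normal form alone proves the statement, and it also shows these arcs are genuinely emitted, since $K=|w|^{\beta_k}$ increases away from $P_k$.
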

\begin{proof}
By Theorem 1.31, the two angles of departure at the pole $P_k$ with the exponent $\beta_k$ in the same direction are given by:

$\phi_{k}=\frac{1}{\beta_k}(2q\pi-\alpha_1+\sum\limits_{l=1}^{m}
(\gamma_{l}arg{(P_k-z_{l})})-\sum\limits_{j=1,j \neq k}^{n}
(\beta_{j}arg{(P_k-p_{j})})$.

$\phi_{k}+2\pi=\frac{1}{\beta_k}(2q\pi-\alpha_2+\sum\limits_{l=1}^{m}
(\gamma_{l}arg{(P_k-z_{l})})-\sum\limits_{j=1,j \neq k}^{n}
(\beta_{j}arg{(P_k-p_{j})})$.

Subtracting the two equations side by side, we obtain:  $-2\pi=\frac{1}{\beta_k}(-\alpha_1+\alpha_2)$.
 $-2\pi\beta_k=-\alpha_1+\alpha_2$. Thus, $\alpha_1-\alpha_2=2\beta_k\pi$.

$\alpha_1-\alpha_2=2\beta_k\pi$. This result shows that the pole $P_k$ with the exponent $\beta_k$ of (1.1) emits root loci from $2q\pi$ degree to $2q\pi+2\beta_k\pi$ degree. The root loci from $2q\pi$ degree to $2q\pi+2\beta_k\pi$ degree, consist of the $\beta_k$ branches of $2q\pi$ degree root loci and $\beta_k$ branches of $2q\pi+\pi$ degree root loci. Therefore, the pole $P_k$ with the exponent $\beta_k$ of (1.1) emits $\beta_k$ branches of $2q\pi$ degree root loci and $\beta_k$ branches of $2q\pi+\pi$ degree root loci.
\end{proof}

By repeating the proof of Theorem 1.35, we can obtain Theorem 1.36.

\begin{theorem}
Assume that a circle is rotated from 0 to $2\pi$ degrees around a zero $Z_k$, the zero $Z_k$ with the exponent $\gamma_k$ of (1.1) receives the root loci from $2q\pi$ degree to $2q\pi+2\gamma_k\pi$ degree. Specifically, the zero $Z_k$ with the exponent $\gamma_k$ of (1.1) receives $\gamma_k$ branches of $2q\pi$ degree root loci and $\gamma_k$ branches of $2q\pi+\pi$ degree root loci.
\end{theorem}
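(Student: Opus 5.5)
The plan is to mirror the proof of Theorem 1.35, with the angle of origination replaced by the angle of receiving from Theorem 1.32. Fix a finite zero $Z_k$ of (1.1) with exponent $\gamma_k$ and a reference degree $\alpha_1$. Let $\vartheta_k$ be the angle of receiving of the $2q\pi+\alpha_1$ degree root locus at $Z_k$:
$$\vartheta_{k}=\frac{1}{\gamma_k}\Bigl(2q\pi+\alpha_1-\sum_{l\neq k}\gamma_{l}\arg(Z_k-z_{l})+\sum_{j=1}^{n}\beta_{j}\arg(Z_k-p_{j})\Bigr).$$
The sums over the other zeros and over the poles do not depend on $\alpha$, since they are evaluated at $Z_k$ itself. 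Hence $\vartheta_k$ is an affine function of the degree, with slope $1/\gamma_k$.

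First, compare two receiving directions. Write the same formula for a degree $\alpha_2$ and subtract the two expressions. This gives $\vartheta_{k,2}-\vartheta_{k,1}=(\alpha_2-\alpha_1)/\gamma_k$, which is the relation quoted before Theorem 1.34. Now require the second direction to be the first one after a full turn, $\vartheta_{k,2}=\vartheta_{k,1}+2\pi$. Then $\alpha_2-\alpha_1=2\gamma_k\pi$. So as the direction at $Z_k$ sweeps once through $[0,2\pi)$, the degree runs continuously and monotonically over an interval of length $2\gamma_k\pi$, from $2q\pi$ to $2q\pi+2\gamma_k\pi$ after renormalizing the base. Monotonicity is Theorem 1.34, and continuity holds because $\vartheta_k$ is affine in $\alpha$. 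Consistency with Lemma 1.11 is automatic: $Z_k$ lies on all of these loci.

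Second, count the branches. On the degree interval $[2q\pi,2q\pi+2\gamma_k\pi)$, the values congruent to $0$ modulo $2\pi$ are $2q\pi,2q\pi+2\pi,\dots,2q\pi+2(\gamma_k-1)\pi$, which is $\gamma_k$ values. The values congruent to $\pi$ modulo $2\pi$ are $2q\pi+\pi,\dots,2q\pi+(2\gamma_k-1)\pi$, again $\gamma_k$ values. The map from degree to direction is a bijection onto one turn, so each of these values gives exactly one receiving direction. By Definition 1.30, each such direction is the tangent of one branch ending at $Z_k$. Therefore $Z_k$ receives $\gamma_k$ branches of $2q\pi$ degree and $\gamma_k$ branches of $2q\pi+\pi$ degree.

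The main obstacle is justifying that each admissible tangent direction is carried by exactly one branch. I would handle this locally. Near $Z_k$,
$$RF(s)=c\,(s-Z_k)^{\gamma_k}\bigl(1+O(s-Z_k)\bigr),\qquad c\neq 0.$$
On a small punctured disc this gives a local holomorphic $\gamma_k$-th root $w=(s-Z_k)\,h(s)$ with $h(Z_k)\neq 0$, which is a conformal coordinate. In the coordinate $w$, the locus $\arg RF=\alpha$ is a union of $\gamma_k$ rays modulo $2\pi$, and each ray corresponds to one smooth branch in $s$. The branches are distinct by Theorem 1.28(2). This turns the heuristic substitution used in the proof of Theorem 1.31 into a rigorous count.
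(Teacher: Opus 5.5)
Your argument is correct, and its skeleton matches the paper's. The paper proves this statement only by ``repeating the proof of Theorem 1.35'': it writes the receiving-angle formula of Theorem 1.32 at $\vartheta_k$ and at $\vartheta_k+2\pi$, subtracts to get a degree span of $2\gamma_k\pi$, and reads off $\gamma_k$ values of each type.

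Where you differ is the local normal form $RF(s)=c\,w^{\gamma_k}$ with $w=(s-Z_k)h(s)$, which the paper does not use. The paper's count silently assumes that every admissible direction is the tangent of exactly one branch, since its angle formulas come from a heuristic substitution. Your conformal coordinate is what actually proves this. In fact that step is self-sufficient: since $\arg RF=\arg c+\gamma_k\arg w$, the sets $\{RF>0\}$ and $\{RF<0\}$ near $Z_k$ are $\gamma_k$ arcs each, alternating at angular spacing $\pi/\gamma_k$. The affine-in-$\alpha$ computation then becomes a corollary rather than the proof.

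Two small points. First, $h$ is defined on the full disc, not just the punctured one, because $RF(s)/\bigl(c(s-Z_k)^{\gamma_k}\bigr)$ is holomorphic and non-vanishing there; this is what makes $w$ a coordinate at $Z_k$ itself. Second, the appeal to Theorem 1.28(2) for distinctness is unnecessary, since injectivity of $w$ already separates the branches.
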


\begin{lemma}
For each root locus of (1.1), no two distinct points share the same gain value.
\end{lemma}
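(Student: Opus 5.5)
The plan is to show that the gain is \emph{strictly monotone} along each root locus, once the locus is parametrized as an arc; injectivity of $K$ then follows at once. Fix a degree $\alpha$ and a single branch of the $\alpha$ degree root locus, and take a connected arc $\Gamma$ of that branch lying in $\Delta$. By Theorem 1.8, every $s\in\Gamma$ satisfies $RF(s)=\frac{1}{K(s)}(a+ib)$ with $K(s)=|1/RF(s)|>0$. So $RF$ maps $\Gamma$ into the open ray $\{r(a+ib): r>0\}$, and the gain at a point is the single number $1/|RF(s)|$, as in the proof of Lemma 1.6. It therefore suffices to show that $r=|RF|$ is strictly monotone as $s$ moves along $\Gamma$.

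First I treat a sub-arc containing no breakaway points. By Theorem 1.27 the breakaway points are finite in number, so $\Gamma$ splits into finitely many such sub-arcs. By Theorems 1.25 and 1.26, on such a sub-arc $RF'(s)\neq 0$. Hence $RF$ is a local biholomorphism there, and $\Gamma$ is locally the preimage of the ray under a conformal map. Its tangent vector $\tau$ is therefore mapped by $RF'(s)$ to a nonzero multiple of $a+ib$, that is, to the radial direction. Consequently
\[
\frac{d}{dt}|RF(s(t))| \neq 0
\]
along a regular parametrization $s(t)$ of $\Gamma$. By continuity this derivative has constant sign on the sub-arc, so $|RF|$, and hence $K$, is strictly monotone on it. The orientation is fixed by the endpoints: by Lemma 1.13 and Theorem 1.14 each branch starts at a pole or at infinity, where $K=0$, and ends at a zero or at infinity, where $K=+\infty$. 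So $K$ increases from the pole end toward the zero end.

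Next I pass through a breakaway point $s_0$ on the branch. Near $s_0$ the characteristic equation has a root of multiplicity $\gamma_0>1$ (Lemma 1.23). Equivalently, $RF(s)-RF(s_0)$ has a zero of order $\gamma_0$, so $RF$ behaves like $RF(s_0)+c(s-s_0)^{\gamma_0}$ with $c\neq 0$. The preimage of the ray then consists of $2\gamma_0$ half-arcs leaving $s_0$. They alternate between preimages of the part of the ray with $r<|RF(s_0)|$ and the part with $r>|RF(s_0)|$. A branch continuing through $s_0$ enters along a half-arc on which $K<K(s_0)$ and leaves along one on which $K>K(s_0)$. Combining this with the previous step, $K$ is strictly increasing along the whole branch. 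Two distinct points of the branch correspond to distinct parameter values, so they cannot have the same gain.

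The main obstacle is making precise what ``a root locus'' means at a breakaway point. The local picture allows a path to turn back onto a half-arc with decreasing $K$, which would break monotonicity. I would resolve this by defining a branch as the continuous path of one root $s(K)$ of the characteristic equation of Lemma 1.19 as $K$ runs from $0$ to $+\infty$, choosing the outgoing half-arc with $K>K(s_0)$ at each breakaway point. With this definition the lemma reduces to the observation above that $K(s)=|1/RF(s)|$ is a well-defined function of the point and strictly monotone along the path.
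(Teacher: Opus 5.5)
Your proof is correct, but it takes a different route from the paper's.

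The paper argues globally, by counting. When $\sum\beta_j\ge\sum\gamma_l$, there are $N=\sum\beta_j$ branches of degree $\arg(a_f+ib_f)$ emitted by the poles. Each attains every gain $G$ at least once, since the gain runs from $0$ at the pole to $+\infty$ at the far end. On the other hand, the points of that degree with gain $G$ are exactly the roots of $FF\prod_j(s-p_j)^{\beta_j}-\prod_l(s-z_l)^{\gamma_l}=0$ with $|FF|=1/G$, and there are at most $N$ of them. So no branch can carry two such points. The case $\sum\beta_j<\sum\gamma_l$ is the same argument with the branches received by the zeros.

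You argue locally instead. Conformality where $RF'\neq 0$ makes $|RF|$ strictly monotone on breakaway-free arcs. The $(s-s_0)^{\gamma_0}$ model shows that the $2\gamma_0$ half-arcs at a breakaway point alternate between $K<K(s_0)$ and $K>K(s_0)$.

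What the paper's count buys is brevity and no differential analysis. However, it presupposes a well-defined decomposition of the locus into $N$ branches. It also tacitly assumes that the level-$G$ points found on different branches are distinct. That fails exactly when $G$ is the gain of a breakaway point, where several branches share a single multiple root of the polynomial. The count then closes only if the multiplicity $\gamma_0$ is matched with the number of branches through that point, and your local picture is precisely what supplies this.

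Your route, in turn, makes explicit the ambiguity the paper passes over. A purely geometric branch could turn back at a breakaway point onto a half-arc of decreasing $K$, and the lemma would then be false. Your route also gives the strict monotonicity of Theorem 1.38 directly, rather than deducing it from Lemma 1.37 as the paper does.

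Be aware, though, that once you define a branch as a root path $K\mapsto s(K)$ of the characteristic equation, the conclusion is immediate: the gain at $s(K)$ is $K$ itself, so distinct points have distinct gains. The substantive content of your argument is therefore the construction of such root paths along the curves of constant argument. Your local analysis supplies this: monotone continuation on regular arcs, and a choice of outgoing half-arc at each of the finitely many critical points.
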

\begin{proof}
We consider two cases on the basis of the relationship between the number of poles and zeros in (1.1).

Case 1: $\sum^n_{j=1} \beta_j \ge \sum^m_{l=1} \gamma_l$.

Since (1.1) has $\sum^n_{j=1} \beta_j$ poles, each pole emits a root locus of (1.1) with degree $\arg(a_f+ib_f)$. Thus, (1.1) emits $\sum^n_{j=1} \beta_j$ root loci, each with degree $\arg(a_f+ib_f)$. By Theorem 1.3, the gain values of points on these root loci with degree $\arg(a_f+ib_f)$ range from 0 at the poles to positive infinity at the zeros. Consequently, on each root locus of the $\sum^n_{j=1} \beta_j$ root loci, there exists at least one point with gain value $\frac{1}{\abs{FF}}$. Therefore, there exist at least $\sum^n_{j=1} \beta_j$ points with phase angle $\arg(a_f+ib_f)$ and gain value $\frac{1}{\abs{FF}}$.

There are at least $\sum^n_{j=1} \beta_j$ points that satisfy the equation $\frac{1}{\abs{FF}}RF(s)=a_f+ib_f$, which implies that $RF(s)=\abs{FF}(a_f+ib_f)$. Letting $FF=\abs{FF}(a_f+ib_f)$, $FF$ is a non-zero complex constant. Therefore, $RF(s)=FF$. $FF\prod^{n}_{j=1}(s-p_{j})^{\beta_j}-\prod^{m}_{l=1}(s-z_{l})^{\gamma_l}=0$, we obtain an $\sum^n_{j=1} \beta_j$ order polynomial equation. This equation has exactly $\sum^n_{j=1} \beta_j$ roots. Hence, it is impossible for two points on the same root locus to share the same gain value.

Case 2: $\sum^n_{j=1} \beta_j < \sum^m_{l=1} \gamma_l$.

In this case, the points  at infinity in $\mathbb{C}\cup\{\infty\}$ emit the root locus to finite zeros of (1.1). Each finite zero receives a root locus of (1.1) with degree $\arg(a_f+ib_f)$. Thus, (1.1) receives $\sum^m_{l=1} \gamma_l$ root loci, each with degree $\arg(a_f+ib_f)$. By replacing $\sum^n_{j=1} \beta_j$ with $\sum^m_{l=1} \gamma_l$ in the previous proof, we conclude that no two points on the same root locus can share the same gain value.
\end{proof}

\begin{theorem}
For each root locus of (1.1), the gain $K$ of the root loci is strictly monotonic increasing from $0$ at the poles of (1.1) to $+\infty$ at the zeros of (1.1).
\end{theorem}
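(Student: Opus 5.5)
We will prove the statement by combining the injectivity of the gain along a root locus (Lemma 1.37) with the continuity of $K$ (noted before Lemma 1.4) and the boundary values of $K$ at the ends of a locus (Lemma 1.2, Lemma 1.3, Theorem 1.14).

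\textbf{Parametrization.} Fix a root locus of degree $\alpha$ and parametrize it by a continuous injective path $\Gamma:[0,1]\to\mathbb{C}\cup\{\infty\}$. Choose the orientation so that $\Gamma(0)$ is its origination point, a finite pole or $\infty$ in case (1) of Lemma 1.3. Then $\Gamma(1)$ is its ending point, a finite zero or $\infty$ in case (3). Breakaway points are isolated by Theorem 1.28, so we may follow a single branch between consecutive breakaway points. By Theorem 1.28(2) distinct branches do not overlap, so such a path is well defined.

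\textbf{Monotonicity.} The function $k(\tau)=K(\Gamma(\tau))$ is continuous on $[0,1]$ as a map into $[0,+\infty]$. We have $k(0)=0$ by Lemma 1.2(1) and Lemma 1.3(1), and $k(1)=+\infty$ by Lemma 1.2(2) and Lemma 1.3(3). On $(0,1)$ the values are finite and positive, since interior points lie in $\Delta$. By Lemma 1.37, $k$ is injective on $[0,1]$. A continuous injective real-valued function on an interval is strictly monotone; this is the standard intermediate value argument: if $k(\tau_1)<k(\tau_2)>k(\tau_3)$ for $\tau_1<\tau_2<\tau_3$, some value would be attained twice. Since $k(0)<k(1)$, $k$ is strictly increasing. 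This gives monotonic increase from $0$ at the pole to $+\infty$ at the zero.

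\textbf{Local justification and main obstacle.} As a check independent of the global counting in Lemma 1.37, we also verify the increase locally. Along a locus of fixed degree we have $RF(s)=e^{i\alpha}/K$, so $\log K=-\log|RF|$ and the argument of $RF$ is constant. By the Cauchy--Riemann equations for $\log RF$, the real part $\log|RF|$ has nonvanishing derivative along the level curve of $\arg RF$ wherever $RF'\neq0$. By Theorem 1.26, $RF'\neq0$ away from breakaway points, so $K$ is locally strictly monotone on each branch segment.

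The main obstacle is passing through breakaway points and using the full strength of Lemma 1.37 there. At a breakaway point $s_0$ with multiplicity $\gamma_0$ (Lemma 1.20), we have $K(s)-K(s_0)\sim c(s-s_0)^{\gamma_0}$. Each incoming and outgoing branch is a ray of this local power map, so $K$ increases along the continuation from an incoming ray to an outgoing one. Hence local monotonicity glues across $s_0$. Combined with the endpoint values, this yields the theorem.
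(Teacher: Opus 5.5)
Your proof is correct and is essentially the paper's. The paper also argues that if $K$ were not monotone, two points of the same locus would share a gain, contradicting Lemma 1.37, and then uses continuity and the boundary values of $K$ (Lemma 1.4) to conclude strict increase; your argument that a continuous injective function on an interval is strictly monotone is this same step with the intermediate value argument written out. Your additional local check (Cauchy--Riemann along level curves of $\arg RF$, and the $c(s-s_0)^{\gamma_0}$ picture at breakaway points) is not in the paper and could make the result independent of Lemma 1.37, but only under the convention that at each breakaway point the locus continues from a ray with $K<K(s_0)$ to one with $K>K(s_0)$. Without that convention or Lemma 1.37, a path in the level set could turn back at $s_0$.
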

\begin{proof}
If the gains $|K(s)|$ are not monotonic on the root locus of (1.1), two points $s_1$ and $s_2$ exist on the same root locus such that $|K(s_1)|=|K(s_2)|$. This would contradict Lemma 1.37. By Lemma 1.4, the gain must be strictly monotonic. We obtain Theorem 1.38.
\end{proof}

When the rational function $RF(s)$ takes a finite value at the point at infinity in the extended complex plane, Lemma 1.37 and Theorem 1.38 do not apply.
In such cases, the point at infinity must be excluded. This is an exceptional scenario, but it does not affect the study of the Gauss-Fourier conjecture. For this reason, we have not explicitly noted this issue in Lemma 1.37 and Theorem 1.38.

By Theorem 1.38, if a root locus intersects with itself, the monotonicity of the gains of the root locus cannot hold on the same root locus. This leads to the following Theorem 1.39.

\begin{theorem}
No root locus intersects itself.
\end{theorem}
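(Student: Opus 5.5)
The plan is to argue by contradiction, using the strict monotonicity of the gain along a root locus (Theorem 1.38). Suppose some root locus $\Gamma$ of $\alpha$ degree of (1.1) meets itself at a point $s^{*}$. First we have to make precise what self-intersection means. $\Gamma$ is a curve traced from a pole (or $\infty$) to a zero (or $\infty$) by Theorem 1.14. Parametrize it by a continuous injective-on-branches path $s(\tau)$, $\tau\in[0,1]$, with $K(s(0))=0$ and $K(s(1))=+\infty$. A self-intersection means there are parameters $\tau_1<\tau_2$ with $s(\tau_1)=s(\tau_2)=s^{*}$.

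\textbf{Case $s^{*}\in\Delta$.} The gain $K=|1/RF|$ is a single-valued function of the point $s$, so $K(s(\tau_1))=K(s^{*})=K(s(\tau_2))$. Both parameter values lie on the same locus $\Gamma$. By Theorem 1.38, $K$ is strictly increasing in $\tau$ along $\Gamma$, so $K(s(\tau_1))<K(s(\tau_2))$, which is a contradiction. Equivalently, the two passages through $s^{*}$ are two distinct points of $\Gamma$ in the parametrization sharing the same gain, which Lemma 1.37 forbids.

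\textbf{Case $s^{*}$ a finite zero or pole.} Here the gain is $+\infty$ or $0$. By strict monotonicity on $[0,1]$, $K$ attains the value $0$ only at $\tau=0$ and the value $+\infty$ only at $\tau=1$. So a single locus can pass through a pole only at its origin and through a zero only at its end. It therefore cannot return to the same pole or zero at an interior parameter. The only remaining possibility is that the origin and the end coincide, but one is a pole and the other is a zero, and these are distinct after cancellation (Lemma 1.2).

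\textbf{The point at infinity.} When $RF(\infty)$ is finite, the point at infinity is excluded, as in the remark after Theorem 1.38. Otherwise $\infty$ is itself a pole or a zero of (1.1) by Lemma 1.3, and the previous case applies.

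The main obstacle is not the contradiction itself but justifying the parametrization. We need each root locus branch to be a genuine curve along which $K$ can be regarded as a function of a parameter, so that "the same root locus" through a breakaway point is well defined. Near a breakaway point several loci of the same degree meet (Theorem 1.25). We must ensure that a locus entering $s^{*}$ and a locus leaving $s^{*}$ are treated as different loci unless they are one continuous branch of $\Gamma$. I would handle this by defining a root locus as a maximal branch on which $K$ is locally a diffeomorphic coordinate, which is possible away from critical points since $K'(s)\neq0$ there (Theorems 1.25, 1.26). At the finitely many breakaway points (Theorem 1.27), a branch is continued by matching gains. Any self-intersection then produces two parameter values with equal gain, and Theorem 1.38 gives the contradiction.
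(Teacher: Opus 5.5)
Your proposal is correct and is essentially the paper's argument. The paper gets Theorem 1.39 in one line: a self-intersection would contradict the strict monotonicity of the gain along a root locus (Theorem 1.38), which is exactly your main case, and your treatment of poles, zeros, $\infty$ and the parametrization through breakaway points makes explicit what the paper leaves implicit. One small correction: your ``equivalently'' appeal to Lemma 1.37 does not literally apply, because a self-intersection is a single point reached at two parameter values, not two distinct points with equal gain; the work is done by the monotonicity in the parameter given by Theorem 1.38.
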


\begin{theorem}
Let $\gamma_0>1$. If $\gamma_0$ root loci of (1.1) intersect at a point $s_0$, then $s_0$ is a critical point of $RF$ with multiplicity $\gamma_0-1$.
\end{theorem}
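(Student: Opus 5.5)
The plan is to turn the geometric hypothesis into an algebraic multiplicity statement, using the characteristic equation (Lemmas 1.19 and 1.20). Then that multiplicity is transferred to $K'(s)$ and $RF'(s)$ by differentiation.

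\emph{Step 1: the point and its equation.} Since $\gamma_0>1$ root loci meet at $s_0$, the point $s_0$ is a breakaway point. By the remark preceding Theorem 1.26, it is neither a zero nor a pole of (1.1), so $s_0\in\Delta$. By Theorem 1.15 all loci through $s_0$ share one unit complex value $a+ib$, and they share one gain $K_0=K(s_0)$.

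\emph{Step 2: the root has multiplicity $\gamma_0$.} Consider the polynomial
$$P(s)=K_0\prod_{l=1}^m (s-z_l)^{\gamma_l}-(a+ib)\prod_{j=1}^n(s-p_j)^{\beta_j}.$$
By Lemma 1.20, $P(s)=(s-s_0)^{\gamma_0}g(s)$ with $g(s_0)\neq 0$. The identification of ``$\gamma_0$ loci through $s_0$'' with ``root of multiplicity $\gamma_0$'' comes from the local structure. Near $s_0$, write $RF(s)-RF(s_0)=c(s-s_0)^{\gamma_0}(1+O(s-s_0))$ with $c\neq0$. The level set $\{\arg RF=\arg RF(s_0)\}$ near $s_0$ is then the preimage of a smooth arc through $RF(s_0)$ under a $\gamma_0$-to-one map. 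That preimage consists of exactly $\gamma_0$ smooth arcs crossing at $s_0$ at equal angles $\pi/\gamma_0$. I would carry this out by taking a local $\gamma_0$-th root, $RF(s)-RF(s_0)=h(s)^{\gamma_0}$ with $h$ conformal at $s_0$. Conversely, exactly $\gamma_0$ loci meeting forces the vanishing order of $RF-RF(s_0)$ to be exactly $\gamma_0$. This is also the order of $P$ at $s_0$, since
$$P(s)=-(a+ib)\prod_{j=1}^n(s-p_j)^{\beta_j}\,\frac{RF(s)-RF(s_0)}{RF(s_0)}\cdot(\text{unit}),$$
and $\prod_{j}(s_0-p_j)^{\beta_j}\neq0$.

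\emph{Step 3: multiplicity $\gamma_0-1$ for the derivatives.} Differentiating $P=(s-s_0)^{\gamma_0}g$ as in Lemma 1.21 gives
$$P'(s)=(s-s_0)^{\gamma_0-1}\bigl(\gamma_0 g(s)+(s-s_0)g'(s)\bigr),$$
and the second factor is nonzero at $s_0$. So $s_0$ is a zero of $P'$ of exact order $\gamma_0-1$. Equivalently, directly from Step 2, $RF'(s)=\gamma_0 c(s-s_0)^{\gamma_0-1}(1+O(s-s_0))$. Thus $s_0$ is a zero of $RF'$ of multiplicity exactly $\gamma_0-1$, i.e.\ a critical point of that multiplicity. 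By Theorem 1.26 the same holds for $K'$, because $K=(a+ib)/RF$ and $RF(s_0)\neq0,\infty$ give $K'=-(a+ib)RF'/RF^2$, which has the same vanishing order at $s_0$.

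The main obstacle is Step 2, namely matching the number of loci with the algebraic multiplicity. The paper's earlier lemmas only assert ``multiple root $\Leftrightarrow$ intersection''. I would settle this with the local normal form $RF-RF(s_0)=h^{\gamma_0}$, using Lemma 1.28 (no overlapping segments) to rule out degenerate counting.
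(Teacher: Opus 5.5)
Your proof is correct and follows the paper's skeleton, but it fills in two links that the paper's proof leaves unstated.

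The paper's argument is only your Step 3 computation. It differentiates the factorization $(s-s_0)^{\gamma_0}g(s)$ of Lemma 1.20 as in Lemma 1.21 and reads off the factor $(s-s_0)^{\gamma_0-1}(\gamma_0 g(s)+(s-s_0)g'(s))$ of $TT2$. It then declares $s_0$ a critical point of $K$ of multiplicity $\gamma_0-1$ and passes to $RF$ by Theorem 1.26.

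\emph{First link.} The paper never proves that the number of loci meeting at $s_0$ equals the multiplicity of $s_0$ as a root of the characteristic polynomial; it simply reuses the letter $\gamma_0$. Your normal form $RF-RF(s_0)=h^{\gamma_0}$ is exactly what justifies this. The paper treats a locus as a gain-monotone branch (Theorem 1.38), so you should add one observation: the $2\gamma_0$ half-branches at $s_0$ alternate between $K<K_0$ and $K>K_0$. Hence exactly $\gamma_0$ loci arrive at $s_0$ and $\gamma_0$ leave. This agrees with your count of $\gamma_0$ arcs, even though arcs and loci need not correspond one-to-one. Lemma 1.28 is not needed.

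\emph{Second link.} The paper's passage from $TT2$ to $K'$ is not justified. $TT2$ is a constant multiple of $P'$, which is a different function from $K'$, and Lemmas 1.22 and 1.24 compare the two only at the point $s_0$. Matching their orders at $s_0$ needs an identity such as $P'=K_0D'\,(RF-RF(s_0))+K_0D\,RF'$ with $D=\prod_j(s-p_j)^{\beta_j}$, whose first term vanishes to order $\gamma_0$. So your ``equivalently'' is not automatic. What actually carries Step 3 is your direct expansion $RF'(s)=\gamma_0 c(s-s_0)^{\gamma_0-1}(1+O(s-s_0))$, followed by $K'=-(a+ib)RF'/RF^2$.

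The paper's version is shorter and stays inside its own lemmas. Yours is self-contained given the standard local theory of holomorphic maps.

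One small correction: the remark before Theorem 1.26 excludes only \emph{multiple} zeros and poles. Simple zeros and poles lie on loci of every degree, so you cannot derive $s_0\in\Delta$ from that remark. Read $s_0\in\Delta$ as an implicit hypothesis of the theorem; the paper's proof needs it too, via Theorem 1.25.
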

\begin{proof}
If the point $s_0$ is a breakaway point of the root loci of (1.1), by Theorem 1.25, $s_0$ satisfies (1.4). Therefore, $TT1(s_0)=0$. $TT2(s)=(\gamma_0(s-s_0)^{\gamma_0-1}g(s)+(s-s_0)^{\gamma_0}g^{'}(s))
\prod^{m}_{l=1}(s_0-z_{l})^{\gamma_{l}}=(s-s_0)^{\gamma_0-1}(\gamma_0g(s)+(s-s_0)g^{'}(s))
\prod^{m}_{l=1}(s_0-z_{l})^{\gamma_{l}}$. $\prod^{m}_{l=1}(s_0-z_{l})^{\gamma_{l}}\neq 0$. $\gamma_0g(s_0)+(s_0-s_0)g^{'}(s)=\gamma_0g(s_0)\neq 0$. By Lemma 1.21, $TT1(s)=TT2(s)$. $TT2(s_0)=0$. Therefore, the point $s_0$ is a critical point of $\gamma_0-1$ multiplicity of $K(s)$. By Theorem 1.26, $s_0$ is also a critical point of $\gamma_0-1$ multiplicity of $RF$.
\end{proof}

For a rational function, if its root loci intersect at a breakaway point, and the root locus that leaves from this breakaway point re-intersects at the same breakaway point, a situation arises where the root locus intersects itself. According to Theorem 1.39, such a situation cannot occur. Therefore, we have the following theorem:

\begin{theorem}
For a rational function, if its root loci of (1.1) intersect at a breakaway point, the root locus leaving this breakaway point cannot re-intersect it.
\end{theorem}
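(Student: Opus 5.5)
The plan is to argue by contradiction, reducing the statement to the strict monotonicity of the gain along a single root locus (Theorem 1.38, resting on Lemma 1.37), equivalently to Theorem 1.39. Let $s_0\in\Delta$ be a breakaway point where, by Theorem 1.40, $\gamma_0>1$ root loci of (1.1) meet. All of them share the unit complex value $a+ib$ (Theorem 1.15), and all of them carry the single gain $K_0=K(s_0)=\abs{1/RF(s_0)}$ at $s_0$. Suppose some branch $L$ leaving $s_0$ comes back and meets $s_0$ again.

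First I will fix what "the root locus leaving $s_0$" means. I will parametrize it by the gain. By Theorem 1.14 every root locus is a curve running from a pole, or from infinity, to a zero, or to infinity, and by Theorem 1.38 $K$ increases strictly along it. So a branch leaving $s_0$ is the continuation, for $K>K_0$, of a root-locus curve $\Gamma$ that already passes through $s_0$ at the parameter value $K=K_0$. Concatenating the incoming arc with $L$ gives one root locus $\Gamma$, on which every point has a well-defined gain and the gain is strictly increasing.

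Next comes the contradiction. If $L$ returns to $s_0$, then $\Gamma$ contains a point $s_1\ne s_0$ as a parameter point (with $K(s_1)>K_0$) followed later by $s_0$ again. This gives two distinct parameter values on $\Gamma$ with the same gain $K_0$, which contradicts Lemma 1.37 and Theorem 1.38. Geometrically, $\Gamma$ would intersect itself at $s_0$, which Theorem 1.39 forbids. The same argument applies if $L$ re-enters $s_0$ along one of the other $\gamma_0-1$ incoming branches. In that case $L$, continued, would coincide near $s_0$ with a locus whose gain there is $K_0<K$, again violating monotonicity on the single curve formed by $L$. Overlap of different branches is excluded by Theorem 1.28(2).

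The main obstacle is the first step: justifying that an outgoing branch at a multiple root $s_0$ really is a continuation of a single root-locus curve, so that the monotonicity of Theorem 1.38 applies across $s_0$. At a breakaway point several branches meet, and the pairing of incoming and outgoing arcs is not canonical. I would handle this by noting that every pairing works. Each outgoing arc consists of roots of the characteristic equation (Lemma 1.19) with $K>K_0$ near $s_0$, since $K$ is continuous and strictly increases from $s_0$. Any return to $s_0$ would force $K=K_0$ on it again, contradicting strict increase. We must also exclude the exceptional case noted after Theorem 1.38, in which $RF$ has a finite value at infinity. That case is harmless, because $s_0$ is a finite point.
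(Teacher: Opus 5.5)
Your proposal is correct and follows the paper's route. The paper disposes of this in one line: a return to the breakaway point would make the locus intersect itself, which contradicts Theorem 1.39, and that theorem rests on the gain monotonicity of Theorem 1.38. Your version argues directly from Theorem 1.38, noting that any return to $s_0$ forces the gain back to $K_0$ on a branch where it strictly increases; this also settles how incoming and outgoing arcs pair up, which the paper leaves implicit.
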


\section{Continuity of Root Loci}
In prior literature, the continuity of the root locus has often been treated as self-evident, with no rigorous proof provided. However, proving the continuity of the root locus is not a trivial matter and cannot be overlooked. It is only under the assumption that the root locus is fully determined by the phase angle condition equations that we can rigorously establish its continuity. The continuity of the root locus is a fundamental property in root locus theory, and providing a rigorous proof of this property is essential.

Prior to our work, there were no established results regarding the relationship between the argument of complex functions of one variable and their critical points. In this paper, we present Theorem 2.7, which addresses this relationship. While this result may appear elementary and seemingly of limited practical use, it plays a crucial role in our proof of the continuity of the root locus. Without this result, the continuity of the root locus cannot be established. By combining this elementary result with the implicit function theorem, we provide a rigorous proof of the continuity of the root locus.

Except the implicit function Theorem 2.12 and 2.13, all lemmas and theorems in this section are original to the authors and do not rely on prior literature.

Notation  3:

Let $\Delta_1$ be a subset of $\Delta$, defined as $\Delta_1=\{s=(\sigma+it)\in\Delta$, except for the infinite points in $\mathbb{C}\cup\{\infty\}$ $\}$. We further define a subset $\Delta_{u}$ of $\Delta_1$, as $\Delta_{u}=\{s=(\sigma+it)\in\Delta_1$, where $u(\sigma,t)\neq0\}$. We define another subset $\Delta_{v}$ of $\Delta_1$ as $\Delta_{v}=\{s=(\sigma+it)\in\Delta_1$, where $v(\sigma,t)\neq0\}$. We define another subset $\Delta_{vu}$ of $\Delta_{v}$ as $\Delta_{vu}=\{s=(\sigma+it)\in\Delta_{v}$, where $u(\sigma,t)=0\}$. Let $\Delta_{d}=\{s=(\sigma+it)\in\Delta_1$. $s$ is not the finite critical point of $RF(s)\}$.

Definition 2.11, Theorem 2.12 and Theorem 2.13 are classic results. For the symbols in them, we do not provide a notation.
The parameters $\varepsilon$ and $\delta$ in Lemma 2.14 are the same as those in the standard $\varepsilon$-$\delta$ definition of limits in mathematical analysis. The proof of Lemma 2.16 and Lemma 2.18 is based principally on $\varepsilon$-$\delta$ arguments.  $\varkappa_1$ and  $\varkappa_2$ are two sets defined in this section.

In this section, we present the expressions of the argument\cite{Stein} of the rational function $RF$ in $\mathbb{C}\cup\{\infty\}$. The argument $\varphi(\sigma,t)$ is defined as follows:

(1). When $u(\sigma,t)>0$, $v(\sigma,t)\ge0$. $\varphi(\sigma,t)=arg(\frac{v(\sigma,t)}{u(\sigma,t)})=\arctan(\frac{v(\sigma,t)}{u(\sigma,t)})$.

(2). When $u(\sigma,t)<0$, $v(\sigma,t)\ge0$. $\varphi(\sigma,t)=arg(\frac{v(\sigma,t)}{u(\sigma,t)})=\pi-\arctan(\frac{v(\sigma,t)}{-u(\sigma,t)})$.

(3). When $u(\sigma,t)<0$, $v(\sigma,t)<0$. $\varphi(\sigma,t)=arg(\frac{v(\sigma,t)}{u(\sigma,t)})=\pi+\arctan(\frac{v(\sigma,t)}{u(\sigma,t)})$.

(4). When $u(\sigma,t)>0$, $v(\sigma,t)<0$. $\varphi(\sigma,t)=arg(\frac{v(\sigma,t)}{u(\sigma,t)})=-\arctan(\frac{-v(\sigma,t)}{u(\sigma,t)})$.

Lemma 2.1 is proved via partial derivative calculations.

\begin{lemma}
Let $s=(\sigma,t)\in\Delta_u$. Then, $\frac{\partial{\varphi}}{\partial{\sigma}}(\sigma,t)=\frac{v^{'}_{\sigma}(\sigma,t)u(\sigma,t)-u^{'}_{\sigma}(\sigma,t)v(\sigma,t)}{u^2(\sigma,t)+v^2(\sigma,t)}$,
$\frac{\partial{\varphi}}{\partial{t}}(\sigma,t)=\frac{v^{'}_{t}(\sigma,t)u(\sigma,t)-u^{'}_{t}(\sigma,t)v(\sigma,t)}{u^2(\sigma,t)+v^2(\sigma,t)}$.
\end{lemma}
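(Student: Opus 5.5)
The plan is a direct computation from the piecewise definition of $\varphi(\sigma,t)$ given just before the lemma. On $\Delta_u$ we have $u(\sigma,t)\neq 0$, so only cases (1)–(4) can occur, and each expresses $\varphi$ as a constant ($0$ or $\pi$) plus $\pm\arctan$ of a quotient of $v$ and $u$. Since $RF$ is a rational function and $s\in\Delta_u\subset\Delta_1$ is a finite point that is neither a zero nor a pole, $u=\operatorname{Re}RF$ and $v=\operatorname{Im}RF$ are real-analytic near $s$. The quotient $v/u$ is therefore differentiable in a neighbourhood of $s$, where $u$ keeps its sign.

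The key step is the single identity
\begin{equation*}
\frac{\partial}{\partial\sigma}\arctan\Big(\frac{v}{u}\Big)=\frac{1}{1+v^2/u^2}\cdot\frac{v'_{\sigma}u-u'_{\sigma}v}{u^2}=\frac{v'_{\sigma}u-u'_{\sigma}v}{u^2+v^2},
\end{equation*}
and the same identity with $\sigma$ replaced by $t$. Then I check each case. In case (1), $\varphi=\arctan(v/u)$, which is immediate. In case (3), $\varphi=\pi+\arctan(v/u)$, and the constant drops out. In case (2), $\varphi=\pi-\arctan(v/(-u))=\pi+\arctan(v/u)$ because $\arctan$ is odd. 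In case (4), $\varphi=-\arctan(-v/u)=\arctan(v/u)$, again by oddness. So on each region $\varphi$ equals $\arctan(v/u)$ up to an additive constant, and both partial derivatives take the stated form.

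The main obstacle is the boundary between cases, namely points with $v=0$. For $u>0$ the formulas of cases (1) and (4) agree and are smooth across $v=0$, so there is nothing to check. For $u<0$, the value of $\varphi$ jumps by $2\pi$ between case (2), where $\varphi\to\pi$, and case (3), where $\varphi\to-\pi$ as $v\to0^-$. I will handle this by noting that the argument is defined only modulo $2\pi$, consistent with the paper's convention that degrees differing by $2q\pi$ correspond to the same unit complex value. Locally I choose the continuous branch $\pi+\arctan(v/u)$ on a neighbourhood where $u<0$. This branch differs from the piecewise definition by a locally constant multiple of $2\pi$, so the partial derivatives are unaffected and the formula holds on all of $\Delta_u$.
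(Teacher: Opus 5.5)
Your proposal is correct and takes the same approach as the paper, which just says that Lemma 2.1 follows from a direct partial-derivative calculation on the piecewise $\arctan$ formulas; that is exactly your computation. One small correction: under the paper's definition, case (3) is $\pi+\arctan(v/u)\to\pi$ (not $-\pi$) as $v\to 0^-$, which is also what you wrote earlier. So $\varphi=\pi+\arctan(v/u)$ on all of $\{u<0\}$ and $\varphi=\arctan(v/u)$ on $\{u>0\}$, and the branch jump you guard against never occurs on $\Delta_u$, since the paper's $\varphi$ only jumps across $\{u=0,\ v<0\}$, which lies outside $\Delta_u$.
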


Let $RF_3(s)=iRF(s)$, so that $RF_3(s)=iu(\sigma,t)-v(\sigma,t)$. By repeating the proof of Lemma 2.1, we obtain Lemma 2.2.

\begin{lemma}
Let $s=(\sigma,t)\in\Delta_v$. Then, $\frac{\partial{\varphi_3}}{\partial{\sigma}}(\sigma,t)=
\frac{v^{'}_{\sigma}(\sigma,t)u(\sigma,t)-u^{'}_{\sigma}(\sigma,t)v(\sigma,t)}{u^2(\sigma,t)+v^2(\sigma,t)}$,
$\frac{\partial{\varphi_3}}{\partial{t}}(\sigma,t)=
\frac{v^{'}_{t}(\sigma,t)u(\sigma,t)-u^{'}_{t}(\sigma,t)v(\sigma,t)}{u^2(\sigma,t)+v^2(\sigma,t)}$
\end{lemma}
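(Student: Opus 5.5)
The statement to be proved is Lemma 2.2: the partial derivatives of the argument $\varphi_3$ of $RF_3=iRF$ on $\Delta_v$. My plan is to reduce it to Lemma 2.1 applied to $RF_3$, and then check that the resulting expressions are identical to those for $RF$.

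First, I identify the real and imaginary parts of $RF_3$. Since $RF_3(s)=iRF(s)=-v(\sigma,t)+iu(\sigma,t)$, I set $u_3=-v$ and $v_3=u$. The set $\Delta_v$, where $v\neq0$, is exactly the set where $u_3\neq0$. So $\Delta_v$ plays for $RF_3$ the role that $\Delta_u$ plays for $RF$. Hence Lemma 2.1, whose proof only uses the real and imaginary parts of a rational function and the nonvanishing of the real part, applies verbatim to $RF_3$ on $\Delta_v$. It gives
$$\frac{\partial\varphi_3}{\partial\sigma}=\frac{(v_3)'_\sigma u_3-(u_3)'_\sigma v_3}{u_3^2+v_3^2},\qquad \frac{\partial\varphi_3}{\partial t}=\frac{(v_3)'_t u_3-(u_3)'_t v_3}{u_3^2+v_3^2}.$$

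Second, I substitute $u_3=-v$ and $v_3=u$. The denominator becomes $v^2+u^2$. The numerator of the $\sigma$-derivative becomes $u'_\sigma(-v)-(-v'_\sigma)u=v'_\sigma u-u'_\sigma v$, and the $t$-derivative works the same way. These are exactly the formulas stated.

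For robustness I would add the direct argument. Locally on $\Delta_v$, $\varphi_3$ equals $\pi-\arctan(u/v)$ or the corresponding branch from the case list (1)–(4) applied to $(u_3,v_3)$. Differentiating $\arctan(u/v)$ gives $\pm\frac{u'v-uv'}{u^2+v^2}$, and the sign works out to the stated expression. Alternatively, one can note that $\varphi_3=\varphi+\pi/2$ modulo constants, so the derivatives coincide wherever both are defined.

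The only delicate point is the branch bookkeeping. Cases (1)–(4) are defined piecewise, so I must check that on each connected piece of $\Delta_v$ the chosen branch formula for $\varphi_3$ differs from a smooth $\arctan$ expression only by an additive constant, so that constants drop out on differentiation. The boundary between cases (1) and (4) for $RF_3$ lies where $v_3=u=0$, which is inside $\Delta_v$. There I would verify continuity of the branch (both formulas give $0$) and appeal to the fact that the derivative expression is continuous there.
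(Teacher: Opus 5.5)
Your proposal is correct and follows the paper's route. The paper likewise writes $RF_3=iRF=-v+iu$ and obtains the lemma by repeating the proof of Lemma 2.1 with real part $-v$ (nonzero exactly on $\Delta_v$) and imaginary part $u$; your substitution then returns the stated formulas, and your branch check (on $\Delta_v$ the case formulas reduce to $\arctan(v_3/u_3)$ or $\pi+\arctan(v_3/u_3)$, so no jump occurs) is a sound detail that the paper leaves implicit.
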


Using Lemma 2.2, we obtain Lemma 2.3.

\begin{lemma}
Let $s=(\sigma,t)\in\Delta_{vu}$. Then,
$\frac{\partial{\varphi}}{\partial{\sigma}}(\sigma,t)=\frac{v^{'}_{\sigma}(\sigma,t)u(\sigma,t)-u^{'}_{\sigma}(\sigma,t)v(\sigma,t)}{u^2(\sigma,t)+v^2(\sigma,t)}$,
$\frac{\partial{\varphi}}{\partial{t}}(\sigma,t)=\frac{v^{'}_{t}(\sigma,t)u(\sigma,t)-u^{'}_{t}(\sigma,t)v(\sigma,t)}{u^2(\sigma,t)+v^2(\sigma,t)}$.
\end{lemma}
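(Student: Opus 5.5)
The plan is to reduce Lemma 2.3 to Lemma 2.2 by comparing the argument $\varphi$ of $RF$ with the argument $\varphi_3$ of $RF_3=iRF$. Since $\Delta_{vu}\subset\Delta_v$, Lemma 2.2 already gives the two partial derivatives of $\varphi_3$ at every $s\in\Delta_{vu}$. It therefore suffices to show that $\varphi$ and $\varphi_3$ differ by a locally constant amount near each point of $\Delta_{vu}$. Then their partial derivatives coincide there, and the formulas of Lemma 2.2 transfer verbatim to $\varphi$.

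\emph{Step 1: the relation between the two arguments.} Because $RF_3(s)=iRF(s)$, we have $\arg RF_3=\arg RF+\frac{\pi}{2}$ modulo $2\pi$. At a point $s_0\in\Delta_{vu}$ we have $u(s_0)=0$ and $v(s_0)\neq0$. Hence $RF_3(s_0)=-v(s_0)$ is a nonzero real number, so $RF_3$ has nonzero real part $-v$ on a neighbourhood $U$ of $s_0$. On $U$, $\varphi_3$ is given by one of the smooth $\arctan$ branches (1)–(4), which is why Lemma 2.2 applies.

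\emph{Step 2: choose $\varphi$ compatibly.} On $U$ I will take $\varphi:=\varphi_3-\frac{\pi}{2}$ (plus a fixed multiple of $2\pi$ if needed). This is a continuous determination of $\arg RF$ on $U$, because $RF=-iRF_3$. This is consistent with the paper's usage: via Lemma 1.5, $\varphi=\sum\gamma_l\arg(s-z_l)-\sum\beta_j\arg(s-p_j)$ is determined only up to $2q\pi$ and is continuous off the zeros and poles. Any two continuous determinations on the connected set $U$ differ by a constant in $2\pi\mathbb{Z}$.

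\emph{Step 3: differentiate.} From Step 2, $\partial_\sigma\varphi=\partial_\sigma\varphi_3$ and $\partial_t\varphi=\partial_t\varphi_3$ on $U$. Substituting the expressions of Lemma 2.2 gives the claimed formulas at $s_0$. As a consistency check, one can verify directly that $\arctan(u/(-v))$ has derivative $\frac{v u'_\sigma\cdot(-1)+u v'_\sigma}{u^2+v^2}$, which matches the numerator $v'_\sigma u-u'_\sigma v$.

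\emph{Main obstacle.} The delicate point is that the piecewise definition (1)–(4) of $\varphi$ given in the paper is discontinuous exactly where $u=0$ or across the negative real direction. At points of $\Delta_{vu}$ the literal formulas for $\varphi$ in terms of $\arctan(v/u)$ are undefined. I would handle this by interpreting $\varphi$ at such points as the continuous local branch described in Step 2, which is the meaning required by Lemma 1.5 and the phase condition (1.2). The differentiability then comes entirely from $\varphi_3$.
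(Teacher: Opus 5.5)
Your proposal is correct and matches the paper, which obtains Lemma 2.3 directly from Lemma 2.2 via $RF_3=iRF$ and $\Delta_{vu}\subset\Delta_v$, leaving the relation $\varphi_3=\varphi+\frac{\pi}{2}$ (up to a constant in $2\pi\mathbb{Z}$) implicit; your Step 2 just makes that branch choice explicit. One small correction to your ``main obstacle'' remark, which does not affect your argument: the branch (1)--(4) is continuous across the negative real direction, since cases (2) and (3) both equal $\pi+\arctan(v/u)$, and its only jump is across $\{u=0,\,v<0\}$.
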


Note that $\Delta_u\cap\Delta_{vu}=\emptyset$, $\Delta_u\subset\Delta_1$, $\Delta_{vu}\subset\Delta_1$, and $\Delta_1=\Delta_u\cup\Delta_{vu}$.
Combining Lemma 2.1 and Lemma 2.3, we obtain Theorem 2.4.

\begin{theorem}
Let $s=(\sigma,t)\in\Delta_1$. Then, $\frac{\partial{\varphi}}{\partial{\sigma}}(\sigma,t)=\frac{v^{'}_{\sigma}(\sigma,t)u(\sigma,t)-u^{'}_{\sigma}(\sigma,t)v(\sigma,t)}{u^2(\sigma,t)+v^2(\sigma,t)}$,
$\frac{\partial{\varphi}}{\partial{t}}(\sigma,t)=\frac{v^{'}_{t}(\sigma,t)u(\sigma,t)-u^{'}_{t}(\sigma,t)v(\sigma,t)}{u^2(\sigma,t)+v^2(\sigma,t)}$.
\end{theorem}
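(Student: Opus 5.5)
The plan is to assemble Theorem 2.4 from the two local formulas already available, using the decomposition $\Delta_1=\Delta_u\cup\Delta_{vu}$ with $\Delta_u\cap\Delta_{vu}=\emptyset$ noted just before the statement. Fix $s=(\sigma,t)\in\Delta_1$. Since $s$ is a finite point that is neither a zero nor a pole of $RF$, we have $u^2+v^2>0$ at $s$, and $u,v$ are real-analytic near $s$ (they are the real and imaginary parts of a holomorphic function away from its poles). Hence the right-hand sides of both formulas are well defined. Every such point satisfies either $u(\sigma,t)\neq0$ or $u(\sigma,t)=0$; in the latter case $v(\sigma,t)\neq0$, because $RF(s)\neq0$. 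So it suffices to establish the formula on each piece separately.

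On $\Delta_u$ the formulas are exactly Lemma 2.1. The underlying computation is the following. Near $s$, $u$ keeps its sign, so in each of the cases (1)--(4) defining $\varphi$ we have $\varphi=\arctan(v/u)+c$, where $c$ is locally constant. Differentiating gives
\[
\frac{\partial\varphi}{\partial\sigma}=\frac{1}{1+v^2/u^2}\cdot\frac{v'_\sigma u-u'_\sigma v}{u^2}=\frac{v'_\sigma u-u'_\sigma v}{u^2+v^2},
\]
and the same computation applies for $t$. One point needs care: where $v$ changes sign with $u>0$, cases (1) and (4) must glue continuously. This holds because $-\arctan(-v/u)=\arctan(v/u)$, so no jump occurs.

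On $\Delta_{vu}$ we use $RF_3=iRF=-v+iu$. Its argument is $\varphi_3=\varphi+\pi/2$, modulo a locally constant multiple of $2\pi$. Its real part is $-v$, which is nonzero at $s$, so Lemma 2.1 applies to $RF_3$ near $s$. With real part $-v$ and imaginary part $u$, Lemma 2.1 gives the numerator $u'_\sigma(-v)-(-v'_\sigma)u=v'_\sigma u-u'_\sigma v$ and the denominator $v^2+u^2$. This is Lemma 2.2, and Lemma 2.3 transfers it to $\varphi$, since $\varphi$ and $\varphi_3$ differ by a locally constant amount and therefore have the same partial derivatives.

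The main obstacle is exactly this transfer on $\Delta_{vu}$. The piecewise definition (1)--(4) of $\varphi$ has branch switches along $u=0$, and one must check that $\varphi-\varphi_3$ really is locally constant in a full neighbourhood of $s$ rather than jumping there. I would settle this by noting that $\varphi$ is a continuous branch of $\arg RF$ near $s$: the cases (2) and (3) match across $v=0$ when $u<0$, and the values approach $\pm\pi/2$ consistently as $u\to0$. Since $\varphi_3$ is also a continuous branch of $\arg RF_3$, their difference is a continuous function with values in $\pi/2+2\pi\mathbb{Z}$, hence constant near $s$. Combining the two pieces yields the formulas on all of $\Delta_1$.
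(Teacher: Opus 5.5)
Your route matches the paper's. You decompose $\Delta_1=\Delta_u\cup\Delta_{vu}$, apply Lemma 2.1 on $\Delta_u$, and pass to $RF_3=iRF=-v+iu$ on $\Delta_{vu}$ (Lemmas 2.2--2.3). Your computations on $\Delta_u$ and for $RF_3$ are correct.

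The gap is in how you settle the step you yourself call the main obstacle. The paper passes over this step in one line, and your justification of it is false at half of $\Delta_{vu}$. The piecewise function (1)--(4) takes values in $(-\pi/2,3\pi/2)$, and its branch cut lies exactly on $\{u=0,\ v<0\}$, i.e.\ inside $\Delta_{vu}$. With $v<0$:
\begin{itemize}
\item case (4) gives $\varphi=-\arctan(-v/u)\to-\pi/2$ as $u\to0^+$;
\item case (3) gives $\varphi=\pi+\arctan(v/u)\to3\pi/2$ as $u\to0^-$.
\end{itemize}
Now $u$ is harmonic and not locally constant, so it takes both signs in every neighbourhood of a point $s$ with $u(s)=0$. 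Hence near such an $s$ with $v(s)<0$, the difference $\varphi-\varphi_3$ equals $-\pi/2$ on $\{u>0\}$ and $3\pi/2$ on $\{u<0\}$. It is not constant. The literal $\varphi$, which (1)--(4) do not even define at $s$, has no continuous extension to $s$, so the statement cannot be read for it there. Your claim that the values approach $\pm\pi/2$ consistently as $u\to0$ is true only when $v(s)>0$; there, setting $\varphi(s)=\pi/2$ does give a continuous branch.

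The repair is to say explicitly what $\partial\varphi$ means at such points. Treat $\varphi$ as the argument determined modulo $2\pi$, as the paper does throughout with degrees $2q\pi+\alpha$. Its partial derivatives are then those of any continuous local branch. Two such branches differ by a locally constant element of $2\pi\mathbb{Z}$, so this is well defined and agrees with the literal $\varphi$ wherever the latter is continuous. Near $s\in\Delta_{vu}$, use the branch $\varphi_3-\pi/2$. It is continuous because $\mathrm{Re}\,RF_3=-v\neq0$ there, and Lemma 2.1 applied to $RF_3$ gives the formula. Alternatively, write a local branch as $\varphi=\mathrm{Im}\log RF$. This gives $\partial_\sigma\varphi=\mathrm{Im}(RF'/RF)$ and $\partial_t\varphi=\mathrm{Re}(RF'/RF)$ on all of $\Delta_1$ at once (the content of Theorem 2.5), with no case split.
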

In many textbooks on functions of one complex variable, the following results are established: $\frac{dRF(s)}{ds}=\frac{\partial{u(\sigma,t)}}{\partial{\sigma}}+i\frac{\partial{v(\sigma,t)}}{\partial{\sigma}}=\frac{\partial{u(\sigma,t)}}{\partial{t}}-i\frac{\partial{v(\sigma,t)}}{\partial{t}}$. $\frac{\partial{u(\sigma,t)}}{\partial{\sigma}}=\frac{\partial{v(\sigma,t)}}{\partial{t}}$, $\frac{\partial{v(\sigma,t)}}{\partial{\sigma}}=-\frac{\partial{u(\sigma,t)}}{\partial{t}}$.

\begin{theorem}
For any point $s=(\sigma,t)\in\Delta_1$, we have $\frac{\partial{\varphi}}{\partial{t}}(\sigma,t)=Re({\frac{\frac{dRF(s)}{ds}}{RF(s)}})$, $\frac{\partial{\varphi}}{\partial{\sigma}}(\sigma,t)=Im({\frac{\frac{dRF(s)}{ds}}{RF(s)}}).$
\end{theorem}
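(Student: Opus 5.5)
The plan is to compute the complex number $\frac{dRF(s)/ds}{RF(s)}$ explicitly in terms of $u,v$ and their partial derivatives. We then compare its real and imaginary parts with the formulas for $\frac{\partial\varphi}{\partial\sigma}$ and $\frac{\partial\varphi}{\partial t}$ already established in Theorem 2.4, using the Cauchy--Riemann equations recalled just before the statement. Throughout we work at a point $s=(\sigma,t)\in\Delta_1$. There $RF(s)=u+iv$ is finite and nonzero, so $u^2+v^2\neq 0$ and the quotient is well defined; this is also exactly the domain on which Theorem 2.4 holds.

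First, write $\frac{dRF(s)}{ds}=u'_\sigma+iv'_\sigma$. Multiplying numerator and denominator by $\overline{RF(s)}=u-iv$ gives
\begin{equation*}
\frac{RF'(s)}{RF(s)}=\frac{(u'_\sigma+iv'_\sigma)(u-iv)}{u^2+v^2}
=\frac{u'_\sigma u+v'_\sigma v}{u^2+v^2}+i\,\frac{v'_\sigma u-u'_\sigma v}{u^2+v^2}.
\end{equation*}
The imaginary part coincides verbatim with the expression for $\frac{\partial\varphi}{\partial\sigma}$ in Theorem 2.4, which settles the second identity.

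For the real part, we substitute the Cauchy--Riemann relations $u'_\sigma=v'_t$ and $v'_\sigma=-u'_t$. This turns the numerator $u'_\sigma u+v'_\sigma v$ into $v'_t u-u'_t v$. The real part therefore equals $\frac{v'_t u-u'_t v}{u^2+v^2}$, which is the expression for $\frac{\partial\varphi}{\partial t}$ in Theorem 2.4, giving the first identity.

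As an independent check, one may note that locally $\log RF=\ln|RF|+i\varphi$ is holomorphic with derivative $RF'/RF$. Differentiating in $\sigma$ gives $\mathrm{Im}(RF'/RF)=\varphi_\sigma$. Using $\frac{d}{ds}=-i\frac{\partial}{\partial t}$ gives $\mathrm{Re}(RF'/RF)=\varphi_t$.

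There is no real obstacle. The only point needing care is that Theorem 2.4 is valid on all of $\Delta_1$, including points where $u=0$ (the set $\Delta_{vu}$). Since we simply invoke that theorem rather than the $\arctan$ formulas, the branch cases of the argument are handled automatically.
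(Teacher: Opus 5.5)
Your proposal is correct and matches the paper's proof: the paper also writes $RF'(s)=u'_\sigma+iv'_\sigma$, multiplies through by $u-iv$, reads off the imaginary part directly, converts the real part with the Cauchy--Riemann relations $u'_\sigma=v'_t$, $v'_\sigma=-u'_t$, and matches both parts with Theorem 2.4. Your extra check via a local branch of $\log RF$ is valid, but the paper does not use it and it is not needed.
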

\begin{proof}
$\frac{\frac{dRF(s)}{ds}}{RF(s)}=\frac{\frac{\partial{u(\sigma,t)}}{\partial{\sigma}}+i\frac{\partial{v(\sigma,t)}}{\partial{\sigma}}}{u(\sigma,t)+iv(\sigma,t)}
=\frac{\frac{\partial{u(\sigma,t)}}{\partial{\sigma}}u(\sigma,t)+\frac{\partial{v(\sigma,t)}}{\partial{\sigma}}v(\sigma,t)+i(\frac{\partial{v(\sigma,t)}}{\partial{\sigma}}u(\sigma,t)-\frac{\partial{u(\sigma,t)}}{\partial{\sigma}}v(\sigma,t))}{u(\sigma,t)^2+v(\sigma,t)^2}
=\frac{\frac{\partial{v(\sigma,t)}}{\partial{t}}u(\sigma,t)-\frac{\partial{u(\sigma,t)}}{\partial{t}}v(\sigma,t)}{u(\sigma,t)^2+v(\sigma,t)^2}+i\frac{\frac{\partial{v(\sigma,t)}}{\partial{\sigma}}u(\sigma,t)-\frac{\partial{u(\sigma,t)}}{\partial{\sigma}}v(\sigma,t)}{u(\sigma,t)^2+v(\sigma,t)^2}
$.

By Theorem 2.4, we derive Theorem 2.5.
\end{proof}

By Theorem 2.5, we derive Theorem 2.6.

\begin{theorem}
For any point $s=(\sigma,t)\in\Delta_1$:

1. If $\frac{\partial{\varphi}}{\partial{t}}(\sigma,t)=0$, then $Re({\frac{\frac{dRF(s)}{ds}}{RF(s)}})=0$.

2. If $\frac{\partial{\varphi}}{\partial{\sigma}}(\sigma,t)=0$, then $Im({\frac{\frac{dRF(s)}{ds}}{RF(s)}})=0$.

Conversely:

1. If $Re({\frac{\frac{dRF(s)}{ds}}{RF(s)}})=0$, then $\frac{\partial{\varphi}}{\partial{t}}(\sigma,t)=0$.

2. If $Im({\frac{\frac{dRF(s)}{ds}}{RF(s)}})=0$, then $\frac{\partial{\varphi}}{\partial{\sigma}}(\sigma,t)=0$.
\end{theorem}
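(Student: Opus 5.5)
Theorem 2.6 is an immediate consequence of Theorem 2.5, so the plan is simply to read off the two identities of Theorem 2.5 in both directions. Fix $s=(\sigma,t)\in\Delta_1$. Then $s$ is a finite point that is neither a zero nor a pole of $RF$, so $RF(s)\neq 0$, $RF(s)\neq\infty$, and $\frac{dRF(s)}{ds}$ is finite. Hence the quotient $\frac{dRF(s)}{ds}/RF(s)$ is a well-defined finite complex number, and its real and imaginary parts are well-defined real numbers.

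\textbf{Step 1 (the forward implications).} By Theorem 2.5,
\[
\frac{\partial\varphi}{\partial t}(\sigma,t)=Re\Big(\frac{\frac{dRF(s)}{ds}}{RF(s)}\Big),\qquad \frac{\partial\varphi}{\partial\sigma}(\sigma,t)=Im\Big(\frac{\frac{dRF(s)}{ds}}{RF(s)}\Big).
\]
Both are equalities of real numbers at the same point $s$. If $\frac{\partial\varphi}{\partial t}(\sigma,t)=0$, substituting into the first identity gives $Re\big(\frac{dRF(s)}{ds}/RF(s)\big)=0$. In the same way, if $\frac{\partial\varphi}{\partial\sigma}(\sigma,t)=0$, the second identity gives $Im\big(\frac{dRF(s)}{ds}/RF(s)\big)=0$.

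\textbf{Step 2 (the converse implications).} Read the same two identities from right to left. If $Re\big(\frac{dRF(s)}{ds}/RF(s)\big)=0$, then $\frac{\partial\varphi}{\partial t}(\sigma,t)=0$. If $Im\big(\frac{dRF(s)}{ds}/RF(s)\big)=0$, then $\frac{\partial\varphi}{\partial\sigma}(\sigma,t)=0$.

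\textbf{The only point needing care.} One must check that Theorem 2.5 really holds on all of $\Delta_1$, including points where $u(\sigma,t)=0$. There the arctan expressions for $\varphi$ defining the branches in Section 2 are not directly usable. This case is covered by Lemma 2.3 on $\Delta_{vu}$, and Theorem 2.4 assembles the formula on $\Delta_1=\Delta_u\cup\Delta_{vu}$. I would add one remark: the partial derivatives of $\varphi$ do not depend on the branch or on the additive constant $2q\pi$, so the identities, and hence the equivalences, are branch-independent. With that noted, no obstacle remains.
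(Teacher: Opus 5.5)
Your proposal is correct and follows the paper's route. The paper likewise obtains Theorem 2.6 by reading the identities of Theorem 2.5 in both directions (with Theorem 2.5 resting on Theorem 2.4 over $\Delta_1=\Delta_u\cup\Delta_{vu}$), and your remark that the partial derivatives of $\varphi$ do not depend on the choice of branch is a harmless, useful clarification.
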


\begin{theorem}
A point $s_0\in\Delta_1$ is a finite zero of $RF^{'}(s)$
if and only if $\frac{\partial{\varphi}}{\partial{\sigma}}(\sigma,t)\mid{_{s=s_0}}=0$
and $\frac{\partial{\varphi}}{\partial{t}}(\sigma,t)\mid{_{s=s_0}}=0$ hold simultaneously.
\end{theorem}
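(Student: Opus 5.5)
The plan is to work directly from Theorem 2.5, which expresses both partial derivatives of the argument through the logarithmic derivative:
\[
\frac{\partial\varphi}{\partial t}=\mathrm{Re}\Big(\frac{RF'(s)}{RF(s)}\Big),\qquad \frac{\partial\varphi}{\partial\sigma}=\mathrm{Im}\Big(\frac{RF'(s)}{RF(s)}\Big).
\]
These hold for every $s\in\Delta_1$. The key observation is that on $\Delta_1$ the point $s_0$ is finite and is neither a zero nor a pole of $RF$. Hence $RF(s_0)$ is a finite nonzero complex number, and $RF$ is holomorphic in a neighbourhood of $s_0$, so $RF'(s_0)$ is finite. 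Consequently the quotient $w(s_0)=RF'(s_0)/RF(s_0)$ is a well-defined finite complex number, and it vanishes exactly when $RF'(s_0)=0$.

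For the forward direction, suppose $RF'(s_0)=0$. Then $w(s_0)=0$, so both its real and imaginary parts vanish. By Theorem 2.5 (equivalently, by the converse parts of Theorem 2.6), this gives $\partial\varphi/\partial t=0$ and $\partial\varphi/\partial\sigma=0$ at $s_0$ simultaneously.

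For the reverse direction, suppose both partial derivatives vanish at $s_0$. By Theorem 2.6 we get $\mathrm{Re}\,w(s_0)=0$ and $\mathrm{Im}\,w(s_0)=0$, so $w(s_0)=0$. Multiplying by the nonzero finite value $RF(s_0)$ yields $RF'(s_0)=0$, so $s_0$ is a finite zero of $RF'$.

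The only delicate point is justifying that Theorem 2.4 and Theorem 2.5 apply on all of $\Delta_1$. This includes points where $u=0$, which are covered by the decomposition $\Delta_1=\Delta_u\cup\Delta_{vu}$ and Lemma 2.3. As a safeguard, I would also note that $\varphi$ is locally a branch of $\mathrm{Im}\log RF$. Then the identity $\varphi_\sigma+i\varphi_t\cdots$ follows from the Cauchy--Riemann equations applied to $\log RF$, whose complex derivative is $w$. This confirms the formulas independently of the case split in the arctan definition, and I do not expect any further obstacle.
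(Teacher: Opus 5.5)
Your proposal is correct and follows essentially the same route as the paper: both directions come from Theorem 2.5 (via Theorem 2.6), which identifies $\partial\varphi/\partial t$ and $\partial\varphi/\partial\sigma$ with the real and imaginary parts of $RF'(s)/RF(s)$. Your remark that $RF(s_0)$ is finite and nonzero on $\Delta_1$, so that $RF'(s_0)/RF(s_0)=0$ is equivalent to $RF'(s_0)=0$, makes explicit a step the paper leaves implicit; your unfinished safeguard identity should read $RF'/RF=\partial_t\varphi+i\,\partial_\sigma\varphi$, which is exactly Theorem 2.5.
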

\begin{proof}
If $s_0\in\Delta_1$ is a zero of $RF^{'}(s)$, then ${\frac{\frac{dRF(s)}{ds}}{RF(s)}}\mid{_{s=s_0}}=0$, except when $s_0$ is an infinite point in $\mathbb{C}\cup\{\infty\}$. Therefore, both $Re({\frac{\frac{dRF(s)}{ds}}{RF(s)}})\mid{_{s=s_0}}=0$ and $Im({\frac{\frac{dRF(s)}{ds}}{RF(s)}})\mid{_{s=s_0}}=0$ hold, by Theorem 2.5, implying that $\frac{\partial{\varphi}}{\partial{t}}(\sigma,t)\mid{_{s=s_0}}=0$ and $\frac{\partial{\varphi}}{\partial{\sigma}}(\sigma,t)\mid{_{s=s_0}}=0$ hold simultaneously.

Conversely, if $s_0\in\Delta$ and both $\frac{\partial{\varphi}}{\partial{t}}(\sigma,t)\mid{_{s=s_0}}=0$ and $\frac{\partial{\varphi}}{\partial{\sigma}}(\sigma,t)\mid{_{s=s_0}}=0$ hold, by Theorem 2.5, then $Re({\frac{\frac{dRF(s)}{ds}}{RF(s)}})\mid{_{s=s_0}}=0$, and $Im({\frac{\frac{dRF(s)}{ds}}{RF(s)}})\mid{_{s=s_0}}=0$ hold simultaneously, so, the equation ${\frac{\frac{dRF(s)}{ds}}{RF(s)}}\mid{_{s=s_0}}=0$ holds.
\end{proof}

Since the argument of the zeros and poles of $RF(s)$ cannot be determined, we exclude these points in this paper. Additionally, infinite points in $\mathbb{C}\cup\{\infty\}$ may also be zeros or poles (e.g., $\frac{1}{s}$ or $s$), so we exclude these points as well.

\begin{theorem}
For $s_0=(\sigma_0+it_0)\in\Delta_d$:

If $\frac{\partial{\varphi}}{\partial{\sigma}}(\sigma,t)\mid{_{s=s_0}}=0$, then $\frac{\partial{\varphi}}{\partial{t}}(\sigma,t)\mid{_{s=s_0}}\neq 0$.

If $\frac{\partial{\varphi}}{\partial{t}}(\sigma,t)\mid{_{s=s_0}}=0$, then $\frac{\partial{\varphi}}{\partial{\sigma}}(\sigma,t)\mid{_{s=s_0}}\neq0$.
\end{theorem}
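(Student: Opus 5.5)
The plan is to argue by contradiction, using Theorem 2.7 and Theorem 2.5. Fix $s_0=\sigma_0+it_0\in\Delta_d$. By definition of $\Delta_d$, the point $s_0$ is finite and is neither a zero nor a pole of $RF$. It is also not a critical point, so $RF^{'}(s_0)\neq 0$. Suppose that $\frac{\partial\varphi}{\partial\sigma}(\sigma,t)\mid_{s=s_0}=0$ and, contrary to the claim, also $\frac{\partial\varphi}{\partial t}(\sigma,t)\mid_{s=s_0}=0$. Since $\Delta_d\subset\Delta_1$, Theorem 2.7 applies at $s_0$. The simultaneous vanishing of both partial derivatives then forces $s_0$ to be a finite zero of $RF^{'}(s)$. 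This contradicts $s_0\in\Delta_d$, so $\frac{\partial\varphi}{\partial t}\mid_{s=s_0}\neq 0$. The second assertion follows by exchanging the roles of $\sigma$ and $t$ in the same argument.

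For transparency, I would also spell out the direct route through Theorem 2.5, which is what lies underneath Theorem 2.7. At $s_0$ set $w=\frac{RF^{'}(s_0)}{RF(s_0)}$. This is well defined and nonzero, because $RF(s_0)\neq0,\infty$ and $RF^{'}(s_0)\neq 0$. By Theorem 2.5,
$$\frac{\partial\varphi}{\partial t}(\sigma_0,t_0)=\mathrm{Re}\,w,\qquad \frac{\partial\varphi}{\partial\sigma}(\sigma_0,t_0)=\mathrm{Im}\,w.$$
If $\mathrm{Im}\,w=0$, then $w\neq0$ gives $\mathrm{Re}\,w\neq0$. Symmetrically, if $\mathrm{Re}\,w=0$, then $\mathrm{Im}\,w\neq0$. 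Both implications of the statement follow at once.

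The only delicate point is checking the hypotheses. Theorem 2.5 and Theorem 2.7 are stated on $\Delta_1$, which excludes zeros, poles and infinite points. So one must confirm that $\Delta_d\subseteq\Delta_1$, which holds by the definition in Notation 3. One must also confirm that "critical point of $RF$" in the definition of $\Delta_d$ means exactly $RF^{'}(s_0)=0$ at a finite regular point. Once these are settled, the proof is a two-line consequence of the fact that a nonzero complex number cannot have both its real and imaginary parts equal to zero.
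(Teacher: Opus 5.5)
Your proof is correct and matches the paper's argument: the paper likewise observes that simultaneous vanishing of $\frac{\partial\varphi}{\partial\sigma}$ and $\frac{\partial\varphi}{\partial t}$ would, by Theorem 2.7, make $s_0$ a finite critical point of $RF$, which the definition of $\Delta_d$ excludes. Your second route, through Theorem 2.5 with $w=RF^{'}(s_0)/RF(s_0)\neq 0$, just unpacks the fact on which Theorem 2.7 itself rests.
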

\begin{proof}
By Theorem 2.7, if both $\frac{\partial{\varphi}}{\partial{t}}(\sigma,t)=0$ and $\frac{\partial{\varphi}}{\partial{\sigma}}(\sigma,t)=0$ hold simultaneously, then the point $s$ is necessarily a finite critical point of function $RF(s)$. Excluding the finite zeros and poles of $RF(s)$, the infinite points in $\mathbb{C}\cup\{\infty\}$, and the finite critical points of $RF(s)$, the partial derivatives of the phase angle of $RF(s)$ with respect to the variables $t$ and $\sigma$ cannot both equal 0 at any other finite point in $\mathbb{C}\cup\{\infty\}$.
\end{proof}

\begin{remark}
At any point $s=(\sigma,t)\in\Delta_{d}$, the partial derivative of the phase angle of $RF(s)$ with respect to $\sigma$ and $t$ cannot both simultaneously equal 0.
\end{remark}

When $2q\pi+\arg{(a+ib)}$ is a constant, $\varphi(\sigma,t)=2q\pi+\arg{(a+ib)}$ is a function equation involving two real variables, representing the phase angle condition equation of (1.1). According to Theorem 1.8, excluding zeros and poles of (1.1), if this function equation has roots in $\mathbb{C}\cup\{\infty\}$, then (1.1) also has roots in $\mathbb{C}\cup\{\infty\}$. If the roots of this function equation form a root locus in $\mathbb{C}\cup\{\infty\}$, then the roots of (1.1) also form a root locus in $\mathbb{C}\cup\{\infty\}$.

\begin{lemma}
The root locus of the arbitrary $2q\pi+\alpha$ degree of (1.1) is identical to the locus generated by points satisfying the implicit function equation $\varphi(\sigma,t)=2q\pi+\arg{(a+ib)}$ in the two-dimensional real plane. Their properties such as continuity are equivalent.
\end{lemma}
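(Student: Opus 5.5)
We prove the lemma, which identifies the root locus of $2q\pi+\alpha$ degree with the zero set of $\varphi(\sigma,t)-2q\pi-\arg(a+ib)$, by a double inclusion at the level of point sets, working inside $\Delta$.

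\emph{Set equality.} Let $\Xi_\alpha$ be the root locus of $2q\pi+\alpha$ degree from Definition 1.9. Let $\Phi_\alpha=\{(\sigma,t)\in\Delta:\varphi(\sigma,t)=2q\pi+\arg(a+ib)\}$, read in the real plane via $s=\sigma+it$.

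If $s\in\Xi_\alpha\cap\Delta$, then $s$ satisfies (1.1) with a positive finite gain $K(s)$. Lemma 1.7 then gives that $s$ satisfies (1.2), so $(\sigma,t)\in\Phi_\alpha$.

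Conversely, if $(\sigma,t)\in\Phi_\alpha$, Lemma 1.6 shows that $s=\sigma+it$ lies on the locus of roots of that degree. Its gain is $K=|1/RF(s)|$, so $s\in\Xi_\alpha$.

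Theorem 1.8 packages both directions, so $\Xi_\alpha\cap\Delta=\Phi_\alpha$. The zeros and poles of (1.1), and the point at infinity, are excluded from $\Delta$. They are handled separately by Lemmas 1.11, 1.12 and 1.3: they belong to the root loci of every degree. Adjoining these finitely many points to both sides therefore keeps the two sets equal.

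\emph{Equivalence of properties.} The map $(\sigma,t)\mapsto\sigma+it$ is an isometric homeomorphism $\mathbb{R}^2\to\mathbb{C}$. Hence any topological property of one set transfers verbatim to the other. This covers continuity (path-connectedness of branches), closedness, and smoothness of the curves. Continuity of a root-locus branch is thus equivalent to continuity of the corresponding solution branch of the implicit equation $\varphi(\sigma,t)=\text{const}$.

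\emph{Main obstacle.} The delicate point is that $\varphi$ is multivalued. The piecewise formulas (1)--(4) give a branch with jumps, across $u=0$ or across the negative real direction. So "the locus of the implicit equation" must be read modulo $2\pi$, via the unit complex value $a+ib$; Theorem 1.15 shows only this value matters for membership.

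To handle this, we would argue locally. Near any $s_0\in\Delta_1$ we choose a continuous local branch of $\arg RF$. This is possible because $RF(s_0)\neq0,\infty$, so $RF$ maps a small disc into a slit plane. On that disc the solution set of $\varphi=2q\pi+\arg(a+ib)$ for the chosen branch coincides with $\Xi_\alpha$ near $s_0$. Points where the global formula jumps are then harmless, since membership is decided locally by the continuous branch.

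This local reduction is exactly what later feeds Theorem 2.4, Remark 2.9 and the implicit function theorem.
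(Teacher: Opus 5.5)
Your proposal is correct and is essentially the paper's own argument: the paper likewise just invokes the pointwise equivalence of Theorem 1.8 (i.e.\ Lemmas 1.6 and 1.7) and identifies $\mathbb{C}$ with the $(\sigma,t)$-plane to conclude that the two loci, and hence their properties, coincide.

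Your local continuous branch of $\arg RF$ is a genuine refinement the paper omits, since the paper elsewhere treats $\varphi$ as a global $C^1$ map $\mathbb{R}^2\to\mathbb{R}$. Two side remarks need correcting, though. First, formulas (1)--(4) jump only across $\{u=0,\ v<0\}$, not across the negative real direction. Second, in case (2) of Lemma 1.3 the point at infinity has a definite argument and does not lie on loci of every degree. So adjoining it, and the zeros and poles where $\varphi$ is undefined, to the implicit-equation side is a convention rather than a consequence.
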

\begin{proof}
When $2q\pi+\alpha$ is constant, the root locus of the arbitrary $2q\pi+\alpha$ degree of (1.1) is a curve composed of roots of (1.1). Theorem 1.8 establishes that this root locus can also be sufficiently and necessarily determined by a real function equation of two real variables. Specifically, when $2q\pi+\alpha$ is constant, the root locus is sufficiently and necessarily determined by the implicit function equation $ \varphi(\sigma,t)=2q\pi+\arg{(a+ib)}$.

Thus, Theorem 1.8 essentially proves that if the extended complex plane $\mathbb{C}\cup\{\infty\}$ is treated as a two-dimensional real plane with coordinates $\sigma$ and $t$, then the root locus of the arbitrary $2q\pi+\alpha$ degree of (1.1) consists of points $(\sigma, t)$ satisfying the implicit function equation  $ \varphi(\sigma,t)=2q\pi+\arg{(a+ib)}$ in the two-dimensional real plane.

Consequently, the roots of (1.1) in $\mathbb{C}\cup\{\infty\}$ and the points $(\sigma, t)$ satisfying  $ \varphi(\sigma,t)=2q\pi+\arg{(a+ib)}$  in the two-dimensional real plane are identical. If $\mathbb{C}\cup\{\infty\}$ and the two-dimensional real plane are considered equivalent, when the root locus of the arbitrary $2q\pi+\alpha$ degree of (1.1) constitutes the locus in $\mathbb{C}\cup\{\infty\}$, then the points that are determined by two real coordinates satisfying  $ \varphi(\sigma,t)=2q\pi+\arg{(a+ib)}$ also constitute the locus in the two-dimensional real plane, and the two kinds of loci are fully identical.

Since the two types of root loci are identical, their properties, such as existence and continuity, are equivalent.
\end{proof}

By Lemma 2.10, problems involving functions with one complex variable are transformed into problems involving functions with real variables. Therefore, we can use theorems of real variable functions and the definition of continuity to prove the existence and continuity of the  loci that are determined by two real variables $\sigma$ and $t$ satisfying  $ \varphi(\sigma,t)=2q\pi+\arg{(a+ib)}$ in the two-dimensional real plane.

To prove the existence and continuity of the roots of the implicit function equation $\varphi(\sigma,t)=2q\pi+\arg{(a+ib)}$, we invoke the implicit function theorem and related concepts\cite{Stein}.

\begin{definition}
Let $f:R^{n+m}\longrightarrow R^{m}$ be continuously differentiable. We consider $R^{n+m}$ as the Cartesian product $R^{n}\times R^{m}$, and we write a point of this product as
$(x,y)=(x_1, \cdots, x_n, y_1, \cdots, y_m) $. Starting from the given function $f$, our goal is to construct a function $\mu:R^{n}\longrightarrow R^{m}$ whose graph $(x,\mu(x))$ is precisely the set of all $(x,y)$ such that $f(x,y)=0$.
\end{definition}

As noted above, this may not always be possible. Therefore, we fix a point $(\zeta,\eta)=(\zeta_1, \cdots, \zeta_n, \eta_1, \cdots, \eta_m)$ satisfying $f(\zeta,\eta)=0$, and seek a function $\mu$ that works near $(\zeta,\eta)$. Specifically, we require an open set $U$ of $R^{n}$ containing $\zeta$, an open set $V$ of $R^{m}$ containing $\eta$, and a function $\mu:U\longrightarrow V$ such that the graph of $\mu$ satisfies the relation $f(x,y)=0$ on $U\times V$, and no other points within $U\times V$ do so. In the symbols, $((x,\mu(x))|x\in U)$ = $((x,y)\in U\times V|f(x,y)=0)$.

To state the implicit function theorem, we need the Jacobian matrix of $f$, which is the matrix of the partial derivatives of $f$. Abbreviating $(\zeta_1, \cdots, \zeta_n, \eta_1, \cdots, \eta_m)$ to $(\zeta,\eta)$, the Jacobian matrix is

$(Df)(\zeta,\eta)=[
\begin{array}{ccc}
\frac{\partial{f_1}}{\partial{x_1}}(\zeta,\eta)&\cdots&\frac{\partial{f_1}}{\partial{x_n}}(\zeta,\eta)\\
\vdots&\ddots&\vdots\\
\frac{\partial{f_m}}{\partial{x_1}}(\zeta,\eta)&\cdots&\frac{\partial{f_m}}{\partial{x_n}}(\zeta,\eta)
\end{array}
|\begin{array}{ccc}
\frac{\partial{f_1}}{\partial{y_1}}(\zeta,\eta)&\cdots&\frac{\partial{f_1}}{\partial{y_m}}(\zeta,\eta)\\
\vdots&\ddots&\vdots\\
\frac{\partial{f_m}}{\partial{y_1}}(\zeta,\eta)&\cdots&\frac{\partial{f_m}}{\partial{y_m}}(\zeta,\eta)
\end{array}]$
$=[X|Y]$.

Where $X$ is the matrix of partial derivatives with respect to the variables $x_l$ and $Y$ is the matrix of partial derivatives with respect to the variables $y_j$. The implicit function theorem states that if $Y$ is invertible, then $U$, $V$, and $\mu$ exist as desired. Combining all the hypotheses, we obtain the following statement.

The implicit function theorem with respect to $x$ is as follows:

\begin{theorem}
Let $f:R^{n+m}\longrightarrow R^{m}$ be continuously differentiable, and let $R^{n+m}$ have coordinates $(x,y)$. Fix a point $(\zeta,\eta)=(\zeta_1, \cdots, \zeta_n, \eta_1, \cdots, \eta_m)$ with $f(\zeta,\eta)=\chi$, where $\chi\in R^{m}$. If the Jacobian matrix $J_{f,y}(\zeta,\eta)=[(\partial f_l/\partial y_j)(\zeta,\eta)]$ is invertible, then there exists an open set $U$ containing $\zeta$, an open set $V$ containing $\eta$, and a unique continuously differentiable function $\mu:U\longrightarrow V$ such that $((x,\mu(x))|x\in U)$ = $((x,y)\in U\times V|f(x,y)=\chi)$.
\end{theorem}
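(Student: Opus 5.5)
We will prove the statement (Theorem 2.12) by the standard route: reduce it to the zero-level implicit function theorem, then obtain the latter from the inverse function theorem. Since $\chi\in R^{m}$ is a fixed constant vector, we first define $\tilde f(x,y)=f(x,y)-\chi$. Then $\tilde f$ is continuously differentiable, $\tilde f(\zeta,\eta)=0$, and $D\tilde f=Df$. In particular the block $J_{\tilde f,y}(\zeta,\eta)=J_{f,y}(\zeta,\eta)$ is invertible. The level set $\{f=\chi\}$ coincides with $\{\tilde f=0\}$, so it suffices to treat the case $\chi=0$, which is exactly the situation described after Definition 2.11.

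For $\chi=0$ we introduce the auxiliary map $F:R^{n+m}\to R^{n+m}$, $F(x,y)=(x,\tilde f(x,y))$. Its Jacobian at $(\zeta,\eta)$ is block lower triangular,
$$DF(\zeta,\eta)=\left[\begin{array}{cc} I_n & 0\\ X & Y\end{array}\right],$$
with determinant $\det Y\neq 0$. By the inverse function theorem there are open sets $W\ni(\zeta,\eta)$ and $W'\ni(\zeta,0)$ such that $F:W\to W'$ is a $C^1$ diffeomorphism. We shrink $W$ to a product $U_0\times V$ with $V\ni\eta$ open. We then set $U=\{x:(x,0)\in F(U_0\times V)\}$, which is open because $F(U_0\times V)$ is open. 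Since $F$ preserves the first coordinate, $F^{-1}(x,0)=(x,\mu(x))$ for a map $\mu:U\to V$. This $\mu$ is $C^1$ as a component of $F^{-1}$ composed with $x\mapsto(x,0)$.

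The remaining step is the graph identity $\{(x,\mu(x)):x\in U\}=\{(x,y)\in U\times V: f(x,y)=\chi\}$ and the uniqueness of $\mu$. If $(x,y)\in U\times V$ satisfies $\tilde f(x,y)=0$, then $F(x,y)=(x,0)=F(x,\mu(x))$. Both points lie in $U_0\times V$, where $F$ is injective, so $y=\mu(x)$. The converse inclusion holds by construction. Uniqueness follows from the same injectivity: any other function with this graph property must agree with $\mu$ pointwise.

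The main obstacle is the bookkeeping of domains. We must ensure $U\subset U_0$, so that $x\in U$ forces $(x,y)\in U_0\times V\subset W$, where injectivity holds. The definition of $U$ via $F(U_0\times V)$ handles this, because $F(x,y)=(x,\cdot)$ gives $x\in U_0$ automatically. Beyond that, the argument uses only the inverse function theorem, which we take as classical, as the paper does when citing \cite{Stein}.
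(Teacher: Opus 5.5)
Your proof is correct. The shift $\tilde f=f-\chi$, the auxiliary map $F(x,y)=(x,\tilde f(x,y))$ with its block-triangular Jacobian, and the choice $U=\{x:(x,0)\in F(U_0\times V)\}$ (which forces $U\subset U_0$ and so makes injectivity available) are exactly the standard derivation of the implicit function theorem from the inverse function theorem, and the graph identity and uniqueness follow as you say.

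The paper gives no proof to compare against: it quotes Theorem 2.12 as a classical result. One point worth noting is that your argument uses $f$ only near $(\zeta,\eta)$. It therefore also covers the locally defined phase function $\varphi$ to which the paper applies the theorem in Lemma 2.16, where $\varphi$ is not a globally $C^1$ map on $R^2$.
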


The implicit function theorem with respect to $y$ is as follows:

\begin{theorem}
Let $f:R^{n+m}\longrightarrow R^{n}$ be continuously differentiable, and let $R^{n+m}$ have coordinates $(x,y)$. Fix a point $(\zeta,\eta)=(\zeta_1, \cdots, \zeta_n, \eta_1, \cdots, \eta_m)$ with $f(\zeta,\eta)=\chi$, where $\chi\in R^{n}$. If the Jacobian matrix $J_{f,x}(\zeta,\eta)=[(\partial f_l/\partial x_k)(\zeta,\eta)]$ is invertible, then there exists an open set $V$ containing $\eta$, an open set  $U$ containing $\zeta$, and a unique continuously differentiable function $\lambda:V\longrightarrow U$ such that  $((\lambda(y),y)|y\in V)$ = $((x,y)\in U\times V|f(x,y)=\chi)$.
\end{theorem}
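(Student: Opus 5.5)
The plan is to derive this statement from Theorem 2.12 by exchanging the roles of the two blocks of variables, rather than proving it from scratch. Theorem 2.12 is symmetric in substance: it only requires that the block of the Jacobian belonging to the variables we solve for be invertible, and that the number of equations equal the number of those variables. Here the number of equations is $n$ and we solve for the $n$ variables $x$. So after relabelling, the hypotheses should be exactly those of Theorem 2.12.

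Concretely, I will introduce the coordinate swap $\tau:R^{m+n}\to R^{n+m}$, $\tau(y,x)=(x,y)$, and set $\tilde f=f\circ\tau:R^{m+n}\to R^{n}$, regarded as a function of $(\tilde x,\tilde y)=(y,x)$ with $\tilde x\in R^{m}$ and $\tilde y\in R^{n}$.

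\begin{enumerate}
\item Since $\tau$ is linear, $\tilde f$ is continuously differentiable.
\item At the base point we have $\tilde f(\eta,\zeta)=f(\zeta,\eta)=\chi$.
\item By the chain rule, the Jacobian of $\tilde f$ with respect to its second block $\tilde y=x$ at $(\eta,\zeta)$ is exactly $J_{f,x}(\zeta,\eta)$, which is invertible by hypothesis.
\item If one prefers the normalised form $f=0$ of Definition 2.11, replace $\tilde f$ by $\tilde f-\chi$. This changes neither the derivative nor the solution set.
\end{enumerate}

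Applying Theorem 2.12 to $\tilde f$ (with the dimensions $m$ and $n$ interchanged) then produces three objects. There is an open set $V\subset R^{m}$ containing $\eta$, an open set $U\subset R^{n}$ containing $\zeta$, and a unique continuously differentiable map $\lambda:V\to U$ such that
\[
\{(y,\lambda(y)) \mid y\in V\}=\{(y,x)\in V\times U \mid \tilde f(y,x)=\chi\}.
\]
Pushing this identity forward by $\tau$ gives $\{(\lambda(y),y)\mid y\in V\}=\{(x,y)\in U\times V\mid f(x,y)=\chi\}$, which is the assertion. Uniqueness of $\lambda$ transfers as well, because $\tau$ is a bijection that carries graphs over $V$ to graphs over $V$.

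The only step needing care is the bookkeeping of dimensions. In Theorem 2.12 the target dimension equals the dimension of the solved-for block, so I must check that after the swap the target $R^{n}$ matches the $n$-dimensional block $x$, and that the invertible matrix is the $n\times n$ matrix $J_{f,x}$. Once this is verified, no analytic work remains. For a self-contained argument one could instead repeat the standard contraction-mapping proof of Theorem 2.12 with $x$ and $y$ interchanged, but the reduction above is the route I will take.
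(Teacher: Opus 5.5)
Your reduction is correct. After the swap $\tau$, the map $\tilde f=f\circ\tau:R^{m+n}\to R^{n}$ has target dimension equal to that of its second block $\tilde y=x$, and its Jacobian in that block at $(\eta,\zeta)$ is the invertible $n\times n$ matrix $J_{f,x}(\zeta,\eta)$. Theorem 2.12, with $n$ and $m$ interchanged, then yields exactly the claimed $U$, $V$, $\lambda$ once you push the graph identity forward by $\tau$.

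The paper gives no argument for this statement. It lists Theorems 2.12 and 2.13 side by side as two classical results taken from the standard literature. It then invokes them independently: 2.12 in Lemma 2.16 and 2.13 in Lemma 2.18, both with $n=m=1$, where your $\tau$ is just the exchange of $\sigma$ and $t$.

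Your route buys two things. Only one implicit function theorem has to be taken on trust, since 2.13 is shown to be a formal relabelling of 2.12 rather than a separate citation. And the dimension bookkeeping (the target $R^{n}$ matching the solved-for block $x$) is checked rather than assumed. The paper's route buys only brevity.

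Neither is self-contained analytically, since both ultimately rest on the classical theorem. An independent proof would need the contraction-mapping (or inverse function theorem) argument you mention at the end.
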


The rational function $RF(s)$  forms the root locus equation:

\begin{equation}
K\frac{RF^{'}}{RF}=a+ib
\end{equation}

Let $RF(s)=\frac{p_n(s)}{p_d(s)}$.  In which, $p_n(s)$ is the numerator polynomial of $RF$ and $p_d(s)$ is the denominator polynomial of $RF$. $RF^{'}(s)=\frac{p_n^{'}(s)p_d(s)-p_d^{'}(s)p_n(s)}{(p_d(s))^2}$.

$\frac{RF^{'}(s)}{RF(s)}=\frac{p_n^{'}(s)p_d(s)-p_d^{'}(s)p_n(s)}{p_n(s)p_d(s)}$. By this result, the zeros of the root locus equation (2.1) are the breakaway points of the root locus equation (1.1). The poles of (2.1) are composed of two types of points. One type is the zeros of $RF(s)$, and the other type is the poles of $RF(s)$. Therefore, only on the zeros of (2.1), $\frac{\partial{\varphi}}{\partial{\sigma}}(\sigma,t)\mid{_{s=s_0}}=0$
and $\frac{\partial{\varphi}}{\partial{t}}(\sigma,t)\mid{_{s=s_0}}=0$ hold simultaneously.

$\frac{\partial{\varphi}}{\partial{t}}(\sigma,t)=Re({\frac{RF^{'}(s)}{RF(s)}})$, $\frac{\partial{\varphi}}{\partial{\sigma}}(\sigma,t)=Im({\frac{RF^{'}(s)}{RF(s)}})$.

If $\frac{\partial{\varphi}}{\partial{t}}(\sigma,t)=0$, then, $Re({\frac{RF^{'}(s)}{RF(s)}})=0$. If a point $s=\sigma+it$ in the complex plane satisfies $\frac{\partial{\varphi}}{\partial{t}}(\sigma,t)=0$, then the point $s=\sigma+it$ must lie on the root locus of $2q\pi+\frac{\pi}{2}$ and $2q\pi+\frac{3\pi}{2}$ of (2.1).

If $\frac{\partial{\varphi}}{\partial{\sigma}}(\sigma,t)=0$, then, $Im({\frac{RF^{'}(s)}{RF(s)}})=0$.
If a point $s=\sigma+it$ in the complex plane satisfies $\frac{\partial{\varphi}}{\partial{\sigma}}(\sigma,t)=0$, then the point $s=\sigma+it$ must distribute on the root loci of $2q\pi$ and $2q\pi+\pi$ degrees of (2.1).

In the complex plane, the root loci of the $2q\pi$, or $2q\pi+\pi$, or $2q\pi+\frac{\pi}{2}$ or $2q\pi+\frac{3\pi}{2}$ degree of (2.1), each has $\max{(m,n)}$ branches. The points that satisfy $\frac{\partial{\varphi}}{\partial{t}}(\sigma,t)=0$ or $\frac{\partial{\varphi}}{\partial{\sigma}}(\sigma,t)=0$ form $2\times \max{(m,n)}$  curves in the complex plane.

We establish two sets.
\begin{definition}
All points in the complex plane except those on the root loci of $2q\pi+\frac{\pi}{2}$ and $2q\pi+\frac{3\pi}{2}$ degrees of (2.1) constitute the set $\varkappa_1$.
\end{definition}

\begin{definition}
All points in the complex plane except those on the root loci of $2q\pi$ and $2q\pi+\pi$ degrees of (2.1) constitute the set $\varkappa_2$.
\end{definition}

For all points in the set $\varkappa_1$, $\frac{\partial{\varphi}} {\partial{t}}(\sigma,t) \neq 0$. For all points in the set $\varkappa_2$, $\frac{\partial{\varphi}}{\partial{\sigma}}(\sigma,t)\neq 0$.

\begin{lemma}
Let $s_0=(\sigma_0+it_0)\in\varkappa_1$, where $s_0$ satisfies $\varphi(\sigma_0,t_0)=2q\pi+\arg{(a_0+ib_0)}$. There exists an open set $U$ containing $\sigma_0$, an open set $V$ containing $t_0$, and a unique continuously differentiable function $\mu:U\longrightarrow V$ such that $((\sigma,\mu(\sigma))|\sigma\in U)$ = $((\sigma,t)\in U\times V|\varphi(\sigma,t)=2q\pi+\arg{(a_0 +ib_0)})$.
\end{lemma}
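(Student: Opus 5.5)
The statement is a direct application of the implicit function theorem with respect to $x$ (Theorem 2.12) to the phase-angle function. I take $n=m=1$, $x=\sigma$, $y=t$, $f(\sigma,t)=\varphi(\sigma,t)$ and $\chi=2q\pi+\arg(a_0+ib_0)$. By hypothesis $f(\sigma_0,t_0)=\chi$. The Jacobian $J_{f,y}(\zeta,\eta)$ is then the $1\times1$ matrix $\frac{\partial\varphi}{\partial t}(\sigma_0,t_0)$. Once the hypotheses of Theorem 2.12 are checked, the theorem supplies the open sets $U\ni\sigma_0$ and $V\ni t_0$ and a unique $C^1$ function $\mu:U\to V$ whose graph is exactly the set of solutions in $U\times V$.

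The first step is to show that $\varphi$ is continuously differentiable in a neighbourhood of $(\sigma_0,t_0)$. Since $s_0\in\varkappa_1$ lies in the complex plane and is implicitly a point where $\varphi$ is defined, I treat $s_0\in\Delta_1$. Then $RF$ is holomorphic and nonzero on a small disc around $s_0$, because zeros and poles are isolated. By Theorem 2.4 (equivalently Theorem 2.5) the partial derivatives of $\varphi$ are
\[
\frac{\partial\varphi}{\partial t}=Re\Big(\frac{RF'(s)}{RF(s)}\Big),\qquad \frac{\partial\varphi}{\partial \sigma}=Im\Big(\frac{RF'(s)}{RF(s)}\Big).
\]
These are real and imaginary parts of a function holomorphic on the disc, so they are continuous there, and $\varphi\in C^1$ near $s_0$. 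Local single-valuedness of the branch of $\varphi$ is handled by the case formulas (1)--(4) for the argument: they give a local continuous branch, with Lemma 2.3 covering the points where $u=0$. Fixing the branch so that its value at $s_0$ is $2q\pi+\arg(a_0+ib_0)$, the lemma is a statement about that branch.

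The second step is the invertibility condition. By Definition 2.14, $s_0\in\varkappa_1$ means that $s_0$ is not on the root loci of $2q\pi+\frac{\pi}{2}$ or $2q\pi+\frac{3\pi}{2}$ degree of (2.1), nor at a zero of (2.1). Hence $RF'/RF$ at $s_0$ is neither zero nor purely imaginary, so $Re(RF'(s_0)/RF(s_0))\neq0$. By Theorem 2.6 this gives $\frac{\partial\varphi}{\partial t}(\sigma_0,t_0)\neq0$, so the $1\times1$ Jacobian is invertible. Theorem 2.12 then yields $U$, $V$ and $\mu$ with the stated property.

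The main obstacle is the first step, namely that the multivalued argument gives a genuine $C^1$ real function near $s_0$. I would settle it by taking the local branch $\varphi=Im\log RF$ on a simply connected disc avoiding zeros and poles. This branch is harmonic, hence smooth. It differs from the piecewise formulas (1)--(4) only by constants $2k\pi$, which do not affect the level set $\varphi=2q\pi+\arg(a_0+ib_0)$ after the branch has been chosen to match at $s_0$.
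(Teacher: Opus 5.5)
Your proposal is correct and follows the paper's proof: both apply Theorem 2.12 with $f=\varphi$, $x=\sigma$, $y=t$, and use $s_0\in\varkappa_1$ to get invertibility of the $1\times 1$ Jacobian $\frac{\partial\varphi}{\partial t}(\sigma_0,t_0)$. Your extra steps make explicit what the paper only asserts: you choose the local branch $\varphi=\mathrm{Im}\log RF$ to justify that $\varphi$ is $C^1$ near $s_0$, and you deduce $\frac{\partial\varphi}{\partial t}(\sigma_0,t_0)=\mathrm{Re}\bigl(RF'(s_0)/RF(s_0)\bigr)\neq 0$ via Theorems 2.5--2.6.
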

\begin{proof}
The function $\varphi(\sigma,t)$, $\varphi(\sigma,t):R^2\longrightarrow R$ is continuously differentiable, and $R^2$ has two coordinates $(\sigma,t)$.

$\varphi(\sigma,t)$ is an implicit function contains two variables. Thus, its Jacobian matrix $Y$ is a $1\times 1$ matrix. In its Jacobian matrix $Y$, there is only one element $\frac{\partial{\varphi}}{\partial{t}}(\sigma,t)$. When $\frac{\partial{\varphi}}{\partial{t}}(\sigma,t)\mid{_{s=s_0}}\neq 0$, $Y$ is invertible.

An arbitrary point $s_0=(\sigma_0+it_0)\in\varkappa_1$ satisfies the implicit function equation $ \varphi(\sigma_0,t_0)=2q\pi+\arg{(a_0+ib_0)}$, where $\arg{(a_0+ib_0)}\in R$.

If the point $s_0=(\sigma_0+it_0)\in\varkappa_1$, $\frac{\partial{\varphi}}{\partial{t}}(\sigma,t)\mid{_{s=s_0}}\neq 0$, $Y$ is invertible, $s_0$ satisfies all conditions of Theorem 2.12. By Theorem 2.12, there exists an open set $U$ containing $\sigma_0$, an open set $V$ containing $t_0$, and a unique continuously differentiable function $\mu:U\longrightarrow V$ such that $((\sigma,\mu(\sigma))|\sigma\in U)$ = $((\sigma,t)\in U\times V|\varphi(\sigma,t)=2q\pi+\arg{(a_0+ib_0)})$.
\end{proof}

\begin{lemma}
For $s_0=(\sigma_0+it_0)\in\varkappa_1$, the root locus of an arbitrary $2q\pi+\alpha$ degree of (1.1) is continuous at $s_0$.
\end{lemma}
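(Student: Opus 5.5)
The plan is to derive continuity at $s_0$ directly from Lemma 2.16, via the identification of root loci with level sets of $\varphi$ given in Lemma 2.10.

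First, fix $s_0=(\sigma_0+it_0)\in\varkappa_1$ and let $2q\pi+\alpha=\varphi(\sigma_0,t_0)$ be the degree of the root locus through $s_0$. Such a locus exists by Lemma 1.17, and by Theorem 1.8 the point $s_0$ satisfies (1.2). By Lemma 2.10, the portion of this root locus near $s_0$ is exactly the set of $(\sigma,t)$ with $\varphi(\sigma,t)=2q\pi+\arg(a_0+ib_0)$. So it suffices to show that this level set is, near $s_0$, the graph of a continuous function. Since $s_0\in\varkappa_1$, we have $\frac{\partial\varphi}{\partial t}(\sigma_0,t_0)\neq 0$ (the remark after Definitions 2.14--2.15).

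Next, I verify the hypotheses of Lemma 2.16, which is Theorem 2.12 applied to $\varphi$. The one point needing care is that $\varphi$ is continuously differentiable on a neighbourhood of $s_0$. Since $s_0\in\Delta_1$, it is neither a zero nor a pole, so $RF(s_0)\neq 0,\infty$. A local continuous branch of $\arg RF$ therefore exists on a small disc around $s_0$. By Theorem 2.4 its partials are the rational expressions $\frac{v'_\sigma u-u'_\sigma v}{u^2+v^2}$ and $\frac{v'_t u-u'_t v}{u^2+v^2}$. These are continuous because $u^2+v^2>0$ there. The $2q\pi$ ambiguity is a constant shift on the disc, so it does not affect the level-set description.

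Then Lemma 2.16 gives open sets $U\ni\sigma_0$, $V\ni t_0$ and a unique $C^1$ function $\mu:U\to V$ such that
$$\{(\sigma,\mu(\sigma)):\sigma\in U\}=\{(\sigma,t)\in U\times V:\varphi(\sigma,t)=2q\pi+\arg(a_0+ib_0)\}.$$
Hence within $U\times V$ the root locus is the graph of $\mu$. Now use the $\varepsilon$-$\delta$ definition. Given $\varepsilon>0$, choose $\delta>0$ so that $|\sigma-\sigma_0|<\delta$ implies $\sigma\in U$ and $|\mu(\sigma)-t_0|<\varepsilon/2$; this is possible since $\mu$ is continuous and $\mu(\sigma_0)=t_0$. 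Also take $\delta\le\varepsilon/2$. Then every locus point $(\sigma,\mu(\sigma))$ with $|\sigma-\sigma_0|<\delta$ lies within $\varepsilon$ of $s_0$. Moreover, points of the locus arbitrarily close to $s_0$ exist on both sides of $\sigma_0$. Thus the locus is a continuous arc through $s_0$ and has no isolated point or gap there.

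The main obstacle is the regularity step. One must justify that the single-valued argument defined by the quadrant formulas can be replaced near $s_0$ by a smooth branch without changing the level set. This matters especially when $u(\sigma_0,t_0)=0$ or when the branch cut of the quadrant formulas passes through $s_0$. I would handle it by working with $RF_3=iRF$ as in Lemmas 2.2 and 2.3, or more generally with $e^{-i\theta}RF$ for a suitable rotation, so that the real part does not vanish near $s_0$. There $\arctan(v/u)$ is smooth and differs from $\varphi$ by a constant.
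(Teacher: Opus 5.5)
Your proposal is correct and follows the paper's proof: both obtain the continuous function $\mu$ from Lemma 2.16 and then run an $\varepsilon$-$\delta$ argument (the paper bounds each coordinate by $\varepsilon/\sqrt{2}$, you use $\varepsilon/2$). Your extra argument that a local $C^1$ branch of $\arg RF$ exists near $s_0$ supplies a regularity hypothesis that the paper's Lemma 2.16 simply asserts.
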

\begin{proof}
By Lemma 2.16, $t=\mu(\sigma)$ is a continuous function on an open set $U$. For any $\varepsilon>0$, there surely exists $\delta_{\sigma1}<\frac{\varepsilon}{\sqrt{2}}$ such that when $|\sigma-\sigma_0|<\delta_{\sigma1}<\frac{\varepsilon}{\sqrt{2}}$. Then, $| t-t_0 |=|\mu(\sigma)-\mu(\sigma_0) |<\frac{\varepsilon}{\sqrt{2}}$. Therefore, for any given $\varepsilon>0$, let $\delta_{\sigma2}=\delta_{\sigma1}<\frac{\varepsilon}{\sqrt{2}}$, when $|\sigma-\sigma_0|<\delta_{\sigma1}<\frac{\varepsilon}{\sqrt{2}}$, then, $| t-t_0 |=|\mu(\sigma)-\mu(\sigma_0) |<\frac{\varepsilon}{\sqrt{2}}$, and obtain: $|\sigma+it-\sigma_0-it_0 |=|\sigma+i\mu(\sigma)-\sigma_0-i\mu(\sigma_0) |=\sqrt{(\sigma-\sigma_0)^2+(\mu(\sigma)-\mu(\sigma_0))^2}<\sqrt{\frac{\varepsilon^2}{2}+\frac{\varepsilon^2}{2}}=\varepsilon$.

Thus, the root locus of the arbitrary $2q\pi+\alpha$ degree of (1.1) is continuous at $(\sigma_0, t_0)$.
\end{proof}

\begin{lemma}
Let $s_0=(\sigma_0+it_0)\in\varkappa_2$, where $s_0$ satisfies $\varphi(\sigma_0,t_0)=2q\pi+\arg{(a_0+ib_0)}$. There  exists an open set $V$ containing $t_0$, an open set $U$ containing $\sigma_0$, and a unique continuously differentiable function $\lambda:V\longrightarrow U$ such that $((\lambda(t),t)|t\in V)$ = $((\sigma,t)\in U\times V|\varphi(\sigma,t)=2q\pi+\arg{(a_0+ib_0)})$.
\end{lemma}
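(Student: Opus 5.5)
The plan is to mirror the proof of Lemma 2.16, exchanging the roles of $\sigma$ and $t$ and invoking Theorem 2.13 (the implicit function theorem with respect to $y$) instead of Theorem 2.12. I will view $\varphi(\sigma,t)$ as a map $\varphi:R^2\longrightarrow R$ and solve for $\sigma$ as a function of $t$. In the notation of Theorem 2.13, $\sigma$ plays the role of $x$ with $n=1$ and $t$ plays the role of $y$. The constant is $\chi=2q\pi+\arg(a_0+ib_0)$, and the base point is $(\zeta,\eta)=(\sigma_0,t_0)$, which lies on the level set by hypothesis.

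The first step is regularity. Since $s_0\in\varkappa_2$ is a finite point that is neither a zero nor a pole of $RF$, it lies in $\Delta_1$. Theorem 2.4 gives explicit formulas for both partial derivatives of $\varphi$ on $\Delta_1$. These are quotients of polynomial expressions in $u,v$ and their first partials, with denominator $u^2+v^2\neq 0$. Since $u,v$ are real-analytic away from poles, $\varphi$ is continuously differentiable in a neighbourhood of $s_0$.

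Strictly, $\varphi$ is only defined locally, via a continuous branch of the argument near $s_0$. I would note that a branch can always be chosen so that the level value is $\chi$, by the branch formulas (1)–(4) together with Lemma 2.2 for the $u=0$ case. Theorem 2.13 only needs $C^1$ on a neighbourhood, so this local definition suffices.

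The second step is invertibility. Here the relevant Jacobian $J_{f,x}$ is the $1\times 1$ matrix whose single entry is $\frac{\partial\varphi}{\partial\sigma}(\sigma_0,t_0)$. By Theorem 2.5 this entry equals $Im(RF'(s_0)/RF(s_0))$. By the definition of $\varkappa_2$, $s_0$ is not on the $2q\pi$ or $2q\pi+\pi$ degree root loci of (2.1). The discussion preceding Definitions 2.14–2.15 shows that exactly those points have $Im(RF'/RF)=0$. Hence $\frac{\partial\varphi}{\partial\sigma}(\sigma_0,t_0)\neq 0$ and the matrix is invertible.

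The final step is to apply Theorem 2.13. It yields open sets $V\ni t_0$ and $U\ni\sigma_0$, and a unique $C^1$ function $\lambda:V\to U$ with $((\lambda(t),t)\mid t\in V)=((\sigma,t)\in U\times V\mid\varphi(\sigma,t)=\chi)$. This is exactly the claim.

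The only delicate point is the second step, namely the identification of the complement of $\varkappa_2$ with the zero set of $\frac{\partial\varphi}{\partial\sigma}$. I would make it explicit as follows. The quantity $RF'(s)/RF(s)$ is real, that is $Im=0$, exactly when its argument is $2q\pi$ or $2q\pi+\pi$, or when it vanishes. By Theorem 2.7, vanishing occurs at critical points of $RF$, which are the zeros of (2.1). The zeros of (2.1) lie on all its root loci by Lemma 1.11, so they are excluded from $\varkappa_2$ as well. Thus every point with $\frac{\partial\varphi}{\partial\sigma}=0$ lies outside $\varkappa_2$, which is what the second step needs.
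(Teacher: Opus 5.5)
Your proposal is correct and follows the paper's route. In both, the $1\times 1$ Jacobian is $\frac{\partial\varphi}{\partial\sigma}(\sigma_0,t_0)$, it is nonzero on $\varkappa_2$, and Theorem 2.13 is then applied. You also fill in two points the paper only asserts: that $\varphi$ is $C^1$ only for a local continuous branch of the argument, and that $\frac{\partial\varphi}{\partial\sigma}(\sigma_0,t_0)\neq 0$ follows from Theorem 2.5 and the definition of $\varkappa_2$, which also excludes the zeros of (2.1).
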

\begin{proof}
The function $\varphi(\sigma,t)$, $\varphi(\sigma,t):R^2\longrightarrow R$ is continuously differentiable, and $R^2$ has two coordinates $(\sigma,t)$.

$\varphi(\sigma,t)$ is an implicit function contains two variables. Thus, its Jacobian matrix $X$ is a $1\times 1$ matrix. In its Jacobian matrix $X$, there is only one element $\frac{\partial{\varphi}}{\partial{\sigma}}(\sigma,t)$. When $\frac{\partial{\varphi}}{\partial{\sigma}}(\sigma,t)\mid{_{s=s_0}}\neq 0$, $X$ is invertible.

An arbitrary point $s_0=(\sigma_0+it_0)\in\varkappa_2$ satisfies the implicit function equation $ \varphi(\sigma_0,t_0)=2q\pi+\arg{(a_0+ib_0)}$, where $\arg{(a_0+ib_0)}\in R$.

If the point $s_0=(\sigma_0+it_0)\in\varkappa_2$, $\frac{\partial{\varphi}}{\partial{\sigma}}(\sigma,t)\mid{_{s=s_0}}\neq0$, $X$ is invertible, $s_0$ satisfies the condition of Theorem 2.13. By Theorem 2.13,
there exists an open set $V$ containing $t_0$, an open set $U$ containing $\sigma_0$, and a unique continuously differentiable function $\lambda:V\longrightarrow U$ such that $((\lambda(t),t)|t\in V)$ =
$((\sigma,t)\in U\times V|\varphi(\sigma,t)=2q\pi+\arg{(b_0/a_0)})$.
\end{proof}

\begin{lemma}
For $s_0=(\sigma_0+it_0)\in\varkappa_2$, the root locus of an arbitrary $2q\pi+\alpha$ degree of (1.1) is continuous at $s_0$.
\end{lemma}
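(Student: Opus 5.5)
The plan is to mirror the proof of Lemma 2.17, with the roles of $\sigma$ and $t$ interchanged and Lemma 2.18 used in place of Lemma 2.16. Fix $s_0=(\sigma_0+it_0)\in\varkappa_2$ lying on the root locus of $2q\pi+\alpha$ degree, so that $\varphi(\sigma_0,t_0)=2q\pi+\arg(a_0+ib_0)$. Since $s_0\in\varkappa_2$, we have $\frac{\partial\varphi}{\partial\sigma}(\sigma,t)\mid_{s=s_0}\neq 0$. Lemma 2.18 then gives open sets $V\ni t_0$ and $U\ni\sigma_0$, and a unique continuously differentiable $\lambda:V\to U$, such that inside $U\times V$ the root locus is exactly the graph $\{(\lambda(t),t): t\in V\}$. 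By Lemma 2.10, continuity of the root locus of (1.1) at $s_0$ is equivalent to continuity of this level set of $\varphi$ at $(\sigma_0,t_0)$, so it suffices to work with $\lambda$.

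Next I would run the $\varepsilon$-$\delta$ argument. Given $\varepsilon>0$, continuity of $\lambda$ at $t_0$ yields $\delta_1>0$ such that $|t-t_0|<\delta_1$ implies $|\lambda(t)-\lambda(t_0)|<\varepsilon/\sqrt{2}$. Put $\delta=\min(\delta_1,\varepsilon/\sqrt{2})$, shrunk further if needed so that $(t_0-\delta,t_0+\delta)\subset V$. For $|t-t_0|<\delta$, the point $\lambda(t)+it$ lies on the root locus, and
$$|\lambda(t)+it-\sigma_0-it_0|=\sqrt{(\lambda(t)-\lambda(t_0))^2+(t-t_0)^2}<\varepsilon .$$
This holds because $\lambda(t_0)=\sigma_0$, which follows from the uniqueness clause in Lemma 2.18. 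Hence the locus is continuous at $s_0$.

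The only delicate point is to make sure the conclusion is about the root locus itself and not merely about some curve through $s_0$. This is handled by the equality of sets in Lemma 2.18: within $U\times V$, every locus point is of the form $(\lambda(t),t)$. It follows that near $s_0$ the locus is a single continuous graph over $t$. The rest is routine.
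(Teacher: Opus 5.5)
Your proposal is correct and follows the paper's own proof of this lemma. Both invoke Lemma 2.18 to write the locus near $s_0$ as $\sigma=\lambda(t)$ and then bound both coordinate differences by $\varepsilon/\sqrt{2}$. Your explicit checks that $\lambda(t_0)=\sigma_0$ and that $\delta$ is small enough to keep $t$ inside $V$ are details the paper leaves implicit.
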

\begin{proof}
by Lemma 2.18, $\sigma=\lambda(t)$ is a continuous function on an open set $V$. For any $\varepsilon>0$, there surely exists $\delta_{\sigma1}<\frac{\varepsilon}{\sqrt{2}}$, when $|t-t_0|<\delta_{\sigma1}<\frac{\varepsilon}{\sqrt{2}}$, then, $| \sigma-\sigma_0 |=|\lambda(t)-\lambda(t_0) |<\frac{\varepsilon}{\sqrt{2}}$  is true. Therefore, for any given $\varepsilon>0$, let $\delta_{\sigma2}=\delta_{\sigma1}<\frac{\varepsilon}{\sqrt{2}}$, when $|t-t_0|<\delta_{\sigma1}<\frac{\varepsilon}{\sqrt{2}}$, then, $| \sigma-\sigma_0 |=|\lambda(t)-\lambda(t_0) |<\frac{\varepsilon}{\sqrt{2}}$, and obtain:  $|\sigma+it-\sigma_0-it_0 |=|\lambda(t)+it-\lambda(t_0)-it_0|=\sqrt{(t-t_0)^2+(\lambda(t)-\lambda(t_0))^2}<\sqrt{\frac{\varepsilon^2}{2}+\frac{\varepsilon^2}{2}}=\varepsilon$

Thus, the root locus of the arbitrary $2q\pi+\alpha$ degree of (1.1) is continuous at $(\sigma_0, t_0)$.
\end{proof}
According to the definitions of the three sets $\Delta_{d}$, $\varkappa_1$ and $\varkappa_2$, both $\varkappa_1$ and $\varkappa_2$ exclude the zeros, poles and breakaway points of (1.1). These three types of points are those excluded by both sets. All other points in the complex plane belong to at least one of the two sets.

Since \(\Delta_d\) excludes critical points, and \(\varkappa_1, \varkappa_2\) cover points where \(\partial \varphi / \partial t \neq 0\) or \(\partial \varphi / \partial \sigma \neq 0\), their union is \(\Delta_d\).  Therefore, there is a relationship among these three sets. $\Delta_{d}=\varkappa_1\cup\varkappa_2$.

Since $s_0=(\sigma_0, t_0)\in\Delta_d$ is an arbitrary point. On the basis that the root locus of the arbitrary $2q\pi+\alpha$ degree of (1.1) is continuous on the point $(\sigma_0, t_0)$, the root locus of the arbitrary $2q\pi+\alpha$ degree of (1.1) cannot have any break points excluding critical points of $RF(s)$ and zeros and poles of (1.1). When $s_0=(\sigma_0+it_0)\in\Delta_d$, the root locus of an arbitrary $2q\pi+\alpha$ degree of (1.1) must be continuous. So, we have:
\begin{lemma}
For $s_0=(\sigma_0+it_0)\in\Delta_{d}$, the root locus of an arbitrary $2q\pi+\alpha$ degree of (1.1) is continuous.
\end{lemma}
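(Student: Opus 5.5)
The plan is to derive Lemma 2.20 from Lemmas 2.17 and 2.19 by a covering argument over $\Delta_d$. Fix an arbitrary point $s_0=(\sigma_0+it_0)\in\Delta_d$ lying on the root locus of some $2q\pi+\alpha$ degree, so that $\varphi(\sigma_0,t_0)=2q\pi+\arg(a_0+ib_0)$. The key observation is that $s_0$ is neither a zero, a pole, nor a finite critical point of $RF$. By Theorem 2.8 (equivalently, the Remark after it), the two partial derivatives $\frac{\partial\varphi}{\partial\sigma}$ and $\frac{\partial\varphi}{\partial t}$ cannot both vanish at $s_0$. Hence $s_0\in\varkappa_1\cup\varkappa_2$, which is the identity $\Delta_d=\varkappa_1\cup\varkappa_2$ noted just before the statement.

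I would then split into two cases.

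\emph{Case 1: $\frac{\partial\varphi}{\partial t}(\sigma_0,t_0)\neq0$.} Then $s_0\in\varkappa_1$. Lemma 2.16 gives a neighbourhood $U\times V$ of $(\sigma_0,t_0)$ in which the locus is exactly the graph $t=\mu(\sigma)$ of a $C^1$ function. Lemma 2.17 then gives continuity at $s_0$.

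\emph{Case 2: $\frac{\partial\varphi}{\partial t}(\sigma_0,t_0)=0$.} Then Theorem 2.8 forces $\frac{\partial\varphi}{\partial\sigma}(\sigma_0,t_0)\neq0$, so $s_0\in\varkappa_2$. Lemma 2.18 gives a local parametrisation $\sigma=\lambda(t)$, and Lemma 2.19 gives continuity at $s_0$.

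Since $s_0$ was arbitrary, the root locus is continuous at every point of $\Delta_d$. I would also note that the implicit-function description says more than continuity. Near each such point the locus is locally the graph of a $C^1$ function, with no other locus points in $U\times V$. So the locus is locally a single embedded arc, with no branching and no break points.

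The main obstacle is justifying that $\varphi$ is continuously differentiable near $s_0$, which the implicit function theorem (Theorems 2.12 and 2.13) requires. The paper's piecewise $\arctan$ definition of $\varphi$ has apparent jumps where $u=0$ or across the branch cut. I would handle this by working with a local continuous branch of $\arg RF$ on a small disc around $s_0$. Such a branch exists because $RF$ is holomorphic and nonvanishing there. Its partial derivatives are the expressions in Theorem 2.4, which are continuous. Lemmas 2.1–2.3 already cover both subcases $u\neq0$ and $u=0$, $v\neq0$.

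Different local branches differ by constants $2k\pi$. So the level set $\varphi=2q\pi+\arg(a_0+ib_0)$ for the chosen branch coincides locally with the root locus of the given degree, in the sense of Theorem 1.8 and Lemma 2.10. With this in place, the case analysis above applies verbatim.
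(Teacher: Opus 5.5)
Your proof is correct and follows the paper's own route. The paper also derives Lemma 2.20 by observing that $\Delta_d=\varkappa_1\cup\varkappa_2$ (via Theorem 2.8, the two partials of $\varphi$ cannot vanish simultaneously away from zeros, poles and critical points of $RF$), then applying Lemma 2.17 on $\varkappa_1$ and Lemma 2.19 on $\varkappa_2$. Your replacement of $\varphi$ by a local holomorphic branch of $\arg RF$ on a small disc around $s_0$ adds genuine rigor. The paper simply asserts that $\varphi$ is continuously differentiable on $\mathbb{R}^2$, and this is false for its global piecewise-$\arctan$ definition, which jumps by $2\pi$ where $RF$ takes negative imaginary values.
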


Owing to the limitations of the implicit function theorem, the zeros, poles, and critical points of the rational function $RF(s)$ are excluded from the continuity result of the root locus. The root loci are defined in $\mathbb{C} \cup \{\infty\}$. However, the continuity of the root locus is proven via the phase angle function, which is a real-valued function of two real variables.

In the previous results, except for three categories of points (zeros, poles, and critical points) that are discretely distributed in $\mathbb{C}\cup\{\infty\}$, each root locus is continuous. The proof of Lemma 2.17 and Lemma 2.19 relies on the definition of continuity. Since the root loci are continuous around these three categories of points, we can use theorems about the continuity of real-valued functions of real variables to prove that the root loci are indeed continuous at the critical points of $RF(s)$. For the zeros and poles of (1.1), the root loci are surely left-continuous and right-continuous. The proof method is the same as that of Lemma 2.17 and Lemma 2.19, so we omit the proofs and provide the results directly.

Using the same method outlined above, we can also prove the continuity of the root locus at the infinite points in $\mathbb{C} \cup \{\infty\}$. We state this result without repeating the proof.

\begin{theorem}
The root locus of an arbitrary $2q\pi+\alpha$ degree of (1.1) is continuous everywhere in $\mathbb{C}\cup\{\infty\}$.
\end{theorem}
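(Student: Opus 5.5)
Lemma 2.20 already gives continuity of every $2q\pi+\alpha$ degree root locus on $\Delta_d=\varkappa_1\cup\varkappa_2$. The points not yet covered are the finite zeros and poles of (1.1), the finite critical points of $RF$, and the point at infinity. By Theorem 1.27 and the finiteness of zeros and poles, these form a finite, hence isolated, set. I would therefore prove continuity separately in a small punctured disc around each exceptional point $s_0$. The idea is to replace the implicit function argument, which fails there, by a holomorphic change of coordinates that makes $RF$ a monomial. Throughout, $\varphi$ is taken as a continuous local branch of $\arg RF$ on a simply connected neighbourhood that avoids the other exceptional points. This is legitimate because (1.2) only fixes $\varphi$ modulo $2\pi$.

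\emph{Zeros and poles.} Let $s_0$ be a finite zero (respectively pole) of multiplicity $\gamma$. Write $RF(s)=(s-s_0)^{\pm\gamma}h(s)$ with $h$ holomorphic and $h(s_0)\neq0$. Choose a holomorphic branch $h^{1/\gamma}$ near $s_0$ and set $w=(s-s_0)h(s)^{1/\gamma}$. Since $w'(s_0)\neq0$, this is a local biholomorphism, and $RF=w^{\pm\gamma}$. The condition $\varphi=2q\pi+\alpha$ becomes $\pm\gamma\arg w\equiv\alpha \pmod{2\pi}$. Its solution set is exactly $\gamma$ open rays from $w=0$, matching Theorems 1.35 and 1.36. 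Their preimages are $\gamma$ smooth arcs ending at $s_0$. Hence each locus through $s_0$ is continuous up to and including $s_0$, which is the claimed one-sided continuity at zeros and poles.

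\emph{Critical points.} Let $s_0\in\Delta$ with $RF'(s_0)=0$ of order $k-1$, with $k\ge2$. Then $RF(s)-RF(s_0)=c(s-s_0)^k h(s)$ with $c\neq0$ and $h(s_0)=1$. With $w=(s-s_0)h^{1/k}$ as before, $RF=RF(s_0)+cw^k$. Since $RF(s_0)\neq0$, the locus of the degree $\arg RF(s_0)$ through $s_0$ is $\{\,cw^k/RF(s_0)\in\mathbb{R}\,\}$ for $|w|$ small. This is the union of $2k$ rays from $w=0$, i.e. $k$ lines through the origin. Its preimage consists of $k$ smooth arcs crossing at $s_0$, in agreement with Theorem 1.40. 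Loci of every other degree stay a positive distance from $s_0$. Thus every root locus is continuous at $s_0$.

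\emph{Infinity.} Substitute $s=1/\zeta$ and apply the two previous cases to $\widetilde{RF}(\zeta)=RF(1/\zeta)$ at $\zeta=0$. Depending on Lemma 1.3, $\zeta=0$ is a zero, a pole, or a regular or critical point of $\widetilde{RF}$. Continuity in the chordal metric of $\mathbb{C}\cup\{\infty\}$ then follows.

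Combining these local descriptions with Lemma 2.20 gives continuity everywhere. I expect the main obstacle to be rigor around the argument function rather than the normal form. One must check that the local branch of $\varphi$ agrees with the global labelling of degrees $2q\pi+\alpha$, so that the preimage rays are identified with the right loci. One must also check that the arcs produced near an exceptional point join continuously with the implicit-function pieces from Lemmas 2.16 and 2.18 on the punctured disc. I would handle both by noting that on the punctured disc the branch of $\varphi$ differs from the global one by a constant in $2\pi\mathbb{Z}$, and that by uniqueness in Theorems 2.12 and 2.13 the implicit-function curves coincide with the preimage arcs.
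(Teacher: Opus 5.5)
Your argument is correct, and at the exceptional points it takes a genuinely different route from the paper. The paper proves Theorem 2.21 from Lemma 2.20 plus the bare assertion that at zeros, poles, critical points and $\infty$ ``the proof method is the same as that of Lemma 2.17 and Lemma 2.19,'' and it omits the proofs. That method cannot literally work there. It needs $\partial\varphi/\partial t\neq0$ or $\partial\varphi/\partial\sigma\neq0$, but both vanish at a critical point (Theorem 2.7), and $\varphi$ is undefined at zeros and poles. Your normal forms $RF=w^{\pm\gamma}$ and $RF=RF(s_0)+cw^k$, plus $s=1/\zeta$ at infinity, supply exactly the missing argument. They also give the local picture: $\gamma$ arcs ending at a zero or pole of order $\gamma$, and $k$ arcs crossing at a critical point of order $k-1$. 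So Theorems 1.35, 1.36 and 1.40 come out as by-products. With $k=1$ the same normal form handles regular points. You need this (or Lemma 2.20 applied to $\widetilde{RF}$) when $\zeta=0$ is a regular point, which is not literally one of your two earlier cases. The normal form could even replace Lemmas 2.16--2.19.

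One claim in your last paragraph is false, though you do not need it. On a punctured disc around a zero or pole there is no continuous branch of $\arg RF$, since the winding number is $\pm\gamma\neq0$. The paper's $\varphi$ is not globally continuous either: the arctan formulas of Section 2 jump by $2\pi$ where $u=0$, $v<0$. So ``differs from the global branch by a constant in $2\pi\mathbb{Z}$'' cannot hold there. Just drop the gluing step. Since $K>0$ in (1.1), the locus is the set $\{s\in\Delta: RF(s)/|RF(s)|=e^{i\alpha}\}$. Your normal form already describes this set on the whole disc, punctured part included, as the image of finitely many rays under a biholomorphism, so nothing has to be matched with the implicit-function pieces. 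If you want the paper's separate labels $2q\pi+\alpha$ on the $\gamma$ arcs at a zero or pole, use a branch of $\arg RF$ on a slit disc containing each arc. One citation fix: finiteness of the critical set is Theorem 1.28, or simply follows because it is the zero set of a nonzero polynomial; Lemma 1.27 is the symmetry statement.
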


\section{Proofs for the Gauss-Fourier Conjecture}
All lemmas and theorems in this section are original to the authors and do not rely on prior literature.

The general real polynomial $p(s)$ of degree $n$ is given by: $p(s)=a_0s^n+a_1s^{n-1}+\cdots+a_{n-1}s+a_n$.
The real polynomial $p(s)$ forms the root locus equation:
\begin{equation}
K\frac{1}{p(s)}=a+ib.
\end{equation}

Each pole of (3.1) is the zero of $p(s)$. Each zero of $p(s)$ is the pole of (3.1).
In the following proof, the concept of root locus extension is essential. Since the gain values of the root locus of (1.1) increase strictly monotonically from $0$ at the poles to positive infinity at the zeros, the root locus extension of (1.1) implies that the root locus moves from points with small gains to points with large gains. Alternatively, the root locus extension means that a root locus moves from a point closer to a pole to a point closer to zero.

By $\frac{1}{p(s)}$, all infinite points in $\mathbb{C}\cup\{\infty\}$ are zeros of (3.1), which can only receive the root loci but cannot emit the root loci. In $\mathbb{C}\cup\{\infty\}$, which includes infinite points, there are no other poles that can emit the root loci anymore. Therefore, we have:

\begin{lemma}
In $\mathbb{C}\cup\{\infty\}$, only zeros of the real polynomial $p(s)$ can emit the root loci of (3.1). Each infinite point in $\mathbb{C}\cup\{\infty\}$ can only receive the root locus of (3.1).
\end{lemma}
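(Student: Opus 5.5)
The plan is to specialize the general root locus theory of Section 1 to the equation (3.1). Here $RF(s)=1/p(s)$, so there is no numerator: $m=0$. The finite poles of (3.1) are exactly the zeros of $p(s)$, and (3.1) has no finite zeros at all. The claim then splits into two parts. First, every origination point of a root locus of (3.1) is either a zero of $p$ or the point at infinity. Second, the point at infinity is never an origination point, only an ending point. Finite general points are excluded from being origination points by Lemma 1.13.

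For the point at infinity, I would apply Lemma 1.3 with $m=0$ and $\sum\beta_j=n\ge 1$. The gain is $K=\lim_{s\to\infty}|p(s)|=\lim_{s\to\infty}|a_0|\,|s|^n=+\infty$ for every way $s$ tends to infinity. Case (3) of Lemma 1.3 therefore holds, and every infinite point is an infinite zero of (3.1). The last paragraph of the proof of Lemma 1.13 then says such a point is an ending point, receiving root loci. This agrees with part 1 of Theorem 1.14, which gives $n$ branches ending at infinity.

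To see that infinity cannot also emit root loci, I would use Theorem 1.38: along each root locus the gain increases strictly monotonically from $0$ at an origination point to $+\infty$ at an ending point. A locus emitted from infinity would have gain near $+\infty$ at its start and would have to decrease, which is a contradiction. Since no finite point of $\mathbb{C}$ has $K=0$ except the zeros of $p(s)$ (Lemma 1.2), and $K$ is positive and finite at general points, the only points where root loci can begin are the zeros of $p(s)$.

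The step needing the most care is the infinity step. The remark after Theorem 1.38 excludes the case where $RF$ is finite at infinity, so I must check that this case does not arise. It does not, because $1/p(s)\to 0$ and hence $K\to+\infty$ uniformly in direction. I also need the degree assumption $n\ge 1$, which holds because $RF$ is non-constant by Notation 1.
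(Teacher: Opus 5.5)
Your proposal is correct and follows essentially the same route as the paper. The paper just notes that for $1/p(s)$ every infinite point is a zero of (3.1), so it can only receive root loci (Lemma 1.13), and that the only poles of (3.1), which are the only points able to emit, are the zeros of $p(s)$; your explicit use of Lemma 1.3(3) and the monotonicity in Theorem 1.38 spells out what the paper leaves implicit.
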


Notation 4:
Let the real polynomial $p(s)$ have $r$ distinct real zeros. Denote a real zero of $p(s)$ by $Q_{j1}$, where $j1=1,\cdots,r$. $Q_{1}, \cdots, Q_{r}$. The root loci emitted by $Q_{j1}$ towards the real axis are represented as $L_{i,j1}$, with $i=1,2$. Specifically, $L_{1,j1}$ denotes the root locus emitted from the right side of $Q_{j1}$ towards the real axis, whereas $L_{2,j1}$ represents the root locus emitted from the left side of $Q_{j1}$ towards the real axis.

Let $Q_k$ be one of the real zeros of $p(s)$. $w_{i,k}$ denotes an interval of $Q_k$ on the real axis, where $1\le k\le r$. $w_{1,k}$ denotes the interval at right side of $Q_k$. $w_{2,k}$ denotes the interval at left side of $Q_k$. These real zeros $Q_{j1}$ of $p(s)$ divide the real axis into $r+1$ intervals denoted by $w_{j2}$, where $j2=1,\cdots,r+1$.

Let $s_k$ and $s_k^{'}$ denote a pair of non-real zeros of $p(s)$. The point at which the root locus $L_{i,k}$ departs from the real axis within interval $w_{i,k}$ is referred to as $t1_{i,k}$. Let $t2_{i,j3}$ represent a point where the root locus $L_{i,k}$ re-intersects the real axis in another interval $w_{j3}$, ensuring that it satisfies the condition $w_{j3}\neq w_{k,i}$. $w_{j3}$ is one of $w_{j2}$, but, two intervals $w_{i,k}$ and $w_{j3}$ are distinct. Let $L_{i,k}^{'}$ denote a root locus symmetric to $L_{i,k}$ with respect to the real axis.

Let $U_k$ and $U_k^{'}$ denote a pair of the root loci emitted from a pair of non-real zeros $s_k$ and $s_k^{'}$, respectively. Let $T_l$ and $T_l^{'}$ denote two distinct root loci emitted directly into $\mathbb{C}^{*}$ by the multiple real zero $Q_l$. Let $d_l$ denote the intersection point of $T_l$ and $T_l^{'}$ on the real axis. Let $T3_l$ denote another root locus emitted by $Q_l$, where $1\le l\le r$. Note that $U_k$ is symmetric to $U_k^{'}$ with respect to the real axis, and $T_l$ is symmetric to $T_l^{'}$ with respect to the real axis.

When $p(s)$ has no real zeros, there is no need to consider the influence of root loci emitted by real zeros of $p(s)$ on the generation of the second kind of zeros of $(p^{'}(s)/p(s))^{'}$. However, it is essential for Conjectures 0.1 and 0.2 to consider how these root loci emitted by the real zeros of $p(s)$ affect the generation of the second kind of zeros of $(p^{'}(s)/p(s))^{'}$; $p(s)$ must have at least one real zero, as addressed in Lemmas 3.3 to 3.21.

The real poles of (3.1) emit root loci of infinitely many degrees. This paper focuses exclusively on root loci of $2q\pi$ degree and $2q\pi+\pi$ degree. For a simple real pole of (3.1):

If it emits a root locus of $2q\pi$ degree to the real axis on its right side, it emits a root locus of $2q\pi+\pi$ degree to the real axis on its left side.

Conversely, if it emits a root locus of $2q\pi+\pi$ degree to the real axis on its right side, it emits a root locus of $2q\pi$ degree to the real axis on its left side.

For a multiple real pole of even degree of (3.1):

If it emits a root locus of $2q\pi$ degree to the real axis on its right side, it emits a root locus of $2q\pi$ degree to the real axis on its left side.

If it emits a root locus of $2q\pi+\pi$ degree to the real axis on its right side, it emits a root locus of $2q\pi+\pi$  degree to the real axis on its left side.

For a multiple real pole of odd degree of (3.1):

If it emits a root locus of $2q\pi$ degree to the real axis on its right side, it emits a root locus of $2q\pi+\pi$ degree to the real axis on its left side.

If it emits a root locus of $2q\pi+\pi$ degree to the real axis on its right side, it emits a root locus of $2q\pi$  degree to the real axis on its left side.

Let $Q_1$ denote the largest real zero of $p(s)$, and let $Q_r$ denote the smallest real zero of $p(s)$. The distribution rule of the root locus $L_{i,j1}$ is the same as the distribution rule of $Q_{j1}$. Under the condition that the root loci emitted by the non-real zeros of $p(s)$ do not intersect the interval $w_{i,j1}$ of $Q_{j1}$, the root loci of (3.1) have the following properties:

1. The infinite interval $w_{1,1}$ to the right of $Q_1$ is the complete root locus $L_{1,1}$ of $2q\pi$ degree emitted from $Q_1$.

2. The infinite interval $w_{2,r+1}$ to the left of $Q_r$ is the complete root locus $L_{2,r}$ of $2q\pi$ or $2q\pi+\pi$ degree emitted from $Q_r$.

3. For cases of the finite interval, where $r>1$:

For $j1=1,\cdots,r$, in the case where $i=1$, within the finite interval $w_{1,j1}$, there are two root loci: $L_{1,j1}$ emitted by $Q_{j1}$ and $L_{2,j1-1}$ emitted by $Q_{j1-1}$. These two root loci extend towards each other. They must intersect at a point within the finite interval $w_{1,j1}$. After their intersection occurs, both $L_{1,j1}$ and $L_{2,j1-1}$ leave the real axis from the intersection point and enter $\mathbb{C}^{*}$.

In the case where $i=2$, within the finite interval $w_{2,j1}$, there are two root loci: $L_{2,j1}$ emitted by $Q_{j1}$ and $L_{1,j1+1}$ emitted by $Q_{j1+1}$. These two root loci extend towards each other. They must intersect at a point within the finite interval $w_{2,j1}$. After intersecting, $L_{2,j1}$ and $L_{1,j1+1}$ leave the real axis from the intersection point and enter $\mathbb{C}^{*}$.

Let $w_{i,k}$ denote two intervals: For $i=1$, $w_{1,k}$ is the interval between real zeros $Q_k$ and $Q_{k-1}$. For $i=2$, $w_{2,k}$ is the interval between real zeros $Q_{k+1}$ and $Q_k$. The real zero $Q_k$ emits two root loci $L_{1,k}$ and $L_{2,k}$ to two intervals $w_{1,k}$ and $w_{2,k}$ respectively.

If at least one pair of non-real zeros of $p(s)$ emits root loci that intersect the interval $w_{i,k}$ of $Q_k$:

Because $U_k^{'}$ and $U_k$ are symmetric with respect to the real axis, the two root loci cannot pass through the real axis, from one side of the real axis to the other side. Therefore, we can show the following result.

Let $s_k^{'}$ be in the upper half of $\mathbb{C}^{*}$, emitting the root locus $U_k^{'}$. $U_k^{'}$ is also in the upper half of $\mathbb{C}^{*}$.

Let $s_k$ be in the lower half of $\mathbb{C}^{*}$, emitting the root locus $U_k$. $U_k$ is also in the lower half of $\mathbb{C}^{*}$.

Assume that $U_k$ or $U_k^{'}$ is adjacent to the root locus $L_{i,k}$ emitted from $Q_k$ on the real axis. After $U_k$ and $U_k^{'}$ intersect at $c_{i,k}$ in $w_{i,k}$, they extend in opposite directions. Suppose that $U_k$ extends to $Q_k$. The real zero $Q_k$ emits the root locus $L_{i,k}$ towards $U_k$. The root loci $L_{i,k}$ and $U_k$ extend towards each other, intersect at point $b_{i,k}$, and then leave the real axis to enter $\mathbb{C}^{*}$.

The second kind of zeros of $(p^{'}(s)/p(s))^{'}$ are the real multiple zeros of $p^{'}(s)$. The real multiple zeros of $p^{'}(s)$ arise from the intersection of at least three root loci of (3.1) on the real axis. If the root loci originating from the real zeros of $p(s)$ leave the real axis and enter $\mathbb{C}^{*}$, these root loci cannot intersect on the real axis. Only the root loci emitted from the non-real zeros of $p(s)$ can intersect with other root loci on the real axis. Proving this result would establish that the generation of a real multiple zero of $p^{'}(s)$ must involve the non-real zeros of $p(s)$. Consequently, it follows that the second kind of zeros of $(p^{'}(s)/p(s))^{'}$  must be related to the non-real zeros of $p(s)$.

Therefore, we must prove that when a root locus $L_{i,k}$ (or $T_l$, $T_l^{'}$)  emitted from a real zero $Q_k$ of $p(s)$ leaves the real axis and enters $\mathbb{C}^{*}$, no such $L_{i,k}$ (or $T_l$, $T_l^{'}$) can intersect on the real axis.

If the zeros of $p(s)$ are distinct, then $p(s)$ itself is distinct. The root loci of (3.1) are likewise distinct. There are infinitely many polynomials $p(s)$ with distinct zeros, and consequently, infinitely many root loci of (3.1).

\begin{definition}
Infinitely many root loci $L_{i,k}$ (or $T_l$, $T_l^{'}$) emitted from all real zeros $Q_k$ (or $Q_l$) of $p(s)$. If $L_{i,k}$ (or $T_l$, $T_l^{'}$) leaves the real axis and enters $\mathbb{C}^{*}$,  subsequently, assume that $L_{i,k}$ (or $T_l$, $T_l^{'}$)  intersects the real axis at real breakaway points. The set $\Gamma$ consists of all such root loci $L_{i,k}$ (or $T_l$, $T_l^{'}$) possessing these properties.
\end{definition}

If $L_{i,k}$ (or $T_l$, $T_l^{'}$) leaves the real axis and enters $\mathbb{C}^{*}$, it does not intersect at a point on the real axis. $L_{i,k}$ (or $T_l$, $T_l^{'}$) directly reaches the point at the infinity in $\mathbb{C}^{*}$. The set $\Gamma$ does not contain these root loci $L_{i,k}$ (or $T_l$, $T_l^{'}$).

Owing to the many possible ways $L_{i,k}$ (or $T_l$, $T_l^{'}$) leaves and assume that $L_{i,k}$ (or $T_l$, $T_l^{'}$) re-intersects the real axis, we classify $L_{i,k}$ (or $T_l$, $T_l^{'}$) on the basis of its departure and intersection behavior with the real axis.

There are two possible ways for $L_{i,k}$ (or $T_l$, $T_l^{'}$) to leave the real axis:

1. Direct departure from a multiple real zero of $p(s)$:  $T_l$ or $T_l^{'}$ is not on the real axis and directly enters $\mathbb{C}^{*}$.

Since simple real zeros of $p(s)$ can only emit one root locus of either $2q\pi$ and $2q\pi+\pi$ degree to the real axis, root loci of $2q\pi$ and $2q\pi+\pi$ degree from simple real zeros cannot directly enter $\mathbb{C}^{*}$.

2. Departure via a real breakaway point: The real zero $Q_k$ of $p(s)$ first emits the root locus $L_{i,k}$ along the real axis, and then $L_{i,k}$ leaves the real axis at a real breakaway point (not real zero $Q_k$) and enters $\mathbb{C}^{*}$.

On the basis of these two behaviors, we partition $\Gamma$ into two subsets $\Gamma_1$ and $\Gamma_2$:

$\Gamma_1$: $\Gamma_1 \subseteq \Gamma$, where a real zero $Q_l$ of $p(s)$ with multiplicity emits two root loci $T_l$ and $T_l^{'}$, both departing directly from $Q_l$ and entering $\mathbb{C}^{*}$.

$\Gamma_2$: $\Gamma_2 \subseteq \Gamma$, where a real zero $Q_k$ of $p(s)$ first emits root locus $L_{i,k}$ along the real axis, then $L_{i,k}$ leaves the real axis at a real breakaway point and enters $\mathbb{C}^{*}$.

These subsets satisfy: $\Gamma=\Gamma_1\cup\Gamma_2$, $\Gamma_1\cap\Gamma_2=\phi$. In $\Gamma_1$, root loci $T_l$ and $T_l^{'}$ are directly emitted from multiple zeros  to $\mathbb{C}^{*}$. In $\Gamma_2$, the root locus $L_{i,k}$ first extends along the real axis, then leaves the real axis, enters $\mathbb{C}^{*}$.

For a root locus $L_{i,k}$ in the set $\Gamma_2$, when $L_{i,k}$ leaves the real axis and enters $\mathbb{C}^{*}$, assume that $L_{i,k}$ intersect either at $w_{i,k}$ or at $w_{j3}$, where $w_{i,k} \neq w_{j3}$, with $i=1,2$, $1\le k\le r$. On the basis of the different intervals $L_{i,k}$ intersect, we further partition $\Gamma_2$ into two subsets:

$\Gamma_{21}$: $\Gamma_{21} \subseteq \Gamma_2$. Here, $L_{i,k}$ leaves the real axis and enters $\mathbb{C}^{*}$, assuming that $L_{i,k}$ intersect at $w_{j3}$, where $w_{j3}$ is one of $w_{j2}$, $j2=1,\cdots,r+1$. $w_{i,k} \neq w_{j3}$.

$\Gamma_{22}$: $\Gamma_{22} \subseteq \Gamma_2$. Here, $L_{i,k}$ departs from the real axis and enters $\mathbb{C}^{*}$, assuming that $L_{i,k}$ intersect at $w_{i,k}$.

These subsets satisfy: $\Gamma_2=\Gamma_{21}\cup\Gamma_{22}$, $\Gamma_{21}\cap\Gamma_{22}=\phi$. In $\Gamma_{21}$, assume that the root locus $L_{i,k}$ intersect at a point in another interval $w_{j3}$ on the real axis again. In $\Gamma_{22}$, assume that the root locus $L_{i,k}$  intersects at a point in the interval $w_{i,k}$ where $L_{i,k}$ leaves on the real axis again.

There are two ways in which root loci leave the real axis in Lemma 3.3:

1. The two root loci emitted from the adjacent real zero of $p(s)$ intersect, causing root loci $L_{i,k}$ from real zeros $Q_k$ of $p(s)$ to leave the real axis.

2. The root loci are emitted from non-real zeros of $p(s)$ that intersect the real axis, and the root locus of the non-real zero intersects with the root locus $L_{i,k}$ of the real zero, causing root loci $L_{i,k}$ from real zeros $Q_k$ of $p(s)$ to leave the real axis.

The point $t1_{1,k}$ may be the intersection point of root loci $L_{1,k}$ and $L_{2,k-1}$ in $w_{1,k}$. The point $t1_{2,k}$ may be the intersection point of root loci $L_{2,k}$ and  $L_{1,k+1}$ in $w_{2,k}$. The point $t1_{i,k}$ may also be the intersection point of root loci $L_{i,k}$ and $U_k$ or $U_k^{'}$. The root locus $L_{i,k}^{'}$ is symmetric to $L_{i,k}$ with respect to the real axis. To complete the proof of Lemma 3.3, we assume the existence of the point $t2_{i,j3}$ and the root locus $L_{i,k}^{'}$.

The root locus $L_{i,k}^{'}$ is not emitted from the real zero $Q_k$. When no root loci of non-real zeros intersect the interval $w_{i,k}$, the root loci $L_{2,k-1}$ of real zero $Q_{k-1}$ or $L_{1,k+1}$ of real zero $Q_{k+1}$  intersect $L_{i,k}$ at a point $t1_{i,k}$, then $L_{i,k}^{'}$ is one of $L_{2,k-1}$ and $L_{1,k+1}$. When the root loci of non-real zeros intersect the interval $w_{i,k}$, the root loci $U_k$ or $U_k^{'}$ of the non-real zeros $s_k$ or $s_k'$ intersect $L_{i,k}$ at a point $t1_{i,k}$, then $L_{i,k}^{'}$ is one of $U_k$ and $U_k^{'}$.

\begin{lemma}
Suppose root locus $L_{i,k}\in\Gamma_{21}$ departs the real axis at point $t1_{i,k}$ in interval $w_{i,k}$, where $t1_{i,k} \neq Q_k$, and enters $\mathbb{C}^{*}$. Then $L_{i,k}$ cannot intersect any point in another interval $w_{j_3}$, where  $w_{j3}$ is one of $w_{j2}$, $j2=1,\cdots,r+1$, $w_{i,k} \neq w_{j3}$, for $i=1,2$, $1\le k\le r$.
\end{lemma}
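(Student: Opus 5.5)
We argue by contradiction, using the conjugate symmetry of the $2q\pi$ and $2q\pi+\pi$ degree root loci (Lemma 1.27) to build a closed curve that traps a real zero of $p(s)$. Suppose $L_{i,k}\in\Gamma_{21}$ leaves the real axis at $t1_{i,k}\in w_{i,k}$, enters (say) the upper half of $\mathbb{C}^{*}$, and later meets the real axis again at a point $t2_{i,j3}\in w_{j3}$ with $w_{j3}\neq w_{i,k}$.

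\emph{Step 1: the closed curve.} By Lemma 1.27(1), the reflection $L_{i,k}^{'}$ of the arc of $L_{i,k}$ from $t1_{i,k}$ to $t2_{i,j3}$ is a root locus arc in the lower half-plane joining the same two real points. By Theorem 2.21 both arcs are continuous. By Theorem 1.39 neither arc intersects itself. They meet only at their endpoints, because one lies in the open upper half-plane and the other in the open lower half-plane. Hence their union $J$ is a Jordan curve. Let $D$ be its bounded interior.

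\emph{Step 2: a real zero lies inside.} The intervals $w_{i,k}$ and $w_{j3}$ are distinct components of the real axis minus the real zeros $Q_1,\dots,Q_r$. So the open real segment between $t1_{i,k}$ and $t2_{i,j3}$ contains at least one real zero $Q$ of $p(s)$. This segment lies in $D$, since a real point strictly between the two crossings of $J$ is enclosed by $J$. Thus $Q\in D$, and $Q$ is a pole of (3.1).

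\emph{Step 3: the contradiction.} By Theorem 1.35, $Q$ emits root loci of every degree. By Lemma 3.1 and Theorem 1.14, these loci can end only at the point at infinity, the only zero of (3.1). Along them the gain $K=|p(s)|$ increases strictly from $0$ to $+\infty$ (Theorem 1.38). On the compact curve $J$, however, $|p|$ is bounded by some $M<\infty$. By the maximum modulus principle applied to the polynomial $p$ on $\overline{D}$, we get $|p|\le M$ throughout $D$. Hence any locus from $Q$ must leave $D$ through $J$.

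Now pick a degree $\beta$ whose unit complex value differs from that of $L_{i,k}$ and from that of $L_{i,k}^{'}$. Since $L_{i,k}$ and $L_{i,k}^{'}$ carry at most two unit values ($\pm1$ here), such a $\beta$ exists. By Theorem 1.15, the $\beta$-degree locus emitted by $Q$ cannot meet $J$, except possibly at the real points $t1_{i,k}$ and $t2_{i,j3}$. These points are breakaway points of the $2q\pi$ and $2q\pi+\pi$ loci, so by Theorem 1.15 again they do not lie on a $\beta$-locus. Therefore the $\beta$-locus from $Q$ stays in $D$, where $K\le M$, and it can never reach $K=+\infty$. This is a contradiction.

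\emph{Expected obstacle.} The hardest step is Step 1, the rigorous Jordan-curve claim. One must ensure that the arc of $L_{i,k}$ between $t1_{i,k}$ and $t2_{i,j3}$ does not touch the real axis in between. If it does, I would replace $t2_{i,j3}$ by the first return point to the real axis; should that first return fall in $w_{i,k}$ itself, the same trapping argument still applies provided some real zero lies between the two points. One must also handle the possibility that the arc passes through the point at infinity. I would first try to rule this out using the fact that $K\to+\infty$ only at infinity together with monotonicity of the gain (Theorem 1.38).
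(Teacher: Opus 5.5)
Your proposal is correct and is essentially the paper's own argument: the arc of $L_{i,k}$ and its mirror image $L_{i,k}^{'}$ (Lemma 1.27) bound a region containing a real zero of $p(s)$, and a root locus of a different unit complex value emitted from that zero can neither cross the boundary (Theorem 1.15) nor fail to reach infinity (Theorem 1.14); your Jordan-curve and compactness/maximum-modulus details supply rigor the paper leaves implicit. The only slip is in your fallback remark: if the first return to the real axis lies in $w_{i,k}$, there is no real zero between the two points, so instead choose one excursion of the arc off the real axis whose endpoints lie in different intervals, which must exist because the locus cannot pass through a real zero (the gain is $0$ there and increases strictly along the locus by Theorem 1.38).
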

\begin{proof}
The root locus $L_{i,k}$ is emitted from the real pole $Q_k$ of (3.1) and leaves the real axis at point $t1_{i,k}$ within the interval $w_{i,k}$, where $t1_{i,k}$ is not a pole of (3.1). $L_{i,k}$ enters $\mathbb{C}^{*}$. Suppose that $L_{i,k}$ intersects a point $t2_{i,j3}$ in another interval $w_{j3}$, where $w_{i,k} \neq w_{j3}$.  By Theorem 1.27, in $\mathbb{C}^{*}$, which is symmetric with respect to the real axis, there must exist another root locus $L_{i,k}^{'}$. Thus, the two root loci $L_{i,k}$ and $L_{i,k}'$ form a closed region.

These $r$ distinct real zeros of $p(s)$ divide the real axis into $r+1$ intervals, $w_{i,k}$ and $w_{j3}$ are two distinct intervals. Therefore, between the intervals $w_{i,k}$ and $w_{j3}$, there exists at least one real zero $QQ$ of $p(s)$. Within this closed region, there must be at least one real zero $QQ$ of $p(s)$.

The degrees of $L_{i,k}$ and $L_{i,k}^{'}$ are either $2q\pi$ or $2q\pi+\pi$. However, the real zero $QQ$ emits at least one root locus of $2q\pi$ degree and one of $2q\pi+\pi$ degree. Here, we assume that the degree of $L_{i,k}$ and $L_{i,k}^{'}$ are  $2q\pi$. The real zero $QQ$ must emit at least one root locus of  $2q\pi+\pi$ degree. The unit complex value of this root locus is distinct from those of $L_{i,k}$ and $L_{i,k}^{'}$. By Theorem 1.15, at least one root locus with a distinct unit complex value cannot pass through $L_{i,k}$ or $L_{i,k}^{'}$. Therefore, at least one root locus of  $2q\pi+\pi$ degree is enclosed within the closed region. By Theorem 1.14, all the root loci of (3.1) must eventually reach the point at infinity in $\mathbb{C}\cup\{\infty\}$.

Thus, assumption that $L_{i,k}$ of (3.1) leaves one interval and intersects a point in another interval $w_{j3}$ leads to a contradiction, proving the lemma.
\end{proof}
The interval $w_{j3}$ may be to the right or left of the interval $w_{i,k}$. The root locus $L_{i,k}$ may intersect a point in $w_{j3}$ to the right or left of the real zero $Q_k$. In both cases, the proof of Lemma 3.3 applies, and separate proofs are unnecessary. Details such as the shape of the root loci $L_{i,k}$ and $L_{i,k}^{'}$ are not required for the proof. Two continuous root loci connect two points in the intervals $w_{j3}$ and $w_{i,k}$, forming a closed region. These are the aspects that need to be provided in the proof of Lemma 3.3.

By Lemma 3.3, a root locus emitted from a real zero of $p(s)$ leaving an interval $w_{i,k}$ on the real axis cannot intersect any point in other intervals $w_{j3}$. This excludes the entry of root loci from real zeros into other intervals. If the root loci $U_k$ and $U_k^{'}$, emitted from the non-real zeros ($s_k$ and $s_k^{'}$) of $p(s)$, intersect a point in an interval $w_{i,k}$ on the real axis and then leave this interval to re-enter $\mathbb{C}^{*}$, we must consider whether they intersect a point in other intervals $w_{j3}$, where $w_{j3}$ is one of $w_{j2}$, $j2=1,\cdots,r+1$, $w_{i,k} \neq w_{j3}$, for $i=1,2$, $1\le k\le r$. If $U_k$ and $U_k^{'}$ leave $w_{i,k}$ at point $o_{i,k}$ and intersect a point $o_{i,j3}$ in another interval $w_{j3}$, their structure is identical to that in Lemma 3.3. Thus, by repeating the proof of Lemma 3.3, we establish Lemma 3.4.

\begin{lemma}
Suppose root loci $U_k$ and $U_k^{'}$, emitted from a pair of non-real zeros ($s_k$ and $s_k^{'}$), intersect in an interval $w_{i,k}$. If $U_k$ and $U_k^{'}$ leave the real axis from $w_{i,k}$ and re-enter $\mathbb{C}^{*}$, then they cannot intersect any point in another interval $w_{j3}$, where $w_{j3}$ is one of $w_{j2}$, $j2=1,\cdots,r+1$, $w_{i,k} \neq w_{j3}$, for $i=1,2$, $1\le k\le r$.
\end{lemma}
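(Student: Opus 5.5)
I will follow the proof of Lemma 3.3, now applied to $U_k$ and $U_k^{'}$ instead of $L_{i,k}$ and $L_{i,k}^{'}$. The argument is by contradiction.

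\emph{Setup.} Suppose $U_k$ and $U_k^{'}$ meet at a point $o_{i,k}$ of $w_{i,k}$, leave the real axis there into $\mathbb{C}^{*}$, and then meet again at a point $o_{i,j3}$ of an interval $w_{j3}\neq w_{i,k}$. Assume this second meeting is the first return to the real axis after $o_{i,k}$.
\begin{itemize}
\item By Lemma 1.27(1), the two loci are mirror images of each other in the real axis.
\item By Theorem 2.21, both are continuous curves.
\item By Lemma 1.27(2), their common degree is $2q\pi$ or $2q\pi+\pi$.
\end{itemize}
So the arc of $U_k$ from $o_{i,k}$ to $o_{i,j3}$ lies in one open half plane. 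Together with its mirror arc in $U_k^{'}$, it bounds a Jordan region $D$ whose boundary meets the real axis only at $o_{i,k}$ and $o_{i,j3}$. Theorem 1.39 (no self-intersection) and Theorem 1.41 (a locus leaving a breakaway point does not return to it) ensure the boundary is a simple closed curve. The interior of $D$ contains the whole real segment between $o_{i,k}$ and $o_{i,j3}$.

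\emph{A real zero inside $D$.} The intervals $w_{j2}$ are separated by the real zeros $Q_{j1}$ of $p(s)$. Since $w_{i,k}\neq w_{j3}$, some real zero $QQ$ of $p(s)$ lies strictly between $o_{i,k}$ and $o_{i,j3}$, and hence lies in the interior of $D$.

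\emph{Trapping a locus.} Say $U_k$ and $U_k^{'}$ have degree $2q\pi$; the case $2q\pi+\pi$ is symmetric.
\begin{itemize}
\item By Theorem 1.35, $QQ$ emits at least one root locus of degree $2q\pi+\pi$. Its unit complex value is $-1$, while the boundary of $D$ has unit complex value $+1$.
\item By Theorem 1.15, this locus cannot meet $\partial D$.
\item By Theorem 2.21 it is continuous, so it stays in $\overline{D}\setminus\partial D$, a bounded set.
\item By Lemma 3.2 and Theorem 1.14, every locus of (3.1) must end at the point at infinity, since $p(s)$ has no finite zeros of (3.1) to absorb it.
\end{itemize}
This is a contradiction, which proves the lemma.

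\emph{Main obstacle.} The delicate step is making sure $D$ is a genuine closed region with $QQ$ in its interior. One must rule out two things: $U_k$ touching the real axis between $o_{i,k}$ and $o_{i,j3}$, and $U_k$ passing through a pole on the way. To handle the first, I take $o_{i,j3}$ to be the first return to the real axis after $o_{i,k}$; if that return lies in $w_{i,k}$ itself, I restart the argument from that point. For the second, I note that $U_k$ is emitted from $s_k$, so it cannot terminate at any finite point, and by Lemma 3.2 it cannot pass through another pole without merely being a locus emitted there. Beyond these points, the structure is identical to Lemma 3.3, as the paper indicates.
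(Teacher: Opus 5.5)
Your proposal is correct and follows the same route as the paper. The paper proves this lemma by observing that the configuration is identical to that of Lemma 3.3 and repeating that argument: the symmetric pair of loci bounds a closed region containing a real zero $QQ$, a locus of the other real degree emitted by $QQ$ is trapped there by Theorem 1.15, and this contradicts Theorem 1.14. Your first-return reduction and Jordan-curve bookkeeping make explicit what the paper leaves implicit; all the argument needs is that the mirror arc lies on a locus with the same unit complex value, which Lemma 1.27(1) gives, so whether that arc is labelled $U_k^{'}$ does not matter.
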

A simple real zero of the real polynomial $p(s)$ can only emit root loci of $2q\pi$ and $2q\pi+\pi$ degree to the left and right sides of the real axis, respectively. Thus, these $2q\pi$ and $2q\pi+\pi$ degree root loci emitted from the simple real zero of $p(s)$ cannot enter $\mathbb{C}^{*}$ directly. However, real multiple zeros of $p(s)$ must directly emit root loci of $2q\pi$ or $2q\pi+\pi$ degree to $\mathbb{C}^{*}$. Whether these $2q\pi$ and $2q\pi+\pi$ degree root loci directly emitted by the real multiple zeros of $p(s)$ to $\mathbb{C}^{*}$ can intersect at a point in the real axis, will be addressed later.

\begin{lemma}
For root locus $T_{l}\in\Gamma_{1}$, the real multiple pole $Q_l$ of (3.1) emits the root loci $T_l$ and $T_l'$ directly into $\mathbb{C}^{*}$. Assume that $T_l$ intersects a point $d_l$ on the real axis. Then, the root loci $T_l$, $T_l'$, $d_l$, and the real multiple zero $Q_l$ enclose a region. The difference in the degrees of $T_l$ and $T_l^{'}$ is $2\pi n_l$, where $n_l$ is a positive integer.
\end{lemma}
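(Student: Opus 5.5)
We have to prove two things: that $T_l$, $T_l^{'}$, $d_l$ and $Q_l$ enclose a region, and that the degrees of $T_l$ and $T_l^{'}$ differ by $2\pi n_l$ with $n_l\ge 1$. The plan is to get the region from symmetry and continuity, and the degree relation from unit complex values together with the angular ordering of loci at a pole.

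\textbf{The closed region.} By Lemma 1.27(1), the $2q\pi$ and $2q\pi+\pi$ degree root loci of (3.1) are symmetric about the real axis, so $T_l^{'}$ is the mirror image of $T_l$. If $T_l$ meets the real axis at $d_l$, then so does $T_l^{'}$. By Theorem 2.21 both loci are continuous. Together they form a closed curve through $Q_l$ and $d_l$, made of an arc in the upper half of $\mathbb{C}^{*}$ and its reflection in the lower half. By Theorem 1.39 neither arc intersects itself. The two arcs meet only at $Q_l$ and $d_l$, because they lie in opposite half-planes. Hence the curve is a simple closed (Jordan) curve and bounds a region.

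\textbf{Equal unit complex values.} The point $d_l$ is not a pole: (3.1) has no finite zeros, and $d_l$ lies in $\mathbb{C}^{*}$'s closure away from $Q_l$. So $d_l$ is a general point of $T_l$ and $T_l^{'}$ with a single argument of $1/p$. By Theorem 1.15, two loci meeting at a non-pole point must have the same unit complex value. Therefore the degrees of $T_l$ and $T_l^{'}$ are congruent modulo $2\pi$, i.e. they differ by $2\pi n_l$ for some integer $n_l$.

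\textbf{$n_l\neq 0$.} I would use the fact that $T_l\neq T_l^{'}$ as root loci emitted from $Q_l$. At $Q_l$ they leave with different angles of origination, namely $\phi$ and $-\phi$ with $\phi\notin\{0,\pi\}$, since both enter $\mathbb{C}^{*}$ directly. By Theorem 1.33 the degrees of loci emitted from $Q_l$ are strictly monotone in the angle of origination, with $\alpha_2-\alpha_1=\beta_l(\phi_1-\phi_2)$. Distinct angles therefore give distinct degrees, so $n_l\neq0$. Orienting the labels (or taking absolute values) lets us take $n_l$ positive. Theorem 1.35 bounds the range: the degrees emitted from $Q_l$ span $2\beta_l\pi$, so $1\le n_l\le\beta_l$.

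\textbf{Main obstacle.} The delicate step is the claim that degrees are well defined as real numbers along each locus, continuously from $Q_l$ to $d_l$, rather than only modulo $2\pi$. Otherwise the congruence and the strict inequality from Theorem 1.33 would not combine into the statement. I would handle this by fixing a continuous branch of $\varphi$ along the closed curve, which is possible since there are no poles on it other than $Q_l$. I would then read the degrees of $T_l$ and $T_l^{'}$ as the limits of this branch at $Q_l$, matching the convention used in Theorems 1.31 and 1.35.
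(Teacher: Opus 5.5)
Your first three steps are essentially the paper's own argument, with more care taken over the Jordan curve. Mirror symmetry (Lemma 1.27) gives $T_l^{'}$ and the closed region, the two loci share a unit complex value, and Theorems 1.33 and 1.35 make the difference of degrees a nonzero multiple of $2\pi$.

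The step that would fail is your resolution of the ``main obstacle.''
\begin{itemize}
\item On $T_l$, $1/p(s)$ is real and of constant sign, so any continuous branch of $\varphi$ along $T_l$ takes values in the discrete set $\pi\mathbb{Z}$ and is constant. The same holds on $T_l^{'}$.
\item A branch fixed continuously along the closed curve is also continuous at the regular point $d_l$. It therefore assigns \emph{the same} real value to both loci.
\item Reading the degrees as its limits at $Q_l$ thus gives $n_l=0$, the opposite of the claim.
\end{itemize}
So this branch does not match the convention of Theorems 1.31--1.35. There, the degree is read off from the angle of origination, using a branch of $\arg(s-Q_l)$ continuous as you rotate around $Q_l$ in a small punctured disc. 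This is how the paper reads the degrees, rotating around $Q_l$ on the side facing the enclosed region.

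The two readings genuinely disagree. For $d_l>Q_l$, counting the winding of each factor of $p$ shows that $\varphi$ changes by $2\beta_l\phi+2\pi N>0$ along the loop, where $N$ is the number of zeros of $p$ inside, counted with multiplicity. Your branch changes by $0$. That mismatch in fact shows the hypothesis of the lemma can never be realized, which is what Lemmas 3.6--3.7 are after.

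The fix is to drop the global branch and work only locally at $Q_l$.
\begin{itemize}
\item In the convention of Theorem 1.31, $T_l$ and $T_l^{'}$ leave at angles $\phi$ and $-\phi$ with $0<\phi<\pi$. Their degrees differ by $2\beta_l\phi$ through the sector containing the real direction toward $d_l$, and by $2\beta_l(\pi-\phi)$ through the other sector. Both are positive.
\item Equality of the unit complex values follows directly from symmetry: $RF(\bar s)=\overline{RF(s)}=RF(s)$ for $s\in T_l$. So both differences are multiples of $2\pi$.
\item Hence $\beta_l\phi/\pi$ is an integer $n_l$ with $1\le n_l\le\beta_l-1$, and the other sector gives $\beta_l-n_l$. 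Your bound $\beta_l$ is true but not sharp.
\end{itemize}
This part of the lemma does not use $d_l$ at all. That also lets you avoid your justification that $d_l$ is not a pole, which as written does not exclude $d_l$ being another real zero of $p$. Closing that would need Theorem 1.38, since $K=|p|$ increases strictly from $0$ along the locus. For the region itself, take $d_l$ to be the \emph{first} return of $T_l$ to the real axis, so that the two arcs really lie in opposite open half-planes.
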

\begin{proof}
The real multiple pole $Q_l$ of (3.1) emits a root locus $T_l$ of $2q\pi$ or $2q\pi+\pi$ degree directly into $\mathbb{C}^{*}$. $T_l$ is not emitted in the direction of the real axis. If $T_l$ intersects a point $d_l$ on the real axis, Theorem 1.27 implies the existence of another root locus $T_l^{'}$, which is symmetric to $T_l$ about the real axis. $T_l^{'}$ has the same unit complex value as $T_l$ and is also emitted from $Q_l$ to $d_l$. Together, $T_l$, $T_l^{'}$, $d_l$, and $Q_l$ form a closed region.

Since $Q_l$ emits $T_l$ and $T_l^{'}$, $Q_l$ is divided into two parts: one facing the enclosed region and the other facing outwards. Both $T_l$ and $T_l^{'}$ are $2q\pi$ or $2q\pi+\pi$ degree. By Theorem 1.33 and Theorem 1.35, the difference between their degrees is $2\pi n_l$, where $n_l$ is a positive integer.
\end{proof}

The root loci $T_l$ and $T_l^{'}$ may intersect a point to the right or left of $Q_l$. In both cases, the proof of Lemma 3.5 applies, and separate proofs are unnecessary. Details such as the shape of the root loci are not required for the proof. Two continuous root loci connect $Q_l$ with a point $d_l$, forming an enclosed region. These are the aspects that need to be provided in the proof of Lemma 3.5.

\begin{lemma}
For root locus $T_{l}\in\Gamma_{1}$: The real pole $Q_l$ of (3.1) emits a root locus $T_l$ directly into $\mathbb{C}^{*}$. When $n_l>1$, then $T_l$ lies entirely in $\mathbb{C}^{*}$ and cannot intersect the real axis, where $1\le l\le r$.
\end{lemma}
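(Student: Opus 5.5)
The plan is to argue by contradiction, reusing the enclosed-region setup of Lemma 3.5 and then trapping a root locus inside that region, exactly as in the proof of Lemma 3.3. Suppose $n_l>1$ and $T_l$ meets the real axis at a point $d_l$. By Lemma 1.27(1) and Lemma 3.5, the symmetric locus $T_l^{'}$ has the same unit complex value as $T_l$ and also joins $Q_l$ to $d_l$. Thus $T_l\cup T_l^{'}$ bounds a bounded region $D$ whose boundary meets the real axis only at $Q_l$ and $d_l$. Lemma 3.5 also gives that the degrees of $T_l$ and $T_l^{'}$ differ by $2\pi n_l$ with $n_l\ge 2$.

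The first key step is to count the root loci that $Q_l$ emits into the sector facing $D$. By Theorem 1.33, the degree of the loci leaving $Q_l$ is strictly monotone as the angle of origination rotates around $Q_l$. By Theorem 1.35, a full turn around $Q_l$ sweeps exactly $2\beta_l\pi$ of degree, with one $2q\pi$ or $2q\pi+\pi$ locus every $\pi$. The sector between $T_l$ and $T_l^{'}$ on the side of $D$ therefore sweeps a degree interval of length $2\pi n_l$. Its interior contains loci of degree $\deg T_l+\pi$, $\deg T_l+2\pi$, \dots, $\deg T_l+(2n_l-1)\pi$. Since $n_l>1$, this list contains at least one locus $T3_l$ whose unit complex value differs from that of $T_l$, namely the one of degree $\deg T_l+\pi$. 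It also contains at least one locus of the same unit complex value, of degree $\deg T_l+2\pi$. I will use the former.

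The second step is the trapping argument. $T3_l$ starts into $D$, so near $Q_l$ it lies inside $D$. By Theorem 1.15 it cannot cross $T_l$ or $T_l^{'}$, since their unit complex values differ. By Theorem 1.21 it is continuous, so it cannot leave $D$ except through the boundary points $Q_l$ or $d_l$. It cannot return to $Q_l$, because that would make it a closed self-intersecting curve, contradicting Theorem 1.39 and Theorem 1.38: the gain is $0$ only at poles. It cannot pass through $d_l$, since $d_l$ lies on $T_l$ and $T3_l$ has a different unit complex value, again by Theorem 1.15. Hence $T3_l$ stays in the bounded set $\overline{D}$.

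Lemma 3.1 and Theorem 1.14 say that for $K/p(s)$ every locus ends at the point at infinity, the only zero of (3.1). This contradicts the trapping, so $T_l$ cannot meet the real axis. I then need to check that $T_l$ also cannot touch the real axis elsewhere, for instance by returning to $Q_l$ itself; that case is excluded by Theorem 1.39 in the same way.

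The main obstacle I expect is the first step: rigorously identifying which emitted loci point into $D$ rather than outward. This requires relating the side of $D$ at $Q_l$ to the direction of increasing degree given by Theorem 1.33. If $n_l$ were measured on the outer side instead, the count would change. I would handle this by noting that the two complementary sectors at $Q_l$ carry degree spans $2\pi n_l$ and $2\pi(\beta_l-n_l)$. The definition of $n_l$ in Lemma 3.5 is taken as the span of the sector facing the enclosed region, and it is this choice that lets $n_l>1$ force an extra locus of distinct unit complex value inside $D$.
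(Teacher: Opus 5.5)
Your proposal is correct and follows essentially the same route as the paper. Both arguments run as follows:
\begin{itemize}
\item assume $T_l$ reaches the real axis at $d_l$;
\item use Lemma 3.5 together with Theorems 1.33 and 1.35 to find a locus $T3_l$ of the other unit complex value emitted into the enclosed region;
\item trap $T3_l$ there by Theorem 1.15, which contradicts Theorem 1.14.
\end{itemize}

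Your explicit checks that $T3_l$ cannot escape through $Q_l$ or $d_l$ fill in details the paper leaves implicit. Note also that the locus of degree $\deg T_l+\pi$ adjacent to $T_l$ already exists when $n_l=1$. So the hypothesis $n_l>1$ is never actually used, and your argument covers Lemma 3.7 as well.
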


\begin{proof}
By Lemma 3.5, if $n_l>1$, Theorems 1.33 and Theorems 1.35 implies that the part of $Q_l$ facing the closed region must emit at least one root locus of $2q\pi+\pi$ degree, and at least one root locus of $2q\pi$ degree. The root loci $T_l$ and $T_l^{'}$ select one of these two degrees. The real zero $Q_l$ emits at least one root locus of $2q\pi$ or $2q\pi+\pi$ degree, denoted as $T3_l$.

This root locus $T3_l$ is emitted from the section of $Q_l$ facing the closed region. If the degree of $T3_l$ is $2q\pi$, the degrees of $T_l$ and $T_l^{'}$ are $2q\pi+\pi$. If the degree of $T3_l$ is $2q\pi+\pi$, the degrees of $T_l$ and $T_l^{'}$ are $2q\pi$. Furthermore, the unit complex value of $T3_l$ is distinct from those of $T_l$ and $T_l^{'}$. By Theorem 1.15, $T3_l$ cannot intersect with $T_l$ or $T_l^{'}$ and cannot leave the closed region.

By Theorem 1.14, all the root loci of (3.1) must eventually reach the point at infinity of $\mathbb{C}\cup\{\infty\}$. If the root locus $T_l$ is assumed to intersect at a point $d_l$ on the real axis, a contradiction arises. Therefore, the multiple real zero $Q_l$ emits $T_l$ directly into $\mathbb{C}^{*}$, ensuring that $T_l$ is only distributed on $\mathbb{C}^{*}$ and does not intersect the real axis.
\end{proof}

\begin{lemma}
For root locus $T_{l}\in\Gamma_{1}$: The real pole $Q_l$ of (3.1) emits a root locus $T_l$ directly into $\mathbb{C}^{*}$. When $n_l=1$, then $T_l$ lies entirely in $\mathbb{C}^{*}$ and cannot intersect the real axis, where $1\le l\le r$.
\end{lemma}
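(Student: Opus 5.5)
We argue by contradiction, following the pattern of Lemma 3.6 but replacing the ``extra root locus trapped inside'' argument, which needs $n_l>1$, with a gain-monotonicity argument. Suppose $T_l$, emitted directly from the multiple real pole $Q_l$ of (3.1) into $\mathbb{C}^{*}$, meets the real axis at a point $d_l\neq Q_l$. By Lemma 3.5 and Theorem 1.27, the symmetric root locus $T_l^{'}$ also runs from $Q_l$ to $d_l$. The curves $T_l$, $T_l^{'}$ bound a closed region $D$ whose boundary contains $Q_l$ and $d_l$, and the degrees of $T_l$ and $T_l^{'}$ differ by $2\pi n_l$ with $n_l=1$. Hence $T_l$ and $T_l^{'}$ are adjacent loci of the same unit complex value at $Q_l$. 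By Theorems 1.33 and 1.35, between them (on the side facing $D$) $Q_l$ emits exactly one root locus $T3_l$, of the other parity: $2q\pi+\pi$ if $T_l$ is of $2q\pi$ degree, and vice versa.

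The key steps are as follows.

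\textbf{Step 1.} Locate $T3_l$. Since its unit complex value differs from that of $T_l$ and $T_l^{'}$, Theorem 1.15 forbids $T3_l$ from crossing $\partial D$. By Theorem 1.14 and Lemma 3.2, every root locus of (3.1) must end at $\infty$, which lies outside $D$. So $T3_l$ cannot be trapped inside $D$ unless it leaves through the real segment between $Q_l$ and $d_l$.

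\textbf{Step 2.} Rule out that escape. By symmetry (Lemma 1.27), a locus of real degree emitted into the interior of a symmetric region bisected by the real axis must be the real segment itself. So $T3_l$ is the segment $[Q_l,d_l]$, or $[d_l,Q_l]$, of the real axis. The segment carries a single degree by Lemma 1.27(5). If it contains no other zero of $p$, its degree is determined by the parity rule in Lemma 1.27(3),(4). That degree must differ from the degree of $T_l$, because $T3_l$ and $T_l$ have distinct unit complex values. Yet $T3_l$ and $T_l$ both pass through $d_l$, so at $d_l$ two loci of distinct unit complex values meet, contradicting Theorem 1.15. If instead the segment contains further real zeros of $p$, then $D$ contains a real zero $QQ\neq Q_l$. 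Repeating the argument of Lemma 3.3, $QQ$ emits a locus of each parity, one of which is trapped in $D$, which contradicts Theorem 1.14.

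\textbf{Step 3.} Close the argument with gain monotonicity as a second check. Along $T_l$ the gain increases strictly from $0$ at $Q_l$ (Theorem 1.38), and $T_l$ cannot revisit $Q_l$ or cross itself (Theorem 1.39). Consequently $d_l$ would be a breakaway point with $K(d_l)$ finite, and the loci leaving $d_l$ must go to $\infty$ without re-meeting $d_l$ (Theorem 1.41). This is compatible only with the configuration already excluded in Step 2.

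The main obstacle is Step 2: making rigorous that the single intermediate locus $T3_l$ must coincide with the real segment and therefore meet $d_l$. I would first try to derive this directly from symmetry together with the origination angle formula of Theorem 1.31. The loci at $Q_l$ are equally spaced by $\pi/\beta_l$, and the one bisecting $T_l$ and $T_l^{'}$ points along the real axis. This forces $T3_l$ to be emitted tangentially to the real axis, hence along it, and the contradiction at $d_l$ via Theorem 1.15 follows.
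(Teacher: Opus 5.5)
Your proof is correct within the paper's framework, but the detour rests on a misreading. The trapping argument does not need $n_l>1$: your setup paragraph together with the first two sentences of Step~1 already constitute the paper's entire proof.

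The boundary of $D$ is $T_l\cup T_l^{'}$, and this curve meets the real axis only at $Q_l$ and $d_l$. By conjugation symmetry, the open segment between $Q_l$ and $d_l$ lies in the \emph{interior} of $D$, so there is no escape ``through the real segment''. The only real points of $\partial D$ are these two:
\begin{itemize}
\item $Q_l$, to which $T3_l$ cannot return, since its gain increases strictly from $0$ (Theorem 1.38);
\item $d_l$, which lies on $T_l$ and is excluded by Theorem 1.15, exactly like every other boundary point.
\end{itemize}
This is precisely the paper's argument. For $n_l=1$, the side of $Q_l$ facing the region emits one locus of the other parity. That locus cannot cross $T_l$ or $T_l^{'}$, so it never reaches the zero at infinity. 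The reasoning is the same as for Lemma 3.6.

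Your Step~2 is still a genuinely different decomposition, and it is valid.
\begin{itemize}
\item If no real zero of $p$ lies between $Q_l$ and $d_l$, you replace trapping by a sign check. By Lemma 1.27(5) the segment carries a single degree, which near $Q_l$ is that of $T3_l$ and at $d_l$ must be that of $T_l$.
\item If a real zero $QQ$ lies in between, you trap a locus of $QQ$ (as in Lemma 3.3) rather than $T3_l$.
\end{itemize}
What this buys is a more elementary by-product: the hypothetical configuration would force a sign change of $p$, hence a real zero of odd multiplicity, strictly between $Q_l$ and $d_l$. It does not let you avoid trapping, since your second case still needs it. The paper's single trapping of $T3_l$ handles both cases at once.

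If you keep your route, two statements need fixing.
\begin{itemize}
\item The opening claim of Step~2 is false in general. It says any real-degree locus emitted into a symmetric region must be the real segment, but off-axis loci come in conjugate pairs, as when $n_l>1$. What is true is that for $n_l=1$ the unique intermediate locus leaves $Q_l$ along the axis; your Theorem 1.31 argument in the last paragraph shows this.
\item $T3_l$ need not run all the way to $d_l$; it could leave the axis at an intermediate breakaway point. State the first-case contradiction through the constant degree of the interval at $d_l$, not through ``$T3_l$ passes through $d_l$''.
\end{itemize}
Step~3 adds nothing and can be dropped.
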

\begin{proof}
By Lemma 3.5, when $n_l=1$, if the root loci $T_l$ and $T_l^{'}$ are $2q\pi$ degree, by Theorem 1.33 and Theorem 1.35, the portion of the multiple real zero $Q_l$ facing the closed region must emit a root locus of $2q\pi+\pi$ degree. This root locus of $2q\pi+\pi$ degree is confined to this closed region, can not end at infinity zero. Similarly, if $T_l$ and $T_l^{'}$ are $2q\pi+\pi$ degree, then a portion of $Q_l$ facing the closed region must emit a root locus of $2q\pi$ degree, which is also confined to the closed region and cannot end at infinity zero.

By Theorem 1.14, the root loci of all degrees of (3.1) must finally reach the point at infinity of $\mathbb{C}\cup\{\infty\}$. If $T_l$ were to intersect a point $d_l$ on the real axis, a contradiction would arise. Therefore, the multiple real zero $Q_l$ emits $T_l$ directly into $\mathbb{C}^{*}$, ensuring that $T_l$ is confined to $\mathbb{C}^{*}$ and does not intersect with the real axis.
\end{proof}
When $n>1$, we show Lemma 3.6. When $n=1$, we show Lemma 3.7. Lemma 3.8 is the result of $n\ge1$.
Combining Lemma 3.6 and Lemma 3.7, we obtain Lemma 3.8.
\begin{lemma}
For root locus $T_{l}\in\Gamma_{1}$: The multiple real pole $Q_l$ of (3.1) emits a root locus $T_l$ directly into $\mathbb{C}^{*}$. Then $T_l$ lies entirely in $\mathbb{C}^{*}$ and cannot intersect the real axis, where $1\le l\le r$.
\end{lemma}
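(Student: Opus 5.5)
Lemma 3.8 is obtained by combining the two cases already treated, Lemma 3.6 ($n_l>1$) and Lemma 3.7 ($n_l=1$). The plan is to show that these two cases exhaust all possibilities. Then, in either case, the assumption that $T_l$ meets the real axis leads to the same contradiction.

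First I would fix a multiple real zero $Q_l$ of $p(s)$, i.e.\ a multiple real pole of (3.1), together with a root locus $T_l\in\Gamma_1$ of $2q\pi$ or $2q\pi+\pi$ degree emitted directly into $\mathbb{C}^{*}$. Suppose, for contradiction, that $T_l$ meets the real axis at a point $d_l$. By Theorem 1.27 there is a mirror locus $T_l^{'}$ with the same unit complex value. This locus also issues from $Q_l$ and reaches $d_l$. By Lemma 3.5, the curves $T_l$ and $T_l^{'}$, together with $d_l$ and $Q_l$, bound a closed region. Moreover, the degrees of $T_l$ and $T_l^{'}$ differ by $2\pi n_l$ for some positive integer $n_l$.

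Since $n_l$ is a positive integer, exactly one of $n_l=1$ or $n_l>1$ holds, so the two lemmas cover every case. In each case I would invoke Theorems 1.33 and 1.35, which give strict monotonicity of the degrees around $Q_l$ and the emission of $\beta_l$ branches of each of the two degrees. These show that the sector of $Q_l$ facing the enclosed region emits at least one locus $T3_l$ of the complementary degree: $2q\pi+\pi$ if $T_l$ has degree $2q\pi$, and $2q\pi$ otherwise. By Theorem 1.15, $T3_l$ cannot cross $T_l$ or $T_l^{'}$, because its unit complex value is different. It therefore stays trapped in the bounded region. This contradicts Theorem 1.14 together with Lemma 3.2, which say that every root locus of (3.1) must end at the zero at infinity. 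Hence $d_l$ cannot exist, and $T_l$ lies entirely in $\mathbb{C}^{*}$.

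The main obstacle is the step common to both cases: showing that the enclosed sector at $Q_l$ really contains a branch of the complementary degree. When $n_l=1$ the degree gap is only $2\pi$. I would argue from Theorem 1.33 that, as the emission angle sweeps continuously from $T_l$ to $T_l^{'}$ inside the region, the degree passes strictly through the intermediate value differing by $\pi$. The corresponding branch is emitted into the interior. After that, the case split is formal: I would simply cite Lemma 3.6 for $n_l>1$ and Lemma 3.7 for $n_l=1$.
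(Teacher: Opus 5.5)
Your proposal is correct and is essentially the paper's proof. The paper obtains Lemma 3.8 simply by splitting on $n_l>1$ versus $n_l=1$ and citing Lemmas 3.6 and 3.7, both of which rest on the trapped-branch argument you reproduce: a complementary-degree branch $T3_l$ from $Q_l$ is confined by $T_l$ and $T_l^{'}$ via Theorem 1.15, contradicting Theorem 1.14.

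On the obstacle you flag: when the degree gap across the sector of $Q_l$ facing the region is exactly $2\pi$, the only branch strictly inside that sector is the real-axis branch leaving $Q_l$ toward $d_l$, and it has the complementary degree, so $T3_l$ can be taken to be that branch.
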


Notation 5:

$c_{i,k}$ denotes an intersection point of the root loci $U_k$ and $U_k^{'}$ in the interval $w_{i,k}$. Similarly, $b_{i,k}$ denotes an intersection point of the root loci $U_k$ or $U_k^{'}$ and $L_{i,k}$ in the interval $w_{i,k}$. Furthermore, let us define that point $e_k^{'}$ as an intersection point of $U_k^{'}$ and $L_{i,k}$ in the upper half of $\mathbb{C}^{*}$. Let point $e_k$ denote an intersection point of $U_k$, situated in the lower half of $\mathbb{C}^{*}$. $d_{i,k}$ denotes an intersection point at which $L_{i,k}$ re-intersects the real axis in the interval between $c_{i,k}$ and $b_{i,k}$.

Each of the two intervals $w_{i,k}$ has two endpoints. $y_{i,k}\neq Q_k$. For $i=1$, $w_{1,k}$=[$Q_{k}$, $Q_{k-1}$]. For $i=2$, $w_{2,k}$=[$Q_{k+1}$, $Q_{k}$]. Let $y_{i,k}$ denote one of the two endpoints of each finite interval $w_{i,k}$, when $i=1$, $y_{1,k}=Q_{k-1}$. When $i=2$, $y_{2,k}=Q_{k+1}$. The two infinite intervals ($w_{2,r+1}$ and $w_{1,1}$) are ($-\infty$, $Q_{r+1}$] and [$Q_{1}$, $+\infty$), respectively. $y_{1,1}=+\infty$, $y_{2,r+1}=-\infty$.

We define $v_{i,k}$ as a intersection point where $L_{i,k}$ re-intersects the real axis in the interval between points $c_{i,k}$ and $y_{i,k}$. $LT_k$ denotes a root locus emitted by the real zero $Q_k$ of $p(s)$. When $LT_k$ leaves the real axis at a point on the real axis and enters $\mathbb{C}^{*}$, $LT_k$ is $L_{i,k}$.  When $LT_k$ directly  enters $\mathbb{C}^{*}$ from the multiple real zero $Q_l$, $LT_k$ is $T_{l}$.

The two infinite points on the real axis are the zeros of $1/p(s)$. When $p(s)$ has at least two real zeros, a situation arises where two real poles of (3.1) are adjacent. If the root loci in $\mathbb{C}^{*}$ do not intersect the interval between these two adjacent real poles, and only the root loci of the two adjacent real poles lie within this interval, then these root loci of the two adjacent real poles intersect, generating a real breakaway point. From this real breakaway point, the root loci of the two adjacent real poles leave the real axis and enter $\mathbb{C}^{*}$. These root loci, $L_{i,k}$, may later re-intersect the interval $w_{i,k}$. This constitutes the first case (a).

When $p(s)$ has at least two real zeros, there exist two adjacent real poles of (3.1). If the root loci $U_k$ or $U_k^{'}$ of the non-real zeros $(s_k, s_k^{'})$ intersect in the interval $w_{i,k}$ between these two adjacent real poles, the following occurs: If both $U_k$ and $U_k^{'}$ intersect in the interval $w_{i,k}$, first, they intersect at a real breakaway point. Then, they leave the real breakaway point and extend in opposite directions, and they intersect with other root loci, generating two additional real breakaway points.

- If only one pair of root loci from non-real zeros intersect in the interval $w_{i,k}$ between two adjacent real poles of (3.1), they intersect with the root loci of the two real poles, producing three real breakaway points.

- If $n_1$ pairs of root loci from non-real zeros intersect in the interval $w_{i,k}$ between two adjacent real poles of (3.1), they intersect with each other and with the root loci of the two real poles, producing $2n_1+1$ real breakaway points.

This constitutes the second case (b).

The proofs required for our results differ between these two cases. Therefore, we partition the set $\Gamma_{22}$ into two subsets on the basis of whether root loci from non-real zeros intersect in the interval between the adjacent real poles of (3.1).

When $p(s)$ has only one real zero:

- If no root loci from non-real zeros intersect in the interval between the real zero of $p(s)$ and positive or negative infinity, root locus emitted from real zero cannot leave the real axis, ends directly at the positive and negative infinity on the real axis. No root loci leave the real axis, no real breakaway points are generated, so this scenario does not require proof in our classification.

- If root loci from non-real zeros do intersect the interval between the real zero of $p(s)$ and positive or negative infinity, the root locus of real zero of $p(s)$ leaves the real axis and enters $\mathbb{C}^{*}$. This behavior aligns with the second case described above. Thus, when $p(s)$ has only one real zero and root loci from non-real zeros intersect in the interval between the real zero and infinity, we classify this special case under the second case (b).

$\Gamma_{221}$: $\Gamma_{221} \subseteq \Gamma_{22}$. Here, $L_{i,k}$ departs from $w_{i,k}$ and enters $\mathbb{C}^{*}$, and no root loci from non-real zeros intersect in $w_{i,k}$.

$\Gamma_{222}$: $\Gamma_{222} \subseteq \Gamma_{22}$. Here, $L_{i,k}$ departs from $w_{i,k}$ and enters $\mathbb{C}^{*}$, and root loci from non-real zeros intersect $w_{i,k}$. This includes the special case where $p(s)$ has only one real zero, and root loci from non-real zeros intersect the interval between real zero of $p(s)$ and infinity.

These subsets satisfy: $\Gamma_{22}=\Gamma_{221}\cup\Gamma_{222}$ and $\Gamma_{221}\cap\Gamma_{222}=\phi$.

The two root loci emitted from the adjacent real zero of $p(s)$ intersect, causing the root loci from real zeros of $p(s)$ to leave the real axis.
The point $b_{1,k}$ is the intersection point of root loci $L_{1,k}$ and $L_{2,k-1}$ in $w_{1,k}$. The point $b_{2,k}$ is the intersection point of root loci $L_{2,k}$ and  $L_{1,k+1}$ in $w_{2,k}$.

\begin{lemma}
Suppose root locus $L_{i,k}\in\Gamma_{221}$ departs the real axis at $b_{i,k}$ ($b_{i,k} \neq Q_k$) and enters $\mathbb{C}^{*}$. Then $L_{i,k}$ cannot intersect any point in the interval between $b_{i,k}$ and the real zero $Q_k$ of $p(s)$, for $i=1,2$, $1\le k\le r$.
\end{lemma}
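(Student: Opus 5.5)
We argue by contradiction, in the same pattern as Lemmas 3.3 and 3.5--3.7: a hypothetical return of $L_{i,k}$ to the real axis closes off a region, and that region then traps a root locus that Theorem 1.14 says must reach infinity.

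\textbf{Setup.} Fix $L_{i,k}\in\Gamma_{221}$. Because no root loci from non-real zeros meet $w_{i,k}$, the only root loci lying on $w_{i,k}$ are the two real-axis loci $L_{i,k}$ (from $Q_k$) and its neighbour (from $Q_{k-1}$ if $i=1$, or $Q_{k+1}$ if $i=2$). By Lemma 1.27(5) the whole interval has one degree, $2q\pi$ or $2q\pi+\pi$. These two loci meet at $b_{i,k}$ and leave the axis there. Suppose, for contradiction, that $L_{i,k}$ meets the real axis again at a point $d_{i,k}$ strictly between $Q_k$ and $b_{i,k}$.

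\textbf{Step 1: a closed region.} By Lemma 1.27(1), the mirror image $L_{i,k}'$ of $L_{i,k}$ is also a root locus with the same unit complex value. It also runs from $b_{i,k}$ to $d_{i,k}$, because $b_{i,k}$ and $d_{i,k}$ are real and hence fixed by reflection. By Theorem 2.21 both arcs are continuous. So the arcs of $L_{i,k}$ and $L_{i,k}'$ between $b_{i,k}$ and $d_{i,k}$ bound a closed region $D$ whose intersection with the real axis is the segment $[d_{i,k},b_{i,k}]$ (or $[b_{i,k},d_{i,k}]$).

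\textbf{Step 2: locating the contradiction on the segment.} That segment lies inside $w_{i,k}$ and contains no zero of $p(s)$. It consists of points of the single real-axis locus $L_{i,k}$, traversed from $Q_k$ towards $b_{i,k}$. The point $d_{i,k}$ is thus a point of $L_{i,k}$ reached twice: once along the axis before $b_{i,k}$, and once after $L_{i,k}$ has left the axis at $b_{i,k}$. So $L_{i,k}$ would intersect itself, which contradicts Theorem 1.39. Equivalently, by Theorem 1.38 the gain on the axis part of $L_{i,k}$ at $d_{i,k}$ is smaller than at $b_{i,k}$, while the gain on the returning arc at $d_{i,k}$ is larger than at $b_{i,k}$; these are incompatible. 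This also matches Theorem 1.41, which forbids a locus leaving $b_{i,k}$ from coming back onto a locus through $b_{i,k}$.

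\textbf{Step 3: a second route via trapping.} As a check modelled on Lemma 3.3, note that $d_{i,k}$ would be a real breakaway point. By Theorem 1.25 it would need at least two loci of the same unit complex value meeting there, and the only candidates available on that segment are the axis locus itself and the returning arcs. Any locus leaving $d_{i,k}$ into $D$ can only exit $D$ across $L_{i,k}$ or $L_{i,k}'$. A locus with a different unit complex value cannot cross them (Theorem 1.15), and $D$ contains no pole, so nothing can exit. This contradicts Theorem 1.14, which requires every locus of (3.1) to end at infinity, the only zero of (3.1) (Lemma 3.1).

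\textbf{Main obstacle.} The delicate step is Step 2: showing rigorously that the point where the arc returns is on the \emph{same} locus $L_{i,k}$, rather than on a different locus of the same degree. I would handle this with the monotone-gain parametrization of Theorem 1.38 and the fact (Theorem 1.28, part 2) that root loci do not overlap, so that the axis segment between $Q_k$ and $b_{i,k}$ carries exactly one locus.
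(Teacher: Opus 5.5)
Your Step 2 is the paper's proof. The return point $d_{i,k}$ would be reached twice by $L_{i,k}$, once along the axis before $b_{i,k}$ and once after leaving at $b_{i,k}$; Theorem 1.39 forbids this, and your gain version via Theorem 1.38 states the same fact. Steps 1 and 3 are not needed, and you should drop Step 3 rather than offer it as a check: a locus leaving $d_{i,k}$ has the same unit complex value as $L_{i,k}$ and $L_{i,k}'$, so Theorem 1.15 does not confine it to $D$.
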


\begin{proof}
If $L_{i,k}$ leaves the real axis from $b_{i,k}$ in the interval $w_{i,k}$, it may intersect a point in $w_{i,k}$. If $L_{i,k}$ intersects a point $t_{i,k}$ in the interval between $Q_k$ and $b_{i,k}$, this would imply that $L_{i,k}$ intersects itself at $t_{i,k}$. By Theorem 1.39, such a case is impossible. Therefore, $L_{i,k}$ cannot intersect any point in the interval between $Q_k$ and $b_{i,k}$.
\end{proof}

When the point $b_{i,k}$ coincides with $b_{1,k}$, $b_{1,k}$ lies within the interval $w_{1,k}$. The root locus $L_{1,k}$ intersects at a point in the interval to the right of the real zero $Q_k$, which is the interval between $Q_k$ and $b_{1,k}$. Similarly, when $b_{i,k}$ coincides with $b_{2,k}$, $b_{2,k}$ lies within the interval $w_{2,k}$. The root locus $L_{2,k}$ intersects at a point in the interval to the left of the real zero $Q_k$, which is in the interval between $Q_k$ and $b_{2,k}$. The root locus $L_{i,k}$ is in the upper half of $\mathbb{C}^{*}$, and it is also in the lower half of $\mathbb{C}^{*}$. The proof of Lemma 3.9 applies to all four possible distributions of the root loci without requiring separate proofs.

Furthermore, details such as the shape of the root locus $L_{i,k}$ are unnecessary for the proof of Lemma 3.9. This continuous root locus, defined on $\mathbb{C}^{*}$, rotating 180 degrees and reaches point $t_{i,k}$ from point $b_{i,k}$. The root locus $L_{i,k}$ intersects itself at point $t_{i,k}$. These aspects are essential for the proof of Lemma 3.9.

The root loci $L_{i,k}$ in the set $\Gamma_{222}$ originate from the real pole $Q_k$ of (3.1), enter $\mathbb{C}^{*}$, and then intersect in $w_{i,k}$.

- $U_k$ and $U_k^{'}$ intersect on the real axis at the real breakaway point $c_{i,k}$.

- $U_k$ or $U_k^{'}$ also generates a real breakaway point $b_{i,k}$ with the root locus $L_{i,k}$.

- Any other root loci (if they exist) emitted from non-real zeros and intersecting $w_{i,k}$ do not intersect with $L_{i,k}$.

Thus, the two breakaway points $b_{i,k}$ and $c_{i,k}$ partition $w_{i,k}$ into three sub-intervals:

1. The sub-interval between $Q_k$ and $b_{i,k}$,

2. The sub-interval between $b_{i,k}$ and $c_{i,k}$,

3. The sub-interval between $c_{i,k}$ and $y_{i,k}$ .

Assume that the root locus $L_{i,k}$ intersects in any of these three sub-intervals, leading to three distinct behaviors:

1. Assume that $L_{i,k}$ intersects in  the sub-interval between $Q_k$ and $b_{i,k}$,

2. Assume that $L_{i,k}$ intersects in the sub-interval between $b_{i,k}$ and $c_{i,k}$,

3. Assume that $L_{i,k}$ intersects in the sub-interval between $c_{i,k}$ and $y_{i,k}$.

These three cases require distinct proofs for the results we seek. Therefore, Assume that $L_{i,k}$ intersects in $w_{i,k}$, the root loci in $\Gamma_{222}$ can be classified into three subsets:

$\Gamma_{2221}$: $\Gamma_{2221} \subseteq \Gamma_{222}$. Here, assume that $L_{i,k}$ intersects in the interval between $Q_k$ and $b_{i,k}$.

$\Gamma_{2222}$: $\Gamma_{2222} \subseteq \Gamma_{222}$. Here, assume that $L_{i,k}$ intersects in the interval between $b_{i,k}$ and $c_{i,k}$.

$\Gamma_{2223}$: $\Gamma_{2223} \subseteq \Gamma_{222}$. Here, assume that $L_{i,k}$ intersects in the interval between $c_{i,k}$ and $y_{i,k}$.

These subsets satisfy: $\Gamma_{222}=\Gamma_{2221}\cup\Gamma_{2222}\cup\Gamma_{2223}$, $\Gamma_{2221}\cap\Gamma_{2222}=\phi$. $\Gamma_{2221}\cap\Gamma_{2223}=\phi$ and $\Gamma_{2222}\cap\Gamma_{2223}=\phi$.

Repeating the proof of Lemma 3.9, we can establish Lemma 3.10.
The root loci are emitted from non-real zeros of $p(s)$ that intersect in $w_{i,k}$, and the root locus of non-real zero intersects with the root locus of the real zero, causing root loci from real zeros of $p(s)$ to leave the real axis. The point $b_{i,k}$ is the intersection point of root loci $L_{i,k}$ and $U_k$ or $U_k^{'}$.

\begin{lemma}
Suppose root locus $L_{i,k}\in\Gamma_{2221}$ departs the real axis at point $b_{i,k}$ ($b_{i,k} \neq Q_k$) and enters $\mathbb{C}^{*}$. Then $L_{i,k}$ cannot intersect any point in the interval between $b_{i,k}$ and the real zero $Q_k$ of $p(s)$, for $i=1,2$, $1\le k\le r$.
\end{lemma}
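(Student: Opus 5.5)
The plan is to repeat the argument of Lemma 3.9, since the paper says Lemma 3.10 follows "by repeating the proof of Lemma 3.9". The only new ingredient is that the breakaway point $b_{i,k}$ is now produced by $L_{i,k}$ meeting $U_k$ or $U_k^{'}$, rather than the root locus of an adjacent real zero. Fix $L_{i,k}\in\Gamma_{2221}$ and suppose, for contradiction, that after leaving the real axis at $b_{i,k}$ it re-intersects the real axis at some point $t_{i,k}$ strictly between $Q_k$ and $b_{i,k}$.

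First I would check that the segment of the real axis from $Q_k$ to $b_{i,k}$ is traced by $L_{i,k}$ itself. By Lemma 1.27(5) the segment carries a single degree. Since $b_{i,k}$ is the first breakaway point met by $L_{i,k}$ when extending from $Q_k$ (Notation 5), no other root locus meets the segment before $b_{i,k}$. By Theorem 1.28(2) no other locus can overlap it. Hence every point of $(Q_k,b_{i,k})$, and in particular $t_{i,k}$, is a point of $L_{i,k}$ with gain smaller than the gain at $b_{i,k}$.

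Second, I would follow $L_{i,k}$ after $b_{i,k}$. It enters $\mathbb{C}^{*}$, and by the symmetry of Lemma 1.27(1) it stays in one half-plane until it next touches the real axis. By Theorem 1.38 the gain strictly increases along it, so the gain at the assumed return point $t_{i,k}$ exceeds the gain at $b_{i,k}$. But $t_{i,k}$ already lies on $L_{i,k}$ with smaller gain. So $L_{i,k}$ would pass through $t_{i,k}$ twice, i.e. intersect itself. This contradicts Theorem 1.39, or equivalently Lemma 1.37, since two distinct points of one locus would share a gain. Theorem 1.41 gives the same conclusion in the special case $t_{i,k}=b_{i,k}$.

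The main obstacle is making sure the presence of $U_k$ and $U_k^{'}$ does not spoil this identification. One might worry that the point reached at $t_{i,k}$ belongs to $U_k$ or $U_k^{'}$ rather than to $L_{i,k}$. I would rule this out with Theorem 1.15, which forbids intersections of loci with different unit complex values. I would also use the fact that $U_k$ and $U_k^{'}$ meet the real axis only at $c_{i,k}$ and $b_{i,k}$, both outside the open sub-interval between $Q_k$ and $b_{i,k}$. The four configurations, with $i=1,2$ and the upper or lower half-plane, are handled identically, as in the remark after Lemma 3.9.
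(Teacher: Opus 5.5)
Your argument is correct and is the paper's own: Lemma 3.10 is proved by repeating Lemma 3.9. If $L_{i,k}$ returned to a point $t_{i,k}$ of its own real-axis segment between $Q_k$ and $b_{i,k}$, that would be a self-intersection, which Theorem 1.39 excludes (and Theorem 1.39 rests on the gain monotonicity of Theorem 1.38 that you invoke directly). Drop your final paragraph, though. Theorem 1.15 says nothing here, since $U_k$ and $L_{i,k}$ meet at $b_{i,k}$ and so share a unit complex value. The claim that $U_k,U_k^{'}$ meet the real axis only at $c_{i,k}$ and $b_{i,k}$ is essentially Theorem 3.24, which the paper derives from this very lemma (via Lemmas 3.19, 3.20 and 3.23), so citing it would be circular. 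It is also not needed: your gain comparison already rules out a return to $t_{i,k}$ by whichever branch leaves $b_{i,k}$.
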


The root locus $L_{i,k}$ leaves the real axis and enters $\mathbb{C}^{*}$. Two conditions can lead to this situation. The first condition is that root loci emitted from non-real zeros of $p(s)$ do not intersect in the interval $w_{i,k}$, and $w_{i,k}$ lies between two adjacent real zeros of $p(s)$. After the root loci emitted from these two adjacent real zeros of $p(s)$ intersect, they leave the real axis. Lemma 3.9 applies to this condition. The second condition is that root loci emitted from non-real zeros of $p(s)$ intersect in the interval $w_{i,k}$, causing the root locus $L_{i,k}$ emitted from the real zero $Q_k$ to leave the real axis. Lemma 3.10 applies to this condition.

Once $L_{i,k}$ leaves the real axis from a point that is not a pole of (3.1), $L_{i,k}$ enters $\mathbb{C}^{*}$ and cannot re-intersect the real axis. This result requires the consideration of two distinct cases. The first case is that $L_{i,k}$ cannot intersect any point in another interval $w_{j3}$ on the real axis, where  $w_{j3}$ is one of $w_{j2}$, $j2=1,\cdots,r+1$. $w_{i,k} \neq w_{j3}$. For $i=1,2$, $1\le k\le r$. The second case is that $L_{i,k}$ cannot intersect any point in the interval $w_{i,k}$ that it has previously left. After these two cases are proven, the results are combined to conclude that $L_{i,k}$ leaves the real axis from a point that is not pole of (3.1) and enters $\mathbb{C}^{*}$, $L_{i,k}$ cannot re-intersect at a point in the real axis.

By Lemma 3.3, the real zeros of $p(s)$ emit root loci that leave the interval $w_{j4}$. These root loci cannot intersect any point in interval $w_{i,k}$, where  $w_{j4}$ is one of $w_{j2}$, $j2=1,\cdots,r+1$. $w_{i,k} \neq w_{j4}$. For $i=1,2$, $1\le k\le r$.  Lemma 3.4 provides an analogous result for non-real zeros. By Lemma 3.4, the root loci emitted from the non-real zeros of $p(s)$ intersect a point in $w_{j4}$, but upon re-entering $\mathbb{C}^{*}$, they cannot intersect any point in $w_{i,k}$.

Additionally, by Lemma 3.8, the root locus $T_l$ emitted from all multiple real zeros of $p(s)$ enters $\mathbb{C}^{*}$ directly and cannot intersect any point in interval $w_{i,k}$. If one of the endpoints $Q_e$ of $w_{i,k}$ (i.e., $Q_{k+1}$, $Q_k$, or $Q_{k-1}$) is a real multiple zero, the root loci emitted from $Q_e$ enter $\mathbb{C}^{*}$ directly and cannot intersect any point in $w_{i,k}$.

Therefore, the root locus from any point (including multiple real zeros of $p(s)$) on the real axis not in interval $w_{i,k}$ cannot intersect any point in $w_{i,k}$. Only the root loci emitted from non-real zeros of $p(s)$ may intersect a point in $w_{i,k}$. Next, we prove the following: After the root loci emitted from non-real zeros intersect in $w_{i,k}$, and the root loci $L_{i,k}$ emitted from real zeros leave $w_{i,k}$ and enter $\mathbb{C}^{*}$, these root loci $L_{i,k}$ emitted from real zeros cannot intersect any point in $w_{i,k}$.

Lemma 3.10 proves that after the root loci emitted from non-real zeros intersect at a point in $w_{i,k}$, and the root locus $L_{i,k}$ emitted from real zeros leaves $w_{i,k}$ and enters $\mathbb{C}^{*}$, $L_{i,k}$ cannot intersect any point in the interval between $b_{i,k}$ and the real zero $Q_k$ of $p$ on the real axis, where $i=1,2$, $1\le k\le r$.

The real pole $Q_k$ ($1\le k\le r$) of (3.1) emits a root locus $L_{i,k}$ to the real axis. Assume that $L_{i,k}$ leaves the real axis from point $b_{i,k}$, where $(b_{i,k}\neq Q_k)$, and enters $\mathbb{C}^{*}$. The root loci emitted from at least one pair of non-real zeros of $p$ intersect in $w_{i,k}$. A pair of the root loci $U_k$ and $U_k^{'}$ emitted from a pair of non-real zeros ($s_k$, $s_k^{'}$) of $p(s)$ intersect at point $c_{i,k}$ in $w_{i,k}$. $U_k$ or $U_k^{'}$ are adjacent to $L_{i,k}$ on the real axis, and $L_{i,k}$ intersects with $U_k$ or $U_k^{'}$ at point $b_{i,k}$.

Under these conditions, we need to prove two results:

1. $L_{i,k}$ cannot intersect any point in the interval between $c_{i,k}$ and $b_{i,k}$.

2. $L_{i,k}$ cannot intersect any point in the interval between $c_{i,k}$ and $y_{i,k}$.

To prove these results, we must consider whether $L_{i,k}$ intersects with $U_k^{'}$ in the upper half of $\mathbb{C}^{*}$. Therefore, the proofs under these conditions are divided into four cases:

1. First Case: $L_{i,k}$ does not intersect with $U_k^{'}$ in the upper half of $\mathbb{C}^{*}$, and $L_{i,k}$ intersects at point $d_{i,k}$ in the interval between $c_{i,k}$ and $b_{i,k}$.

2. Second Case: $L_{i,k}$ intersects with $U_k^{'}$ in the upper half of $\mathbb{C}^{*}$, and $L_{i,k}$ intersects at point $d_{i,k}$ in the interval between $c_{i,k}$ and $b_{i,k}$.

3. Third Case: $L_{i,k}$ intersects with $U_k^{'}$ in the upper half of $\mathbb{C}^{*}$, and $L_{i,k}$ intersects at point $v_{i,k}$ in the interval between $c_{i,k}$ and $y_{i,k}$.

4. Fourth Case: $L_{i,k}$ does not intersect with $U_k^{'}$ in the upper half of $\mathbb{C}^{*}$, and $L_{i,k}$ intersects at point $v_{i,k}$ in the interval between $c_{i,k}$ and $y_{i,k}$.

The root locus $L_{i,k}$ in the set $\Gamma_{2222}$ departs from the real axis, enters $\mathbb{C}^{*}$. Assume that $L_{i,k}$ intersects in the interval between $b_{i,k}$ and $c_{i,k}$ in $w_{i,k}$. The root loci $U_k$ and $U_k^{'}$ intersect on the real axis at the real breakaway point $c_{i,k}$, generating two symmetric segments in $\mathbb{C}^{*}$: $s_kc_{i,k}$ and $s_k^{'}c_{i,k}$. For $L_{i,k}$ in $\Gamma_{2222}$, two distinct cases may occur:
1. $L_{i,k}$ intersects with $s_k^{'}c_{i,k}$, or
2. $L_{i,k}$ does not intersect with $s_k^{'}c_{i,k}$.
Since the required proofs differ for these two cases, for the root locus $L_{i,k}$ in the set $\Gamma_{2222}$, on the basis of two distinct cases, we further partition $\Gamma_{2222}$ into two subsets:

$\Gamma_{22221}$: $\Gamma_{22221} \subseteq \Gamma_{2222}$. Here, $L_{i,k}$ does not intersect with $s_k^{'}c_{i,k}$.

$\Gamma_{22222}$: $\Gamma_{22222} \subseteq \Gamma_{2222}$. Here, $L_{i,k}$ intersects with $s_k^{'}c_{i,k}$.

These subsets satisfy: $\Gamma_{2222}=\Gamma_{22221}\cup\Gamma_{22222}$ and $\Gamma_{22221}\cap\Gamma_{22222}=\phi$.

The root locus $L_{i,k}$ in the set $\Gamma_{2223}$ departs from the real axis, enters $\mathbb{C}^{*}$. Subsequently, assume that  $L_{i,k}$ intersects in the interval between $c_{i,k}$ and $y_{i,k}$. For $L_{i,k}$ in $\Gamma_{2223}$, two distinct cases may occur:
1. $L_{i,k}$ intersects $s_k^{'}c_{i,k}$, or
2. $L_{i,k}$ does not intersect with $s_k^{'}c_{i,k}$.
Since the required proofs differ for these two cases, for the root locus $L_{i,k}$ in the set $\Gamma_{2223}$, on the basis of two distinct cases, we further partition $\Gamma_{2223}$ into two subsets:

$\Gamma_{22231}$: $\Gamma_{22231} \subseteq \Gamma_{2223}$. Here, $L_{i,k}$ does not intersect with $s_k^{'}c_{i,k}$.

$\Gamma_{22232}$: $\Gamma_{22232} \subseteq \Gamma_{2223}$. Here, $L_{i,k}$ intersects with $s_k^{'}c_{i,k}$.

These subsets satisfy: $\Gamma_{2223}=\Gamma_{22231}\cup\Gamma_{22232}$ and $\Gamma_{22231}\cap\Gamma_{22232}=\phi$.

\begin{lemma}
For $L_{i,k}\in\Gamma_{22221}$: Suppose root locus $U_k$ intersects $L_{i,k}$ at point $b_{i,k}$ in interval $w_{i,k}$, then $U_k$ and $L_{i,k}$ depart the real axis from $b_{i,k}$ in $w_{i,k}$, entering $\mathbb{C}^{*}$.
\end{lemma}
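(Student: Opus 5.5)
The claim has two parts: the meeting point $b_{i,k}$ of $U_k$ and $L_{i,k}$ in $w_{i,k}$ is a real breakaway point, and both loci must leave the real axis there. I would prove them in that order, using only the local structure of root loci at a real breakaway point and the symmetry and non-self-intersection results of Section 1.

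\emph{Step 1: $b_{i,k}$ is a breakaway point.} Both $U_k$ (which reached the real axis at $c_{i,k}$ and then runs along $w_{i,k}$ toward $Q_k$) and $L_{i,k}$ (emitted by $Q_k$ along $w_{i,k}$) lie in $w_{i,k}$. By Lemma 1.27(5), the whole interval $w_{i,k}$ carries a single degree, so the two loci share the same unit complex value. Since they extend toward each other, they meet at $b_{i,k}$. By Definition 1.18 and Theorem 1.25, $b_{i,k}$ is a finite real breakaway point. It is not a pole, since $b_{i,k}\neq Q_k$, and not a zero, since $1/p(s)$ has no finite zeros. Hence $b_{i,k}\in\Delta$, and by Theorem 1.40 it is a multiple root of the characteristic equation with $\gamma_0\ge 2$.

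\emph{Step 2: neither locus continues along the real axis past $b_{i,k}$.} By Theorem 1.38, the gain is strictly monotone on each locus. On $L_{i,k}$ the gain increases from $0$ at $Q_k$ up to $b_{i,k}$. On $U_k$ the gain increases from $0$ at $s_k$ through $c_{i,k}$ up to $b_{i,k}$. Suppose $L_{i,k}$ continued along the real axis past $b_{i,k}$ toward $c_{i,k}$. It would then run over the real segment already traced by $U_k$, and Theorem 1.28(2) forbids overlapping segments. Symmetrically, $U_k$ cannot continue toward $Q_k$, because that segment is occupied by $L_{i,k}$. In each case I would add the gain argument: the overlapping points would carry two different gain values, which is impossible because $K(s)=|p(s)|$ is single valued. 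This also contradicts Lemma 1.37. Theorem 1.41 excludes either locus turning back onto the part it has just traversed.

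\emph{Step 3: the loci enter $\mathbb{C}^{*}$, and this step is the main obstacle.} Continuity (Theorem 2.21) and Theorem 1.14 say each root locus must continue from $b_{i,k}$ to the zero at infinity. Step 2 shows the continuation is not along the real axis, so both loci leave $b_{i,k}$ into $\mathbb{C}^{*}$. What needs care is ruling out a hidden real continuation, and I would handle it with the local form from Lemma 1.20. Near $b_{i,k}$ the characteristic polynomial behaves like $(s-b_{i,k})^{\gamma_0}g(s)$, so the $\gamma_0$ loci meeting there split into $2\gamma_0$ equally spaced directions. Exactly two of these are the real directions, and they are the incoming segments of $L_{i,k}$ and $U_k$. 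The outgoing directions are therefore non-real. By Lemma 1.27(1) they occur in conjugate pairs, one continuation in the upper half of $\mathbb{C}^{*}$ and one in the lower half. This establishes that $U_k$ and $L_{i,k}$ depart the real axis at $b_{i,k}$ into $\mathbb{C}^{*}$.
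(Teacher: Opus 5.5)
Your proof is correct, and it actually justifies the step that the paper only asserts. The paper's proof is descriptive. It recalls that $U_k$ and $U_k^{'}$ run from $s_k$, $s_k^{'}$ along the symmetric segments $s_kc_{i,k}$, $s_k^{'}c_{i,k}$, meet at $c_{i,k}$, and separate along the real axis in opposite directions. It then says that $U_k$ and $L_{i,k}$ extend toward each other and meet at $b_{i,k}$, and that after this meeting they ``leave the real axis and enter $\mathbb{C}^{*}$.'' No reason for the departure is given beyond the analogy with two adjacent real poles.

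You supply the reason. You use Lemma 1.27(5) to get a common unit complex value, and you identify $b_{i,k}$ as a multiple root of the characteristic equation. You then observe that in the local picture at that root, both real rays are already occupied by the incoming segments of $L_{i,k}$ and $U_k$. Hence every outgoing ray is non-real, and conjugate symmetry places them in both half-planes. This conclusion does not depend on how incoming and outgoing branches are paired at the crossing, which the paper never specifies.

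Three remarks.

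First, the $2\gamma_0$ equally spaced directions are not in the paper; Lemma 1.20 only gives the factorization $(s-s_0)^{\gamma_0}g(s)$. You should supply the standard argument. Write $p(s)-p(b_{i,k})=\zeta(s)^{\gamma_0}$ with $\zeta$ a local biholomorphic coordinate. Then near $b_{i,k}$ the locus is the preimage of the $2\gamma_0$ rays on which $\zeta^{\gamma_0}$ is real, and along these rays $|p|$ alternately increases and decreases outward.

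Second, once you have that picture, your Step 2 (non-overlap and gain monotonicity) is redundant, since the local structure already excludes a real continuation.

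Third, Step 3 uses, as the paper implicitly does, that $U_k$ arrives at $b_{i,k}$ along the real axis, i.e.\ $c_{i,k}\neq b_{i,k}$. This hypothesis is genuinely needed. In the degenerate case, for instance the paper's own Example 3.25 with $p(s)=s^{3}+1$ at $0$, where $\gamma_0=3$, only one real ray is incoming, and a locus does continue along the real axis.
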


\begin{proof}
The root loci emitted from at least a pair of non-real zeros of $p(s)$ intersect in the interval $w_{i,k}$. Specifically, a pair of root loci $U_k$ and $U_k^{'}$ emitted from a pair of non-real zeros ($s_k$, $s_k^{'}$) of $p(s)$ intersect at point $c_{i,k}$ in the interval $w_{i,k}$. A segment $s_k^{'}c_{i,k}$ of $U_k^{'}$ is from the non-real zero $s_k^{'}$ to point $c_{i,k}$ in the upper half of $\mathbb{C}^{*}$, whereas a segment $s_kc_{i,k}$ of $U_k$ is from the non-real zero $s_k$ to point $c_{i,k}$ in the lower half of $\mathbb{C}^{*}$. Since the root loci of $2q\pi$ and $2q\pi+\pi$ degree of (3.1) are symmetric about the real axis, these two segments  $s_kc_{i,k}$ and $s_k^{'}c_{i,k}$ are symmetric about the real axis.

After the root loci $U_k$ and $U_k^{'}$ intersect at point $c_{i,k}$, they separate and extend in opposite directions on the real axis. $U_k$ or $U_k^{'}$ is adjacent to $L_{i,k}$ on the real axis. Here, we assume that the root loci $U_k$ and $L_{i,k}$ extend toward each other. Consequently, the root locus $U_k$ intersects with the root locus $L_{i,k}$ at point $b_{i,k}$ in the interval $w_{i,k}$. After this intersection, the root loci $U_k$ and $L_{i,k}$ leave the real axis and enter $\mathbb{C}^{*}$.
\end{proof}

\begin{lemma}
If the root locus $L_{i,k}\in\Gamma_{22221}$, then $L_{i,k}$ cannot intersect any point in the interval between $c_{i,k}$ and $b_{i,k}$.
\end{lemma}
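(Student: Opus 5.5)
We argue by contradiction, in the same enclosure style as Lemma 3.3 and Lemma 3.5. Suppose $L_{i,k}\in\Gamma_{22221}$ leaves the real axis at $b_{i,k}$, enters (say) the upper half of $\mathbb{C}^{*}$, and re-intersects the real axis at a point $d_{i,k}$ strictly between $c_{i,k}$ and $b_{i,k}$. By Lemma 3.11, $U_k$ also leaves $b_{i,k}$. By Lemma 1.27(1), the root locus $L_{i,k}^{'}$ symmetric to $L_{i,k}$ also runs from $b_{i,k}$ to $d_{i,k}$, and it has the same unit complex value. Here $L_{i,k}^{'}$ is the branch of $U_k$ leaving $b_{i,k}$ in the lower half-plane. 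Theorem 2.21 gives continuity of these arcs, so the two arcs $b_{i,k}\to d_{i,k}$ bound a closed region $D$ whose intersection with the real axis is the segment $[d_{i,k},b_{i,k}]$ (or $[b_{i,k},d_{i,k}]$).

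Next we locate the segment $s_k^{'}c_{i,k}$ of $U_k^{'}$ and its mirror $s_kc_{i,k}$ relative to $D$. Since $L_{i,k}\in\Gamma_{22221}$, the arc $L_{i,k}$ does not meet $s_k^{'}c_{i,k}$. By symmetry, $L_{i,k}^{'}$ does not meet $s_kc_{i,k}$ either. The point $c_{i,k}$ lies outside the segment $[d_{i,k},b_{i,k}]$, because $d_{i,k}$ lies strictly between $c_{i,k}$ and $b_{i,k}$. Hence $c_{i,k}$ is outside $D$, and the connected arcs $s_k^{'}c_{i,k}$ and $s_kc_{i,k}$ lie entirely outside $D$. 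In particular, $s_k$ and $s_k^{'}$ are outside $D$.

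We then use the real-axis root locus on the subinterval between $d_{i,k}$ and $b_{i,k}$. After $U_k$ and $L_{i,k}$ collide at $b_{i,k}$, the real segment on the $c_{i,k}$-side of $b_{i,k}$ up to $d_{i,k}$ belongs to $U_k$, which came from $c_{i,k}$. By Lemma 1.27(5), this segment is a single root locus of fixed degree. So $U_k$ must enter $D$ through $d_{i,k}$ and reach $b_{i,k}$. Therefore $d_{i,k}$ is a real point where at least three branches meet: $L_{i,k}$, $L_{i,k}^{'}$, and $U_k$ coming along the axis. By Theorem 1.39 and Theorem 1.41, a locus leaving a breakaway point cannot return to it, and no locus intersects itself. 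But $U_k$ passes through $b_{i,k}$, leaves it as $L_{i,k}^{'}$ (its continuation), and then reaches $d_{i,k}$, a point that $U_k$ had already traversed on its way from $c_{i,k}$ to $b_{i,k}$. So $U_k$ intersects itself at $d_{i,k}$, contradicting Theorem 1.39. A second route to the contradiction is available: by Theorem 1.38 the gain is strictly increasing along $U_k$, yet $d_{i,k}$ would carry two different gain values on the same locus, contradicting Lemma 1.37.

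The main obstacle is making the continuation rigorous: we must identify which of the outgoing branches at $b_{i,k}$ is the continuation of $U_k$ and which is the continuation of $L_{i,k}$, since at a breakaway point the labels of incoming and outgoing branches are not canonical. My first approach is to sidestep the labelling by using gain monotonicity (Theorem 1.38) on the union of arcs through $b_{i,k}$. The gain at $d_{i,k}$, as a point of the real-axis segment between $c_{i,k}$ and $b_{i,k}$, is smaller than $K(b_{i,k})$, because that segment is traversed toward $b_{i,k}$. But any branch leaving $b_{i,k}$ has gain larger than $K(b_{i,k})$, so at $d_{i,k}$ it would need a gain larger than $K(b_{i,k})$. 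This is a contradiction, and it is independent of how the branches are labelled.
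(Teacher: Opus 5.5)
Your argument is correct and is essentially the paper's proof: the paper also gets the contradiction from $U_k$ running $c_{i,k}\to d_{i,k}\to b_{i,k}$ along the axis and then returning from $b_{i,k}$ to $d_{i,k}$ as the mirror image of $L_{i,k}$, which contradicts Theorem 1.39. For the case where the axis segment belongs to $U_k^{'}$, the paper uses exactly your gain comparison, $K(d_{i,k})<K(b_{i,k})$ on the axis versus $K(d_{i,k})>K(b_{i,k})$ on the departing branch. Your enclosed region $D$ and the placement of $s_k,s_k^{'}$ outside it are never used and can be dropped, while your label-free gain version covers both of the paper's cases at once.
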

\begin{proof}
By Lemma 3.11, if the root locus $L_{i,k}$ intersects at point $d_{i,k}$ in the interval between points $b_{i,k}$ and $c_{i,k}$, and if $L_{i,k}$ does not intersect with $s_k^{'}c_{i,k}$ in the upper half of $\mathbb{C}^{*}$, then a segment of $L_{i,k}$ is from point $b_{i,k}$ to point $d_{i,k}$ in the upper half of $\mathbb{C}^{*}$. Similarly, a segment of $U_k$ is from point $b_{i,k}$ to point $d_{i,k}$ in the lower half of $\mathbb{C}^{*}$. Owing to the symmetry of the root loci of $2q\pi$ degree and $2q\pi+\pi$ degree of (3.1) about the real axis, these two segments are symmetric about the real axis.

On the real axis, the root locus $U_k$ first extends from point $c_{i,k}$ to point $d_{i,k}$, then from point $d_{i,k}$ to point $b_{i,k}$. At point $b_{i,k}$, $U_k$ leaves the real axis and enters the lower half of $\mathbb{C}^{*}$. In the lower half of $\mathbb{C}^{*}$, $U_k$ extends from point $b_{i,k}$ to point $d_{i,k}$, where it intersects with itself at $d_{i,k}$. By Theorem 1.39, $U_k$ cannot intersect with itself. Such a case cannot hold. Since the root locus $L_{i,k}$ is assumed to intersect point $d_{i,k}$ on the real axis, a contradiction arises. Thus, under the conditions of this lemma, the root locus $L_{i,k}$ cannot intersect any point in the interval between points $c_{i,k}$ and $b_{i,k}$.
\end{proof}

In the proofs of Lemma 3.11 and Lemma 3.12, the segment from $c_{i,k}$ to $b_{i,k}$ is part of $U_k$. However, this segment can also be part of $U_k^{'}$. Therefore, for the latter kind of case of  $U_k$ and $U_k^{'}$, an alternative proof of Lemma 3.12 is provided.

\begin{proof}
When the segment from point $c_{i,k}$ to point $b_{i,k}$ is a part of the root locus $U_k^{'}$. In this case, $L_{i,k}$ in the upper half of $\mathbb{C}^{*}$ extends from point $b_{i,k}$ to point $d_{i,k}$. At these two points, the gains satisfy $K(d_{i,k})>K(b_{i,k})$. On the real axis, $U_k^{'}$ extends from point $d_{i,k}$ to point $b_{i,k}$, where the gains satisfy $K(d_{i,k})<K(b_{i,k})$. This leads to a contradiction. Since the root locus $L_{i,k}$ is assumed to intersect at point $d_{i,k}$ on the real axis, a contradiction arises. Thus, under the conditions of Lemma 3.11, the root locus $L_{i,k}$ cannot intersect at a point in the interval between points $c_{i,k}$ and $b_{i,k}$.
\end{proof}

When point $b_{i,k}$ is $b_{1,k}$, it lies within the interval $w_{1,k}$. The root locus $L_{1,k}$ intersects a point in the interval to the right of the real zero $Q_k$, which is the interval between points $c_{1,k}$ and $b_{1,k}$. When point $b_{i,k}$ is $b_{2,k}$, it lies within the interval $w_{2,k}$. The root locus $L_{2,k}$ intersects a point in the interval to the left of the real zero $Q_k$, which is the interval between points $c_{2,k}$ and $b_{2,k}$. For both cases, the proofs of Lemmas 3.11 and 3.12 apply without requiring separate proofs. Additionally, details such as the shape of the root locus $U_k$ are not necessary in the proofs of Lemmas 3.11 and 3.12. The continuous root locus $U_k$ travels 180 degree in $\mathbb{C}^{*}$ from point $b_{i,k}$ to point $d_{i,k}$, where it intersects with itself. These aspects are sufficient for the proof.

\begin{lemma}
If the root locus $L_{i,k}\in\Gamma_{22222}$, then $L_{i,k}$ cannot intersect any point in the interval between $c_{i,k}$ and $b_{i,k}$.
\end{lemma}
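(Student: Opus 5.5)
The plan is to argue by contradiction, as in Lemmas 3.9--3.12. Suppose $L_{i,k}\in\Gamma_{22222}$ re-intersects the real axis at a point $d_{i,k}$ strictly between $c_{i,k}$ and $b_{i,k}$. By definition of $\Gamma_{22222}$, after leaving the real axis at $b_{i,k}$ and entering the upper half of $\mathbb{C}^{*}$, $L_{i,k}$ meets the segment $s_k^{'}c_{i,k}$ of $U_k^{'}$ at some point $e_k^{'}$. By Lemma 3.11, $U_k$ (or $U_k^{'}$) and $L_{i,k}$ leave the real axis together at $b_{i,k}$. First I record the degrees. All of $L_{i,k}$, $U_k$ and $U_k^{'}$ are $2q\pi$ or $2q\pi+\pi$ degree loci. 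Since they meet at $b_{i,k}$ and at $c_{i,k}$, Theorem 1.15 forces them to share the same unit complex value, so the intersection at $e_k^{'}$ is not excluded on phase grounds alone.

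The key step is a gain comparison along $U_k^{'}$. By Theorem 1.38, the gain $K$ increases strictly along every root locus away from its emitting pole. On $U_k^{'}$, traversed from $s_k^{'}$ to $e_k^{'}$ and then to $c_{i,k}$, this gives $K(e_k^{'})<K(c_{i,k})$. On the real axis, $U_k^{'}$ (or $U_k$, whichever reaches $b_{i,k}$) runs from $c_{i,k}$ through $d_{i,k}$ to $b_{i,k}$, so $K(c_{i,k})<K(d_{i,k})<K(b_{i,k})$. On $L_{i,k}$, which goes from $Q_k$ to $b_{i,k}$, then to $e_k^{'}$, then to $d_{i,k}$, monotonicity gives $K(b_{i,k})<K(e_k^{'})<K(d_{i,k})$. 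Chaining these yields $K(e_k^{'})<K(c_{i,k})<K(b_{i,k})<K(e_k^{'})$, which is a contradiction. If instead the real-axis segment from $c_{i,k}$ to $b_{i,k}$ belongs to $U_k$, I reuse the same chain using only the real-axis and $L_{i,k}$ inequalities, exactly as in the alternative proof of Lemma 3.12.

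As a backup, and to cover the possibility that the order of $e_k^{'}$ and $d_{i,k}$ along $L_{i,k}$ is reversed, I would use a closed-region argument in the style of Lemma 3.3. The arc of $L_{i,k}$ from $b_{i,k}$ to $d_{i,k}$, together with its mirror image from Lemma 1.27, bounds a region containing the segment $(d_{i,k},b_{i,k})$. The locus $U_k^{'}$ enters this region through $e_k^{'}$. It must then leave it, since by Theorem 1.14 it ends at $\infty$. Leaving requires crossing $L_{i,k}$ a second time, and that contradicts Theorem 1.41 and the no-self-intersection result, Theorem 1.39.

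The main obstacle is pinning down the orientation: which of $U_k$ and $U_k^{'}$ carries the real segment $[c_{i,k},b_{i,k}]$, and in what order $b_{i,k}$, $e_k^{'}$ and $d_{i,k}$ occur along $L_{i,k}$. The gain inequalities depend on this order, so I would first fix it using continuity (Theorem 2.21) and the upper/lower half-plane confinement of $U_k^{'}$ and $U_k$, and only then run the gain chain.
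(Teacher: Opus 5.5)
Your proof is correct. Its central chain $K(e_k^{'})<K(c_{i,k})<K(b_{i,k})<K(e_k^{'})$ is exactly the paper's argument for the sub-case in which $U_k^{'}$ carries the real segment from $c_{i,k}$ to $b_{i,k}$.

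Where you differ is the other sub-case. When that segment belongs to $U_k$, the paper does not use gains. It reflects the excursion of $L_{i,k}$ across the real axis and identifies the mirror arc as the continuation of $U_k$ into the lower half-plane from $b_{i,k}$. That arc must pass through $e_k$, the mirror image of $e_k^{'}$, which already lies on $s_kc_{i,k}\subset U_k$, and Theorem 1.39 finishes the argument.

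Your observation is that $K(s)=|p(s)|$ depends only on the point, so $K(c_{i,k})<K(b_{i,k})$ holds whichever conjugate locus runs along the axis. This lets one argument cover both sub-cases and removes the reflection and branch-labelling step. In fact $e_k^{'}$ is not needed at all. The real segment gives $K(d_{i,k})<K(b_{i,k})$, and the portion of $L_{i,k}$ after $b_{i,k}$ gives $K(b_{i,k})<K(d_{i,k})$, which already contradict each other. So the same reasoning yields Lemmas 3.12 and 3.13 simultaneously.

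For the same reason, the orientation issue you flag as the main obstacle does not arise. Your chain uses only that $e_k^{'}$ precedes $c_{i,k}$ on $U_k^{'}$ and follows $b_{i,k}$ on $L_{i,k}$; it never uses the relative order of $e_k^{'}$ and $d_{i,k}$.

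You should drop the backup closed-region argument; it does not work as stated, for two reasons:
\begin{enumerate}
\item Two distinct loci meeting twice contradicts neither Theorem 1.39 nor Theorem 1.41. The paper itself explicitly allows $L_{i,k}$ to cross $s_k^{'}c_{i,k}$ twice.
\item $s_k^{'}$ may lie inside the region bounded by $L_{i,k}$ and its mirror. In that case $U_k^{'}$ simply exits once through $e_k^{'}$, which is perfectly consistent.
\end{enumerate}
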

\begin{proof}
If the root locus $L_{i,k}$ ($1\le k\le r$) intersects at point $d_{i,k}$ in the interval between points $b_{i,k}$ and $c_{i,k}$, and if $L_{i,k}$ intersects with the root locus $s_k^{'}c_{i,k}$ in the upper half of $\mathbb{C}^{*}$, then $L_{i,k}$ has an intersection point $e_k^{'}$ with $s_k^{'}c_{i,k}$. In this case, on the real axis, the root locus $U_k$ first extends from point $c_{i,k}$ to point $d_{i,k}$, then from point $d_{i,k}$ to point $b_{i,k}$. At point $b_{i,k}$, $U_k$ leaves the real axis and enters the lower half of $\mathbb{C}^{*}$. A segment of $L_{i,k}$ is from point $b_{i,k}$ to point $d_{i,k}$ in the upper half of $\mathbb{C}^{*}$, whereas a segment of $U_k$ is from point $b_{i,k}$ to point $d_{i,k}$ in the lower half of $\mathbb{C}^{*}$. These two segments are symmetric about the real axis.

The root locus $U_k$ extends into the lower half of $\mathbb{C}^{*}$ and reaches point $e_k$, which is symmetric to point $e_k^{'}$ about the real axis. At point $e_k$, $U_k$ intersects with itself. By Theorem 1.39, such a case cannot hold. Since the root locus $L_{i,k}$ is assumed to intersect point $d_{i,k}$ in the interval between points $b_{i,k}$ and $c_{i,k}$, a contradiction arises. Thus, under the conditions of this lemma, the root locus $L_{i,k}$ cannot intersect any point in the interval between points $c_{i,k}$ and $b_{i,k}$.

In the above proof, the segment between points $c_{i,k}$ and $b_{i,k}$ is part of $U_k$. However, this segment can also be part of the root locus $U_k^{'}$. In this case, $L_{i,k}$ in the upper half of $\mathbb{C}^{*}$ extends from point $b_{i,k}$ to point $e_k^{'}$, where the gains satisfy $K(e_k^{'})>K(b_{i,k})$. $U_k^{'}$ extends from point $e_k^{'}$ in the upper half of $\mathbb{C}^{*}$ to point $c_{i,k}$, then from point $c_{i,k}$ to point $b_{i,k}$ on the real axis, where the gains satisfy $K(e_k^{'})<K(b_{i,k})$. This leads to a contradiction. Since $L_{i,k}$ is assumed to intersect point $d_{i,k}$ on the real axis, a contradiction arises. Thus, under the conditions of this lemma, the root locus $L_{i,k}$ cannot intersect any point in the interval between points $c_{i,k}$ and $b_{i,k}$.
\end{proof}

When point $b_{i,k}$ is point $b_{1,k}$, it lies within the interval $w_{1,k}$. The root locus $L_{1,k}$ intersects at a point in the interval to the right of the real zero $Q_k$, which is the interval between points $c_{1,k}$ and $b_{1,k}$. When point $b_{i,k}$ is point $b_{2,k}$, it lies within the interval $w_{2,k}$. The root locus $L_{2,k}$ intersects at a point in the interval to the left of the real zero $Q_k$, which is the interval between points $c_{2,k}$ and $b_{2,k}$. For both cases, the proof of Lemma 3.13 is applicable, and separate proofs are unnecessary. Additionally, details such as the shape of the root locus $U_k$ are not required in the proof of Lemma 3.13. This continuous root locus $U_k$, in $\mathbb{C}^{*}$, extends from point $b_{i,k}$ to point $e_k$, where it intersects with itself. These are the aspects that need to be provided in the proof of Lemma 3.13.

\begin{lemma}
If the root locus $L_{i,k}\in\Gamma_{22231}$, then $L_{i,k}$ cannot intersect any point in the interval between $c_{i,k}$ and $y_{i,k}$.
\end{lemma}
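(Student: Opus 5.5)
We argue by contradiction, following the pattern of Lemmas 3.12 and 3.13 but now with the re-intersection point on the far side of $c_{i,k}$. Suppose $L_{i,k}\in\Gamma_{22231}$ leaves the real axis at $b_{i,k}$, enters (say) the upper half of $\mathbb{C}^{*}$, and re-intersects the real axis at a point $v_{i,k}$ lying strictly between $c_{i,k}$ and $y_{i,k}$. By Lemma 1.27(1), the $2q\pi$ and $2q\pi+\pi$ degree root loci are symmetric about the real axis. Hence there is a mirror arc in the lower half of $\mathbb{C}^{*}$ from $b_{i,k}$ to $v_{i,k}$, namely the continuation of $U_k$ (or of $U_k^{'}$, in the alternative labeling) after it leaves $b_{i,k}$. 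These two arcs form a closed Jordan curve $J$ through $b_{i,k}$ and $v_{i,k}$. Continuity of the arcs, including at the breakaway points, is supplied by Theorem 2.21.

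The key topological step is to locate $s_k^{'}$ and the segment $s_k^{'}c_{i,k}$ relative to $J$. Since $c_{i,k}$ lies strictly between $b_{i,k}$ and $v_{i,k}$ on the real axis, $c_{i,k}$ is inside $J$. The segment $s_k^{'}c_{i,k}$ of $U_k^{'}$ ends at $c_{i,k}$ and, by the defining hypothesis of $\Gamma_{22231}$, does not meet the upper arc of $J$ (which is part of $L_{i,k}$). It also cannot meet the lower arc, because it lies in the upper half-plane. Therefore the whole segment, and in particular the non-real zero $s_k^{'}$, lies inside $J$; symmetrically, so does $s_k$.

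Now we derive the contradiction in one of two ways, mirroring Lemma 3.12.

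\emph{Self-intersection or gain argument.} In the labeling where the lower arc belongs to $U_k$, the locus $U_k$ runs from $s_k$ to $c_{i,k}$, then along the real axis from $c_{i,k}$ to $b_{i,k}$, then back through the lower half-plane to $v_{i,k}$, while $L_{i,k}$ also arrives at $v_{i,k}$. We then compare gains using Theorem 1.38: $K$ increases strictly along $U_k$ from $c_{i,k}$ to $b_{i,k}$ to $v_{i,k}$, and also along $L_{i,k}$ from $b_{i,k}$ to $v_{i,k}$. We would like to show that the real-axis portion between $c_{i,k}$ and $v_{i,k}$ must be traversed by some locus in a direction incompatible with this monotonicity. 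If that fails, we invoke Theorem 1.39 or Theorem 1.41: the locus leaving $c_{i,k}$ toward $y_{i,k}$ (namely $U_k^{'}$ on the real axis) would have to reach $v_{i,k}$ as well, making $v_{i,k}$ a point where a locus re-intersects itself.

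\emph{Confinement argument, as in Lemmas 3.3 and 3.6.} Inside $J$ sits the pole $s_k$ of (3.1). By Theorem 1.35, $s_k$ emits root loci of both $2q\pi$ and $2q\pi+\pi$ degree. The one whose unit complex value differs from that of $L_{i,k}$ cannot cross $J$, by Theorem 1.15, and so cannot reach the zero at infinity. This contradicts Theorem 1.14.

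The main obstacle is making the second alternative rigorous. The locus of the other degree emitted by $s_k$ could a priori terminate at a breakaway point inside $J$ rather than being genuinely trapped, so we need that every branch eventually reaches $\infty$. We would justify this with Theorem 1.14 together with Lemma 3.2, since only zeros of $p$ emit loci and $\infty$ is the only zero of (3.1). We expect this confinement argument to carry the proof, with the gain-monotonicity comparison kept as a backup. Finally, as after Lemma 3.12, the case in which the segment from $c_{i,k}$ to $b_{i,k}$ belongs to $U_k^{'}$ is handled by the same argument with the roles of the upper and lower half-planes exchanged.
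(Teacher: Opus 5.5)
Your confinement argument is correct, but it is not the route the paper takes for this lemma.

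\textbf{What the paper does.} The paper's proof assumes that $L_{i,k}$ \emph{does} meet $s_k^{'}c_{i,k}$ and simply reruns Lemma 3.13. The crossing point $e_k^{'}$ has mirror image $e_k\in s_kc_{i,k}\subset U_k$, so the lower mirror arc (part of $U_k$) forces $U_k$ to meet itself, contrary to Theorem 1.39. When the axis segment from $c_{i,k}$ to $b_{i,k}$ belongs to $U_k^{'}$ instead, the gains at $e_k^{'}$ and $b_{i,k}$ come out ordered both ways, contrary to Theorem 1.38.

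This clashes with the stated definition of $\Gamma_{22231}$: the labels $\Gamma_{22231}$ and $\Gamma_{22232}$ are interchanged between their definitions and the proofs. The non-crossing case you treat is handled in the paper by Lemmas 3.15--3.16, with essentially your argument: a closed zone bounded by $L_{i,k}$ and its mirror contains $s_k,s_k^{'}$, and their branch of the other unit complex value is trapped by Theorem 1.15 yet must reach $\infty$ by Theorem 1.14.

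\textbf{Comparison.} For the statement as literally written your route is the right one, and the two arguments are complementary halves of the same case split. The self-intersection route is local and needs only the crossing point, symmetry and Theorems 1.38--1.39. Yours needs the global fact that every branch from $s_k$ ends at $\infty$, together with the check that $c_{i,k}$, hence $s_k^{'}$ and $s_k$, lie inside $J$. Your connectedness argument settles that check more cleanly than the paper's ``passes above'' description.

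\textbf{Your first alternative.} You should drop it: it works only when a crossing point $e_k^{'}$ exists, i.e.\ in the paper's case. Without one, nothing forces a locus back onto itself. A locus reaching $v_{i,k}$ along the axis from $c_{i,k}$ is distinct from both $L_{i,k}$ and the mirror arc, so Theorems 1.39 and 1.41 give nothing. The gain orderings you list along the mirror arc and along $L_{i,k}$ also agree with each other, so Theorem 1.38 gives nothing either.

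\textbf{The step you flag.} It can be bypassed altogether. The arc of $L_{i,k}$ from $b_{i,k}$ to $v_{i,k}$, together with the segment $[b_{i,k},v_{i,k}]$, is a Jordan curve on which $\mathrm{Im}\,p=0$. By the maximum principle the harmonic function $\mathrm{Im}\,p$ then vanishes on its interior, so $p$ would be constant. This disposes of both sub-cases at once without locating $s_k$.
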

\begin{proof}
If the root locus $L_{i,k}$ ($1\le k\le r$) intersects at point $v_{i,k}$ in the interval between points $c_{i,k}$ and $y_{i,k}$, and if the root locus $L_{i,k}$ intersects with $s_k^{'}c_{i,k}$, then by repeating the proof of Lemma 3.13, we conclude that under the conditions of this lemma, the root locus $L_{i,k}$ cannot intersect at a point in the interval between points $c_{i,k}$ and $y_{i,k}$.
\end{proof}

The root loci $L_{i,k}$ and $U_k^{'}$ intersect in the upper half of $\mathbb{C}^{*}$. There are two situations in which they intersect. The first situation is that there is only one intersection point in segment $s_k^{'}c_{i,k}$ of $U_k^{'}$. This is the situation presented in the proof of Lemma 3.13.

The second situation is that there are two intersection points of the root loci $L_{i,k}$ and $s_k^{'}c_{i,k}$. The root locus $L_{i,k}$ is emitted from the real zero $Q_k$ of $p(s)$. The segment $s_k^{'}c_{i,k}$ of the root locus $U_k^{'}$ serves as the line of demarcation. One side of $s_k^{'}c_{i,k}$ contains the real zero $Q_k$, whereas the other side does not. $L_{i,k}$ lies on the side of $s_k^{'}c_{i,k}$ where $Q_k$ exists. $L_{i,k}$ passes through $s_k^{'}c_{i,k}$, extending from the side where $Q_k$ exists to the other side of $s_k^{'}c_{i,k}$. Then, from the other side of $s_k^{'}c_{i,k}$, $L_{i,k}$ passes through $s_k^{'}c_{i,k}$ again, returning to the side where $Q_k$ exists.

In the second situation, there are two intersection points of the root loci $L_{i,k}$ and $s_k^{'}c_{i,k}$. The root locus $L_{i,k}$ passes through $s_k^{'}c_{i,k}$, extending from the side where $Q_k$ exists to the other side of $s_k^{'}c_{i,k}$, producing the first intersection point. From the other side of $s_k^{'}c_{i,k}$, $L_{i,k}$ passes through $s_k^{'}c_{i,k}$ again, returning to the side where $Q_k$ exists, producing the second intersection point.

The proof of Lemma 3.13 addresses the case of one intersection point. For the case of two intersection points, only the first intersection point is needed. By using the first intersection point and repeating the proof of Lemma 3.13, the case of two intersection points can be proven. Therefore, in the proofs of Lemma 3.13 and Lemma 3.14, new results and proofs are unnecessary. Only Lemma 3.13 and Lemma 3.14 are provided. Additionally, in these lemmas, it is not necessary to specify whether there is one or two intersection points. The lemmas hold for both cases.

If there are cases where the root locus $L_{i,k}$ passes through $s_k^{'}c_{i,k}$ multiple times, the analysis is the same as above. These cases are also included in Lemma 3.13 and Lemma 3.14, and no additional results or proofs are needed.

\begin{lemma}
If root locus $L_{i,k}\in\Gamma_{22232}$, then the root loci $L_{i,k}$ and $U_k$ form a closed zone in $\mathbb{C}^{*}$ between points $b_{i,k}$ and $v_{i,k}$.
\end{lemma}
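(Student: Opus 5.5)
The plan is to build the closed zone as a Jordan curve made of three arcs, and to show these arcs are disjoint except at their endpoints.

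\textbf{Setup.} The hypothesis $L_{i,k}\in\Gamma_{22232}$ fixes the configuration, as in the proof of Lemma 3.11.
\begin{enumerate}
\item $U_k$ and $U_k^{'}$ meet at $c_{i,k}$.
\item $U_k$ then runs along the real axis from $c_{i,k}$ to $b_{i,k}$.
\item At $b_{i,k}$, $U_k$ leaves the axis into the lower half of $\mathbb{C}^{*}$.
\item $L_{i,k}$ leaves $b_{i,k}$ into the upper half of $\mathbb{C}^{*}$ and, by assumption, re-intersects the real axis at $v_{i,k}$, which lies between $c_{i,k}$ and $y_{i,k}$.
\end{enumerate}

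\textbf{Three arcs.} First, let $\Lambda$ be the arc of $L_{i,k}$ from $b_{i,k}$ to $v_{i,k}$, lying in the upper half of $\mathbb{C}^{*}$. By Lemma 1.27(1), the $2q\pi$ and $2q\pi+\pi$ degree root loci are symmetric about the real axis. So the reflection $\overline{\Lambda}$ is also a root-locus arc of the same degree joining $b_{i,k}$ to $v_{i,k}$, now in the lower half-plane. Since $U_k$ leaves $b_{i,k}$ into the lower half-plane with the same departure direction as the reflection of $L_{i,k}$, I will identify $\overline{\Lambda}$ with the initial arc of $U_k$ beyond $b_{i,k}$. To do this I will invoke Theorem 1.28(2), which says root loci do not overlap, together with the continuity of Theorem 2.21. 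Second, $\Lambda$ passes through $s_k^{'}c_{i,k}$, because $L_{i,k}\in\Gamma_{22232}$. Third, I take the real segment from $b_{i,k}$ to $v_{i,k}$.

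\textbf{Disjointness.} By Theorem 2.21 each arc is a continuous curve. I will check that the arcs meet only at $b_{i,k}$ and $v_{i,k}$.
\begin{enumerate}
\item $\Lambda$ and $\overline{\Lambda}$ lie in opposite open half-planes, except at their endpoints.
\item Neither arc meets itself, by Theorem 1.39.
\item Neither returns to $b_{i,k}$, by Theorem 1.41.
\end{enumerate}
Hence $\Lambda\cup\overline{\Lambda}$ is a Jordan curve through $b_{i,k}$ and $v_{i,k}$ that is symmetric about the real axis. It bounds a closed zone in $\mathbb{C}^{*}$ between $b_{i,k}$ and $v_{i,k}$, formed by $L_{i,k}$ and $U_k$, which is the claim. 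As in the remarks after Lemmas 3.3 and 3.9, the shape of the arcs is irrelevant, and the cases $i=1$ and $i=2$ are handled identically.

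\textbf{Main obstacle.} The hardest step is justifying that the lower reflected arc really is part of $U_k$, rather than of some other root locus leaving $b_{i,k}$. I would first try to argue via Theorem 1.40 that $b_{i,k}$ is met by exactly the two loci $U_k$ and $L_{i,k}$. At a simple breakaway point on the real axis, the two outgoing branches go one up and one down, conjugate to each other. Since $L_{i,k}$ took the upper branch, $U_k$ must take the lower one. If $b_{i,k}$ has higher multiplicity, I would instead use the monotone ordering of degrees around a point (the analogue of Theorems 1.33 and 1.35) to pin down which branch is $U_k$'s.
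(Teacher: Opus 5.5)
Your core mechanism, reflecting the upper arc $\Lambda$ of $L_{i,k}$ from $b_{i,k}$ to $v_{i,k}$ by Lemma 1.27(1) and identifying $\overline{\Lambda}$ with the part of $U_k$ after $b_{i,k}$, is the paper's. But you run it under the opposite hypothesis, and under yours it breaks. You take $\Lambda$ to meet $s_k^{'}c_{i,k}$. That matches the written definition of $\Gamma_{22232}$. However, the paper's labels $\Gamma_{22231}$ and $\Gamma_{22232}$ are swapped relative to how Lemmas 3.14--3.16 are proved. The proof of this lemma, and of Lemma 3.16 which depends on it, assumes $L_{i,k}$ does \emph{not} meet $s_k^{'}c_{i,k}$. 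The crossing case is the one ruled out in Lemma 3.14 by the argument of Lemma 3.13. Indeed, if $\Lambda$ meets $s_k^{'}c_{i,k}$ at $e_k^{'}$, then $\overline{\Lambda}$ meets $s_kc_{i,k}$ at $e_k$. Since you identify $\overline{\Lambda}$ with $U_k$ beyond $b_{i,k}$, $U_k$ passes through $e_k$ once before reaching $c_{i,k}$ and again after leaving $b_{i,k}$, which Theorem 1.39 forbids. So the theorem you cite to certify disjointness actually refutes your setup, and your argument proves the lemma only vacuously. What is missing is the one substantive step of the paper's proof. Because $b_{i,k}$ and $v_{i,k}$ lie on opposite sides of $c_{i,k}$, and the arc $s_k^{'}c_{i,k}$ issues from $c_{i,k}$ without meeting $L_{i,k}$, the arc of $L_{i,k}$ must pass above $U_k^{'}$ and $s_k^{'}$. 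Hence the symmetric zone encloses $s_k$ and $s_k^{'}$. That enclosure, not merely the existence of a closed curve, is what Lemma 3.16 needs.

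Two smaller points. First, your ``three arcs'' are not a Jordan curve. The second item is a condition, not an arc. The real segment from $b_{i,k}$ to $v_{i,k}$, together with $\Lambda$ and $\overline{\Lambda}$, would give three arcs sharing both endpoints. You in fact only use $\Lambda\cup\overline{\Lambda}$, and the open real segment (which contains $c_{i,k}$) lies inside the zone, so drop it. Second, you rightly single out the identification as the weak step; the paper just asserts it from symmetry. Theorem 1.40 only relates the number of loci meeting at a point to its multiplicity as a zero of $p^{'}$. Nothing in the hypothesis fixes that multiplicity, so the claim that only $U_k$ and $L_{i,k}$ meet at $b_{i,k}$ must come from the setup of $\Gamma_{222}$. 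Also, matching the lower branch at $b_{i,k}$ pins down $U_k$ only up to the next breakaway point on $\overline{\Lambda}$. There the local picture no longer tells you which outgoing branch continues $U_k$. Under your reading $e_k$ is such a point, which is exactly where the contradiction above arises.
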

\begin{proof}
If the root locus $L_{i,k}$ ($1\le k\le r$) intersects at point $v_{i,k}$ in the interval between points $c_{i,k}$ and $y_{i,k}$, and if the root locus $L_{i,k}$ does not intersect with $s_k^{'}c_{i,k}$, then $L_{i,k}$ enters the upper half of $\mathbb{C}^{*}$ from $b_{i,k}$ and extends from point $b_{i,k}$ in the upper half of $\mathbb{C}^{*}$ to $v_{i,k}$.

Points $v_{i,k}$ and $b_{i,k}$ lie on opposite sides of point $c_{i,k}$ on the real axis. The root locus $U_k^{'}$, emitted from the non-real zero $s_k^{'}$, extends to point $c_{i,k}$. The root locus $L_{i,k}$ intersects at point $v_{i,k}$ but does not intersect with $s_k^{'}c_{i,k}$. Therefore, $L_{i,k}$ must pass above $U_k^{'}$ and the non-real zero $s_k^{'}$ to reach the real axis. A segment of $L_{i,k}$ is from point $b_{i,k}$ to point $v_{i,k}$ in the upper half of $\mathbb{C}^{*}$, whereas another segment of $U_k$ is from point $b_{i,k}$ to point $v_{i,k}$ in the lower half of $\mathbb{C}^{*}$. These two segments are symmetric about the real axis.

The root locus $U_k$ enters the lower half of $\mathbb{C}^{*}$ from point $b_{i,k}$, and extends from $b_{i,k}$ in the lower half of $\mathbb{C}^{*}$ to point $v_{i,k}$. Therefore, the root loci $L_{i,k}$ and $U_k$ form a closed zone in $\mathbb{C}^{*}$ between points $b_{i,k}$ and $v_{i,k}$.
\end{proof}

\begin{lemma}
If root locus $L_{i,k}\in\Gamma_{22232}$, then $L_{i,k}$ cannot intersect any point in the interval between $c_{i,k}$ and $y_{i,k}$.
\end{lemma}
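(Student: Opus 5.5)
The plan is to argue by contradiction, following the pattern of Lemma 3.3 and Lemma 3.6. We use the closed zone supplied by Lemma 3.15 and show that it must trap a root locus, contrary to Theorem 1.14.

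Assume that $L_{i,k}\in\Gamma_{22232}$ meets a point $v_{i,k}$ between $c_{i,k}$ and $y_{i,k}$. By Lemma 3.15, the arc of $L_{i,k}$ from $b_{i,k}$ to $v_{i,k}$ in the upper half of $\mathbb{C}^{*}$ and the symmetric arc of $U_k$ in the lower half bound a closed region $R$. The real segment from $b_{i,k}$ to $v_{i,k}$ lies inside $R$ and contains $c_{i,k}$. As shown in the proof of Lemma 3.15, the arc of $L_{i,k}$ passes above $s_k^{'}$ without meeting the segment $s_k^{'}c_{i,k}$. By symmetry (Lemma 1.27), $U_k$ passes below $s_k$. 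Hence both non-real zeros $s_k$ and $s_k^{'}$, together with the segments $s_k^{'}c_{i,k}$ and $s_kc_{i,k}$, lie in the interior of $R$.

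Next we use the fact that $s_k$ is a pole of (3.1). By Theorem 1.35 it emits root loci of both $2q\pi$ and $2q\pi+\pi$ degree, and by Theorem 1.33 these alternate in degree around $s_k$. Let $\alpha$ be the common degree class of $L_{i,k}$ and $U_k$, which are $2q\pi$ or $2q\pi+\pi$. Pick a branch emitted from $s_k$ whose degree class is the other one, so that its unit complex value ($+1$ or $-1$) differs from that of the boundary of $R$. By Theorem 1.15, this branch cannot cross $L_{i,k}$ or $U_k$. It cannot leave $R$ through the real segment either, because the real segment is interior to $R$. Thus the branch stays in the bounded region $R$.

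By Lemma 3.1 and Theorem 1.14, every root locus of (3.1) must end at the point at infinity, since $p(s)$ has no finite zeros of $1/p(s)$. The trapped branch therefore gives a contradiction, and $L_{i,k}$ cannot intersect the interval between $c_{i,k}$ and $y_{i,k}$.

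The main obstacle is the first step: showing rigorously that $s_k$ really lies strictly inside $R$, rather than on its boundary or outside it. This relies on the claim in Lemma 3.15 that $L_{i,k}$ goes around $s_k^{'}$. I would first try a Jordan curve argument. The segment $s_k^{'}c_{i,k}$ connects $s_k^{'}$ to a point of the real segment interior to $R$, and it does not meet $L_{i,k}$. So $s_k^{'}$ lies in the same component as $c_{i,k}$, which is the interior of $R$. The case where the boundary arcs belong to $U_k^{'}$ instead of $U_k$ is handled symmetrically.
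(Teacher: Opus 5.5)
Your proposal is correct and is essentially the paper's proof: the closed zone of Lemma 3.15 encloses $s_k$ and $s_k^{'}$, a branch of the opposite degree class emitted there cannot cross $L_{i,k}$ or $U_k$ by Theorem 1.15, and being confined to a bounded region contradicts Theorem 1.14 (the paper traps two such branches, one from each zero, where you use one). Your Jordan-curve argument that $s_k^{'}$, and by symmetry $s_k$, lies in the interior makes rigorous what the paper only asserts; like the paper's proofs of Lemmas 3.15 and 3.16, you work in the case where $L_{i,k}$ avoids $s_k^{'}c_{i,k}$, which is the reverse of how $\Gamma_{22232}$ is literally defined in the text.
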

\begin{proof}
By Lemma 3.15, in this case, the root loci $L_{i,k}$ and $U_k$ form a closed zone in $\mathbb{C}^{*}$ between points $b_{i,k}$ and $v_{i,k}$. The two non-real zeros $s_k$ and $s_k^{'}$ are connected to point $c_{i,k}$. The root locus $L_{i,k}$ does not intersect with $s_k^{'}c_{i,k}$ and extends from one side of $s_k^{'}c_{i,k}$ to the other side. Both $L_{i,k}$ and $U_k$ pass above the two non-real zeros ($s_k$ and $s_k^{'}$ ) and point $c_{i,k}$. Therefore, the closed zone contains at least the pair of non-real zeros $s_k$ and $s_k^{'}$.

The root loci $L_{i,k}$ and $U_k$ correspond to one of the two degrees $2q\pi$ or $2q\pi+\pi$. However, the pair of non-real zeros $s_k$ and $s_k^{'}$ must emit at least two root loci of $2q\pi$ and $2q\pi+\pi$ degrees. Therefore, for the root loci whose unit complex values differ from that of $L_{i,k}$, they cannot pass through $L_{i,k}$ and $U_k$. At least two of the root loci of $2q\pi$ or $2q\pi+\pi$ degrees emitted from $s_k$ and $s_k^{'}$ cannot leave this closed region.

By Theorem 1.14, the root loci of all degrees of (3.1) must eventually reach the point at infinity in $\mathbb{C}\cup\{\infty\}$. Thus, a contradiction arises under the assumption that the root locus $L_{i,k}$ intersects at point $v_{i,k}$ in the interval from $c_{i,k}$ to $y_{i,k}$. Therefore, the root locus $L_{i,k}$ cannot intersect at a point in the interval between points $c_{i,k}$ and $y_{i,k}$.
\end{proof}

When the point $b_{i,k}$ coincides with $b_{1,k}$, the point $b_{1,k}$ lies within the interval $w_{1,k}$. The root locus $L_{1,k}$ intersects at a point in the interval to the right of the real zero $Q_k$, specifically in the interval between points $c_{1,k}$ and $y_{1,k}$. When $b_{i,k}$ coincides with $b_{2,k}$, the point $b_{2,k}$ lies within the interval $w_{2,k}$. The root locus $L_{2,k}$ intersects at a point in the interval to the left of the real zero $Q_k$, specifically in the interval between $c_{2,k}$ and $y_{2,k}$. In both cases, the proofs of Lemma 3.15 and Lemma 3.16 are directly applicable, and no separate proofs are needed. Additionally, details such as the shape of the root locus $U_{k}$ are not necessary in the proofs of Lemma 3.15 and Lemma 3.16. The two continuous root loci $L_{i,k}$ and $U_{k}$ connect two points within the interval $w_{i,k}$ and form a closed region. These aspects must be included in the proofs of Lemma 3.15 and Lemma 3.16.

In the proofs of Lemma 3.15 and Lemma 3.16, the segment on the real axis between $c_{i,k}$ and $b_{i,k}$ belongs to the root locus $U_{k}$. However, this segment can also belong to the root locus $U_k^{'}$. The proof of this scenario is identical to the proofs of Lemma 3.12 and Lemma 3.13. By repeating these proofs, we obtain Lemma 3.16.

From Lemma 3.10 to Lemma 3.16 and their proofs, the two root loci $U_k^{'}$ and $U_{k}$ intersect at $c_{i,k}$, whereas the root loci $L_{i,k}$ and $U_{k}$ intersect at $b_{i,k}$. In general, $c_{i,k}$ and $b_{i,k}$ are distinct points. However, there is a special case where the root loci $L_{i,k}$, $U_k^{'}$, and $U_{k}$ intersect at a single point, meaning that $c_{i,k}=b_{i,k}$. In this special case, the interval between $c_{i,k}$ and $b_{i,k}$ reduces to a single point.

This special case is not addressed in Lemma 3.15, so the above results cannot be written as lemmas. However, this special case does not affect Lemma 3.10, Lemma 3.12, Lemma 3.13, Lemma 3.14, or Lemma 3.16; thus, no further explanation is needed.

Given $\Gamma_{2222}=\Gamma_{22221}\cup\Gamma_{22222}$, Lemmas 3.12 and 3.13 imply:

\begin{lemma}
If root locus $L_{i,k}\in\Gamma_{2222}$, then $L_{i,k}$ cannot intersect any point in the interval between $c_{i,k}$ and $b_{i,k}$.
\end{lemma}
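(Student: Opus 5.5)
The statement follows directly from the partition $\Gamma_{2222}=\Gamma_{22221}\cup\Gamma_{22222}$ with $\Gamma_{22221}\cap\Gamma_{22222}=\phi$, introduced just before Lemma 3.11. Take an arbitrary root locus $L_{i,k}\in\Gamma_{2222}$. It departs the real axis at $b_{i,k}$, enters $\mathbb{C}^{*}$, and is assumed to re-intersect at some point $d_{i,k}$ strictly between $b_{i,k}$ and $c_{i,k}$. The two subsets are defined by a dichotomy: either $L_{i,k}$ meets the segment $s_k^{'}c_{i,k}$ of $U_k^{'}$ in the upper half of $\mathbb{C}^{*}$, or it does not. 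Since this dichotomy is exhaustive, $L_{i,k}$ lies in exactly one of $\Gamma_{22221}$, $\Gamma_{22222}$.

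The proof then splits into these two cases.

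\textbf{Case $L_{i,k}\in\Gamma_{22221}$.} Lemma 3.12 applies, in either of its versions, according to whether the real segment from $c_{i,k}$ to $b_{i,k}$ belongs to $U_k$ or to $U_k^{'}$.
\begin{itemize}
\item If it belongs to $U_k$, symmetry (Lemma 1.27) forces $U_k$ to return to $d_{i,k}$ through the lower half-plane. That is a self-intersection, excluded by Theorem 1.39.
\item If it belongs to $U_k^{'}$, the two orderings of the gain along the two root loci conflict with the strict monotonicity of $K$ (Theorem 1.38).
\end{itemize}

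\textbf{Case $L_{i,k}\in\Gamma_{22222}$.} Lemma 3.13 applies. Here the intersection point $e_k^{'}$ of $L_{i,k}$ with $s_k^{'}c_{i,k}$ has a mirror image $e_k$. At $e_k$ either $U_k$ meets itself, contradicting Theorem 1.39, or the gains conflict, contradicting Theorem 1.38. The remark after Lemma 3.14 extends this to multiple crossings of $s_k^{'}c_{i,k}$: it suffices to use the first crossing.

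In both cases the assumed re-intersection point $d_{i,k}$ cannot exist, and the conclusion holds for every element of $\Gamma_{2222}$.

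The only point needing care is exhaustiveness: each $L_{i,k}\in\Gamma_{2222}$ must fall into one of the two subsets. This includes the degenerate situation $c_{i,k}=b_{i,k}$, where the interval collapses to a point and the statement is vacuous. I will note this case explicitly, as the paper does after Lemma 3.16, and otherwise simply combine Lemmas 3.12 and 3.13.
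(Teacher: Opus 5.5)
Your proposal matches the paper's proof. The paper derives this lemma in one line from the exhaustive split $\Gamma_{2222}=\Gamma_{22221}\cup\Gamma_{22222}$, applying Lemma 3.12 to the first subset and Lemma 3.13 to the second, exactly as you do; your recap of their internal arguments (self-intersection via Theorem 1.39, or the gain-ordering conflict via Theorem 1.38) and your note on the degenerate case $c_{i,k}=b_{i,k}$ add detail but no different route.
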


Given $\Gamma_{2223}=\Gamma_{22231}\cup\Gamma_{22232}$, Lemmas 3.14 and 3.16 imply:

\begin{lemma}
If root locus $L_{i,k}\in\Gamma_{2223}$, then $L_{i,k}$ cannot intersect any point in the interval between $c_{i,k}$ and $y_{i,k}$.
\end{lemma}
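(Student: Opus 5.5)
The statement is Lemma 3.18: if $L_{i,k}\in\Gamma_{2223}$, then $L_{i,k}$ cannot intersect any point in the interval between $c_{i,k}$ and $y_{i,k}$. My plan is to obtain it as a direct consequence of the partition $\Gamma_{2223}=\Gamma_{22231}\cup\Gamma_{22232}$, exactly as Lemma 3.17 was obtained from Lemmas 3.12 and 3.13. Take an arbitrary $L_{i,k}\in\Gamma_{2223}$. By the definition of these two subsets, either $L_{i,k}$ does not meet the segment $s_k^{'}c_{i,k}$ of $U_k^{'}$, or it does. Since $\Gamma_{22231}\cap\Gamma_{22232}=\phi$, exactly one of these two cases holds. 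Each case is settled by an earlier lemma, so the whole proof is a two-case split.

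In the first case, $L_{i,k}$ lies in the subset covered by Lemma 3.14. That lemma rests on the self-intersection argument of Lemma 3.13: the reflection of $L_{i,k}$ forces $U_k$ to return to a point it has already passed, which Theorem 1.39 forbids. Where the segment from $c_{i,k}$ to $b_{i,k}$ belongs to $U_k^{'}$ rather than $U_k$, the gain-monotonicity argument of Theorem 1.38 is used instead. In the second case, $L_{i,k}$ lies in the subset covered by Lemmas 3.15 and 3.16. Lemma 3.15 shows that $L_{i,k}$ and $U_k$ bound a closed zone containing the pair $s_k,s_k^{'}$. Lemma 3.16 then derives a contradiction: by Theorem 1.15, root loci emitted from $s_k,s_k^{'}$ whose unit complex value differs from that of $L_{i,k}$ cannot cross the boundary. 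So they could never reach infinity, contrary to Theorem 1.14.

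The main obstacle is the bookkeeping between the labels and the lemmas. The textual definitions of $\Gamma_{22231}$ and $\Gamma_{22232}$ ("does not intersect" versus "intersects" $s_k^{'}c_{i,k}$) appear to be attached in the reverse order to the hypotheses actually used in the proofs of Lemmas 3.14 and 3.16. I would therefore state the case split by the geometric condition itself (whether $L_{i,k}$ meets $s_k^{'}c_{i,k}$), not by subset name, and cite whichever lemma's proof covers that condition.

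I would also add a remark on the degenerate configuration $c_{i,k}=b_{i,k}$. There the closed-zone argument of Lemma 3.15 still encloses $s_k,s_k^{'}$, and the self-intersection argument does not need the sub-interval from $c_{i,k}$ to $b_{i,k}$, so the conclusion holds in this case as well.
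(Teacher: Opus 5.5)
Your proposal is correct and is essentially the paper's own argument: Lemma 3.18 is stated as an immediate consequence of the partition $\Gamma_{2223}=\Gamma_{22231}\cup\Gamma_{22232}$ together with Lemmas 3.14 and 3.16, the latter resting on the closed zone of Lemma 3.15. You are also right that the defining conditions of $\Gamma_{22231}$ and $\Gamma_{22232}$ are the reverse of the hypotheses actually used in the proofs of Lemmas 3.14--3.16, and splitting on the geometric condition (whether $L_{i,k}$ meets $s_k^{'}c_{i,k}$) is the right repair, since the union covers both cases either way.
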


Given $\Gamma_{222}=\Gamma_{2221}\cup\Gamma_{2222}\cup\Gamma_{2223}$, Lemmas 3.10, 3.17 and 3.18 imply:

\begin{lemma}
If root locus $L_{i,k}\in\Gamma_{222}$ departs the real axis at point $b_{i,k}$($b_{i,k} \neq Q_k$) and enters $\mathbb{C}^{*}$, then $L_{i,k}$ cannot intersect any point in $w_{i,k}$.
\end{lemma}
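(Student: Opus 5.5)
The statement is the combination step announced just before it. Given $\Gamma_{222}=\Gamma_{2221}\cup\Gamma_{2222}\cup\Gamma_{2223}$, Lemmas 3.10, 3.17 and 3.18 should imply Lemma 3.19. The plan is therefore a case split, not a new geometric argument.

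\textbf{Decomposition of $w_{i,k}$.} Fix $L_{i,k}\in\Gamma_{222}$. It departs the real axis at $b_{i,k}\neq Q_k$, where it meets $U_k$ (or $U_k^{'}$). The pair $U_k,U_k^{'}$ meets the real axis at $c_{i,k}$ in $w_{i,k}$. The two real breakaway points $b_{i,k}$ and $c_{i,k}$ split $w_{i,k}$ into three subintervals: from $Q_k$ to $b_{i,k}$, from $b_{i,k}$ to $c_{i,k}$, and from $c_{i,k}$ to $y_{i,k}$. The endpoint $y_{i,k}$ is an adjacent real zero, or $\pm\infty$ in the unbounded case.

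\textbf{Case analysis.} Suppose, for contradiction, that $L_{i,k}$ meets some point $x\in w_{i,k}$ after leaving at $b_{i,k}$.
\begin{enumerate}
\item If $x$ lies in the first subinterval, then $L_{i,k}\in\Gamma_{2221}$. Lemma 3.10 excludes this, via the self-intersection prohibition of Theorem 1.39.
\item If $x$ lies in the second subinterval, then $L_{i,k}\in\Gamma_{2222}$. Lemma 3.17 excludes this whether or not $L_{i,k}$ crosses the segment $s_k^{'}c_{i,k}$.
\item If $x$ lies in the third subinterval, then $L_{i,k}\in\Gamma_{2223}$. Lemma 3.18 excludes this.
\end{enumerate}
Since the three subsets exhaust $\Gamma_{222}$, every case gives a contradiction.

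\textbf{Points the case split might miss.} First, $x$ could coincide with $b_{i,k}$ or $c_{i,k}$ themselves.
\begin{itemize}
\item At $b_{i,k}$: returning there means $L_{i,k}$ re-intersects the breakaway point it left, which Theorem 1.41 forbids.
\item At $c_{i,k}$: $L_{i,k}$ would then be a further locus through the breakaway point of $U_k$ and $U_k^{'}$. I would apply the gain monotonicity of Theorem 1.38 along $U_k$, comparing $K(c_{i,k})<K(b_{i,k})$ with the increase of $K$ along $L_{i,k}$ beyond $b_{i,k}$.
\end{itemize}
Second, there is the degenerate configuration $c_{i,k}=b_{i,k}$. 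Here the middle subinterval is empty and only Lemmas 3.10 and 3.18 are needed; the remark after Lemma 3.16 states that this case does not affect them. Third, the orientation ($i=1$ or $i=2$, and upper or lower half-plane) is handled by the symmetry remarks following each lemma.

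The main obstacle is the endpoint case at $c_{i,k}$, since none of the cited lemmas covers it explicitly. I would treat it first by the gain comparison above. If that fails, I would absorb the point into the closed subinterval in Lemma 3.17 or 3.18.
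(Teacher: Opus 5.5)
Your proposal is correct and follows the paper's argument: there, Lemma 3.19 is obtained simply by noting $\Gamma_{222}=\Gamma_{2221}\cup\Gamma_{2222}\cup\Gamma_{2223}$ and invoking Lemmas 3.10, 3.17 and 3.18 on the three pieces. Your extra treatment of the endpoints $b_{i,k}$, $c_{i,k}$ and of the degenerate case $c_{i,k}=b_{i,k}$ goes beyond what the paper writes, and your gain comparison $K(c_{i,k})<K(b_{i,k})<K(x)$, for points $x$ of $L_{i,k}$ beyond $b_{i,k}$, does settle the $c_{i,k}$ endpoint via Theorem 1.38 (the same device as the paper's alternative proof of Lemma 3.12), so the fallback is not needed.
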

The point $b_{1,k}$ is the intersection point of root loci $L_{1,k}$ and $L_{2,k-1}$ in $w_{1,k}$. The point $b_{2,k}$ is the intersection point of root loci $L_{2,k}$ and  $L_{1,k+1}$ in $w_{2,k}$.

When the root loci of non-real zeros of $p(s)$ don't intersect in the intervals between two adjacent real poles of (3.1). In the intervals between two adjacent real poles of (3.1), there exist the root loci ($L_{1,k}$ and $L_{2,k-1}$) or ($L_{2,k}$ and  $L_{1,k+1}$) emitted from real poles of (3.1). The two root loci in these interval intersect and generate the real breakaway points  ($b_{1,k}$, or, $b_{2,k}$). From the real breakaway points  ($b_{1,k}$, or, $b_{2,k}$), the root loci leave the real axis and enter  $\mathbb{C}^{*}$. If $L_{1,k}$ or $L_{2,k}$ re-intersects in $w_{i,k}$. That is, re-intersects in the sub-interval between $b_{1,k}$ and $Q_k$; re-intersects in the sub-interval between $b_{1,k}$ and $Q_{k-1}$; re-intersects in the sub-interval between $b_{2,k}$ and $Q_k$; re-intersects in the sub-interval between $b_{2,k}$ and $Q_{k+1}$.

Lemma 3.9 has proved $L_{1,k}$ or $L_{2,k}$ doesn't intersect in two sub-intervals. In the proof of Lemma 3.9,   $L_{i,k}$ cannot intersect itself. We use this result to complete our proof. Repeated the proof of Lemma 3.9,  $L_{1,k+1}$ or $L_{2,k-1}$ cannot intersect itself, by utilizing this result, we can prove $L_{1,k}$ or $L_{2,k}$ doesn't intersect in another two sub-intervals. Therefore, we prove $L_{1,k}$ or $L_{2,k}$ doesn't intersect in four sub-intervals. Hence, we have: Suppose root locus $L_{i,k}\in\Gamma_{221}$ departs the real axis at $b_{i,k}$ ($b_{i,k} \neq Q_k$) and enters $\mathbb{C}^{*}$. Then $L_{i,k}$ cannot intersect any point in  $w_{i,k}$.

Given $\Gamma_{22}=\Gamma_{221}\cup\Gamma_{222}$, Lemmas 3.19 and this result imply:

\begin{lemma}
When the root locus $L_{i,k}\in\Gamma_{22}$ leaves the real axis at the point $b_{i,k}$($b_{i,k} \neq Q_k$) in the interval $w_{i,k}$ and enters $\mathbb{C}^{*}$. Then, the root locus $L_{i,k}$ cannot intersect at any point in $w_{i,k}$.
\end{lemma}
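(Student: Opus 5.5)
The statement to prove is Lemma 3.20: a root locus $L_{i,k}\in\Gamma_{22}$ that leaves the real axis at $b_{i,k}\neq Q_k$ cannot re-intersect $w_{i,k}$. The plan is to exploit the partition $\Gamma_{22}=\Gamma_{221}\cup\Gamma_{222}$ with $\Gamma_{221}\cap\Gamma_{222}=\phi$. It then suffices to prove the conclusion separately on each piece.

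For $L_{i,k}\in\Gamma_{222}$, i.e.\ when root loci from non-real zeros do meet $w_{i,k}$ (including the case of a single real zero), the conclusion is exactly Lemma 3.19. Lemma 3.19 was obtained from Lemmas 3.10, 3.17 and 3.18, which cover the three sub-intervals determined by $b_{i,k}$ and $c_{i,k}$. This case is therefore disposed of by citation.

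For $L_{i,k}\in\Gamma_{221}$, no root locus from a non-real zero meets $w_{i,k}$. Then $w_{i,k}$ carries only the two loci from adjacent real zeros, say $L_{1,k}$ and $L_{2,k-1}$ (or $L_{2,k}$ and $L_{1,k+1}$), which meet at $b_{i,k}$ and leave. The point $b_{i,k}$ splits $w_{i,k}$ into two sub-intervals.
\begin{itemize}
\item \textbf{Sub-interval adjacent to $Q_k$.} Lemma 3.9 already excludes a re-intersection there: such a point would force $L_{i,k}$ to meet itself, contradicting Theorem 1.39.
\item \textbf{Sub-interval adjacent to the other endpoint $y_{i,k}$.} Suppose $L_{i,k}$ returns to a point $t$ there. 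By Lemma 1.27(1) its mirror image $L_{i,k}^{'}$, which is the partner locus $L_{2,k-1}$ or $L_{1,k+1}$, also reaches $t$. The real segment from $b_{i,k}$ to $t$ belongs to that partner. So the partner would travel along the axis through $t$ to $b_{i,k}$ and then, after departing, come back to $t$, i.e.\ it meets itself. This again contradicts Theorem 1.39.
\end{itemize}
As a backup for the second bullet, I would use the monotone-gain argument of Theorem 1.38, as in the alternative proof of Lemma 3.12. The gain $K$ increases along $L_{i,k}$ from $b_{i,k}$ to $t$, but the partner traverses the real segment from $t$ to $b_{i,k}$ with $K$ increasing, giving $K(t)>K(b_{i,k})$ and $K(t)<K(b_{i,k})$ at once.

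The main obstacle is justifying that the returning symmetric branch really is the partner locus, and that the real segment between $b_{i,k}$ and $t$ is traversed by it in the orientation claimed. For this I would rely on the symmetry in Lemma 1.27, the non-overlap of loci in Theorem 1.28, and Theorem 1.15. Together these ensure no other locus of the same unit complex value occupies $w_{i,k}$ in case $\Gamma_{221}$. Once both pieces are settled, combining them over the disjoint union gives the lemma.
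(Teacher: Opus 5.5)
Your proposal is correct and matches the paper's argument. It splits $\Gamma_{22}=\Gamma_{221}\cup\Gamma_{222}$, cites Lemma 3.19 for $\Gamma_{222}$, and for $\Gamma_{221}$ uses Lemma 3.9 on the sub-interval next to $Q_k$ and then repeats that self-intersection argument (Theorem 1.39) for the partner locus $L_{2,k-1}$ or $L_{1,k+1}$ on the sub-interval next to the other endpoint, exactly as the paper does. The paper does not use your gain-monotonicity backup at this point, though it is a harmless and arguably cleaner way to close the second sub-interval, since it avoids deciding which departing branch at $b_{i,k}$ belongs to which incoming locus.
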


Given $\Gamma_{2}=\Gamma_{21}\cup\Gamma_{22}$, Lemmas 3.3 and 3.20 imply:

\begin{lemma}
If root locus $L_{i,k}\in\Gamma_{2}$ departs the real axis at point $b_{i,k}$($b_{i,k} \neq Q_k$) in interval $w_{i,k}$ and enters $\mathbb{C}^{*}$, then $L_{i,k}$ cannot re-intersect  on the real axis.
\end{lemma}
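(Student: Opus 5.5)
Lemma 3.21 follows from the partition $\Gamma_2=\Gamma_{21}\cup\Gamma_{22}$ with $\Gamma_{21}\cap\Gamma_{22}=\phi$. Take $L_{i,k}\in\Gamma_2$, emitted from the real zero $Q_k$, running along the real axis inside $w_{i,k}$, and leaving it at a real breakaway point $b_{i,k}\neq Q_k$ into $\mathbb{C}^{*}$. Suppose, for contradiction, that $L_{i,k}$ meets the real axis again at some point $t$ (a real breakaway point, as in the definition of $\Gamma$). The $r$ real zeros of $p(s)$ divide the real axis into the intervals $w_{j2}$, $j2=1,\cdots,r+1$. The point $t$ cannot be a real zero of $p(s)$: those are poles of (3.1), where $K=0$, while the gain along $L_{i,k}$ is strictly increasing away from $Q_k$ by Theorem 1.38. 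Hence $t$ lies in the interior of exactly one interval, and there are two cases: either $t\in w_{j3}$ with $w_{j3}\neq w_{i,k}$, or $t\in w_{i,k}$.

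In the first case $L_{i,k}\in\Gamma_{21}$, and Lemma 3.3 gives the contradiction. By Theorem 1.27 (symmetry of the $2q\pi$ and $2q\pi+\pi$ loci), the mirror image $L_{i,k}^{'}$ together with $L_{i,k}$ encloses a real zero lying between the two intervals. That zero emits a locus with a different unit complex value, which by Theorem 1.15 cannot escape to $\infty$, contrary to Theorem 1.14. In the second case $L_{i,k}\in\Gamma_{22}$, and Lemma 3.20 states directly that $L_{i,k}$ cannot meet any point of $w_{i,k}$. Lemma 3.20 covers both sub-cases: $\Gamma_{221}$ (no loci from non-real zeros reach $w_{i,k}$, handled by Lemma 3.9 and its extension to the other two sub-intervals) and $\Gamma_{222}$ (loci $U_k,U_k^{'}$ reach $w_{i,k}$, handled by Lemma 3.19). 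Since both cases are impossible, $L_{i,k}$ does not re-intersect the real axis.

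The main obstacle is checking that this dichotomy is really exhaustive, and I would verify three points.
\begin{itemize}
\item Endpoints: the infinite intervals $w_{1,1}$ and $w_{2,r+1}$ and the special case of a single real zero must fall under Lemmas 3.3 and 3.20. The paper's definition of $\Gamma_{222}$ explicitly includes the single-zero case, which covers this.
\item Degenerate breakaway: the case $c_{i,k}=b_{i,k}$ must not break Lemma 3.19; the remark after Lemma 3.16 asserts that it does not.
\item Repeated returns: it suffices to treat the \emph{first} return point $t$ after $b_{i,k}$. The arc of $L_{i,k}$ from $b_{i,k}$ to $t$ then lies in $\mathbb{C}^{*}$, which is the configuration assumed in Lemmas 3.3 and 3.9--3.19. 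I would spell this reduction out using the continuity in Theorem 2.21.
\end{itemize}
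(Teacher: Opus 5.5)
Your proposal is correct and follows the paper's own proof: Lemma 3.21 comes from the partition $\Gamma_2=\Gamma_{21}\cup\Gamma_{22}$, applying Lemma 3.3 to $\Gamma_{21}$ and Lemma 3.20 to $\Gamma_{22}$. Your extra checks are refinements the paper leaves implicit, not a different route: the return point $t$ is not a zero of $p(s)$, and reducing to the first return point after $b_{i,k}$ makes the two cases genuinely exclusive.
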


Given $\Gamma=\Gamma_{1}\cup\Gamma_{2}$, Lemmas 3.8 and 3.21 imply Theorem 3.22.

\begin{theorem}
Let $p(s)$ be a real polynomial with at least one real zero.  $LT_{k}\in\Gamma$. Each real zero $Q_k$ of $p(s)$ emits a root locus $LT_k$ of $2q\pi$ or $2q\pi+\pi$ degree. If $LT_k$ enters $\mathbb{C}^{*}$, then it cannot re-intersect  on the real axis.
\end{theorem}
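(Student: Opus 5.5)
The plan is to assemble Theorem 3.22 from the partition $\Gamma=\Gamma_1\cup\Gamma_2$, $\Gamma_1\cap\Gamma_2=\emptyset$, introduced before Lemma 3.3. The classification is by how $LT_k$ leaves the real axis. Either $LT_k=T_l$ is emitted directly into $\mathbb{C}^{*}$ by a multiple real zero $Q_l$ (case $\Gamma_1$), or $LT_k=L_{i,k}$ first runs along the real axis and leaves at a real breakaway point $b_{i,k}\neq Q_k$ (case $\Gamma_2$). A simple real zero emits its $2q\pi$ and $2q\pi+\pi$ degree loci only along the real axis, by Theorem 1.35 and Lemma 1.27(5). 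So every locus $LT_k$ that enters $\mathbb{C}^{*}$ falls into exactly one of these two cases, and the theorem reduces to proving non-return in each.

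For $\Gamma_1$, I invoke Lemma 3.8 directly; it is itself the combination of Lemma 3.6 ($n_l>1$) and Lemma 3.7 ($n_l=1$). The mechanism is the same in both subcases. Suppose $T_l$ returned to the axis at $d_l$. The symmetric locus $T_l'$ (Lemma 1.27(1)) would close a region with $Q_l$ and $d_l$. By Theorems 1.33 and 1.35, $Q_l$ emits into this region a locus of the opposite degree. By Theorem 1.15 that locus cannot cross $T_l\cup T_l'$, so it could not reach the zero at infinity required by Theorem 1.14 and Lemma 3.1, which is a contradiction.

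For $\Gamma_2$, I split further by where the hypothetical return point lies. If it lies in a different interval $w_{j3}\neq w_{i,k}$, Lemma 3.3 excludes it by the same enclosure argument: a real zero $QQ$ lies between the two intervals and emits a locus trapped inside. If it lies in the interval $w_{i,k}$ that the locus left, Lemma 3.20 handles it. That lemma splits according to whether loci from non-real zeros reach $w_{i,k}$ ($\Gamma_{221}$ versus $\Gamma_{222}$). The subcases are dispatched either by the no-self-intersection property (Theorem 1.39, plus the symmetric partner locus) or by strict monotonicity of the gain along a locus (Theorem 1.38). Together these give Lemma 3.21 for all of $\Gamma_2$. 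Combining Lemma 3.8 and Lemma 3.21 over $\Gamma=\Gamma_1\cup\Gamma_2$ then yields the theorem.

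The main obstacle is verifying that the case split is genuinely exhaustive. Two configurations need checking. The first is a locus that leaves at a breakaway point where several loci meet, such as the degenerate case $c_{i,k}=b_{i,k}$ noted after Lemma 3.16. The second is a locus that crosses the segment $s_k'c_{i,k}$ several times. For the latter, I would argue, as the paper indicates after Lemma 3.14, that only the first crossing is needed to produce the self-intersection or gain contradiction. For the coincident breakaway point, I would treat it as a limit of the generic configuration and apply the gain-monotonicity argument of Lemma 3.13 at the shared point.
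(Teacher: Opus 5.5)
Your proposal is essentially the paper's own proof: Theorem 3.22 is obtained there exactly by combining Lemma 3.8 on $\Gamma_1$ with Lemma 3.21 on $\Gamma_2$, the latter being Lemma 3.3 (return to another interval) together with Lemma 3.20 (return to $w_{i,k}$). One correction to your summary of Lemma 3.20: a return beyond $c_{i,k}$ that avoids $s_k'c_{i,k}$ (Lemmas 3.15--3.16) is excluded by an enclosure argument, not by self-intersection or gain monotonicity, since the loop traps $s_k,s_k'$ and hence a locus of the other degree; and for $c_{i,k}=b_{i,k}$ the paper uses no limiting argument but simply states that the lemmas are unaffected.
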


For the root locus of (3.1) emitted from the real zeros of the polynomial $p(s)$ entering $\mathbb{C}^{*}$, there are three possible scenarios:

1. First scenario: Two adjacent real zeros of $p(s)$ form an interval. Two root loci intersect at a point within this interval, and from this intersection point, the root loci enter $\mathbb{C}^{*}$.

2. Second scenario: Two non-real complex zeros of $p(s)$ emit root loci toward the real axis. These root loci intersect on the real axis with the root locus emitted from the real zeros at a point. From this intersection point, the root locus emitted from the real zero enters $\mathbb{C}^{*}$.

3. Third scenario: For a multiple real zero of $p(s)$, the multiple real zero directly emits root loci into $\mathbb{C}^{*}$.

After leaving the real axis and entering $\mathbb{C}^{*}$ in these three ways, the root loci of (3.1) emitted from the real zeros of $p(s)$ cannot intersect any point on the real axis again. Theorem 3.22 states that when proving the generation of the second kind of zeros of $(p^{'}(s)/p(s))^{'}$, we do not need to consider the root loci of (3.1) emitted from the real zeros of $p(s)$ that enter $\mathbb{C}^{*}$. This is because these root loci do not intersect any point on the real axis and thus cannot participate in the generation of the second kind of zeros of $(p^{'}(s)/p(s))^{'}$.

The proof of Theorem 0.3.

\begin{proof}
When $p(s)$ has at least two distinct real zeros, by Theorem 3.22, the root loci emitted by the real zeros of $p(s)$ enter $\mathbb{C}^{*}$ and cannot re-intersect the real axis. If the root loci emitted by the non-real zeros of $p(s)$ do not intersect with other root loci on the intervals between each pair of adjacent real zeros of $p(s)$, there must exist a real zero of $p^{'}(s)$ in each such interval $[Q_{j+1}, Q_{j}]$. This real zero of $p^{'}(s)$ is produced by the intersection of two root loci of (3.1), the two root loci are emitted by $Q_{j+1}$ and $Q_{j}$, not by the intersection of more than two root loci of (3.1). By Theorem 1.40, such a real zero of $p^{'}(s)$ can only be a simple real zero of $p^{'}(s)$ and cannot be the second kind of zero of $(p^{'}(s)/p(s))^{'}$. $j=1,\cdots, r-1$.

In the infinite interval $[Q_{1},+\infty)$ from the largest real zero of $p(s)$ to positive infinity and in the infinite interval $(-\infty, Q_{r}]$ from the smallest real zero of $p(s)$ to negative infinity, if the root loci emitted by the non-real zeros of $p(s)$ do not intersect with other root loci on these intervals, the two infinite intervals consist of two complete root loci from the poles of (3.1) to the two infinity points. On these complete root loci, there are no zeros of $p^{'}(s)$ and no second kind of zeros of $(p^{'}(s)/p(s))^{'}$.

When $p(s)$ has a real zero, repeating the previous proof yields the result of Theorem 0.3.

When $p(s)$ has no real zeros, Theorem 0.3 holds trivially.
\end{proof}

The non-real zeros $s_k$ and $s_k^{'}$ emit the root loci $U_k$ and $U_k^{'}$. The root loci $U_k$ and $U_k^{'}$ intersect at a point $c_{i,k}$ in $w_{i,k}$. The root locus $L_{i,k}$ intersects with $U_k$ or $U_k^{'}$ at a point $b_{i,k}$. Both $L_{i,k}$ and $U_k$ (or $U_k^{'}$) leave the real axis at the point $b_{i,k}$. The portions of $L_{i,k}$ and $U_k$ (or $U_k^{'}$) that enter $\mathbb{C}^{*}$ are symmetric about the real axis. Lemma 3.20 proves that  the root locus $L_{i,k}$ cannot intersect at any point in $w_{i,k}$. Therefore, $U_k$ or $U_k^{'}$ cannot intersect at any point in $w_{i,k}$. That is Lemma 3.20  also applies to the root loci $U_k$ and $U_k^{'}$. No additional proof is needed, and Lemma 3.23 can be stated directly.

\begin{lemma}
Let $p(s)$ be a real polynomial with at least one real zero.
Assume that the root loci $U_k$ or $U_k^{'}$ leave the real axis from the point $b_{i,k}$($b_{i,k}\neq Q_k$) in the interval $w_{i,k}$ and enter $\mathbb{C}^{*}$. Then, the root loci $U_k$ and $U_k^{'}$ cannot intersect at any point in the interval $w_{i,k}$.
\end{lemma}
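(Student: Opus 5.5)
The plan is to reduce Lemma 3.23 to Lemma 3.20, exploiting the fact that after the breakaway point $b_{i,k}$ the departing branch of $U_k$ (or $U_k^{'}$) is the mirror image of the departing branch of $L_{i,k}$.

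\textbf{Step 1 (the configuration at $b_{i,k}$).} By Lemma 3.11 the configuration is as follows. $U_k$ and $U_k^{'}$ meet at $c_{i,k}\in w_{i,k}$ and then run in opposite directions along the real axis. The one adjacent to $L_{i,k}$, say $U_k$, meets $L_{i,k}$ at $b_{i,k}$. Both then leave the real axis: $L_{i,k}$ into one half of $\mathbb{C}^{*}$ and $U_k$ into the other. By Lemma 1.27(1) these two departing branches are symmetric about the real axis; they are images of each other under $s\mapsto\overline{s}$. The branch of $U_k^{'}$ running the other way from $c_{i,k}$ is treated in the same manner, with its own partner at its own departure point.

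\textbf{Step 2 (transfer a hypothetical re-intersection).} Suppose the departing branch of $U_k$ re-intersects $w_{i,k}$ at some real point $x$. The real axis is fixed by conjugation, so by the symmetry of Step 1 the mirror branch, which is the departing part of $L_{i,k}$, also reaches $x$. This contradicts Lemma 3.20, which says $L_{i,k}\in\Gamma_{22}$ cannot re-intersect any point of $w_{i,k}$ after leaving at $b_{i,k}\neq Q_k$.

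\textbf{Step 3 (the other cases).} The other possible behaviours are handled by existing results:
\begin{itemize}
\item If $U_k$ leaves $w_{i,k}$ and re-intersects a different interval $w_{j3}$, that is excluded by Lemma 3.4.
\item The sub-interval between $c_{i,k}$ and $b_{i,k}$ is already occupied by $U_k$. A return there is ruled out directly, without needing $L_{i,k}$: it would make $U_k$ self-intersect, which is impossible by Theorem 1.39. Alternatively, it violates the strict monotonicity of the gain along $U_k$ (Theorem 1.38), as in the second proof of Lemma 3.12.
\end{itemize}
Combining Steps 2 and 3 gives the lemma for both $U_k$ and $U_k^{'}$.

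The main obstacle is Step 2: justifying that "symmetric" means the departing portion of $U_k$ is exactly the conjugate of the departing portion of $L_{i,k}$, and not merely some $2q\pi$-degree locus. I would argue this using the following facts:
\begin{itemize}
\item $b_{i,k}$ is a real breakaway point, so it is a zero of $p^{'}$ (Theorem 1.25).
\item Near $b_{i,k}$, the loci of the common unit complex value form finitely many arcs (Theorem 1.28).
\item Conjugation maps this set of arcs to itself.
\end{itemize}
In the generic case of a double point, exactly two real arcs arrive and two non-real arcs leave. The non-real arcs are conjugates of each other, and the gain continues monotonically through $b_{i,k}$. So the continuation of $L_{i,k}$ and the continuation of $U_k$ are forced to be the conjugate pair. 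If this identification is unavailable for higher-multiplicity breakaway points, I would fall back on rerunning the case analysis of Lemmas 3.12–3.16 with the roles of $U_k$ and $L_{i,k}$ interchanged.
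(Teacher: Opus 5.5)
Your proposal follows the paper's own argument: the paper proves Lemma 3.23 by noting that the portions of $L_{i,k}$ and $U_k$ (or $U_k^{'}$) entering $\mathbb{C}^{*}$ at $b_{i,k}$ are mirror images about the real axis, and then invoking Lemma 3.20, which is exactly your Step 2. Your Step 3 is not needed for this statement, because Step 2 already covers all of $w_{i,k}$, and the other-interval case is what the paper adds afterwards in Theorem 3.24 via Lemma 3.4; your local analysis at a simple breakaway point supplies a justification of the symmetry that the paper only asserts.
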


Combining Lemma 3.4 and Lemma 3.23, we obtain Theorem 3.24.

\begin{theorem}
For a real polynomial $p(s)$ with at least one real zero, each real zero $Q_k$ of $p(s)$ emits a root locus $LT_k$ of $2q\pi$ or $2q\pi+\pi$ degree. The root loci $U_k$ and $U_k^{'}$ intersect at a point $c_{i,k}$ in $w_{i,k}$. Assume that the root loci $LT_k$ and $U_k$ (or $U_k^{'}$) intersect at the point $b_{i,k}$ in the interval $w_{i,k}$, they leave the real axis at  $b_{i,k}$($b_{i,k}\neq Q_k$) in  $w_{i,k}$ and enter $\mathbb{C}^{*}$. Then, the root loci $U_k$ and $U_k^{'}$ cannot intersect at any point on the real axis except $c_{i,k}$.
\end{theorem}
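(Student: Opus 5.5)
The proof of Theorem 3.24 combines Lemma 3.4 and Lemma 3.23, as the paper indicates. The goal is to show that, after $U_k$ and $U_k^{'}$ meet at $c_{i,k}$ and then leave the real axis at $b_{i,k}$ (together with $LT_k$), neither of them can meet the real axis again. I split the real axis minus $\{c_{i,k}\}$ into two parts: the other points of the interval $w_{i,k}$, and the points of all other intervals $w_{j3}\neq w_{i,k}$, including the two infinite ones. The zeros $Q_j$ are poles of (3.1), which lie only on emitted loci and are not re-intersection points, so they need no separate treatment.

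\textbf{Other intervals.} Here I invoke Lemma 3.4 directly. Its hypothesis is exactly that $U_k$ and $U_k^{'}$ intersect in $w_{i,k}$ and then leave the real axis into $\mathbb{C}^{*}$, which is the situation of the present statement. Its conclusion is that they cannot reach any point of another interval $w_{j3}$.

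The underlying argument is the one from Lemma 3.3. By the symmetry of Lemma 1.27, an arc returning to $w_{j3}$ together with its mirror image would enclose a real zero $QQ$ lying between the two intervals. That zero emits a locus of the other unit complex value among $2q\pi$ and $2q\pi+\pi$. By Theorem 1.15 this locus cannot cross the boundary, so it is trapped and cannot reach infinity, contradicting Theorem 1.14.

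\textbf{Same interval.} I invoke Lemma 3.23. Because of the symmetry in Lemma 1.27, the portion of $U_k$ (or $U_k^{'}$) leaving $b_{i,k}$ is the mirror image of the portion of $LT_k=L_{i,k}$ leaving $b_{i,k}$ in the opposite half-plane. Hence any return of $U_k$ or $U_k^{'}$ to a point of $w_{i,k}$ would force $L_{i,k}$ to return to the same point. Lemma 3.20, which covers the sub-intervals $(Q_k,b_{i,k})$, $(b_{i,k},c_{i,k})$ and $(c_{i,k},y_{i,k})$ through Lemmas 3.10, 3.17 and 3.18, rules this out.

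The point $b_{i,k}$ itself is excluded by Theorem 1.41, since a locus leaving a breakaway point cannot return to it. I would also note the degenerate case $c_{i,k}=b_{i,k}$, which the paper remarks does not affect Lemmas 3.10 to 3.16; in that case the same-interval step reduces to Theorem 1.41 alone.

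\textbf{Main obstacle.} The delicate step is the transfer from $L_{i,k}$ to $U_k$ and $U_k^{'}$ in the same-interval case. One must make sure that the branch of $U_k$ or $U_k^{'}$ which actually leaves at $b_{i,k}$ is the mirror image of $L_{i,k}$. The other branch is the one that met $U_k^{'}$ at $c_{i,k}$ and already lies on the axis, and it must not be double-counted as a new intersection.

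I would handle this with the gain monotonicity of Theorem 1.38. Along each locus the gain increases strictly, so the ordering of $K$ at $c_{i,k}$, $b_{i,k}$ and any putative return point fixes which branch is which. This is the same style of argument as the alternative proofs of Lemmas 3.12 and 3.13.

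Once this identification is established, the two cases together cover every real point other than $c_{i,k}$, which proves the theorem.
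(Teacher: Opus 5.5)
Your proposal follows the paper's own proof: Theorem 3.24 comes from combining Lemma 3.4, which excludes a return to any other interval $w_{j3}$, with Lemma 3.23, which transfers Lemma 3.20 from $L_{i,k}$ to $U_k$ and $U_k^{'}$ through the mirror symmetry of the branches leaving $b_{i,k}$. One correction to your aside: when $c_{i,k}=b_{i,k}$ the same-interval step does not reduce to Theorem 1.41 alone, because a return to $(Q_k,c_{i,k})$ or $(c_{i,k},y_{i,k})$ is still excluded only by the Lemma 3.20 argument, exactly as in the general case.
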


By Theorem 3.24, any intersection of $U_k$ or $U_k^{'}$ with the real axis must occur at $c_{i,k}$, as other intersections would violate the uniqueness of breakaway points.

The proof of Theorem 0.4.
\begin{proof}
When $p(s)$ has no real zeros, Theorem 0.4 holds trivially.

Sufficiency:
at least two root loci $f_a$ and $f_a^{'}$ of $2q\pi$ or $2q\pi+\pi$ degree, emitted from a pair of non-real conjugate zeros of $p(s)$, in $\mathbb{C}^{*}$, directly intersect with at least one other root locus at $M$ in the real axis. There exists at least one root locus $f_r$ on the real axis, which has the same unit complex value as $f_a$ and $f_a^{'}$. $f_r$ intersects with $f_a$ and $f_a^{'}$ at $M$.

According to the conditions of this theorem, the root locus $f_r$ may originate from non-real zeros or from real zeros. By  Theorem 1.41 the other root locus $f_r$ is distinct from the two root loci $f_a$ and $f_a^{'}$ emitted from the pair of non-real conjugate zeros. There exist at least three distinct root loci of (3.1) intersecting at $M$. By Theorem 1.40, $M$ is a real multiple zero of $p^{'}(s)$, not a real simple zero of $p^{'}(s)$. Additionally, $M$ is a intersection point of the root loci of (3.1), rather then a zero of $p(s)$, so, $M$ is not a real multiple zero of $p(s)$. Therefore, $M$ is a second kind of zero of $(p^{'}(s)/p(s))^{'}$.

Necessity:
Suppose that $M$ is a second kind of zero of $(p^{'}(s)/p(s))^{'}$. If only two root loci of (3.1) intersect at $M$, then by Theorem 1.40, $M$ must be a real simple zero of $p^{'}(s)$, not a real multiple zero of $p^{'}(s)$. Consequently, $M$ cannot be a second kind of zero of $(p^{'}(s)/p(s))^{'}$. Therefore, if $M$ is indeed a second kind of zero of $(p^{'}(s)/p(s))^{'}$, by Theorem 1.40, there must be at least three distinct root loci of (3.1) intersecting at $M$.

When at least three distinct root loci intersect at $M$. Two cases need to be proven.

1. Suppose that there is only one root locus passing through the point $M$ on the real axis, meaning that the segments of the real axis on both sides of $M$ belong to a single root locus $L_{r}$. $L_{r}$ passes through $M$ and is distributed on the real axis. $L_{r}$ may originate from non-real zeros or from real zeros.

If at least three distinct root loci of (3.1) intersect at $M$ on the real axis, by Theorem 3.22 and Theorem 3.24, then at least one of these root loci, denoted $L_{a_1}$, must originate from a non-real zero $a_1$. $L_{a_1}$ intersects $M$ from $\mathbb{C}^{*}$. Except for point $M$, the root locus $L_{a_1}$ emitted from non-real zeros cannot intersect the real axis. $L_{a_1}$ must directly intersect at point $M$ in $\mathbb{C}^{*}$.

By Theorem 1.27, the $2q\pi$ and $2q\pi+\pi$ degree root loci of (3.1) are symmetric with respect to the real axis. Consequently, on the opposite side of the real axis, at the symmetric position, the root locus $L_{a_1}^{'}$, emitted from the conjugate non-real zero $a_1^{'}$ (the conjugate of the non-real zero $a_1$), also intersects at point $M$ in $\mathbb{C}^{*}$. By Theorem 1.39 and Theorem 1.41, the root locus $L_{r}$ on the real axis is the third root locus, and it is a distinct root locus with $L_{a_1}$ and $L_{a_1}^{'}$. Therefore, when $M$ is a second kind of zero of $(p^{'}(s)/p(s))^{'}$, at least two root loci of $2q\pi$ or $2q\pi+\pi$ degree, originating  from a pair of non-real conjugate zeros of the real polynomial $p(s)$, in $\mathbb{C}^{*}$, directly intersect with at least one other root locus at $M$ in the real axis.

2. Suppose that two root loci on the real axis intersect at point $M$, meaning that the segments of the real axis on both sides of $M$ belong to two distinct root loci $L_{r1}$ and $L_{r2}$. $L_{r1}$ and $L_{r2}$ are distributed on the real axis. $L_{r1}$ and $L_{r2}$ intersect at $M$. $L_{r1}$ and $L_{r2}$ may originate from non-real zeros or from real zeros.

If at least three distinct root loci of (3.1) intersect at $M$, then at least one of these root loci, denoted $L_{a_1}$. By Theorem 3.22 and Theorem 3.24, $L_{a_1}$ must originate from a non-real zero $a_1$. Except for point $M$, the root locus $L_{a_1}$ emitted from non-real zeros cannot intersect the real axis. $L_{a_1}$ must directly intersect at point $M$ in $\mathbb{C}^{*}$.

Consequently, on the opposite side of the real axis, at the symmetric position, the root locus $L_{a_1}^{'}$, emitted from the conjugate non-real zero $a_1^{'}$ (the conjugate of the non-real zero $a_1$), also  directly intersects at point $M$ in $\mathbb{C}^{*}$. By Theorem 1.39 and Theorem 1.41,  two root loci $L_{r1}$ and $L_{r2}$ on the real axis are the third and fourth root loci, and they are the distinct with  root loci to $L_{a_1}$ and $L_{a_1}^{'}$. Therefore, when $M$ is a second kind of zero of $(p^{'}(s)/p(s))^{'}$, at least two root loci of $2q\pi$ or $2q\pi+\pi$ degree, originating from a pair of non-real conjugate zeros of the real polynomial $p(s)$, in $\mathbb{C}^{*}$, directly intersect with at least one other root locus at $M$ in the real axis.
\end{proof}

\begin{example}
The real polynomial $p_{e1}(s)$ is given by $p_{e1}(s)=(s+1)(s^2-s+1)=s^{3}+1$.
\end{example}
The polynomial $p_{e1}(s)$ has a real zero at $s_{e1,3}=-1$ and a pair of complex conjugate non-real zeros at $s_{e1,1,2}=\frac{1}{2}\pm\frac{\sqrt{3}}{2}i$.

Consider the root locus equation for the rational function $\frac{1}{p_{e1}(s)}$: $K\frac{1}{p_{e1}(s)}=\pm 1$.
This equation has poles at $s_{e1,1,2} = \frac{1}{2} \pm \frac{\sqrt{3}}{2}i$ (complex conjugate non-real) and $s_{e1,3} = -1$ (real), with zeros exclusively at infinity.

The interval $(-\infty, s_{e1,3})=(-\infty, -1)$ constitutes a $2q\pi+\pi$ degree root locus segment. Two $2q\pi+\pi$ degree root loci emitted from $s_{e1,1,2}$ and extend to the points at infinity respectively. These three loci do not intersect, and no breakaway points exist.

The interval $(s_{e1,3}, +\infty)=(-1, +\infty)$ constitutes a $2q\pi$ degree root locus segment. Three $2q\pi$ degree root loci emitted from $s_{e1,1,2}$ and $s_{e1,3}$, intersecting at the breakaway point $b_{e1,1} = 0$, which is a second kind of zero.

Figure 1 illustrates the $2q\pi+\pi$ and $2q\pi$ degree root loci in this example, computed using MATLAB.

\begin{figure}[H]
\centering 
\subfigure[$2q\pi+\pi$ degree root locus]{
\label{Fig.sub.1}
\includegraphics[width=0.45\textwidth]{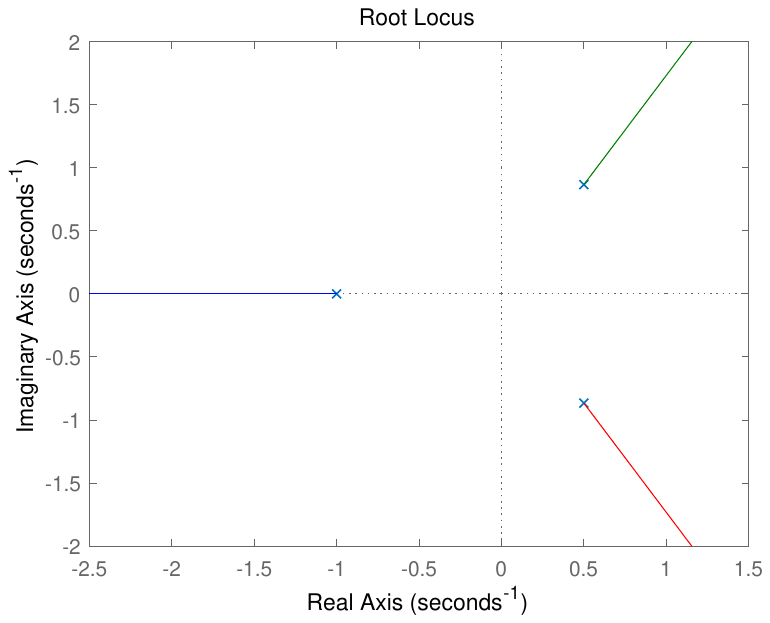}}
\subfigure[$2q\pi$ degree root locus]{
\label{Fig.sub.2}
\includegraphics[width=0.45\textwidth]{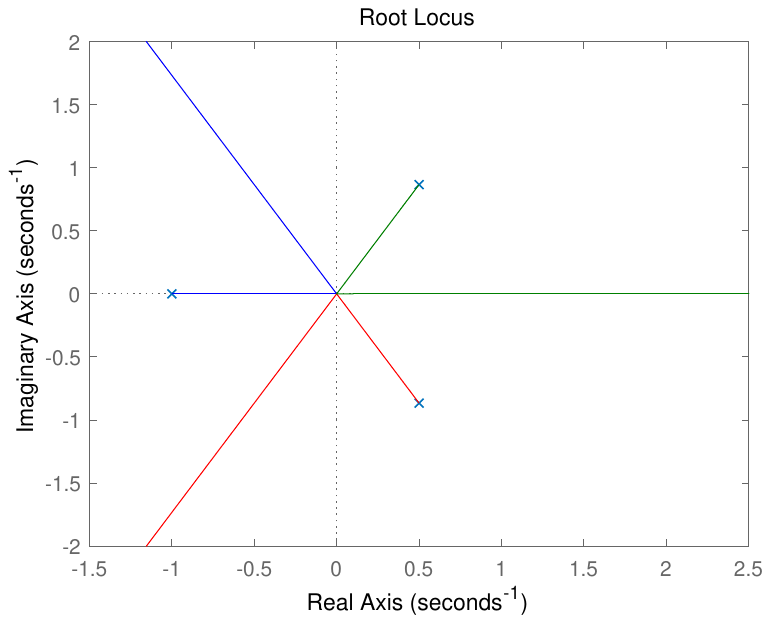}}
\caption{Example 3.25}
\label{Fig.main}
\end{figure}

\begin{example}
The real polynomial $p_{e2}(s)$ is given by $p_{e2}(s)=(s+1)(s-1)(s-i)(s+i)=s^{4}-1$.
\end{example}

The real polynomial $p_{e2}(s)$ has two conjugate non-real zeros $s_{e2,1,2}= \pm i$, and two real zeros $s_{e2,3}=1$, $s_{e2,4}=-1$.

Consider the root locus equation for the rational function $\frac{1}{p_{e2}(s)}$: $K\frac{1}{p_{e2}(s)}=\pm 1$. This root locus equation has two conjugate non-real poles $s_{e2,1,2}= \pm i$, and two real poles $s_{e2,3}=1$, $s_{e2,4}=-1$, with only zeros at infinity.

Four $2q\pi+\pi$ degree root loci emitted from $s_{e2,1,2}$, $s_{e2,3}$, and $s_{e2,4}$. The loci from $s_{e2,1,2}$ extend toward each other, while those from $s_{e2,3}$ and $s_{e2,4}$ also extend toward each other. All four root loci intersect at the breakaway point $b_{e2,1}=0$.

The $2q\pi$ degree root locus comprises four intervals: $(-\infty, s_{e2,4}) = (-\infty, -1)$, $(s_{e2,3}, +\infty) = (1, +\infty)$, $(-i\infty, s_{e2,2}) = (-i\infty, -i)$, $(s_{e2,1}, +i\infty) = (i, +i\infty)$. Four $2q\pi$ degree root loci emitted from $s_{e2,1,2}$, $s_{e2,3}$, and $s_{e2,4}$ and extend to infinity along these paths without intersection. No breakaway points exist.

MATLAB computations confirm $p_{e2}(s)$ produces only one real Fourier breakaway critical point (at $b_{e2,1}=0$). Figure 2 illustrates the $2q\pi+\pi$ and $2q\pi$ degree root loci in this example.

\begin{figure}[H]
\centering 
\subfigure[$2q\pi+\pi$ degree root locus]{
\label{Fig.sub.1}
\includegraphics[width=0.45\textwidth]{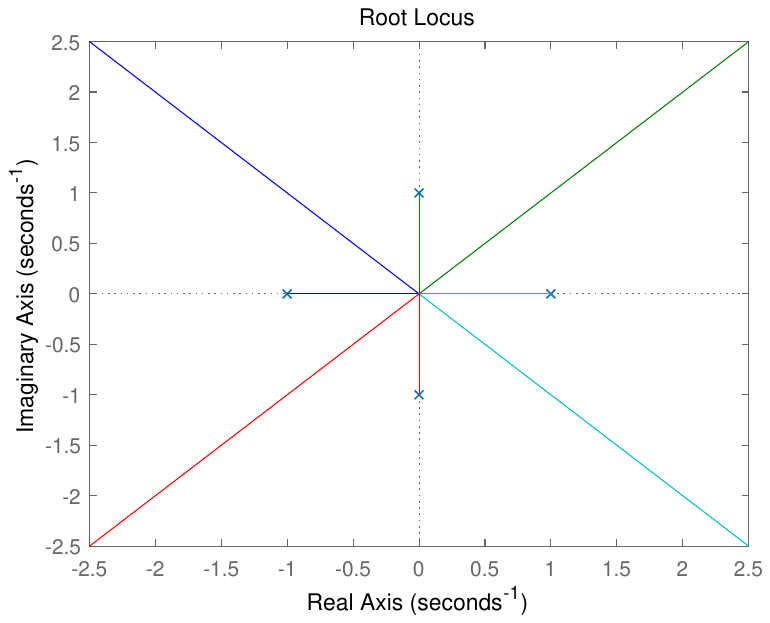}}
\subfigure[$2q\pi$ degree root locus]{
\label{Fig.sub.2}
\includegraphics[width=0.45\textwidth]{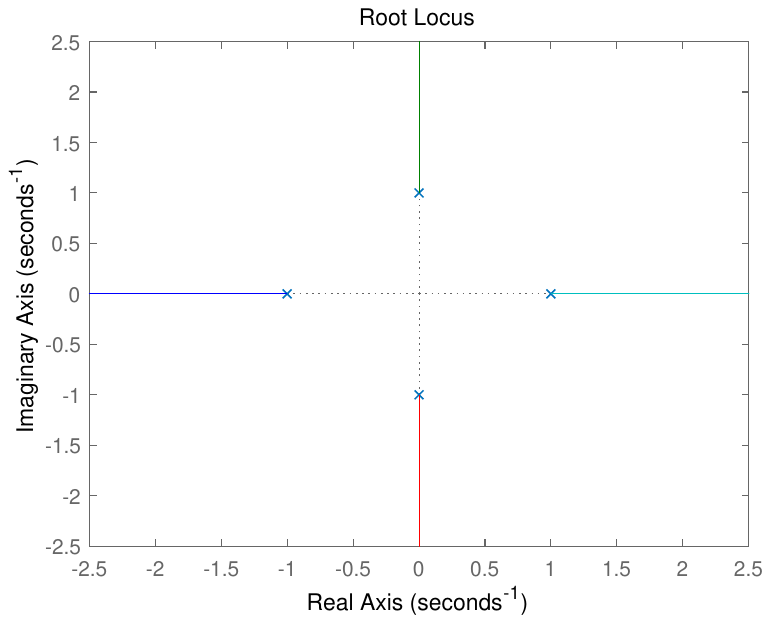}}
\caption{Example 3.26}
\label{Fig.main}
\end{figure}

Conflict of interest: On behalf of all the authors, the corresponding author states that there are no conflicts of
interest.

Availability of data and material: No data were used to support this study.

\bibliographystyle{unsrt}

\end{document}